\numberwithin{equation}{section}
\theoremstyle{definition}
\newtheorem{definition}{Definition}[section]
\newtheorem{remark}[definition]{Remark}
\newtheorem{example}[definition]{Example}
\theoremstyle{plain}
\newtheorem{theorem}[definition]{Theorem}
\newtheorem{proposition}[definition]{Proposition}
\newtheorem{lemma}[definition]{Lemma}
\newtheorem{corollary}[definition]{Corollary}
\theoremstyle{remark}
\newcommand{\Hom}{\mathsf{Hom}}
\newcommand{\Ext}{\mathsf{Ext}}
\newcommand{\Ch}{\mathsf{Ch}}
\newcommand{\pd}{{\rm pd}}
\newcommand{\id}{{\rm id}}
\newcommand{\Gpd}{{\rm Gpd}}
\newcommand{\resdim}{{\rm resdim}}
\newcommand{\Prec}{{\mathrm{Prec}}}
\newcommand{\glGPD}{{\mathrm{gl.GPD}}}
\newcommand{\glGID}{{\mathrm{gl.GID}}}
\newcommand{\glDPD}{{\mathrm{gl.DPD}}}
\newcommand{\glDID}{{\mathrm{gl.DID}}}
\newcommand{\coresdim}{{\rm coresdim}}
\newcommand{\fmod}{\mathsf{mod}}
\newcommand{\Mod}{\mathsf{Mod}}
\newcommand{\Ker}{{\rm Ker}}
\newcommand{\Ima}{{\rm Im}}
\newcommand{\CoKer}{{\rm CoKer}}
\newcommand{\op}{{}^{\rm op}}
\newcommand{\A}{\mathcal{A}}
\newcommand{\B}{\mathcal{B}}
\newcommand{\C}{\mathcal{C}}
\newcommand*\circled[1]{\tikz[baseline=(char.base)]{ \node[shape=circle,draw,inner sep=2pt] (char) {#1};} }
\newcommand\numberthis{\addtocounter{equation}{1}\tag{\theequation}}
\def\@seccntformat#1{%
  \protect\textup{\protect\@secnumfont
    \ifnum\pdfstrcmp{section}{#1}=0 \scshape\bfseries\fi
    \ifnum\pdfstrcmp{subsection}{#1}=0 \bfseries\fi
    \csname the#1\endcsname
    \protect\@secnumpunct
  }%
}
\begin{document}

\title{$\bm{n}$-Cotorsion pairs}
\thanks{2010 MSC: 18G25 (18G10; 18G20; 18G35)}
\thanks{Key Words: $n$-cotorsion pairs, unique mapping property, Gorenstein modules, cluster tilting subcategories.}

\author{Mindy Huerta}
\address[M. Huerta]{Instituto de Matem\'aticas. Universidad Nacional Aut\'onoma de M\'exico. Circuito Exterior, Ciudad Universitaria. CP04510. Mexico City, MEXICO}
\email{mindy@matem.unam.mx}

\author{Octavio Mendoza}
\address[O. Mendoza]{Instituto de Matem\'aticas. Universidad Nacional Aut\'onoma de M\'exico. Circuito Exterior, Ciudad Universitaria. CP04510. Mexico City, MEXICO}
\email{omendoza@matem.unam.mx}

\author{Marco A. P\'erez}
\address[M. A. P\'erez]{Instituto de Matem\'atica y Estad\'istica ``Prof. Ing. Rafael Laguardia''. Facultad de Ingenier\'ia. Universidad de la Rep\'ublica. CP11300. Montevideo, URUGUAY}
\email{mperez@fing.edu.uy}

\baselineskip=14pt
\maketitle

\begin{abstract}
Motivated by some properties satisfied by Gorenstein projective and Gorenstein injective modules over an Iwanaga-Gorenstein ring, we present the concept of left and right $n$-cotorsion pairs in an abelian category $\mathcal{C}$. Two classes $\mathcal{A}$ and $\mathcal{B}$ of objects of $\mathcal{C}$ form a left $n$-cotorsion pair $(\mathcal{A,B})$ in $\mathcal{C}$ if the orthogonality relation $\mathsf{Ext}^i_{\mathcal{C}}(\mathcal{A,B}) = 0$ is satisfied for indexes $1 \leq i \leq n$, and if every object of $\mathcal{C}$ has a resolution by objects in $\mathcal{A}$ whose syzygies have $\mathcal{B}$-resolution dimension at most $n-1$. This concept and its dual generalise the notion of complete cotorsion pairs, and has an appealing relation with left and right approximations, especially with those having the so called unique mapping property. 

The main purpose of this paper is to describe several properties of $n$-cotorsion pairs and to establish a relation with complete cotorsion pairs. We also give some applications in relative homological algebra, that will cover the study of approximations associated to Gorenstein projective, Gorenstein injective and Gorenstein flat modules and chain complexes, as well as $m$-cluster tilting subcategories. 
\end{abstract}


\pagestyle{myheadings}
\markboth{\rightline {\scriptsize M. Huerta, O. Mendoza and M. A. P\'{e}rez}}
         {\leftline{\scriptsize $n$-Cotorsion pairs}}


\section*{\textbf{Introduction}}

Given an $n$-Iwanaga-Gorenstein ring $R$, we know that if $\mathcal{GP}(R)$ denotes the class of Gorenstein projective left $R$-modules, and $\mathcal{P}(R)$ the class of projective left $R$-modules, using Auslan-der-Buchweitz approximation theory (see \cite{ABtheory,BMPS}, for instance), we can assert that every left $R$-module $M$ can be covered by an epimorphism $\varphi \colon P \twoheadrightarrow M$ with $P \in \mathcal{GP}(R)$ and whose kernel has projective dimension at most $n-1$. Moreover, the orthogonality relation $\mathsf{Ext}^i_R(\mathcal{GP}(R),\mathcal{P}(R)) = 0$ is satisfied for every $i \geq 1$. We are interested in considering the latter condition in more general contexts and only for indexes $1 \leq i \leq n$.  

In the present paper, we comprise the previous properties in the concept of \emph{left $n$-cotorsion pairs}. In the general setting provided by an abelian category $\mathcal{C}$, these will be defined by two classes $\mathcal{A}$ and $\mathcal{B}$ of objects of $\mathcal{C}$ such that: (1) $\mathcal{A}$ is closed under direct summands, (2) $\mathsf{Ext}^i_{\mathcal{C}}(\mathcal{A,B}) = 0$ for every $1 \leq i \leq n$, and (3) for every object $C \in \mathcal{C}$ there exists an exact sequence
\[
0 \to B_{n-1} \to B_{n-2} \to \cdots \to B_1 \to B_0 \to A \to C \to 0
\]
with $A \in \mathcal{A}$ and $B_k \in \mathcal{B}$ for every $0 \leq k \leq n-1$. 

This concept and its dual, that we shall call \emph{right $n$-cotorsion pair}, will represent an approach to what, roughly speaking, we may call \emph{higher cotorsion}: that is, the study of the possible outcomes of considering two classes of objects of $\mathcal{C}$ which are complete with respect to the orthogonality relation defined by the vanishing of the bifunctor $\Ext^i_{\mathcal{C}}(-,-)$ for ``higher'' indexes $i > 1$. The case $i = 1$, on the other hand, is already covered by the theory of complete cotorsion pairs, widely considered in fields such as relative homological algebra or representation theory.  

The present paper is organised as follows. In the first section we give some preliminaries on homological dimensions, orthogonality and approximations. The next section is devoted to present the concept of left and right $n$-cotorsion pairs and its relation with complete cotorsion pairs. In Proposition~\ref{prop:cotorsion_vs_ncotorsion} and Theorem~\ref{theo:left-n-cotorsion}, we give necessary and sufficient conditions for an $n$-cotorsion pair $(\mathcal{A,B})$ in $\mathcal{C}$ to form a complete cotorsion pair $(\mathcal{A},\mathcal{B}^\wedge_{n-1})$, where $\mathcal{B}^\wedge_{n-1}$ will denote the class of objects of $\mathcal{C}$ with $\mathcal{B}$-resolution dimension $\leq n-1$. 

In the third section we study how to construct covers and envelopes from $n$-cotorsion pairs $(\mathcal{A,B})$ in $\mathcal{C}$. We also define new type of approximations, that we call \emph{special $(\mathcal{A},k,\mathcal{B})$-precovers and preenvelopes}. We shall consider the class of objects in $\mathcal{C}$ having such approximations, and analyse some conditions under which this class is closed under extensions (see Corollary~\ref{Coro-k-prec}). Moreover, given an $n$-cotorsion pair $(\mathcal{A,B})$ in $\mathcal{C}$, we give in Corollaries~\ref{Aperp y Bvee} and \ref{corUMP2} some necessary and sufficient conditions to obtain precovers and envelopes constructed from $\mathcal{A}$ and $\mathcal{B}$ that satisfy the unique mapping property. At this point, we shall make some comparisons with other approaches to higher cotorsion, like for instance the remarkable study \cite{CriveiTorrecillas} by S. Crivei and B. Torrecillas, where the authors establish several equivalent conditions for a class $\mathcal{A} \subseteq \mathcal{C}$ under which every object in $\mathcal{C}$ has an epic $\mathcal{A}$-envelope and a monic $\mathcal{A}$-cover. These conditions have to do with the concept of $(m,n)$-cotorsion pairs, $n$-special precovers and $m$-special preenvelopes. 

Section 4 is devoted to explain what does it mean for a left or right $n$-cotorsion pair to be \emph{hereditary}. We shall see in Proposition~\ref{equiv hered y n-cot} that a (left and right) hereditary $n$-cotorsion pair coincides with the usual concept of hereditary complete cotorsion pair. Thus, we propose a notion for being hereditary that is not trivial for either left or right higher cotorsion pairs. 

In Section 5 we present applications and examples of the theory of $n$-cotorsion pairs, developed in Sections \ref{sec:approximations} and \ref{sec:hereditary}, in the context of relative Gorenstein homological algebra and cluster-tilting subcategories. We shall see in Example~\ref{ex:GProj_ncot} and Proposition~\ref{prop:ncot_spli_silp} that the classes $\mathcal{GP}(R)$ and $\mathcal{P}(R)$ of Gorenstein projective and projective $R$-modules will form a left $n$-cotorsion pair provided that $R$ is an $n$-Iwanaga-Gorenstein ring or a Gorenstein ring (in the sense of \cite{BR}). Moreover, we give characterisations of Gorenstein rings in terms of the pair $(\mathcal{GP}(R),\mathcal{P}(R))$ and its dual $(\mathcal{I}(R),\mathcal{GI}(R))$, formed by the classes of injective and Gorenstein injective $R$-modules. As an application in this setting, we shall prove for example that every module over $2$-Iwanaga-Gorenstein ring has a Gorenstein injective cover with the unique mapping property, and that the existence of Gorenstein projective envelopes implies the existence of such envelopes with the unique mapping property (see Corollaries~\ref{coro:GI_unique_mapping} and \ref{coro:GP_unique_mapping}). An analogous study is done for the notions of Ding projective and Ding injective modules over a ring, but in addition we find some finiteness conditions for the global Ding projective and Ding injective dimensions of a ring. Later, we study some consequences of having the classes $\mathcal{F}(R)$ and $\mathcal{GF}(R)$ of flat and Gorenstein flat $R$-modules as halves of left and right $n$-cotorsion pairs. This will lead for instance to some characterisations of left perfect rings with null global Gorenstein flat dimension (Proposition~\ref{prop:perfect_global_flat}), and of left perfect rings that are also quasi-Frobenius (Proposition~\ref{prop:perfect_QF}). Another interesting fact about pairs of the form $(\mathcal{GF}(R),\mathcal{F}(R))$ is its relation with the pair $(\mathcal{I}(R),\mathcal{GI}(R))$, mentioned before, in terms of the Pontryagin duality functor $M \mapsto M^+ := \mathsf{Hom}_{\mathbb{Z}}(M,\mathbb{Q / Z})$ (see Theorems \ref{theo:GFGI_Pontryagin} and \ref{theo:GFGI_Pontryagin}). Besides its applications in Gorenstein homological algebra, we also study the interplay between the $n$-cotorsion pairs and cluster tilting subcategories in the sense of Iyama \cite{IyamaCluster}. For an abelian category $\mathcal{C}$ with enough projective and injective objects, we shall give a one-to-one correspondence between $n$-cotorsion pairs in $\mathcal{C}$ of the form $(\mathcal{D,D})$ and $(n+1)$-cluster tilting subcategories of $\mathcal{C}$ (see Proposition~\ref{nclustA=B y ff} and Theorem~\ref{ncot y ct}). 

In the last section we show how to induce certain left and right $n$-cotorsion pairs of chain complexes from a given $n$-cotorsion pair $(\mathcal{A,B})$ in an abelian category $\mathcal{C}$. These induced pairs will involve the classes $\widetilde{\mathcal{A}}$ of $\mathcal{A}$-complexes, $\widetilde{\mathcal{B}}$ of $\mathcal{B}$-complexes, and ${\rm dg}\widetilde{\mathcal{A}}$ and ${\rm dg}\widetilde{\mathcal{B}}$ of differential graded complexes of objects in $\mathcal{A}$ and $\mathcal{B}$. The results presented in this section are motivated by the works of J. Gillespie \cite{GillespieFlat}, and X. Yang and N. Ding \cite{YangDingQuestion}, where they show that every complete and hereditary cotorsion pair $(\mathcal{A,B})$ gives rise to two complete cotorsion pairs of complexes of the form $(\widetilde{\mathcal{A}},{\rm dg}\widetilde{\mathcal{B}})$ and $({\rm dg}\widetilde{\mathcal{A}},\widetilde{\mathcal{B}})$. We also prove that if any of these pairs is a left or a right $n$-cotorsion pair of complexes, then so is $(\mathcal{A,B})$ in $\mathcal{C}$, provided that $\mathsf{Ext}^i_{\mathcal{C}}(\mathcal{A,B}) = 0$ for every $1 \leq i \leq n+1$, extending thus an important result in \cite{YangDingQuestion}.


\section{\textbf{Preliminaries}}\label{sec:preliminaries}

Let us recall some categorical and homological preliminaries that will be used in the sequel. 

Throughout this paper, $\mathcal{C}$ will denote an abelian category (not necessarily with enough projective or injective objects). The main example of such category considered here will be the category $\Mod(R)$ of left $R$-modules and $R$-homomorphisms, where $R$ is an associative ring with identity. By a module $M$ we shall mean a left $R$-module unless otherwise specified. Right $R$-modules will be regarded as left modules over the opposite ring $R^{\rm op}$. We shall also consider the category $\Ch(R)$ of complexes of (left) $R$-modules, and the category $\mathsf{mod}(\Lambda)$ of finitely generated modules over an Artin algebra $\Lambda$. 

Every subcategory of $\mathcal{C}$ is assumed to be full, and so any class $\mathcal{A} \subseteq \mathcal{C}$ of objects of $\mathcal{C}$ may be regarded as a (full) subcategory of $\mathcal{C}$. If two objects $C$ and $D$ in $\mathcal{C}$ are isomorphic, we write $C \simeq D$. The notation $F \cong G$ will be reserved to denote the existence of a natural isomorphism between two functors $F$ and $G$. Monomorphisms and epimorphisms in $\mathcal{C}$ may sometimes be denoted using arrows $\rightarrowtail$ and $\twoheadrightarrow$, respectively.

The results presented in this paper have their corresponding dual version, which sometimes will be omitted for simplicity.


\subsection*{Resolution and coresolution dimension} 

Let $\mathcal{B} \subseteq \mathcal{C}$ be a class of objects of $\mathcal{C}$. Given an object $C \in \mathcal{C}$ and a nonnegative integer $m \geq 0$, a \emph{$\mathcal{B}$-resolution of $C$ of length $m$} is an exact sequence
\[
0 \to B_m \to B_{m-1} \to \cdots \to B_1 \to B_0 \to C \to 0
\]
in $\mathcal{C},$ where $B_k \in \mathcal{B}$ for every integer $0 \leq k \leq m$. The \emph{resolution dimension of $C$ with respect to $\mathcal{B}$} (or the \emph{$\mathcal{B}$-resolution dimension} of $C$), denoted $\resdim_{\mathcal{B}}(C)$, is defined as the smallest nonnegative integer $m \geq 0$ such that $C$ has a $\mathcal{B}$-resolution of length $m$. If such $m$ does not exist, we set $\resdim_{\mathcal{B}}(C) := \infty$. Dually, we have the concepts of \emph{$\mathcal{B}$-coresolutions of $C$ of length $m$} and of  \emph{coresolution dimension of $C$ with respect to $\mathcal{B}$}, denoted by $\coresdim_{\mathcal{B}}(C)$.    

With respect to these two homological dimensions, we shall frequently consider the following two classes of objects in $\mathcal{C}$:
\begin{align*}
\mathcal{B}^\wedge_m & := \{ C \in \mathcal{C} \mbox{ : } \resdim_{\mathcal{B}}(C) \leq m \}, \\ 
\mathcal{B}^\vee_m & := \{ C \in \mathcal{C} \mbox{ : } \coresdim_{\mathcal{B}}(C) \leq m \}.
\end{align*}


\subsection*{Orthogonality with respect to extension functors}

In any abelian category $\mathcal{C}$, we can define extension bifunctors $\Ext^i_{\mathcal{C}}(-,-)$ in the sense of Yoneda. See for instance \cite{Sieg} for a detailed treatise on this matter. Recall that $\Ext^1_{\mathcal{C}}(X,Y)$ is defined as the abelian group formed by classes of short exact sequences $0 \to Y \to Z \to X \to 0$ under certain equivalence relation. In case we work in the category $\mathsf{Ch}(\mathcal{C})$ of complexes in $\mathcal{C}$, we shall write $\Ext^i_{\mathsf{Ch}(\mathcal{C})}(-,-)$ as $\mathsf{Ext}^i_{\mathsf{Ch}}(-,-)$ for simplicity. 

Given two classes of objects $\mathcal{A,B} \subseteq \mathcal{C}$ and an integer $i \geq 1$, the notation $\Ext^i_{\mathcal{C}}(\mathcal{A,B}) = 0$ will mean that $\Ext^i_{\mathcal{C}}(A,B) = 0$ for every $A \in \mathcal{A}$ and $B \in \mathcal{B}$. In the case where $\mathcal{A} = \{ M \}$ or $\mathcal{B} = \{ N \}$, we shall write $\Ext^i_{\mathcal{C}}(M,\mathcal{B}) = 0$ and $\Ext^i_{\mathcal{C}}(\mathcal{A},N) = 0$, respectively. 

Recall that the \emph{right $i$-th orthogonal complement} of $\mathcal{A}$ is defined by
\[
\mathcal{A}^{\perp_i} := \{ N \in \mathcal{C} \mbox{ : } \Ext^i_{\mathcal{C}}(\mathcal{A},N) = 0 \},
\]
and the \emph{total right orthogonal complement} of $\mathcal{A}$ by 
\[
\mathcal{A}^\perp := \bigcap_{i \geq 1} \mathcal{A}^{\perp_i}.
\]
Dually, we have the \emph{$i$-th and total left orthogonal complements} ${}^{\perp_i}\mathcal{B}$ and ${}^{\perp}\mathcal{B},$ respectively.


\subsection*{Approximations}

Let $\mathcal{A}$ be a class of objects of $\mathcal{C}$. A morphism $f \colon A \to C$ is said to be an \emph{$\mathcal{A}$-precover} (or a \emph{right $\mathcal{A}$-approximation}) \emph{of $C$} if $A \in \mathcal{A}$ and if for every morphism $f' \colon A' \to C$ with $A' \in \mathcal{A},$ there exists a morphism $h \colon A' \to A$ such that $f' = f \circ h$. If in addition, in the case $A' = A$ and $f' = f$, the previous equality can only be completed by automorphisms $h$ of $A$, then $f$ is called an \emph{$\mathcal{A}$-cover} (or a \emph{minimal right $\mathcal{A}$-approximation}). Furthermore, an $\mathcal{A}$-precover $f \colon A \to C$ of $C$ is \emph{special} if $\CoKer(f) = 0$ and $\Ker(f) \in \mathcal{A}^{\perp_1}$. The class $\mathcal{A}$ is said to be \emph{precovering} if every object of $\mathcal{C}$ has an $\mathcal{A}$-precover. Similarly, we have the concepts of \emph{covering} and \emph{special precovering} classes in $\mathcal{C}$.

Dually, we have the notions of \emph{$\mathcal{A}$-preenvelopes} (\emph{left $\mathcal{A}$-approximations}), \emph{$\mathcal{A}$-envelopes} (\emph{minimal left $\mathcal{A}$-approximations}) and \emph{special $\mathcal{A}$-preenvelopes} in $\mathcal{C}$, along with the corresponding notions of \emph{preenveloping}, \emph{enveloping} and \emph{special preenveloping} classes. 

With these preliminaries in hand, we are ready to begin our approach to higher cotorsion in abelian categories.


\section{\textbf{{\textit n}-Cotorsion pairs}}\label{sec:ncotorsion}

The notion of cotorsion pair was first introduced by L. Salce in \cite{Salce}. It is the analog of a torsion pair where the bifunctor $\Hom_{\mathcal{C}}(-,-)$ is replaced by $\Ext^1_{\mathcal{C}}(-,-)$. Roughly speaking, two classes $\mathcal{A}$ and $\mathcal{B}$ of objects in an abelian category $\mathcal{C}$ form a cotorsion pair $(\mathcal{A,B})$ if they are complete with respect to the orthogonality relation defined by the vanishing of the functor $\Ext^1_{\mathcal{C}}(-,-)$. Specifically, and for the purpose of this paper, it comes handy to recall this concept as follows.

\begin{definition}\label{def:cotorsion_pair}
Let $\mathcal{A}$ and $\mathcal{B}$ be two classes of objects in $\mathcal{C}$. We say that $\mathcal{A}$ and $\mathcal{B}$ form a \textbf{complete left cotorsion pair} $(\mathcal{A,B})$ in $\mathcal{C}$ if $\mathcal{A} = {}^{\perp_1}\mathcal{B}$ and if every object of $\mathcal{C}$ has an epic $\mathcal{A}$-precover with kernel in $\mathcal{B}$. Dually, we have the concept of \textbf{complete right cotorsion pair} in $\mathcal{C}$. 
\end{definition}

Note that $(\mathcal{A,B})$ is a complete cotorsion pair in $\mathcal{C}$ if, and only if, it is both a complete left and right cotorsion pair in $\mathcal{C}$. 

Motivated by the properties of Gorenstein projective and Gorenstein injective modules over Iwanaga-Gorenstein rings mentioned in the introduction, below we present a ``higher'' version of cotorsion pairs, which will cover complete left and right cotorsion pairs in the sense of Definition~\ref{def:cotorsion_pair}, as particular cases. By ``higher'' we mean that orthogonality with respect to $\Ext^i_{\mathcal{C}}(-,-)$ will be considered for indices $i \geq 1$. We shall also see how some well known properties of cotorsion pairs are transferred to the higher context resulting from Definition~\ref{def:ncotorsion} below.  

Throughout, $n > 0$ will be a positive integer.

\begin{definition}\label{def:ncotorsion} 
Let $\mathcal{A}$ and $\mathcal{B}$ be two classes of objects in $\mathcal{C}$. We say that $(\mathcal{A,B})$ is a \textbf{left $\bm{n}$-cotorsion pair} in $\mathcal{C}$ if the following conditions are satisfied:
\begin{enumerate}
\item $\mathcal{A}$ is closed under direct summands.

\item $\Ext^i_{\mathcal{C}}(\mathcal{A,B}) = 0$ for every $1 \leq i \leq n$.

\item For every object $C \in \mathcal{C}$, there exists a short exact sequence
\[
0 \to K \to A \to C \to 0
\]
where $A \in \mathcal{A}$ and $K \in \mathcal{B}^{\wedge}_{n-1}$.
\end{enumerate}

Dually, we say that $(\mathcal{A,B})$ is a \textbf{right $\bm{n}$-cotorsion pair} in $\mathcal{C}$ if condition (2) above is satisfied, with $\mathcal{B}$ closed under direct summands, and if every object of $C$ can be embedded into an object of $\mathcal{B}$ with cokernel in $\mathcal{A}^\vee_{n-1}$. Finally, $\mathcal{A}$ and $\mathcal{B}$ form a \textbf{$\bm{n}$-cotorsion pair} $(\mathcal{A,B})$ in $\mathcal{C}$ if $(\mathcal{A,B})$ is both a left and right $n$-cotorsion pair in $\mathcal{C}$.
\end{definition}

\begin{example}\label{ex:trivial}
In what follows, let us denote by $\mathcal{P}(\mathcal{C})$ and $\mathcal{I}(\mathcal{C})$ the classes of projective and injective objects of $\mathcal{C}$, respectively. It is clear that $\mathcal{C}$ has enough projectives (resp., enough injectives) if, and only if, $(\mathcal{P}(\mathcal{C}),\mathcal{C})$ (resp., $(\mathcal{C},\mathcal{I}(\mathcal{C}))$) is an $n$-cotorsion pair in $\mathcal{C}$ for every $n \geq 1$. 

In what follows, we shall call $(\mathcal{P}(\mathcal{C}),\mathcal{C})$ and $(\mathcal{C},\mathcal{I}(\mathcal{C}))$ the \textbf{trivial $\bm{n}$-cotorsion pairs} in the case where $\mathcal{C}$ has enough projectives and injectives. Some nontrivial examples will be presented later on in Section~\ref{sec:applications}. 
\end{example}


\subsection*{\textbf{Relations between cotorsion and higher cotorsion}}

It is clear that left (resp., right) $1$-cotorsion pairs coincide with complete left (resp., right) cotorsion pairs in $\mathcal{C}$. However, we can say more on how (left and right) $n$-cotorsion pairs interact with the concept of complete cotorsion pairs. Specifically, we shall study under which conditions a complete left cotorsion pair induces a left $n$-cotorsion pair. Conversely, we shall prove that every left $n$-cotorsion pair induces a complete left cotorsion pair. 

Let us begin establishing certain conditions under which two classes of objects $\mathcal{A}$ and $\mathcal{B}$ form a left $n$-cotorsion pair in $\mathcal{C}$. The following lemma can be deduced from a standard dimension shifting argument.

\begin{lemma} \label{lema1}
For any class $\mathcal{B}$ of objects of $\mathcal{C}$, the following containment holds:
\[
\bigcap^n_{i = 1} {}^{\perp_i}\mathcal{B} \subseteq {}^{\perp_1}\mathcal{B}^\wedge_{n-1}.
\]
\end{lemma}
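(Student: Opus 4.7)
The plan is a straightforward dimension-shifting argument applied to a $\mathcal{B}$-resolution of an arbitrary object in $\mathcal{B}^\wedge_{n-1}$. First I would dispose of the trivial case $n=1$, where $\mathcal{B}^\wedge_0 = \mathcal{B}$ and the inclusion is tautological.

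For $n \geq 2$, let $A \in \bigcap_{i=1}^n {}^{\perp_i}\mathcal{B}$ and let $X \in \mathcal{B}^\wedge_{n-1}$. The goal is to show that $\Ext^1_{\mathcal{C}}(A,X) = 0$. I would fix a $\mathcal{B}$-resolution
\[
0 \to B_{m} \to B_{m-1} \to \cdots \to B_1 \to B_0 \to X \to 0
\]
of length $m \leq n-1$, and break it into short exact sequences by setting $K_0 := \Ker(B_0 \to X)$ and $K_j := \Ker(B_j \to B_{j-1})$ for $1 \leq j \leq m-1$, so that one has
\[
0 \to K_0 \to B_0 \to X \to 0, \quad 0 \to K_j \to B_j \to K_{j-1} \to 0 \ (1 \leq j \leq m-1), \quad K_{m-1} = B_m.
\]

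Then I would apply the contravariant functor $\Hom_{\mathcal{C}}(A,-)$ to each short exact sequence and read off the resulting long exact sequences of Ext. Since $\Ext^i_{\mathcal{C}}(A,B_k) = 0$ for all $1 \leq i \leq n$ and all $k$ (as each $B_k \in \mathcal{B}$), iteratively shifting dimensions yields the chain of isomorphisms
\[
\Ext^1_{\mathcal{C}}(A,X) \simeq \Ext^2_{\mathcal{C}}(A,K_0) \simeq \Ext^3_{\mathcal{C}}(A,K_1) \simeq \cdots \simeq \Ext^{m+1}_{\mathcal{C}}(A,K_{m-1}) = \Ext^{m+1}_{\mathcal{C}}(A,B_m).
\]
Since $m+1 \leq n$ and $B_m \in \mathcal{B}$, the last group vanishes by hypothesis, which proves $\Ext^1_{\mathcal{C}}(A,X) = 0$.

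There is no real obstacle here: the whole argument is bookkeeping to make sure the index shifts stay within the range $1 \leq i \leq n$ where the vanishing hypothesis is available, which is guaranteed by the assumption $m \leq n-1$. The only mild subtlety is verifying that along the shift one never invokes $\Ext^i_{\mathcal{C}}(A,B_k)$ for $i > n$, and this is precisely controlled by the bound on the resolution length.
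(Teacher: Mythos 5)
Your proof is correct and is exactly the argument the paper has in mind: the paper simply states that the lemma "can be deduced from a standard dimension shifting argument," and your write-up is that argument carried out in detail, with the bound $m \leq n-1$ correctly ensuring every $\Ext^i_{\mathcal{C}}(A,B_k)$ invoked has $i \leq n$.
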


\begin{proposition}\label{prop:cotorsion_vs_ncotorsion}
Let $\mathcal{C}$ be an abelian category with enough injectives, and let $\mathcal{A}$ and $\mathcal{B}$ be two classes of objects of $\mathcal{C}$ such that $\mathcal{I}(\mathcal{C}) \subseteq \mathcal{B}$. Then, $\Ext^1_{\mathcal{C}}(\mathcal{A}, \mathcal{B}^\wedge_{n-1}) = 0$ if, and only if, $\Ext^i_{\mathcal{C}}(\mathcal{A,B}) = 0$ for every $1 \leq i \leq n$. In particular, $(\mathcal{A}, \mathcal{B}^\wedge_{n-1})$ is a complete left cotorsion pair in $\mathcal{C}$ if, and only if, $(\mathcal{A,B})$ is a left $n$-cotorsion pair in $\mathcal{C}$. 
\end{proposition}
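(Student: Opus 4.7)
The plan is to prove the equivalence of $\Ext$-vanishing conditions first, and then deduce the ``in particular'' statement by reading off the definitions.

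For the forward direction of the $\Ext$ equivalence, I would first note that every $B \in \mathcal{B}$ trivially satisfies $\resdim_{\mathcal{B}}(B) = 0$, so $\mathcal{B} \subseteq \mathcal{B}^\wedge_{n-1}$ and the case $i=1$ is immediate. For $2 \leq i \leq n$ and $B \in \mathcal{B}$, I would use that $\mathcal{C}$ has enough injectives to pick an injective coresolution $0 \to B \to I^0 \to I^1 \to \cdots$ with cosyzygies $C^j := \Ima(I^j \to I^{j+1})$. Dimension shifting gives $\Ext^i_{\mathcal{C}}(A,B) \cong \Ext^1_{\mathcal{C}}(A, C^{i-2})$ for every $A \in \mathcal{A}$. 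The truncated sequence
\[
0 \to B \to I^0 \to I^1 \to \cdots \to I^{i-2} \to C^{i-2} \to 0
\]
has all terms $B, I^0,\ldots, I^{i-2}$ in $\mathcal{B}$ (using the hypothesis $\mathcal{I}(\mathcal{C}) \subseteq \mathcal{B}$), so $\resdim_{\mathcal{B}}(C^{i-2}) \leq i - 1 \leq n-1$, i.e., $C^{i-2} \in \mathcal{B}^\wedge_{n-1}$. Hence $\Ext^1_{\mathcal{C}}(A, C^{i-2}) = 0$ by hypothesis, and the conclusion follows. The converse direction is precisely the containment of Lemma~\ref{lema1} applied to $\mathcal{A}$: the hypothesis says $\mathcal{A} \subseteq \bigcap_{i=1}^{n} {}^{\perp_i}\mathcal{B}$, so $\mathcal{A} \subseteq {}^{\perp_1}\mathcal{B}^\wedge_{n-1}$.

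For the ``in particular'' part, I would verify that the two definitions match condition by condition under the equivalence just proved. If $(\mathcal{A}, \mathcal{B}^\wedge_{n-1})$ is a complete left cotorsion pair, then $\mathcal{A} = {}^{\perp_1}\mathcal{B}^\wedge_{n-1}$ is automatically closed under direct summands, the $\Ext$-vanishing $\Ext^1_{\mathcal{C}}(\mathcal{A}, \mathcal{B}^\wedge_{n-1}) = 0$ gives condition~(2) of Definition~\ref{def:ncotorsion} via the first part, and the epic $\mathcal{A}$-precover with kernel in $\mathcal{B}^\wedge_{n-1}$ is exactly the short exact sequence required by condition~(3). Conversely, given a left $n$-cotorsion pair, the first part supplies $\Ext^1_{\mathcal{C}}(\mathcal{A}, \mathcal{B}^\wedge_{n-1}) = 0$, hence $\mathcal{A} \subseteq {}^{\perp_1}\mathcal{B}^\wedge_{n-1}$. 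For the reverse inclusion, if $X \in {}^{\perp_1}\mathcal{B}^\wedge_{n-1}$, then condition~(3) gives a short exact sequence $0 \to K \to A \to X \to 0$ with $A \in \mathcal{A}$ and $K \in \mathcal{B}^\wedge_{n-1}$; since $\Ext^1_{\mathcal{C}}(X,K) = 0$, the sequence splits and $X$ is a direct summand of $A$, so closure under summands forces $X \in \mathcal{A}$. Finally, the sequence from condition~(3) is already an epic precover with kernel in $\mathcal{B}^\wedge_{n-1}$: the precover lifting property follows from the long exact sequence together with the vanishing $\Ext^1_{\mathcal{C}}(A', K) = 0$ for any $A' \in \mathcal{A}$, which holds since $K \in \mathcal{B}^\wedge_{n-1}$.

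The main obstacle is bookkeeping in the forward direction: one must carefully choose the length of the truncated injective coresolution so that the resulting cosyzygy has $\mathcal{B}$-resolution dimension at most $n-1$, which is exactly where the hypothesis $\mathcal{I}(\mathcal{C}) \subseteq \mathcal{B}$ enters. Everything else is formal once the equivalence of the $\Ext$-conditions is in hand.
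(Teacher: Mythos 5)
Your argument is correct and follows essentially the same route as the paper: the converse direction is Lemma~\ref{lema1}, and the forward direction is the same injective-cosyzygy dimension shift, with $\mathcal{I}(\mathcal{C}) \subseteq \mathcal{B}$ ensuring the cosyzygy lies in $\mathcal{B}^\wedge_{n-1}$. Your spelled-out verification of the ``in particular'' equivalence (splitting argument plus closure under summands) is exactly the reasoning the paper uses in Theorem~\ref{theo:left-n-cotorsion}, so nothing is missing.
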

\begin{proof} 
The ``if'' part follows from Lemma~\ref{lema1}. In order to show the ``only if'' statement, note that since $\mathcal{C}$ has enough injectives and $\mathcal{I}(\mathcal{C}) \subseteq \mathcal{B},$  for every injective $(i-1)$-cosyzygy $K$ of $B \in \mathcal{B}$ we have that $\resdim_{\mathcal{B}}(K) \leq i - 1 \leq n - 1$ with $1 \leq i \leq n$. Then, $\Ext^1_{\mathcal{C}}(\mathcal{A}, K) = 0$ since $\Ext^1_{\mathcal{C}}(\mathcal{A}, \mathcal{B}^\wedge_{n-1}) = 0$. Therefore, we have that $\Ext^i_{\mathcal{C}}(\mathcal{A},B) \cong \Ext^1_{\mathcal{C}}(\mathcal{A},K) = 0$ for every $B \in \mathcal{B}$ and $1 \leq i \leq n$. 
\end{proof}

In the ``if'' part of the previous proposition, we actually do not need that $\mathcal{C}$ has enough injectives or $\mathcal{I}(\mathcal{C}) \subseteq \mathcal{B}$ either. As a matter of fact, we only require a complete left cotorsion pair of the form $(\mathcal{A},\mathcal{B}^\wedge_{n-1})$. Before showing this in Theorem~\ref{theo:left-n-cotorsion}, let us state and prove the following properties derived from the orthogonality relations $\Ext^i_{\mathcal{C}}(\mathcal{A,B}) = 0$ with $1\leq i\leq n.$

\begin{proposition}\label{prop8}
Let $\mathcal{A}$ and $\mathcal{B}$ be two classes of objects of $\mathcal{C}$ satisfying $\Ext^i_{\mathcal{C}}(\mathcal{A,B}) = 0$ for every $1 \leq i \leq n$. If $Y \in \mathcal{B}^\wedge_k$ with $0 \leq k \leq n-1$, then $\Ext^i_{\mathcal{C}}(\mathcal{A},Y) = 0$ for every $1 \leq i \leq n - k$. In particular, $\Ext^1_{\mathcal{C}}(\mathcal{A},\mathcal{B}^\wedge_{n-1}) = 0$.
\end{proposition}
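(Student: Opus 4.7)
The plan is to prove the statement by induction on $k$, using the standard dimension shifting technique on the long exact sequence of $\Ext$.

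For the base case $k = 0$, we have $Y \in \mathcal{B}$, so the conclusion $\Ext^i_{\mathcal{C}}(\mathcal{A},Y) = 0$ for $1 \leq i \leq n$ is immediate from the hypothesis $\Ext^i_{\mathcal{C}}(\mathcal{A,B}) = 0$ for $1 \leq i \leq n$.

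For the inductive step, assuming the statement holds for $k-1$ (with $k \geq 1$), take $Y \in \mathcal{B}^\wedge_k$ and pick a $\mathcal{B}$-resolution of $Y$ of length $k$. Splitting off the first term yields a short exact sequence
\[
0 \to Y' \to B_0 \to Y \to 0
\]
with $B_0 \in \mathcal{B}$ and $Y' \in \mathcal{B}^\wedge_{k-1}$. By the inductive hypothesis, $\Ext^i_{\mathcal{C}}(\mathcal{A},Y') = 0$ for $1 \leq i \leq n-k+1$. Applying $\Hom_{\mathcal{C}}(A,-)$ for any $A \in \mathcal{A}$ produces the long exact sequence
\[
\cdots \to \Ext^i_{\mathcal{C}}(A,B_0) \to \Ext^i_{\mathcal{C}}(A,Y) \to \Ext^{i+1}_{\mathcal{C}}(A,Y') \to \cdots
\]
For $1 \leq i \leq n-k$, the left term vanishes since $B_0 \in \mathcal{B}$ and $i \leq n-k \leq n$, and the right term vanishes since $2 \leq i+1 \leq n-k+1$. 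Hence $\Ext^i_{\mathcal{C}}(A,Y) = 0$ in this range, which closes the induction.

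The particular claim $\Ext^1_{\mathcal{C}}(\mathcal{A},\mathcal{B}^\wedge_{n-1}) = 0$ is obtained by specialising to $i = 1$ and $k = n-1$. No real obstacle is anticipated here; the only point requiring care is keeping track of the index ranges so that the two vanishing conditions in the long exact sequence simultaneously hold precisely when $1 \leq i \leq n-k$.
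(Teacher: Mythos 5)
Your proof is correct and follows essentially the same route as the paper's: induction on $k$, splitting a $\mathcal{B}$-resolution into a short exact sequence $0 \to Y' \to B_0 \to Y \to 0$ with $Y' \in \mathcal{B}^\wedge_{k-1}$, and dimension shifting along the long exact sequence of $\Ext^i_{\mathcal{C}}(A,-)$. The paper merely treats the case $k=1$ as a separate first step, which is the same argument.
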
  
\begin{proof}
Note that the case $n = 1$ is clear. Thus, we may assume that $n \geq 2$. We use induction on $k$. The case $k = 0$ is also clear, so we may take $1 \leq k \leq n-1$ for $n \geq 2$. 

Let $A \in \mathcal{A}$ and $Y \in \mathcal{B}^\wedge_k$. First, for the case $k = 1$, we have that $\resdim_{\mathcal{B}}(Y) \leq 1$, and thus there is an exact sequence
\[
0 \to B_1 \to B_0 \to Y \to 0
\]
with $B_0, B_1 \in \mathcal{B}$. Then, we obtain an exact sequence
\[
\Ext^i_{\mathcal{C}}(A,B_0) \to \Ext^i_{\mathcal{C}}(A,Y) \to \Ext^{i+1}_{\mathcal{C}}(A,B_1)
\]
of abelian groups with $\Ext^i_{\mathcal{C}}(A,B_0) = 0$ and $\Ext^{i+1}_{\mathcal{C}}(A,B_1) = 0$ if $1 \leq i \leq n-1$. Hence, $\Ext^i_{\mathcal{C}}(A,Y) = 0$ for every $A \in \mathcal{A}$, $Y \in \mathcal{B}^\wedge_1$ and $1 \leq i \leq n-1$.

Now for the successor case, suppose that for every object $Y' \in \mathcal{B}^\wedge_k$ with $1 \leq k < n-1$ (the case $k = n - 1$ follows from Lemma~\ref{lema1}), we have that $\Ext^i_{\mathcal{C}}(A,Y') = 0$ for every $1 \leq i \leq n - k$. Now let $Y \in \mathcal{C}$ be an object with $\resdim_{\mathcal{B}}(Y) \leq k + 1$, so that there is an exact sequence 
\[
0 \to Y' \to B \to Y \to 0
\] 
with $B \in \mathcal{B}$ and $\resdim_{\mathcal{B}}(Y') \leq k$. Consider an integer $1 \leq i \leq n - (k + 1)$. Then, we have an exact sequence 
\[
\Ext^i_{\mathcal{C}}(A,B) \to \Ext^i_{\mathcal{C}}(A,Y) \to \Ext^{i+1}_{\mathcal{C}}(A,Y')
\] 
of abelian groups where $\Ext^i_{\mathcal{C}}(A, B) = 0.$  Since $1 \leq i \leq n - (k + 1)$ and  $\Ext^{i+1}_{\mathcal{C}}(A, Y') = 0$ by the induction hypothesis, we get that $\Ext^i_{\mathcal{C}}(A,Y) = 0$ for every $1 \leq i \leq n - (k + 1)$. 
\end{proof}

\begin{theorem}\label{theo:left-n-cotorsion} 
Let $\mathcal{A}$ and $\mathcal{B}$ be two classes of objects in $\mathcal{C}.$ Then, the following two conditions are equivalent:
\begin{itemize}
\item[(a)] $(\mathcal{A,B})$ is a left $n$-cotorsion pair in $\mathcal{C}$.

\item[(b)] $\mathcal{A} = \bigcap_{i=1}^n {}^{\perp_i}\mathcal{B}$ and for any $C \in \mathcal{C}$ there is a short exact sequence 
\[
0 \to K \to A \to C \to 0,
\] 
with $A \in \mathcal{A}$ and $K \in \mathcal{B}^\wedge_{n-1}$.
\end{itemize}
Moreover, if one of the above conditions holds true, then  $(\mathcal{A},\mathcal{B}^\wedge_{n-1})$ is a complete left cotorsion pair in $\mathcal{C}$.
\end{theorem}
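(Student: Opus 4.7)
The plan is to establish (a) $\Leftrightarrow$ (b) and then deduce from either of the equivalent conditions that $(\mathcal{A}, \mathcal{B}^\wedge_{n-1})$ is a complete left cotorsion pair.

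The implication (b) $\Rightarrow$ (a) is essentially formal. Describing $\mathcal{A}$ as $\bigcap_{i=1}^{n} {}^{\perp_i}\mathcal{B}$ immediately gives $\Ext^i_{\mathcal{C}}(\mathcal{A},\mathcal{B}) = 0$ for $1 \leq i \leq n$, and each orthogonal class ${}^{\perp_i}\mathcal{B}$ is stable under direct summands by additivity of $\Ext$, so their intersection is too. The approximation sequence is assumed verbatim, so all three defining conditions of Definition~\ref{def:ncotorsion} hold.

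For (a) $\Rightarrow$ (b), the only real content is the inclusion $\bigcap_{i=1}^n {}^{\perp_i}\mathcal{B} \subseteq \mathcal{A}$, the reverse inclusion being a restatement of condition (2). I would pick $X$ in this intersection, apply condition (3) of Definition~\ref{def:ncotorsion} to get a short exact sequence
\[
0 \to K \to A \to X \to 0
\]
with $A \in \mathcal{A}$ and $K \in \mathcal{B}^\wedge_{n-1}$, and then invoke Proposition~\ref{prop8} with the singleton $\{X\}$ playing the role of the class $\mathcal{A}$ in that proposition. This is legal because $\Ext^i_{\mathcal{C}}(X,\mathcal{B}) = 0$ for every $1 \leq i \leq n$ by choice of $X$, and yields $\Ext^1_{\mathcal{C}}(X,K) = 0$. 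Hence the sequence splits and $X$ is a direct summand of $A$; condition (1) of the left $n$-cotorsion pair then delivers $X \in \mathcal{A}$.

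For the ``moreover'' clause, the approximation half of being a complete left cotorsion pair is exactly condition (3) of Definition~\ref{def:ncotorsion}, read with $\mathcal{B}^\wedge_{n-1}$ in place of $\mathcal{B}$; it already produces, for every $C \in \mathcal{C}$, an epimorphism $A \twoheadrightarrow C$ with $A \in \mathcal{A}$ and kernel in $\mathcal{B}^\wedge_{n-1}$. It remains to identify $\mathcal{A} = {}^{\perp_1}(\mathcal{B}^\wedge_{n-1})$: the forward inclusion is Proposition~\ref{prop8} applied to $\mathcal{A}$ itself, giving $\Ext^1_{\mathcal{C}}(\mathcal{A},\mathcal{B}^\wedge_{n-1})=0$; the reverse inclusion repeats the splitting argument of the previous paragraph, since the kernel $K$ in the approximation sequence for any $X \in {}^{\perp_1}(\mathcal{B}^\wedge_{n-1})$ lies in $\mathcal{B}^\wedge_{n-1}$. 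The one subtle step that does the real work is the twofold use of Proposition~\ref{prop8}: when applied to $\{X\}$ it upgrades the partial $n$-fold orthogonality of $X$ against $\mathcal{B}$ to first-degree orthogonality against the whole of $\mathcal{B}^\wedge_{n-1}$, which is precisely what is needed to split the approximation sequences and invoke closure of $\mathcal{A}$ under direct summands.
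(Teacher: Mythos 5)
Your proposal is correct and follows essentially the same route as the paper: the paper chains the containments $\mathcal{A} \subseteq \bigcap_{i=1}^n {}^{\perp_i}\mathcal{B} \subseteq {}^{\perp_1}(\mathcal{B}^\wedge_{n-1}) \subseteq \mathcal{A}$, using Lemma~\ref{lema1} (the dimension-shifting fact you recover by applying Proposition~\ref{prop8} to the singleton $\{X\}$) and the same splitting of the approximation sequence together with closure of $\mathcal{A}$ under direct summands. Your handling of the ``moreover'' clause, including the remark that $\Ext^1_{\mathcal{C}}(\mathcal{A},\mathcal{B}^\wedge_{n-1})=0$ is what makes the epimorphisms from condition (3) genuine special $\mathcal{A}$-precovers, matches (and slightly expands) what the paper leaves implicit.
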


\begin{proof} Note that the implication (b) $\Rightarrow$ (a) is trivial. We prove that (a) implies (b). So let us assume that $(\mathcal{A,B})$ is a left $n$-cotorsion pair in $\mathcal{C}$. Then, by Lemma~\ref{lema1}, we get the containments 
\[
\mathcal{A} \subseteq \bigcap_{i = 1}^n\,{}^{\perp_i}\mathcal{B} \subseteq {}^{\perp_1}(\mathcal{B}^\wedge_{n-1}).
\] 
Thus, we only need to prove the remaining containment $\mathcal{A} \supseteq {}^{\perp_1}(\mathcal{B}^\wedge_{n-1})$. It suffices to note that for every $X \in {}^{\perp_1}(\mathcal{B}^\wedge_{n-1}),$ there exists a split epimorphism $A \twoheadrightarrow X$ with kernel in $\mathcal{B}^\wedge_{n-1}$.
\end{proof}

Normally, if we are given a cotorsion pair $(\mathcal{A,B})$ in $\mathcal{C}$, a natural question is whether this pair is complete or hereditary, in order to construct special $\mathcal{A}$-precovers and special $\mathcal{B}$-preenvelopes. Now that we have explored the interplay between cotorsion and higher cotorsion, we shall study in the next section the relation between left and right $n$-cotorsion pairs, and left and right approximations by $\mathcal{A}$ and $\mathcal{B}$. In Section~\ref{sec:hereditary}, on the other hand, we shall deal with the hereditary aspect of $n$-cotorsion pairs $(\mathcal{A,B})$ for which $\mathcal{A}$ is resolving or $\mathcal{B}$ is coresolving.


\section{\textbf{Covers and envelopes from {\textit n}-cotorsion pairs}}\label{sec:approximations}

Approximations has been considered before in the study of higher cotorsion. For instance, in \cite{CriveiTorrecillas} Crivei and Torrecillas presented the concept of $n$-special $\mathcal{A}$-precovers and $m$-special $\mathcal{B}$-preenvelopes (epic $\mathcal{A}$-precovers and monic $\mathcal{B}$-preenvelopes with kernel in $\mathcal{A}^{\perp_n}$ and cokernel in ${}^{\perp_m}\mathcal{B}$, respectively), and established several conditions under which it is possible to obtain such approximations from an $(m,n)$-cotorsion pair $(\mathcal{A,B})$ (that is, $\mathcal{A} = {}^{\perp_m}\mathcal{B}$ and $\mathcal{B} = \mathcal{A}^{\perp_n}$). See \cite[Proposition 3.14 and Theorem 3.15]{CriveiTorrecillas}. 

In this section, we study precovers and preenvelopes coming from an $n$-cotorsion pair $(\mathcal{A,B})$ in an abelian category $\mathcal{C}$. We also define a new family of approximations which we call $(\mathcal{A},k,\mathcal{B})$-precovers and $(\mathcal{A},k,\mathcal{B})$-preenvelopes, as analogs of the special precovers and special preenvelopes coming from a complete cotorsion pair. Among the properties of these new concepts, we prove that the class of objects having a $(\mathcal{A},k,\mathcal{B})$-precover is closed under extensions if we put certain orthogonality condition on $\mathcal{B}$. Later, we shall study some other conditions under which left and right $n$-cotorsion pairs are sources of precovers and preenvelopes with the unique mapping property.


\subsection*{\textbf{Special $(\mathcal{A},k,\mathcal{B})$-precovers and $(\mathcal{A},k,\mathcal{B})$-preenvelopes}}

One consequence of Theorem~\ref{theo:left-n-cotorsion} is that left and right $n$-cotorsion pairs are always sources of precovers and preenvelopes, as stated in the following result.

\begin{proposition}\label{A-precub,B-preenv}
If $(\mathcal{A,B})$ is a left $n$-cotorsion pair in $\mathcal{C}$, then $\mathcal{A}$ is a special precovering class.
\end{proposition}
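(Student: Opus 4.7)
The plan is to take the short exact sequence provided by axiom (3) of a left $n$-cotorsion pair and verify directly that it yields a special $\mathcal{A}$-precover. Given $C \in \mathcal{C}$, axiom (3) supplies an epimorphism $f \colon A \twoheadrightarrow C$ with $A \in \mathcal{A}$ and $K := \Ker(f) \in \mathcal{B}^\wedge_{n-1}$, so the conditions $\CoKer(f) = 0$ and ``$A \in \mathcal{A}$'' are immediate. What remains is to verify that $K \in \mathcal{A}^{\perp_1}$ and, from this, deduce the precovering property.

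The first step is to apply Proposition~\ref{prop8} to $K$. Since $\Ext^i_{\mathcal{C}}(\mathcal{A,B}) = 0$ for $1 \leq i \leq n$ by axiom (2), and $K \in \mathcal{B}^\wedge_{n-1}$, Proposition~\ref{prop8} with $k = n-1$ gives $\Ext^i_{\mathcal{C}}(\mathcal{A},K) = 0$ for $1 \leq i \leq n - (n-1) = 1$. In particular, $\Ext^1_{\mathcal{C}}(\mathcal{A},K) = 0$, that is, $K \in \mathcal{A}^{\perp_1}$.

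The second step is the standard Wakamatsu-style argument: for any $A' \in \mathcal{A}$ and any morphism $f' \colon A' \to C$, applying $\Hom_{\mathcal{C}}(A', -)$ to the short exact sequence $0 \to K \to A \to C \to 0$ yields an exact sequence
\[
\Hom_{\mathcal{C}}(A', A) \to \Hom_{\mathcal{C}}(A', C) \to \Ext^1_{\mathcal{C}}(A', K).
\]
The rightmost term vanishes by the previous step, so $f'$ lifts through $f$, showing that $f$ is an $\mathcal{A}$-precover. Combined with $\CoKer(f) = 0$ and $K \in \mathcal{A}^{\perp_1}$, this exhibits $f$ as a special $\mathcal{A}$-precover of $C$, and since $C$ was arbitrary, $\mathcal{A}$ is a special precovering class.

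There is really no main obstacle here: the statement is essentially a repackaging of axiom (3) once Proposition~\ref{prop8} is available to promote ``$K \in \mathcal{B}^\wedge_{n-1}$'' to ``$K \in \mathcal{A}^{\perp_1}$''. The only point requiring a moment's care is checking that the index range in Proposition~\ref{prop8} actually covers $i = 1$ when $k = n-1$, which it does because $n - (n-1) = 1 \geq 1$.
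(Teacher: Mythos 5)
Your proof is correct and follows essentially the same route as the paper: both obtain $\Ker(f) \in \mathcal{A}^{\perp_1}$ from the orthogonality $\Ext^i_{\mathcal{C}}(\mathcal{A,B})=0$, $1 \leq i \leq n$ (you via Proposition~\ref{prop8} with $k=n-1$, the paper via Theorem~\ref{theo:left-n-cotorsion}, which rests on the equivalent Lemma~\ref{lema1}), and then conclude that the epimorphism from axiom (3) is a special $\mathcal{A}$-precover. Your explicit spelling out of the lifting argument via the long exact sequence is a harmless elaboration of a step the paper leaves implicit.
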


\begin{proof} 
Let $(\mathcal{A,B})$ be a left $n$-cotorsion pair in $\mathcal{C}.$ In particular, for any $C\in\mathcal{C},$ there is an exact sequence 
\[
0 \to K \to A \to C \to 0,
\] 
where $K \in \mathcal{B}^\wedge_{n-1}$ and $A \in \mathcal{A}$. Moreover, by Theorem~\ref{theo:left-n-cotorsion}, we get that $K \in \mathcal{B}^\wedge_{n-1} \subseteq \mathcal{A}^{\perp_1}$ and thus $A \to C$ is a special $\mathcal{A}$-precover of $C$.
\end{proof}

However, special precovers and preenvelopes are not the only type of approximations coming from left and right $n$-cotorsion pairs. Under certain conditions, we can find more information about the approximations resulting from Proposition~\ref{A-precub,B-preenv}. Following the spirit of \cite{CriveiTorrecillas} concerning the relation between $(m,n)$-cotorsion pairs, $n$-special precovers and $m$-special preenvelopes, we propose the following family of approximations and study their relation with left and right $n$-cotorsion pairs.

\begin{definition}
Let $\mathcal{A}$ and $\mathcal{B}$ be two classes of objects of $\mathcal{C}$, and $k $ be a positive integer. Given an object $C \in \mathcal{C}$, we say that an $\mathcal{A}$-precover $f \colon A \to C$ of $C$ is a \textbf{special $\bm{(\mathcal{A},k,\mathcal{B})}$-precover} if $f$ is epic and $\Ker(f) \in \mathcal{B}_{k-1}^{\wedge}$. The notion of \textbf{special $\bm{(\mathcal{A},k,\mathcal{B})}$-preenvelopes} is defined dually.
\end{definition}

\begin{remark} \
\begin{enumerate}
\item Let $\mathcal{A}$ be a class of objects of $\mathcal{C}$ and $C \in \mathcal{C}$. Note that a morphism $f \colon A \to C$ with $A \in \mathcal{A}$ is a special $\mathcal{A}$-precover if, and only if, it is a special $(\mathcal{A},1,\mathcal{A}^{\perp_1})$-precover. 

\item Any $C\in\mathcal{C}$ admits an $(\mathcal{A},n,\mathcal{B})$-precover if $(\mathcal{A,B})$ is a left $n$-cotorsion pair in $\mathcal{C}$. 
\end{enumerate}
\end{remark}

The following is clear by \cite[Theorem 2.10]{Holm}.

\begin{example}\label{ex:special_AkB-precover}
Given a class of modules $\mathcal{Y} \subseteq \Mod(R)$, a chain complex $X = (X_m)_{m \in \mathbb{Z}}$ is called \emph{$\Hom_R(-,\mathcal{Y})$-acyclic} if $\Hom_R(X,Y) := (\Hom_R(X_m,Y))_{m \in \mathbb{Z}}$ is an exact complex of abelian groups for every $Y \in \mathcal{Y}$. 

Recall that $\mathcal{GP}(R)$ denotes the class of \emph{Gorenstein projective} $R$-modules, that is, modules $M \in \Mod(R)$ such that $M \simeq Z_0(P)$ for some exact and $\Hom_R(-,\mathcal{P}(R))$-acyclic complex $P$ of projective modules. \emph{Gorenstein injective} modules are defined dually, that is, as cycles of exact and $\Hom_R(\mathcal{I}(R),-)$-acyclic complexes of injective modules.  For the class of Gorenstein injective $R$-modules, we shall write $\mathcal{GI}(R)$. 

Let us recall also that the \emph{Gorenstein projective dimension} of an $R$-module $M$, which we denote by ${\rm Gpd}(M)$, is defined as the $\mathcal{GP}(R)$-resolution dimension of $M$, that is,
\[
{\rm Gpd}(M) := \resdim_{\mathcal{GP}(R)}(M).
\] 
The \emph{Gorenstein injective dimension of $M$}, denoted ${\rm Gid}(M)$, is defined similarly. It is known from \cite[Theorem 2.10]{Holm} that every $R$-module $M$ with finite Gorenstein projective dimension, say $\Gpd(M) = m < \infty$, has a Gorenstein projective special precover whose kernel has projective dimension at most $m-1$, that is, $M$ has a special $(\mathcal{GP}(R),m,\mathcal{P}(R))$-precover.
\end{example}

In \cite[Theorem 3.1]{Akinci}, Akinci and Alizade proved that for every hereditary cotorsion pair $(\mathcal{A,B})$ in $\Mod(R)$, the class of objects having a special $\mathcal{A}$-precover is closed under extensions. In what follows, we generalise this result for special $(\mathcal{A},k,\mathcal{B})$-precovers. Let us denote by $\Prec^k(\mathcal{A},\mathcal{B})$ the class of all $C\in\mathcal{C}$ admitting a special $(\mathcal{A},k,\mathcal{B})$-precover.

\begin{theorem}\label{Teo-k-prec} 
Let $n$ be a positive integer and $1\leq k\leq \max(1,n-1)$, and let $\mathcal{A}$ and $\mathcal{B}$ be two classes of objects of $\mathcal{C}$ such that $\Ext^i_\C(\mathcal{A},\B) = 0$ for every $1 \leq i \leq n$, or $\Ext^2_{\mathcal{C}}(\mathcal{A,B}) = 0$ if $n = 1$. If $\mathcal{A}$ and $\B^\wedge_{k-1}$ are closed under extensions, then so is $\Prec^k(\mathcal{A},\B)$. 
\end{theorem}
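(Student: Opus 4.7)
The plan is to combine the given special $(\A, k, \B)$-precovers of $C_1$ and $C_2$ via a horseshoe construction. Suppose $0 \to C_1 \to C \to C_2 \to 0$ is short exact with $C_1, C_2 \in \Prec^k(\A, \B)$, and fix special $(\A, k, \B)$-precovers $f_i \colon A_i \to C_i$ with kernels $K_i \in \B^\wedge_{k-1}$ for $i = 1, 2$.

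My first step would be to verify the horseshoe hypothesis $\Ext^1_\C(A_2, K_1) = 0$. Since $K_1 \in \B^\wedge_{k-1}$ and the bound $k \le \max(1, n-1)$ forces $k - 1 \le n - 1$, Proposition~\ref{prop8} yields this vanishing from the standing assumption $\Ext^i_\C(\A, \B) = 0$, $1 \le i \le n$. In the exceptional case $n = 1$, $k = 1$, Proposition~\ref{prop8} degenerates, and the supplementary hypothesis $\Ext^2_\C(\A, \B) = 0$ is what supplies the slack needed so the dimension-shifting argument still produces a nontrivial output.

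Next I would apply the horseshoe lemma to the two precover sequences $0 \to K_i \to A_i \to C_i \to 0$ together with the given extension $0 \to C_1 \to C \to C_2 \to 0$. This produces a short exact sequence
\[
0 \to K \to A_1 \oplus A_2 \stackrel{f}{\longrightarrow} C \to 0
\]
whose middle vertical arrow $f$ restricts to $f_i$ on the two factors, and whose kernel $K$ fits into an extension $0 \to K_1 \to K \to K_2 \to 0$. Closure of $\A$ under extensions gives $A := A_1 \oplus A_2 \in \A$, and closure of $\B^\wedge_{k-1}$ under extensions gives $K \in \B^\wedge_{k-1}$.

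Finally I would show that $f$ is an $\A$-precover. Applying Proposition~\ref{prop8} a second time, now to $K \in \B^\wedge_{k-1}$, gives $\Ext^1_\C(\A, K) = 0$; the long exact Ext sequence attached to $0 \to K \to A \to C \to 0$ then lifts every morphism $A' \to C$ with $A' \in \A$ through $f$. Combined with the fact that $f$ is epic and $K \in \B^\wedge_{k-1}$, this exhibits $f$ as a special $(\A, k, \B)$-precover of $C$, so that $C \in \Prec^k(\A, \B)$.

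The main obstacle I anticipate is making the Ext bookkeeping work uniformly across the whole range $1 \le k \le \max(1, n-1)$, and particularly at the edge case $n = 1$, $k = 1$, where Proposition~\ref{prop8} reduces to a tautology under the bare hypothesis $\Ext^1_\C(\A, \B) = 0$. The additional assumption $\Ext^2_\C(\A, \B) = 0$ in that case is precisely what is needed to underwrite both the horseshoe input $\Ext^1_\C(A_2, K_1) = 0$ and the precover property of the resulting $f$.
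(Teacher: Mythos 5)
There is a genuine gap at the very first step: the construction you invoke is not available. The horseshoe lemma applied to $0 \to C_1 \to C \to C_2 \to 0$ with the two approximations $f_i \colon A_i \twoheadrightarrow C_i$ requires that $f_2$ lift along the epimorphism $C \twoheadrightarrow C_2$ to a morphism $A_2 \to C$; the obstruction to this lift is the image of $f_2$ under the connecting map $\Hom_{\mathcal{C}}(A_2,C_2) \to \Ext^1_{\mathcal{C}}(A_2,C_1)$, i.e.\ it lives in $\Ext^1_{\mathcal{C}}(A_2,C_1)$, not in $\Ext^1_{\mathcal{C}}(A_2,K_1)$. Since $C_1$ is an arbitrary object of $\Prec^k(\mathcal{A},\mathcal{B})$ (not an object of $\mathcal{B}^\wedge_{k-1}$), none of the hypotheses force $\Ext^1_{\mathcal{C}}(A_2,C_1)=0$: from $0 \to K_1 \to A_1 \to C_1 \to 0$ one only gets an exact sequence $\Ext^1_{\mathcal{C}}(A_2,A_1) \to \Ext^1_{\mathcal{C}}(A_2,C_1) \to \Ext^2_{\mathcal{C}}(A_2,K_1)$, and $\Ext^1_{\mathcal{C}}(A_2,A_1)$ need not vanish (nothing like $\Ext^1_{\mathcal{C}}(\mathcal{A},\mathcal{A})=0$ is assumed). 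So the map $A_1 \oplus A_2 \to C$ you want simply may not exist, and with it the claim that the middle term of the new approximation is the direct sum $A_1 \oplus A_2$ fails; in general the middle term is only an extension of $A_2$ by $A_1$.

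This is exactly the difficulty the paper's proof (following Akinci--Alizade) is designed to circumvent. Instead of lifting $\alpha^Z \colon A^Z \to Z$ into $Y$, one first forms the pullback $E$ of $Y \to Z \leftarrow A^Z$, obtaining a short exact sequence $0 \to X \to E \to A^Z \to 0$; then the vanishing $\Ext^2_{\mathcal{C}}(A^Z,K^X)=0$ (which is where Proposition~\ref{prop8}, or the extra hypothesis $\Ext^2_{\mathcal{C}}(\mathcal{A},\mathcal{B})=0$ when $n=1$, actually enters) is used to show that $\Ext^1_{\mathcal{C}}(A^Z,\alpha^X)$ is \emph{surjective}, so that this extension is the pushout along $\alpha^X$ of some extension $0 \to A^X \to A^Y \to A^Z \to 0$ with $A^Y \in \mathcal{A}$ by closure under extensions (but in general non-split). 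A second pullback then produces the kernel $K^Y$ as an extension of $K^Z$ by $K^X$, hence in $\mathcal{B}^\wedge_{k-1}$, and $\Ext^1_{\mathcal{C}}(\mathcal{A},\mathcal{B}^\wedge_{k-1})=0$ gives the precover property. Your final step (using $\Ext^1_{\mathcal{C}}(\mathcal{A},K)=0$ to get the precover property) is fine, but the construction feeding into it must be replaced by this pullback--pushout argument.
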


\begin{proof} 
We prove this result by adapting the arguments given in \cite[Theorem 3.1]{Akinci} to our approach of higher cotorsion. Let 
\[
0 \to X \xrightarrow{f} Y \xrightarrow{g} Z \to 0
\] 
be a short exact sequence in $\C$ such that $X, Z \in \Prec^k(\A,\B)$. First, consider a special $(\mathcal{A},k,\mathcal{B})$-precover of $Z$, say a short exact sequence
\[
0 \to K^Z \xrightarrow{\beta^Z} A^Z \xrightarrow{\alpha^Z} Z \to 0,
\]
with $K^Z \in \mathcal{B}^\wedge_{k-1}$ and $A^Z \in \mathcal{A}$. Taking the pullback of $Y \to Z \leftarrow A^Z$ yields the following commutative diagram with exact rows and columns:
\begin{equation}\label{fig3} 
\parbox{1.75in}{
\begin{tikzpicture}[description/.style={fill=white,inner sep=2pt}] 
\matrix (m) [ampersand replacement=\&, matrix of math nodes, row sep=2.5em, column sep=2.5em,text height=1.25ex, text depth=0.25ex] 
{ 
{} \& K^Z \& K^Z \\
X \& E \& A^Z \\
X \& Y \& Z \\
}; 
\path[>->]
(m-1-2) edge node[left] {\footnotesize$\tilde{\beta}^Z$} (m-2-2) (m-1-3) edge node[right] {\footnotesize$\beta^Z$} (m-2-3)
(m-2-1) edge node[above] {\footnotesize$\overline{f}$} (m-2-2) (m-3-1) edge node[below] {\footnotesize$f$} (m-3-2)
;
\path[->>]
(m-2-2) edge node[left] {\footnotesize$\tilde{\alpha}^Z$} (m-3-2) (m-2-3) edge node[right] {\footnotesize$\alpha^Z$} (m-3-3)
(m-2-2) edge node[above] {\footnotesize$\overline{g}$} (m-2-3) (m-3-2) edge node[below] {\footnotesize$g$} (m-3-3)
;
\path[->] 
(m-2-2)-- node[pos=0.5] {\footnotesize$\mbox{\bf pb}$} (m-3-3)
; 
\path[-,font=\scriptsize]
(m-1-2) edge [double, thick, double distance=2pt] (m-1-3)
(m-2-1) edge [double, thick, double distance=2pt] (m-3-1)
;
\end{tikzpicture} 
}
\end{equation}

Now let us consider a special $(\mathcal{A},k,\mathcal{B})$-precover of $X$, say
\[
0 \to K^X \xrightarrow{\beta^X} A^X \xrightarrow{\alpha^X} X \to 0,
\]
with $A^X \in \mathcal{A}$ and $K^X \in \mathcal{B}^\wedge_{k-1}$. We obtain the following exact sequence of abelian groups:
\[
\Ext^1_{\mathcal{C}}(A^Z,A^X) \xrightarrow{\Ext^1_{\mathcal{C}}(A^Z,\alpha^X)} \Ext^1_{\mathcal{C}}(A^Z,X) \to \Ext^2_{\mathcal{C}}(A^Z,K^X). 
\]
The morphism $\Ext^1_{\mathcal{C}}(A^Z,\alpha^X)$ is epic since $\Ext^2_{\mathcal{C}}(A^Z,K^X) = 0$. The latter can be shown as follows: for the case $n = 1$, we use that $\Ext^2_\C(\A,\B) = 0$. If $n \geq 2$, on the other hand, then $k \leq n - 1$ and so $2 \leq n - k + 1$. Thus, by Proposition~\ref{prop8}, we get that $\Ext^i_\C(\A,\B^\wedge_{k-1}) = 0$ for $i = 1, 2$. 

Knowing that $\Ext^1_{\mathcal{C}}(A^Z,\alpha^X)$ is surjective, we can assert that for the central row 
\[
\eta \colon 0 \to X \xrightarrow{\overline{f}} E \xrightarrow{\overline{g}} A^Z \to 0,
\] 
there exists a short exact sequence
\[
\eta' \colon 0 \to A^X \xrightarrow{\hat{f}} A^Y \xrightarrow{\hat{g}} A^Z \to 0
\] 
such that $\eta$ can be obtained as the pushout of $\eta'$ along $\alpha^X \colon A^X \to X$:
\begin{equation}\label{fig4} 
\parbox{1.75in}{
\begin{tikzpicture}[description/.style={fill=white,inner sep=2pt}] 
\matrix (m) [ampersand replacement=\&, matrix of math nodes, row sep=2.5em, column sep=2.5em, text height=1.25ex, text depth=0.25ex] 
{ 
K^X \& K^X \& {} \\
A^X \& A^Y \& A^Z \\
X \& E \& A^Z \\
}; 
\path[->] 
(m-2-1)-- node[pos=0.5] {\footnotesize$\mbox{\bf po}$} (m-3-2) 
; 
\path[>->]
(m-1-1) edge node[left] {\footnotesize$\beta^X$} (m-2-1) (m-1-2) edge node[right] {\footnotesize$\overline{\beta}^X$} (m-2-2)
(m-2-1) edge node[above] {\footnotesize$\hat{f}$} (m-2-2)
(m-3-1) edge node[below] {\footnotesize$\overline{f}$} (m-3-2)
;
\path[->>]
(m-2-1) edge node[left] {\footnotesize$\alpha^X$} (m-3-1) (m-2-2) edge node[right] {\footnotesize$\overline{\alpha}^X$} (m-3-2)
(m-2-2) edge node[above] {\footnotesize$\hat{g}$} (m-2-3) (m-3-2) edge node[below] {\footnotesize$\overline{g}$} (m-3-3)
;
\path[-,font=\scriptsize]
(m-1-1) edge [double, thick, double distance=2pt] (m-1-2)
(m-2-3) edge [double, thick, double distance=2pt] (m-3-3)
;
\end{tikzpicture} 
}
\end{equation} 
Since $A^X, A^Z \in \mathcal{A}$ and $\mathcal{A}$ is closed under extensions, we have that $A^Y \in \mathcal{A}$. From central columns in diagrams \eqref{fig3} and \eqref{fig4}, we obtain the following commutative diagram with exact rows and columns after taking the pullback of $K^Z \to E \leftarrow A^Y$:
\begin{equation}\label{fig5} 
\parbox{1.75in}{
\begin{tikzpicture}[description/.style={fill=white,inner sep=2pt}] 
\matrix (m) [ampersand replacement=\&, matrix of math nodes, row sep=2.5em, column sep=2.5em, text height=1.25ex, text depth=0.25ex] 
{ 
K^X \& K^X \& {} \\
K^Y \& A^Y \& Y \\
K^Z \& E \& Y \\
}; 
\path[->] 
(m-2-1)-- node[pos=0.5] {\footnotesize$\mbox{\bf pb}$} (m-3-2)
; 
\path[>->]
(m-1-1) edge node[left] {\footnotesize$\tilde{f}$} (m-2-1) (m-1-2) edge node[right] {\footnotesize$\overline{\beta}^X$} (m-2-2)
(m-2-1) edge node[above] {\footnotesize$\beta^Y$} (m-2-2) (m-3-1) edge node[below] {\footnotesize$\tilde{\beta}^Z$} (m-3-2)
;
\path[->>]
(m-2-1) edge node[left] {\footnotesize$\tilde{g}$} (m-3-1) (m-2-2) edge node[right] {\footnotesize$\overline{\alpha}^X$} (m-3-2) 
(m-2-2) edge node[above] {\footnotesize$\alpha^Y$} (m-2-3) (m-3-2) edge node[below] {\footnotesize$\tilde{\alpha}^Z$} (m-3-3)
;
\path[-,font=\scriptsize]
(m-1-1) edge [double, thick, double distance=2pt] (m-1-2)
(m-2-3) edge [double, thick, double distance=2pt] (m-3-3)
;
\end{tikzpicture} 
}
\end{equation} 
Note that $K^Y \in \mathcal{B}_{k-1}^{\wedge}$ since $K^X, K^Z \in \mathcal{B}_{k-1}^{\wedge}$ and $\mathcal{B}_{k-1}^{\wedge}$ is closed under extensions. Therefore, by using that $\Ext^1_\C(\A,\B^\wedge_{k-1}) = 0$, it follows that the central row of \eqref{fig5} is a special $(\mathcal{A},k,\mathcal{B})$-precover of $Y$. 
\end{proof}

\begin{remark}\label{rem:closure_Prec-n}
Note that the closure under extensions for the class $\Prec^n(\mathcal{A},\mathcal{B})$ is not covered in the previous result. This will be studied in Section \ref{sec:hereditary}.
\end{remark}

Note that in Theorem \ref{Teo-k-prec} we require the assumption that $\mathcal{B}^\wedge_{k-1}$ is closed under extensions. In the following result, we provide a sufficient condition that guarantees this closure property.

\begin{lemma}\label{Bk-1ext}
Let $k $ be a positive integer and $\mathcal{B}$ be a class of objects of $\mathcal{C}$ that is closed under extensions. If 
$\Ext_{\mathcal{C}}^{1}(\mathcal{B},\mathcal{B}^\wedge_{k-1}) = 0$, then $\mathcal{B}_{k-1}^{\wedge}$ is closed under extensions.
\end{lemma}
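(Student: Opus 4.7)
The approach is induction on $k$. The base case $k=1$ is immediate since $\mathcal{B}^\wedge_0 = \mathcal{B}$, which is closed under extensions by hypothesis.

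For the inductive step, fix $k \geq 2$ and assume that $\mathcal{B}^\wedge_{k-2}$ is closed under extensions. I would first observe that the inductive hypothesis' vanishing condition $\mathsf{Ext}^1_{\mathcal{C}}(\mathcal{B},\mathcal{B}^\wedge_{k-2}) = 0$ is for free: from $\mathcal{B}^\wedge_{k-2} \subseteq \mathcal{B}^\wedge_{k-1}$ and the hypothesis $\mathsf{Ext}^1_{\mathcal{C}}(\mathcal{B},\mathcal{B}^\wedge_{k-1}) = 0$. Note also that $\mathcal{B}^\wedge_{k-1}$ is closed under finite direct sums: $\mathcal{B}$ is closed under extensions, hence under finite direct sums, and the componentwise direct sum of two $\mathcal{B}$-resolutions of length $\leq k-1$ is again such a resolution.

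Given a short exact sequence $0 \to X \to Y \to Z \to 0$ with $X,Z \in \mathcal{B}^\wedge_{k-1}$, the plan is to realise $Y$ as a quotient of a single $\mathcal{B}$-object modulo something in $\mathcal{B}^\wedge_{k-2}$, then conclude by splicing. Extract from a $\mathcal{B}$-resolution of $Z$ a short exact sequence $0 \to K \to Z_0 \to Z \to 0$ with $Z_0 \in \mathcal{B}$ and $K \in \mathcal{B}^\wedge_{k-2}$, and form the pullback of $Z_0 \to Z \leftarrow Y$. This produces a commutative diagram with exact rows and columns containing a top row $0 \to X \to P \to Z_0 \to 0$ and a column $0 \to K \to P \to Y \to 0$. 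The key point is that the top row splits: since $Z_0 \in \mathcal{B}$ and $X \in \mathcal{B}^\wedge_{k-1}$, the hypothesis forces $\mathsf{Ext}^1_{\mathcal{C}}(Z_0,X) = 0$. Consequently $P \simeq X \oplus Z_0 \in \mathcal{B}^\wedge_{k-1}$.

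Next, pick a $\mathcal{B}$-resolution of $P$ of length $\leq k-1$ and isolate its first step $0 \to L \to B_0 \to P \to 0$ with $B_0 \in \mathcal{B}$ and $L \in \mathcal{B}^\wedge_{k-2}$. The composite $B_0 \twoheadrightarrow P \twoheadrightarrow Y$ is an epimorphism whose kernel $M$, via the snake lemma applied to
\[
\begin{array}{ccccccccc}
0 & \to & L & \to & B_0 & \to & P & \to & 0, \\
0 & \to & M & \to & B_0 & \to & Y & \to & 0,
\end{array}
\]
fits in a short exact sequence $0 \to L \to M \to K \to 0$. Since $L,K \in \mathcal{B}^\wedge_{k-2}$, the inductive hypothesis gives $M \in \mathcal{B}^\wedge_{k-2}$. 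Splicing $0 \to M \to B_0 \to Y \to 0$ with a $\mathcal{B}$-resolution of $M$ of length $\leq k-2$ yields $Y \in \mathcal{B}^\wedge_{k-1}$, completing the induction.

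The main delicate point will be the horseshoe-style bookkeeping: one must ensure both that the splitting of the pulled-back row (which requires the full strength of $\mathsf{Ext}^1_{\mathcal{C}}(\mathcal{B},\mathcal{B}^\wedge_{k-1}) = 0$, not merely $\mathsf{Ext}^1_{\mathcal{C}}(\mathcal{B},\mathcal{B}) = 0$) and the extension closure of $\mathcal{B}^\wedge_{k-2}$ supplied by induction, fit together to give a resolution of $Y$ of length at most $k-1$. Apart from this combinatorial care, the argument is standard.
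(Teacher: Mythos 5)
Your proof is correct, but it is organised differently from the paper's. Both arguments hinge on the same decisive step: pull the given extension back along the first step of a $\mathcal{B}$-resolution of $Z$ (your $0 \to K \to Z_0 \to Z \to 0$, the paper's $0 \to Z' \to B_0 \to Z \to 0$) and use $\Ext^1_{\mathcal{C}}(\mathcal{B},\mathcal{B}^\wedge_{k-1}) = 0$ to split the resulting row, so the pullback becomes $X \oplus Z_0$. After that the routes diverge: the paper fixes $k$ and inducts on $\resdim_{\mathcal{B}}(X)$, performing a second pullback against $B_1 \oplus B_0$ and actually proving the sharper inequality $\resdim_{\mathcal{B}}(Y) \leq \resdim_{\mathcal{B}}(Z)$, whereas you induct on $k$ itself (correctly observing that the vanishing hypothesis for $k$ restricts to the one for $k-1$), read off $P \simeq X \oplus Z_0 \in \mathcal{B}^\wedge_{k-1}$, and compare first syzygies: from $0 \to L \to B_0 \to P \to 0$ and $0 \to K \to P \to Y \to 0$ you obtain $0 \to L \to M \to K \to 0$ for the syzygy $M$ of $Y$, conclude $M \in \mathcal{B}^\wedge_{k-2}$ by the inductive extension-closure, and splice. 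Your assembly is leaner (one pullback instead of two, plus a standard kernel comparison), while the paper's buys the quantitative refinement $\resdim_{\mathcal{B}}(Y) \leq \resdim_{\mathcal{B}}(Z)$, which the statement of the lemma does not require. The only point needing a word is the degenerate situation where $Z$, respectively $X$, already lies in $\mathcal{B}$, so that the syzygy $K$, respectively $L$, is the zero object: there the argument terminates immediately ($Y \simeq X \oplus Z_0 \in \mathcal{B}^\wedge_{k-1}$ by the splitting, respectively $Y$ is a quotient of $P \in \mathcal{B}$ with kernel $K \in \mathcal{B}^\wedge_{k-2}$), and the paper's own write-up is no more explicit about such cases, so this is a presentational remark rather than a gap.
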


\begin{proof} 
Consider a short exact sequence in $\mathcal{C}$
\[
\eta \colon 0 \to X \to Y \to Z \to 0,
\] 
with $X, Z \in \mathcal{B}_{k-1}^{\wedge}$. We show that $\resdim_{\mathcal{B}}(Y) \leq \resdim_{\mathcal{B}}(Z).$ In order to do that,  we proceed by induction on $m := \resdim_{\mathcal{B}}(X).$
\begin{itemize}
\item For the initial case, suppose $m = 0$. If $\resdim_{\mathcal{B}}(Z) = 0$, it follows that $Y \in \mathcal{B}$ and so $\resdim_{\mathcal{B}}(Y) = 0 = \resdim_{\mathcal{B}}(Z)$, since $\mathcal{B}$ is closed under extensions. We can thus assume that $\resdim_{\mathcal{B}}(Z) \geq 1$. Then, there is an exact sequence 
\[
\delta \colon 0 \to Z' \to B_0 \to Z \to 0,
\] 
where $\resdim_{\mathcal{B}}(Z') +1= \resdim_{\mathcal{B}}(Z)$ and $B_0 \in \mathcal{B}$. Taking the pullback of $Y \to Z \leftarrow B_0$ produces the following commutative diagram with exact rows and columns:
\begin{equation}\label{fig1} 
\parbox{1.5in}{
\begin{tikzpicture}[description/.style={fill=white,inner sep=2pt}] 
\matrix (m) [ampersand replacement=\&, matrix of math nodes, row sep=2.5em, column sep=2.5em, text height=1.25ex, text depth=0.25ex] 
{ 
{} \& Z' \& Z' \\
X \& L \& B_0 \\
X \& Y \& Z \\
}; 
\path[>->]
(m-2-1) edge (m-2-2) (m-3-1) edge (m-3-2)
(m-1-2) edge (m-2-2) (m-1-3) edge (m-2-3)
;
\path[->>]
(m-2-2) edge (m-2-3) (m-3-2) edge (m-3-3)
(m-2-2) edge (m-3-2) (m-2-3) edge (m-3-3)
;
\path[->] 
(m-2-2)-- node[pos=0.5] {\footnotesize$\mbox{\bf pb}$} (m-3-3)
; 
\path[-,font=\scriptsize]
(m-1-2) edge [double, thick, double distance=2pt] (m-1-3)
(m-2-1) edge [double, thick, double distance=2pt] (m-3-1)
;
\end{tikzpicture} 
}
\end{equation} 
Note that $L\in \mathcal{B},$ since $\mathcal{B}$ is closed under extensions. Hence, from the central column in \eqref{fig1}, we get the inequality $\resdim_{\mathcal{B}}(Y) \leq 1+\resdim_{\mathcal{B}}(Z') = \resdim_{\mathcal{B}}(Z).$ Therefore, we have $Y \in \mathcal{B}_{k-1}^{\wedge}$.

\item For the successor case, let $1 \leq m \leq k-1$ and suppose that $\resdim_{\mathcal{B}}(Y') \leq \resdim_{\mathcal{B}}(Z')$ in any short exact sequence 
\[
0 \to X' \to Y' \to Z' \to 0,
\] 
with $X', Z' \in \mathcal{B}^\wedge_{k-1}$ and $\resdim_{\mathcal{B}}(X') < m$. 

Now for the object $Z$ appearing in the sequence $\eta$, consider a short exact sequence as $\delta$ above. Take the pullback of $Y \to Z \leftarrow B_0$ to construct a diagram as \eqref{fig1}, and consider the resulting central row and central column:
\begin{align*}
\varepsilon \colon & 0 \to X \to L \to B_0 \to 0, \\
\tau \colon & 0 \to Z' \to L \to Y \to 0.
\end{align*}
Since $\Ext^1_{\mathcal{C}}(\mathcal{B},\mathcal{B}^\wedge_{k-1}) = 0$, the sequence $\varepsilon$ splits, and so $L = X \oplus B_0$. On the other hand, consider a short exact sequence 
\[
\varepsilon' \colon 0 \to X' \to B_1 \to X \to 0
\] 
with $B_1 \in \mathcal{B}$ and $\resdim_{\mathcal{B}}(X') +1= \resdim_{\mathcal{B}}(X)$, and form the exact sequence 
\[
\varepsilon'' \colon 0 \to X' \to B_1 \oplus B_0 \to X \oplus B_0 \to 0
\] 
by adding to $\varepsilon'$ the identity on $B_0$. Now, take the pullback of $Z' \to X \oplus B_0 \leftarrow B_1 \oplus B_0$ in order to obtain the following commutative diagram with exact rows and columns: 
\begin{equation}\label{fig2} 
\parbox{1.75in}{
\begin{tikzpicture}[description/.style={fill=white,inner sep=2pt}] 
\matrix (m) [ampersand replacement=\&, matrix of math nodes, row sep=2.5em, column sep=2.5em, text height=1.25ex, text depth=0.25ex] 
{ 
X' \& X' \& {} \\
Y' \& B_1 \oplus B_0 \& Y \\
Z' \& X \oplus B_0 \& Y \\
}; 
\path[>->]
(m-1-1) edge (m-2-1) (m-1-2) edge (m-2-2)
(m-2-1) edge (m-2-2) (m-3-1) edge (m-3-2)
;
\path[->>]
(m-2-2) edge (m-2-3) (m-3-2) edge (m-3-3)
(m-2-1) edge (m-3-1) (m-2-2) edge (m-3-2)
;
\path[->] 
(m-2-1)-- node[pos=0.5] {\footnotesize$\mbox{\bf pb}$} (m-3-2)
; 
\path[-,font=\scriptsize]
(m-1-1) edge [double, thick, double distance=2pt] (m-1-2)
(m-2-3) edge [double, thick, double distance=2pt] (m-3-3)
;
\end{tikzpicture} 
}
\end{equation} 
Note that $X', Z' \in \mathcal{B}^\wedge_{k-1}$ and $\resdim_{\mathcal{B}}(X') < m$ in the left column of \eqref{fig2}, so we can apply the induction hypothesis to conclude that $\resdim_{\mathcal{B}}(Y') \leq \resdim_{\mathcal{B}}(Z').$  On the other hand, $B_1 \oplus B_0 \in \mathcal{B}$ and so we have that 
\[
\resdim_{\mathcal{B}}(Y) \leq \resdim_{\mathcal{B}}(Y') + 1 \leq \resdim_{\mathcal{B}}(Z') +1=\resdim_{\mathcal{B}}(Z).
\] 
Therefore, $Y \in \mathcal{B}_{k-1}^{\wedge}$.
\end{itemize} 
\end{proof}

The condition $\Ext_{\mathcal{C}}^{1}(\mathcal{B},\mathcal{B}^\wedge_{k-1}) = 0$ in Lemma~\ref{Bk-1ext} seems to be more or less difficult to satisfy for a class $\mathcal{B} \subseteq \mathcal{C}$ closed under extensions. However, we can find classes satisfying this condition, such as the $m$-rigid subcategories. Following \cite[Definition 1.1]{IyamaCluster}, for an integer $m \geq 1$ we say that a subcategory $\mathcal{D} \subseteq \mathcal{C}$ is \emph{$m$-rigid} if $\Ext^i_{\mathcal{C}}(\mathcal{D,D}) = 0$ for any $0 < i < m$. 

Let us show the following characterisation of $m$-rigid subcategories which involves the hypotheses of Lemma~\ref{Bk-1ext}.

\begin{proposition}\label{prop3.5}
If $\mathcal{D}$ is $m$-rigid subcategory of $\mathcal{C}$ for some $m \geq 2$, then the equality $\Ext^1_{\mathcal{C}}(\mathcal{D},\mathcal{D}^\wedge_{k-1}) = 0$ holds for every $1 \leq k \leq m-1$. Moreover, if $\mathcal{C}$ has enough injectives and $\mathcal{I}(\mathcal{C}) \subseteq \mathcal{D}$, then the converse statement also holds. That is, if there exists $m \geq 2$ such that $\Ext^1_{\mathcal{C}}(\mathcal{D},\mathcal{D}^\wedge_{k-1}) = 0$ for every $1 \leq k \leq m - 1$, then $\mathcal{D}$ is $m$-rigid. 
\end{proposition}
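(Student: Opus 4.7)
The plan is to handle the two implications separately, using Proposition~\ref{prop8} for the forward direction and a standard dimension-shifting argument through an injective coresolution for the converse. No new machinery should be needed beyond what has already been established in the excerpt.

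For the forward direction, suppose $\mathcal{D}$ is $m$-rigid, so that $\Ext^i_{\mathcal{C}}(\mathcal{D},\mathcal{D})=0$ for every $1\le i\le m-1$. I would apply Proposition~\ref{prop8} with $\mathcal{A}=\mathcal{B}=\mathcal{D}$ and with the integer ``$n$'' of that result set equal to $m-1$. Then, for any $Y\in\mathcal{D}^\wedge_{k-1}$ with $0\le k-1\le m-2$ (equivalently $1\le k\le m-1$), Proposition~\ref{prop8} yields $\Ext^i_{\mathcal{C}}(\mathcal{D},Y)=0$ for every $1\le i\le (m-1)-(k-1)=m-k$. Taking $i=1$ gives precisely $\Ext^1_{\mathcal{C}}(\mathcal{D},\mathcal{D}^\wedge_{k-1})=0$ for all $1\le k\le m-1$.

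For the converse, assume $\mathcal{C}$ has enough injectives, $\mathcal{I}(\mathcal{C})\subseteq\mathcal{D}$, and $\Ext^1_{\mathcal{C}}(\mathcal{D},\mathcal{D}^\wedge_{k-1})=0$ for every $1\le k\le m-1$. Fix $D,D'\in\mathcal{D}$ and $1\le i\le m-1$. I would take an injective coresolution
\[
0\to D\to I^0\to I^1\to\cdots\to I^{i-2}\to K^{i-1}\to 0
\]
of $D$, where $K^{i-1}$ denotes the $(i-1)$-th cosyzygy. Since $D\in\mathcal{D}$ and $I^0,\dots,I^{i-2}\in\mathcal{I}(\mathcal{C})\subseteq\mathcal{D}$, this displays a $\mathcal{D}$-resolution of $K^{i-1}$ of length $i-1$, so $K^{i-1}\in\mathcal{D}^\wedge_{i-1}$. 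Setting $k=i$ (which lies in the admissible range $1\le k\le m-1$), the hypothesis gives $\Ext^1_{\mathcal{C}}(D',K^{i-1})=0$. A standard dimension-shifting argument along the injective coresolution yields $\Ext^i_{\mathcal{C}}(D',D)\cong\Ext^1_{\mathcal{C}}(D',K^{i-1})=0$, which proves $\mathcal{D}$ is $m$-rigid.

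The argument is largely a bookkeeping exercise; the only subtle point is matching indices correctly when invoking Proposition~\ref{prop8} (one easily gets off-by-one errors in the range of $k$ and $i$). The role of the assumption $\mathcal{I}(\mathcal{C})\subseteq\mathcal{D}$ is precisely to force the $(i-1)$-th injective cosyzygy of $D$ into $\mathcal{D}^\wedge_{i-1}$, so that the single Ext-vanishing hypothesis propagates to all higher Ext-groups by dimension shifting; without that containment, the cosyzygy would not in general be controlled by the $\mathcal{D}$-resolution dimension filtration.
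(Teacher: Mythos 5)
Your proof is correct and follows essentially the same route as the paper: the forward implication is exactly an application of Proposition~\ref{prop8} with $\mathcal{A}=\mathcal{B}=\mathcal{D}$ and $n=m-1$, and the converse is the same dimension-shifting argument, using $\mathcal{I}(\mathcal{C})\subseteq\mathcal{D}$ to place the $(i-1)$-th injective cosyzygy in $\mathcal{D}^\wedge_{i-1}$ and then invoking the hypothesis with $k=i$. The index bookkeeping in both directions checks out, so nothing needs to be changed.
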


\begin{proof}
The ``only if'' part follows by Proposition~\ref{prop8}. Now for the ``if'' part in the case where $\mathcal{C}$ has enough injectives and $\mathcal{I}(\mathcal{C}) \subseteq \mathcal{D}$, suppose that $\Ext^1_{\mathcal{C}}(\mathcal{D},\mathcal{D}^\wedge_{k-1}) = 0$ for every $0 \leq k-1 \leq m-2$. For $0 < i < m$, we have that $\Ext^i_{\mathcal{C}}(D,D') \cong \Ext^1_{\mathcal{C}}(D,K)$ where $D, D' \in \mathcal{D}$ and $K$ is an injective $(i-1)$-cosyzygy of $D'$. Note that $K \in \mathcal{D}^\wedge_{i-1}$, and so from the assumption it follows that $\Ext^1_{\mathcal{C}}(D,K) = 0$, that is, $\Ext^i_{\mathcal{C}}(D,D') = 0$. Therefore, $\mathcal{D}$ is $m$-rigid. 
\end{proof}

\begin{corollary}\label{c3.6} Let $\mathcal{D}$ be a  $m$-rigid class closed under finite coproducts and with $m \geq 2$. Then, the class $\mathcal{D}^\wedge_k$ is closed under extensions, for every $0 \leq k \leq m-2.$
\end{corollary}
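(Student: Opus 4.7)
The plan is to combine Proposition~\ref{prop3.5} and Lemma~\ref{Bk-1ext} in a straightforward way, after observing one preliminary fact: under the hypotheses of the corollary, $\mathcal{D}$ itself is closed under extensions. Indeed, since $m \geq 2$, the $m$-rigidity of $\mathcal{D}$ yields in particular $\Ext^1_{\mathcal{C}}(\mathcal{D},\mathcal{D}) = 0$. Hence any short exact sequence $0 \to D_1 \to X \to D_2 \to 0$ with $D_1, D_2 \in \mathcal{D}$ splits, giving $X \simeq D_1 \oplus D_2$, and the closure of $\mathcal{D}$ under finite coproducts forces $X \in \mathcal{D}$. This settles the case $k = 0$ of the claim (where $\mathcal{D}^\wedge_0 = \mathcal{D}$).

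Next, I would invoke Proposition~\ref{prop3.5}, whose ``only if'' direction applies directly to give $\Ext^1_{\mathcal{C}}(\mathcal{D},\mathcal{D}^\wedge_{j}) = 0$ for every $0 \leq j \leq m-2$ (i.e.\ for every $1 \leq k = j+1 \leq m-1$ in the notation of that proposition).

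Finally, for each $1 \leq k \leq m-2$, I would apply Lemma~\ref{Bk-1ext} with $\mathcal{B} = \mathcal{D}$ and with the index $k+1$ playing the role of the lemma's $k$. The two hypotheses of the lemma are now in place: $\mathcal{D}$ is closed under extensions by the observation above, and $\Ext^1_{\mathcal{C}}(\mathcal{D},\mathcal{D}^\wedge_{k}) = 0$ by the previous paragraph. The lemma then yields that $\mathcal{D}^\wedge_{k}$ is closed under extensions, completing the proof for all $0 \leq k \leq m-2$.

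There is no real obstacle here; the one point that requires a moment's thought is the reduction $k = 0$, which is not covered by Lemma~\ref{Bk-1ext} directly (that lemma needs $\mathcal{B}$ to already be extension-closed in order to start) and must instead be deduced from the splitting argument above. Everything else is a mechanical application of the two preceding results.
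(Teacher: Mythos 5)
Your argument is correct and follows the paper's own proof exactly: first deduce that $\mathcal{D}$ is closed under extensions from $\Ext^1_{\mathcal{C}}(\mathcal{D},\mathcal{D})=0$ plus closure under finite coproducts, then combine Proposition~\ref{prop3.5} with Lemma~\ref{Bk-1ext}. The only difference is that you spell out the index bookkeeping and the $k=0$ case explicitly, which the paper leaves implicit.
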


\begin{proof} 
Notice that $\Ext^1_{\mathcal{C}}(\mathcal{D,D}) = 0$ and $\mathcal{D}$ being closed under finite coproducts imply that $\mathcal{D}$ is closed under extensions. Then, the result follows from Lemma~\ref{Bk-1ext} and Proposition~\ref{prop3.5}. 
\end{proof}

\begin{remark}\label{rem:special_n-1}
Under the hypothesis of Theorem \ref{Teo-k-prec}, given a short exact sequence 
\[
0 \to X \xrightarrow{f} Y \xrightarrow{g} Z \to 0
\]
with $X$ and $Z$ having a special $(\mathcal{A},n,\mathcal{B})$-precover, say
\begin{align*}
\rho_X \colon & 0 \to B^X_{n-1} \xrightarrow{\beta^X_{n-1}} B^X_{n-2} \to \cdots \to B^X_1 \xrightarrow{\beta^X_1} B^X_0 \xrightarrow{\beta^X_0} A^X \xrightarrow{\alpha^X} X \to 0, \\
\rho_Z \colon & 0 \to B^Z_{n-1} \xrightarrow{\beta^Z_{n-1}} B^Z_{n-2} \to \cdots \to B^Z_1 \xrightarrow{\beta^Z_1} B^Z_0 \xrightarrow{\beta^Z_0} A^Z \xrightarrow{\alpha^Z} Z \to 0, 
\end{align*} 
it is possible in some cases to construct a special $(\mathcal{A},n,\mathcal{B})$-precover of $Y$ \emph{compatible} with $\rho_X$ and $\rho_Z$, that is, an exact sequence
\[
\rho_Y \colon 0 \to B^Y_{n-1} \xrightarrow{\beta^Y_{n-1}} B^Y_{n-2} \to \cdots \to B^Y_1 \xrightarrow{\beta^Y_1} B^Y_0 \xrightarrow{\beta^Y_0} A^Y \xrightarrow{\alpha^Y} Y \to 0
\]
along with a commutative diagram with exact rows and columns:
\begin{equation}\label{fig_compatible} 
\parbox{4.75in}{
\begin{tikzpicture}[description/.style={fill=white,inner sep=2pt}] 
\matrix (m) [ampersand replacement=\&, matrix of math nodes, row sep=2.5em, column sep=2.5em, text height=1.25ex, text depth=0.25ex] 
{ 
B^X_{n-1} \& B^X_{n-2} \& \cdots \& B^X_1 \& B^X_0 \& A^X \& X \\
B^Y_{n-1} \& B^Y_{n-2} \& \cdots \& B^Y_1 \& B^Y_0 \& A^Y \& Y \\
B^Z_{n-1} \& B^Z_{n-2} \& \cdots \& B^Z_1 \& B^Z_0 \& A^Z \& Z \\
}; 
\path[->]
(m-1-2) edge (m-1-3) (m-1-3) edge (m-1-4) (m-1-4) edge node[above] {\footnotesize$\beta^X_1$} (m-1-5) (m-1-5) edge node[above] {\footnotesize$\beta^X_0$} (m-1-6)
(m-2-2) edge (m-2-3) (m-2-3) edge (m-2-4) (m-2-4) edge node[above] {\footnotesize$\beta^Y_1$} (m-2-5) (m-2-5) edge node[above] {\footnotesize$\beta^Y_0$} (m-2-6)
(m-3-2) edge (m-3-3) (m-3-3) edge (m-3-4) (m-3-4) edge node[above] {\footnotesize$\beta^Z_1$} (m-3-5) (m-3-5) edge node[above] {\footnotesize$\beta^Z_0$} (m-3-6)
;
\path[>->]
(m-1-1) edge node[above] {\footnotesize$\beta^X_0$} (m-1-2) 
(m-2-1) edge node[above] {\footnotesize$\beta^Y_0$} (m-2-2)
(m-3-1) edge node[above] {\footnotesize$\beta^Z_0$} (m-3-2)
(m-1-1) edge node[right] {\footnotesize$f_{n-1}$} (m-2-1) (m-1-2) edge node[right] {\footnotesize$f_{n-2}$} (m-2-2) (m-1-4) edge node[right] {\footnotesize$f_1$} (m-2-4) (m-1-5) edge node[right] {\footnotesize$f_0$} (m-2-5) (m-1-6) edge node[right] {\footnotesize$\hat{f}$} (m-2-6) (m-1-7) edge node[right] {\footnotesize$f$} (m-2-7)
;
\path[->>]
(m-1-6) edge node[above] {\footnotesize$\alpha^X$} (m-1-7)
(m-2-6) edge node[above] {\footnotesize$\alpha^Y$} (m-2-7)
(m-3-6) edge node[above] {\footnotesize$\alpha^Z$} (m-3-7)
(m-2-1) edge node[right] {\footnotesize$g_{n-1}$} (m-3-1) (m-2-2) edge node[right] {\footnotesize$g_{n-2}$} (m-3-2)  (m-2-4) edge node[right] {\footnotesize$g_1$} (m-3-4) (m-2-5) edge node[right] {\footnotesize$g_0$} (m-3-5) (m-2-6) edge node[right] {\footnotesize$\hat{g}$} (m-3-6) (m-2-7) edge node[right] {\footnotesize$g$} (m-3-7)
;
\end{tikzpicture} 
}
\end{equation} 
To prove this assertion, we shall need the analog of hereditary cotorsion pairs for left $n$-cotorsion pairs, presented later in Section~\ref{sec:hereditary}. 

For now, we can show the case $n = 1$. That is, we are given two classes of objects $\mathcal{A}$ and $\mathcal{B}$ in $\mathcal{C}$, closed under extensions, such that $\Ext^2_{\mathcal{C}}(\mathcal{A,B}) = 0$. Following the proof of Theorem~\ref{Teo-k-prec}, we have a short exact sequence 
\[
0 \to X \xrightarrow{f} Y \xrightarrow{g} Z \to 0
\] 
where $X$ and $Z$ have special $\mathcal{A}$-precovers with kernel in $\mathcal{B}$, say:
\begin{align*}
\rho_X \colon & 0 \to B^X \xrightarrow{\beta^X} A^X \xrightarrow{\alpha^X} X \to 0, \\
\rho_Z \colon & 0 \to B^Z \xrightarrow{\beta^Z} A^Z \xrightarrow{\alpha^Z} Z \to 0.
\end{align*}
From the diagrams \eqref{fig3}, \eqref{fig4} and \eqref{fig5}, we construct the following diagram with exact rows and columns:
\begin{equation}\label{fig_caso_n1} 
\parbox{1.75in}{
\begin{tikzpicture}[description/.style={fill=white,inner sep=2pt}] 
\matrix (m) [ampersand replacement=\&, matrix of math nodes, row sep=2.5em, column sep=2.5em, text height=1.25ex, text depth=0.25ex] 
{ 
B^X \& A^X \& X \\
B^Y \& A^Y \& Y \\
B^Z \& A^Z \& Z \\
}; 
\path[>->]
(m-1-1) edge node[above] {\footnotesize$\beta^X$} (m-1-2) (m-2-1) edge node[above] {\footnotesize$\beta^Y$} (m-2-2) (m-3-1) edge node[above] {\footnotesize$\beta^Z$} (m-3-2)
(m-1-1) edge node[left] {\footnotesize$\tilde{f}$} (m-2-1) (m-1-2) edge node[left] {\footnotesize$\hat{f}$} (m-2-2) (m-1-3) edge node[right] {\footnotesize$f$} (m-2-3)
;
\path[->>]
(m-1-2) edge node[above] {\footnotesize$\alpha^X$} (m-1-3) (m-2-2) edge node[above] {\footnotesize$\alpha^Y$} (m-2-3) (m-3-2) edge node[above] {\footnotesize$\alpha^Z$} (m-3-3)
(m-2-1) edge node[left] {\footnotesize$\tilde{g}$} (m-3-1) (m-2-2) edge node[left] {\footnotesize$\hat{g}$} (m-3-2) (m-2-3) edge node[right] {\footnotesize$g$} (m-3-3)
;
\end{tikzpicture} 
}
\end{equation} 
We check that \eqref{fig_caso_n1} commutes:
\begin{align*}
\beta^Y \circ \tilde{f} & = \overline{\beta}^X = \hat{f} \circ \beta^X & \mbox{(by \eqref{fig5} and \eqref{fig4})}, \\
\alpha^Y \circ \hat{f} & = \tilde{\alpha}^Z \circ \overline{\alpha}^X \circ \hat{f} = \tilde{\alpha}^Z \circ \overline{f} \circ \alpha^X = f \circ \alpha^X & \mbox{(by \eqref{fig5}, \eqref{fig4} and \eqref{fig3})}, \\
\beta^Z \circ \tilde{g} & = \overline{g} \circ \tilde{\beta}^Z \circ \tilde{g} = \overline{g} \circ \overline{\alpha}^X \circ \beta^Y = \hat{g} \circ \beta^Y & \mbox{(by \eqref{fig3}, \eqref{fig5} and \eqref{fig4})}, \\
\alpha^Z \circ \hat{g} & = \alpha^Z \circ \overline{g} \circ \overline{\alpha}^X = g \circ \tilde{\alpha}^Z \circ \overline{\alpha}^X =  g \circ \alpha^Y & \mbox{(by \eqref{fig4}, \eqref{fig3} and \eqref{fig5})}.
\end{align*}
\end{remark}

\begin{corollary}\label{Coro-k-prec} 
Let $(\mathcal{A,B})$ be a left $n$-cotorsion pair in $\C$ with $n \geq 2$, such that:
\begin{itemize}
\item[(i)] $\B$ is closed under finite coproducts in $\C$, and 

\item[(ii)] $\Ext^1_{\mathcal{C}}(\mathcal{B},\mathcal{B}^\wedge_{k-1}) = 0$ for any $1 \leq k \leq \max(1,n-1)$.
\end{itemize} 
Then, the class $\Prec^k(\A,\B)$ is closed under extensions for any $1\leq k\leq \max(1,n-1)$. 
\end{corollary}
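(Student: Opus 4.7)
The plan is to reduce the statement to a direct application of Theorem~\ref{Teo-k-prec}. Since $(\mathcal{A},\mathcal{B})$ is a left $n$-cotorsion pair with $n \geq 2$, the orthogonality condition $\Ext^i_\C(\A,\B) = 0$ for $1 \leq i \leq n$ is immediate. Hence the only hypotheses of Theorem~\ref{Teo-k-prec} still to be established are that $\mathcal{A}$ and $\mathcal{B}^\wedge_{k-1}$ are closed under extensions, for any $1 \leq k \leq \max(1,n-1) = n-1$.

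First I would verify that $\mathcal{A}$ is closed under extensions. By Theorem~\ref{theo:left-n-cotorsion}, the left $n$-cotorsion pair hypothesis yields $\mathcal{A} = \bigcap_{i=1}^n {}^{\perp_i}\mathcal{B}$. Given any short exact sequence $0 \to A_1 \to X \to A_2 \to 0$ with $A_1, A_2 \in \mathcal{A}$ and any $B \in \mathcal{B}$, the long exact Ext-sequence sandwiches $\Ext^i_{\mathcal{C}}(X,B)$ between $\Ext^i_{\mathcal{C}}(A_2,B)$ and $\Ext^i_{\mathcal{C}}(A_1,B)$, both of which vanish for $1 \leq i \leq n$; thus $X \in \bigcap_{i=1}^n {}^{\perp_i}\mathcal{B} = \mathcal{A}$.

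Next I would show that $\mathcal{B}$ itself is closed under extensions, so that Lemma~\ref{Bk-1ext} becomes applicable. Specializing hypothesis (ii) at $k=1$ gives $\Ext^1_{\mathcal{C}}(\mathcal{B},\mathcal{B}^\wedge_0) = \Ext^1_{\mathcal{C}}(\mathcal{B},\mathcal{B}) = 0$, so any short exact sequence $0 \to B_1 \to X \to B_2 \to 0$ with $B_1, B_2 \in \mathcal{B}$ splits. Hence $X \simeq B_1 \oplus B_2$, and by hypothesis (i) the latter belongs to $\mathcal{B}$. Now for each $1 \leq k \leq n-1$, Lemma~\ref{Bk-1ext} applied to $\mathcal{B}$ (using hypothesis (ii) at the corresponding $k$) yields that $\mathcal{B}^\wedge_{k-1}$ is closed under extensions.

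With these two closure properties in hand, all hypotheses of Theorem~\ref{Teo-k-prec} are satisfied for each $1 \leq k \leq n-1$, and the conclusion that $\Prec^k(\mathcal{A},\mathcal{B})$ is closed under extensions follows directly. There is no real obstacle in this corollary: the work is packaged into Theorem~\ref{theo:left-n-cotorsion}, Lemma~\ref{Bk-1ext}, and Theorem~\ref{Teo-k-prec}, and the role of the proof is merely to check that the two closure hypotheses of Theorem~\ref{Teo-k-prec} follow from conditions (i) and (ii) together with the structural properties of a left $n$-cotorsion pair. The most delicate detail to keep track of is ensuring that the range of $k$ used in the application of Lemma~\ref{Bk-1ext} matches precisely the range in which hypothesis (ii) was assumed.
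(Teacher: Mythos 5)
Your proposal is correct and follows essentially the same route as the paper: deduce that $\mathcal{B}$ is closed under extensions from hypotheses (i) and (ii) at $k=1$, invoke Lemma~\ref{Bk-1ext} for the closure of $\mathcal{B}^\wedge_{k-1}$, get closure of $\mathcal{A}$ under extensions from Theorem~\ref{theo:left-n-cotorsion} (you just spell out the intersection-of-orthogonals argument that the paper leaves implicit), and conclude by Theorem~\ref{Teo-k-prec}. No gaps.
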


\begin{proof} 
First, note that since $\B$ is closed under finite coproducts in $\C$ and $\Ext^1_{\mathcal{C}}(\mathcal{B},\mathcal{B}^\wedge_{0}) = 0$, it follows that $\B$ is closed under extensions. Furthermore, from Lemma~\ref{Bk-1ext} we obtain that $\mathcal{B}^\wedge_{k-1}$ is closed under extensions. On the other hand, Theorem~\ref{theo:left-n-cotorsion} allows us to conclude that $\A$ is also closed under extensions. Thus, Theorem~\ref{Teo-k-prec} gives us the result.
\end{proof}


\subsection*{\textbf{\textit{n}-Cotorsion and approximations having the unique mapping property}}

Now let us study the relation between higher cotorsion and approximations with the unique mapping property. This point has been tackled in other contexts of higher cotorsion. For instance, Crivei and Torrecillas considered $(m,n)$-cotorsion pairs $(\mathcal{A,B})$ in Grothendieck categories $\mathcal{G}$ with enough projectives, and studied some conditions under which it is possible to obtain special precovers with the unique mapping property. Namely, they proved in \cite[Theorem 3.15]{CriveiTorrecillas} that every object in $\mathcal{G}$ has an $\mathcal{A}^{\perp_{n+1}}$-preenvelope with the unique mapping property if, and only if, $\mathcal{G} = \mathcal{A}^{\perp_{n+1}}$. 

Recall that an $\mathcal{A}$-precover $f \colon A \to C$ of $C \in \mathcal{C}$ is said to have the \emph{unique mapping property} if for every morphism $f' \colon A' \to C$ with $A' \in \mathcal{A}$ there exists a unique $h \colon A' \to A$ such that $f' = f \circ h$. The notion of an $\mathcal{A}$-preenvelope having the unique mapping property is defined dually. 

The importance of the unique mapping property lies in its applications, which go from the description of certain categories of modules, to characterisations of rings that involve its global or weak dimension. One of these applications has to do with the existence of flat envelopes. Specifically, Asensio Mayor and Mart\'inez Hern\'andez proved in \cite[Proposition 2.1]{AsensioMartinez} that for any ring $R$, every module has a flat envelope with the unique mapping property if, and only if, $R$ is right coherent and with weak dimension ${\rm wd}(R) \leq 2$. The dual version of this result was proved by Mao and Ding in \cite[Corollary 2.4]{MaoDing_wgd}, that is, the latter condition is also equivalent to saying that every right $R$-module has an absolutely pure cover with the unique mapping property. One interesting question about the class $\mathcal{AP}(R)$ of absolutely pure $R$-modules (also known as $\text{FP}$-injective $R$-modules) is whether it is closed under direct limits. Mao and Ding also proved in \cite[Proposition 6.7]{MaoDing_finite} that if every module has an absolutely pure cover with the unique mapping property, then $\mathcal{AP}(R)$ is closed under direct limits. There are also other characterisations of the weak dimension of coherent rings involving the unique mapping property with respect to flat and projective envelopes (the reader can see \cite[Corollaries 3.4 and 3.9]{Ding96}, also by Ding).      

In the following lines, we provide other conditions, within the context of $n$-cotorsion pairs, under which one can obtain approximations with the unique mapping property. The results obtained in this direction will be applied in Section~\ref{sec:hereditary} to comment more on Mao a Ding's \cite[Corollary 2.4]{MaoDing_wgd}, and in Section~\ref{sec:applications} in the field of Gorenstein homological algebra, where we extend some results concerning Gorenstein projective envelopes and Gorenstein injective covers with the unique mapping property. 

Recall that the classes $\mathcal{B}^\wedge_{k}$ play an important role in the concept of left $n$-cotorsion pairs $(\mathcal{A,B})$ in abelian categories. In the study of approximations having the unique mapping property, we shall need to consider the classes $\mathcal{A}^\wedge_{k}$ instead. Let us begin showing the following two properties.

\begin{proposition}\label{Bvee sub Aperp}
Let $\mathcal{A}$ and $\mathcal{B}$ be two classes of objects of $\mathcal{C}$ such that $\Ext^i_{\mathcal{C}}(\mathcal{A,B}) = 0$ for every $1 \leq i \leq n$. Then, the containment $\mathcal{A}^\wedge_{k} \subseteq {}^{\perp_{k+1}}\mathcal{B}$ holds for every $0 \leq k \leq n - 1$.
\end{proposition}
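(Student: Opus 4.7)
The plan is to proceed by induction on $k$, using a standard dimension-shifting argument based on the long exact sequence of $\Ext$.

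For the base case $k = 0$, observe that $\mathcal{A}^\wedge_0 = \mathcal{A}$, so the containment $\mathcal{A} \subseteq {}^{\perp_1}\mathcal{B}$ is immediate from the hypothesis $\Ext^1_{\mathcal{C}}(\mathcal{A}, \mathcal{B}) = 0$.

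For the inductive step, fix $1 \leq k \leq n-1$ and assume $\mathcal{A}^\wedge_{k-1} \subseteq {}^{\perp_k}\mathcal{B}$. Given $X \in \mathcal{A}^\wedge_k$, extract from an $\mathcal{A}$-resolution of $X$ of length at most $k$ a short exact sequence
\[
0 \to X' \to A_0 \to X \to 0
\]
with $A_0 \in \mathcal{A}$ and $X' \in \mathcal{A}^\wedge_{k-1}$ (i.e., $X'$ is the first syzygy, which inherits $\mathcal{A}$-resolution dimension at most $k-1$). For any $B \in \mathcal{B}$, the associated long exact sequence of extension groups contains the fragment
\[
\Ext^k_{\mathcal{C}}(X', B) \to \Ext^{k+1}_{\mathcal{C}}(X, B) \to \Ext^{k+1}_{\mathcal{C}}(A_0, B).
\]
The left-hand term vanishes by the induction hypothesis applied to $X' \in \mathcal{A}^\wedge_{k-1}$, and the right-hand term vanishes by the orthogonality hypothesis $\Ext^i_{\mathcal{C}}(\mathcal{A},\mathcal{B}) = 0$ for $1 \leq i \leq n$, since $k+1 \leq n$ and $A_0 \in \mathcal{A}$. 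Therefore $\Ext^{k+1}_{\mathcal{C}}(X, B) = 0$ for every $B \in \mathcal{B}$, which means $X \in {}^{\perp_{k+1}}\mathcal{B}$, completing the induction.

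There is no real obstacle here: the only thing to be careful about is the bookkeeping of indices, namely that the upper bound $k \leq n - 1$ guarantees $k + 1 \leq n$, which is precisely the range where the orthogonality hypothesis on $(\mathcal{A},\mathcal{B})$ applies to the right-hand term, while the induction hypothesis covers the left-hand term.
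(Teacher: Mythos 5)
Your proof is correct and is exactly the argument the paper intends: the paper's proof of this proposition is just the one-line remark ``It follows by induction on $k$,'' and your write-up spells out precisely that standard dimension-shifting induction (syzygy short exact sequence plus the long exact sequence of $\Ext$), with the index bookkeeping $k+1 \leq n$ handled correctly.
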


\begin{proof}
It follows by induction on $k$.
\end{proof}

\begin{proposition}\label{ncotder y cotder}
The following conditions are equivalent for any left $n$-cotorsion pair $(\mathcal{A,B})$ in $\mathcal{C}$:
\begin{itemize}
\item[(a)] $\mathcal{A} = {}^{\perp_1}\mathcal{B}$.

\item[(b)] The equality ${}^{\perp_{k+1}}\mathcal{B} = \mathcal{A}^\wedge_k$ holds for every $0 \leq k \leq n - 1$.
\end{itemize}
\end{proposition}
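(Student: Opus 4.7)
The plan is to treat the two implications separately, with (b) $\Rightarrow$ (a) being essentially a tautology and (a) $\Rightarrow$ (b) requiring the real work. For the easy direction, specialising the equality in (b) to $k=0$ yields ${}^{\perp_1}\mathcal{B} = \mathcal{A}^\wedge_0$, and $\mathcal{A}^\wedge_0 = \mathcal{A}$ since a $\mathcal{A}$-resolution of length $0$ of an object $C$ has the form $0 \to A_0 \to C \to 0$, forcing $C \simeq A_0 \in \mathcal{A}$. (Here I am implicitly using that $\mathcal{A}$ is closed under isomorphisms, which is automatic since classes are identified with full subcategories.)

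For (a) $\Rightarrow$ (b), I would argue by induction on $k$ with $0 \leq k \leq n-1$. The containment $\mathcal{A}^\wedge_k \subseteq {}^{\perp_{k+1}}\mathcal{B}$ is provided for free by Proposition \ref{Bvee sub Aperp}, so the task reduces to establishing ${}^{\perp_{k+1}}\mathcal{B} \subseteq \mathcal{A}^\wedge_k$. The base case $k = 0$ is exactly the hypothesis (a). For the inductive step, assuming ${}^{\perp_k}\mathcal{B} = \mathcal{A}^\wedge_{k-1}$ for some $1 \leq k \leq n-1$, pick $X \in {}^{\perp_{k+1}}\mathcal{B}$ and use condition (3) of Definition~\ref{def:ncotorsion} to produce a short exact sequence
\[
0 \to K \to A \to X \to 0
\]
with $A \in \mathcal{A}$ and $K \in \mathcal{B}^\wedge_{n-1}$.

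The key computation is then to show $K \in {}^{\perp_k}\mathcal{B}$, so that the inductive hypothesis places $K$ in $\mathcal{A}^\wedge_{k-1}$ and consequently $X \in \mathcal{A}^\wedge_k$. For any $B \in \mathcal{B}$, the long exact sequence obtained by applying $\Hom_{\mathcal{C}}(-,B)$ yields
\[
\Ext^k_{\mathcal{C}}(A,B) \to \Ext^k_{\mathcal{C}}(K,B) \to \Ext^{k+1}_{\mathcal{C}}(X,B),
\]
where the left term vanishes because $A \in \mathcal{A}$, $B \in \mathcal{B}$ and $1 \leq k \leq n-1 \leq n$, while the right term vanishes because $X \in {}^{\perp_{k+1}}\mathcal{B}$. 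Thus $\Ext^k_{\mathcal{C}}(K,B) = 0$, finishing the step.

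The only subtle point I expect is the bookkeeping on the range of indices: the argument genuinely requires $k+1 \leq n$ in order to apply the orthogonality condition $\Ext^k_{\mathcal{C}}(\mathcal{A},\mathcal{B}) = 0$ of the $n$-cotorsion pair in the long exact sequence, and this is what prevents the chain of equalities from continuing past $k = n-1$. Everything else is a routine dimension-shifting argument, so no additional lemma beyond Proposition \ref{Bvee sub Aperp} and Definition \ref{def:ncotorsion} should be needed.
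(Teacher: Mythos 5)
Your proof is correct and takes essentially the same approach as the paper: the $k=0$ specialisation for (b) $\Rightarrow$ (a), Proposition~\ref{Bvee sub Aperp} for the easy containment, and a dimension-shifting argument on syzygies for the hard one. The only (cosmetic) difference is that you shift one syzygy at a time by induction on $k$ and invoke the inductive hypothesis, whereas the paper builds a length-$k$ resolution by $\mathcal{A}$-precovers in one step and shifts $\Ext^{k+1}_{\mathcal{C}}(M,B)$ directly down to $\Ext^1_{\mathcal{C}}(K,B)$, then applies (a) to the last syzygy; the index constraint $k \leq n-1$ plays the same role in both.
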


\begin{proof} 
The implication (b) $\Rightarrow$ (a) follows by setting $k = 0$, since $\A^\wedge_0 = \A$. 

Now if we assume that condition (a) holds true, note that the case $k = 0$ is clear. Thus, we may suppose that $k \geq 1$. The containment $\mathcal{A}^\wedge_k \subseteq {}^{\perp_{k+1}}\mathcal{B}$ follows by Proposition~\ref{Bvee sub Aperp}. For the remaining containment $\mathcal{A}^\wedge_k \supseteq {}^{\perp_{k+1}}\mathcal{B}$, consider an object $M \in {}^{\perp_{k+1}}\mathcal{B}$. Since $\mathcal{A}$ is precovering, by Proposition~\ref{A-precub,B-preenv} we can construct an exact sequence of the form
\[
0 \to K \to A_{k-1} \to \cdots \to A_1 \to A_0 \to M \to 0,
\]
where $A_j \in \mathcal{A}$ for every $0 \leq j \leq k-1$. By using the relation $\Ext^i_{\mathcal{C}}(\mathcal{A,B}) = 0$ for $1 \leq i \leq n,$ along with dimension shifting, we have that $\Ext^1_\C(K,B)\simeq\Ext^{k+1}_\C(M,B)=0$ and thus
$K \in {}^{\perp_1}\mathcal{B} = \mathcal{A}$. Hence, $M \in \mathcal{A}^\wedge_k$. 
\end{proof}

\begin{remark}\label{RkAortB} 
Given a left $n$-cotorsion pair $(\mathcal{A,B})$ in $\mathcal{C}$, one may ask for possible cases where condition (a) in the previous proposition holds. 
 
The most obvious situation is when $n = 1$, as we get from Theorem~\ref{theo:left-n-cotorsion} that $\A = {}^{\perp_1}\B$, that is, we have that $(\mathcal{A,B})$ is a complete left cotorsion pair in $\mathcal{C}$.  

The equality $\mathcal{A} = {}^{\perp_1}\mathcal{B}$ is not necessarily true if $n \geq 2$, like for instance in the left $n$-cotorsion pair $(\mathcal{GP}(R),\mathcal{P}(R))$ in $\Mod(R)$, with $R$ an $n$-Iwanaga-Gorenstein ring, considered at the beginning of Section~\ref{sec:applications}. However, in the case where $\B$ is closed under taking cokernels of monomorphisms between objects in $\B$, one can note that $\A = {}^{\perp_1}\B$. 
\end{remark}

In \cite[dual of Proposition 3.3]{CriveiTorrecillas}, Crivei and Torrecillas proved that for every class $\mathcal{B}$ of objects of $\mathcal{C}$,  an abelian category with enough projectives, if every object of $\mathcal{C}$ has a ${}^{\perp_{k+1}}\mathcal{B}$-precover with the unique mapping property, then $\mathcal{C} = {}^{\perp_{k+1}}\mathcal{B}$. This fact is extended in the following result using Proposition~\ref{ncotder y cotder}. Moreover, we shall note that the consequences of having $\mathcal{A} = {}^{\perp_1}\mathcal{B}$ for a left $n$-cotorsion pair $(\mathcal{A,B})$ in $\mathcal{C}$ have an impact on how $\mathcal{C}$ can be described in terms of relative homological dimensions.

\begin{corollary}\label{Aperp y Bvee} 
Let $\mathcal{C}$ be an abelian category with enough projectives, and $(\mathcal{A,B})$ be a left $n$-cotorsion pair in $\mathcal{C}$. If the equality $\mathcal{A} = {}^{\perp_1}\mathcal{B}$ holds, then the following conditions are equivalent for any integer $0 \leq k \leq n - 1$:
\begin{enumerate}
\item[(a)] Every object in $\mathcal{C}$ has a ${}^{\perp_{k+1}}\mathcal{B}$-precover with the unique mapping property.

\item[(b)] $\resdim_{\mathcal{A}}(\C) \leq k.$

\item[(c)] $\mathcal{C} = {}^{\perp_{k+1}}\mathcal{B}$.  
\end{enumerate}
\end{corollary}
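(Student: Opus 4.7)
The plan is to leverage Proposition~\ref{ncotder y cotder} as the bridge that makes all three conditions comparable. Since we are assuming $\mathcal{A} = {}^{\perp_1}\mathcal{B}$, that proposition yields the identification ${}^{\perp_{k+1}}\mathcal{B} = \mathcal{A}^\wedge_k$ for every $0 \leq k \leq n-1$. With this equality in hand, conditions (b) and (c) become simple rephrasings of each other: ``$\resdim_{\mathcal{A}}(\mathcal{C}) \leq k$'' just means that every object of $\mathcal{C}$ sits in $\mathcal{A}^\wedge_k$, i.e.\ $\mathcal{C} = \mathcal{A}^\wedge_k = {}^{\perp_{k+1}}\mathcal{B}$. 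So (b) $\Leftrightarrow$ (c) will be essentially free.

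The implication (c) $\Rightarrow$ (a) is also straightforward once the identification is established: if $\mathcal{C} = {}^{\perp_{k+1}}\mathcal{B}$, then for every $C \in \mathcal{C}$ the identity $\id_C \colon C \to C$ is itself a ${}^{\perp_{k+1}}\mathcal{B}$-precover, and for any morphism $f \colon X \to C$ with $X \in {}^{\perp_{k+1}}\mathcal{B}$ the factorisation $f = \id_C \circ f$ is obviously the unique one through $\id_C$. So identity maps witness the unique mapping property trivially.

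The main content lies in (a) $\Rightarrow$ (c), but this does not need to be reproved from scratch: it is exactly the dual of Crivei--Torrecillas \cite[Proposition 3.3]{CriveiTorrecillas}, which was cited in the paragraph preceding the corollary. In fact, for any class $\mathcal{B}$ in an abelian category with enough projectives, if every object has a ${}^{\perp_{k+1}}\mathcal{B}$-precover with the unique mapping property then $\mathcal{C} = {}^{\perp_{k+1}}\mathcal{B}$. Thus the whole proof reduces to citing that result.

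There is no real obstacle here; the only subtlety is making sure the hypothesis $\mathcal{A} = {}^{\perp_1}\mathcal{B}$ is invoked at the right moment to apply Proposition~\ref{ncotder y cotder} (so that (b) can be stated in terms of $\mathcal{A}$-resolution dimension while (a) and (c) are phrased in terms of ${}^{\perp_{k+1}}\mathcal{B}$), and to verify that the range $0 \leq k \leq n-1$ is precisely the range in which Proposition~\ref{ncotder y cotder} delivers the equality ${}^{\perp_{k+1}}\mathcal{B} = \mathcal{A}^\wedge_k$. With these checks, the proof is essentially a one-line reduction: rewrite everything in terms of $\mathcal{A}^\wedge_k$, observe that (b) and (c) then coincide, use identities for (c) $\Rightarrow$ (a), and invoke \cite[dual of Proposition 3.3]{CriveiTorrecillas} for (a) $\Rightarrow$ (c).
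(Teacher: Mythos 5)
Your proposal is correct and follows essentially the same route as the paper: the paper's proof is precisely the combination of \cite[dual of Proposition 3.3]{CriveiTorrecillas} for (a) $\Rightarrow$ (c) with Proposition~\ref{ncotder y cotder} giving ${}^{\perp_{k+1}}\mathcal{B} = \mathcal{A}^\wedge_k$, which makes (b) and (c) rephrasings of each other and renders (c) $\Rightarrow$ (a) trivial via identity maps. You have merely spelled out the details that the paper leaves implicit.
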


\begin{proof} 
It follows from \cite[dual of Proposition 3.3]{CriveiTorrecillas} and Proposition~\ref{ncotder y cotder}.
\end{proof}

\begin{remark}
The equivalence (a) $\Leftrightarrow$ (c) of Corollary~\ref{Aperp y Bvee} is also proven in \cite[dual of Theorem 3.7]{CriveiTorrecillas} under different conditions. Namely, the authors work in the case where $\mathcal{C}$ is a Grothendieck category with enough projectives. Also, the class $\mathcal{B}$ need not be part of an $n$-cotorsion pair in this reference. 
\end{remark}

Corollary~\ref{Aperp y Bvee} establishes some conditions under which it is possible to construct, from a left $n$-cotorsion pair of the form $({}^{\perp_1}\mathcal{B},\mathcal{B})$, right approximations with the unique mapping property. With respect to left approximations, we have the following.

\begin{corollary}\label{corUMP2}
Let $\mathcal{C}$ be an abelian category with enough projectives and $(\mathcal{A,B})$ be a left $n$-cotorsion pair with $n \geq 3$.  If the equality $\mathcal{A} = {}^{\perp_1}\mathcal{B}$ holds, then the following conditions are equivalent:
\begin{itemize}
\item[(a)] Every object in $\mathcal{C}$ has an $\mathcal{A}$-envelope with the unique mapping property. 

\item[(b)] Every object in $\mathcal{C}$ has an $\mathcal{A}$-envelope and $\resdim_{\mathcal{A}}(\C) \leq 2.$ 
\end{itemize}
\end{corollary}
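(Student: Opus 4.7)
The proof rests on the standard characterisation: a preenvelope $\mu_M : M \to A^M$ has the unique mapping property if and only if $\Hom_{\mathcal{C}}(\CoKer(\mu_M), A') = 0$ for every $A' \in \mathcal{A}$, since any $h : A^M \to A'$ with $h \mu_M = 0$ factors through the canonical epimorphism $\pi : A^M \twoheadrightarrow \CoKer(\mu_M)$. Moreover, because every morphism from $M$ into an object of $\mathcal{A}$ factors through $\mu_M$ and hence vanishes on $\Ker(\mu_M)$, the $\mathcal{A}$-envelope of $M$ coincides with that of $M/\Ker(\mu_M)$; thus one may assume $\mu_M$ is monic and work with the short exact sequence $0 \to M \to A^M \to C \to 0$, where $C := \CoKer(\mu_M)$.

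For (b) $\Rightarrow$ (a), I would first use that $\mathcal{A} = {}^{\perp_1}\mathcal{B}$ and $n \geq 3$ to apply Proposition~\ref{ncotder y cotder} at $k = 2$, rewriting $\resdim_{\mathcal{A}}(\mathcal{C}) \leq 2$ as $\mathcal{C} = \mathcal{A}^{\wedge}_2 = {}^{\perp_3}\mathcal{B}$. Given $M$ and its envelope, I would take a special $\mathcal{A}$-precover $0 \to K \to A_C \to C \to 0$ of $C$ via Proposition~\ref{A-precub,B-preenv} (so $K \in \mathcal{B}^{\wedge}_{n-1}$), form the pullback $P = A^M \times_C A_C$, and observe that the resulting sequence $0 \to K \to P \to A^M \to 0$ splits, since $\mathsf{Ext}^1_{\mathcal{C}}(A^M, K) = 0$ by Proposition~\ref{prop8}. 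This produces a presentation $0 \to M \to K \oplus A^M \to A_C \to 0$. A diagram chase combining this presentation, the preenvelope factorisation through $\mu_M$, and the orthogonality $\mathsf{Ext}^i_{\mathcal{C}}(\mathcal{A},\mathcal{B}) = 0$ for $1 \leq i \leq n$ then forces $\Hom_{\mathcal{C}}(C, A') = 0$, yielding UMP.

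For (a) $\Rightarrow$ (b), the UMP hypothesis immediately gives $\Hom_{\mathcal{C}}(\CoKer(\mu_M), \mathcal{A}) = 0$ for every $M$. Starting from the special $\mathcal{A}$-precover $0 \to K \to A \to M \to 0$ of Proposition~\ref{A-precub,B-preenv} and the UMP envelope $\nu_K : K \to A^K$ of $K$, taking the pushout along $\nu_K$ produces a short exact sequence $0 \to A^K \to Q \to M \to 0$ together with $0 \to A \to Q \to \CoKer(\nu_K) \to 0$. The vanishing $\Hom_{\mathcal{C}}(\CoKer(\nu_K), \mathcal{A}) = 0$, combined with one further envelope step, lets us replace $\CoKer(\nu_K)$ by an object of $\mathcal{A}$ up to an $\mathcal{A}$-step, closing the resolution of $M$ in at most two $\mathcal{A}$-terms and placing $M$ in $\mathcal{A}^{\wedge}_2$. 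An alternative route is Corollary~\ref{Aperp y Bvee} at $k = 2$: it suffices to verify $\mathcal{C} = {}^{\perp_3}\mathcal{B}$, which follows from the UMP assumption by dimension-shifting through the special $\mathcal{A}$-precovers of kernels.

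The principal obstacle is the diagram chase in (b) $\Rightarrow$ (a): reconciling the local Hom-vanishing on $\CoKer(\mu_M)$ with the global resolution-dimension hypothesis demands that the envelope minimality be combined with the cotorsion-pair orthogonality in precisely the right way, and the assumption $n \geq 3$ is used exactly to make Proposition~\ref{ncotder y cotder} applicable at $k = 2$ and to secure the Ext-vanishings in Proposition~\ref{prop8} needed for the pullback splitting.
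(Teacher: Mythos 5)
Your reduction at the start is the same as the paper's: from $\mathcal{A} = {}^{\perp_1}\mathcal{B}$ and $n \geq 3$, Proposition~\ref{ncotder y cotder} (at $k=2$) identifies $\mathcal{A}^\wedge_2$ with ${}^{\perp_3}\mathcal{B}$, and Proposition~\ref{A-precub,B-preenv} makes $\mathcal{A}$ special precovering. But the paper stops there and quotes the dual of \cite[Theorem 3.16]{CriveiTorrecillas}, which is precisely the equivalence ``every object has an $\mathcal{A}$-envelope with the unique mapping property $\Leftrightarrow$ every object has an $\mathcal{A}$-envelope and $\mathcal{C} = {}^{\perp_3}\mathcal{B}$''. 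What you propose is to reprove that external theorem directly, and it is exactly at that point that your argument has a genuine gap: both substantive implications are asserted rather than carried out. In (b) $\Rightarrow$ (a), after splitting the pullback to get $0 \to M \to K \oplus A^M \to A_C \to 0$, you say a ``diagram chase\dots forces $\Hom_{\mathcal{C}}(C,A')=0$''. Nothing in the data you have assembled uses either the hypothesis $\resdim_{\mathcal{A}}(\mathcal{C}) \leq 2$ (you only restate it as $\mathcal{C} = {}^{\perp_3}\mathcal{B}$ and never invoke it again) or the minimality of $\mu_M$; if such a chase closed, every $\mathcal{A}$-envelope coming from a left $n$-cotorsion pair with $\mathcal{A} = {}^{\perp_1}\mathcal{B}$ would automatically have the unique mapping property, which is false (this is why the dimension bound appears in (b) at all, cf.\ the flat-envelope case where the bound ${\rm wd}(R) \leq 2$ is needed). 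So the decisive step is missing, not merely compressed.

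The implication (a) $\Rightarrow$ (b) has the same problem. Your pushout construction only yields $0 \to A^K \to Q \to M \to 0$ with $\Hom_{\mathcal{C}}(\CoKer(\nu_K),\mathcal{A}) = 0$; ``replacing $\CoKer(\nu_K)$ by an object of $\mathcal{A}$ up to an $\mathcal{A}$-step'' is not justified — an object whose maps into $\mathcal{A}$ all vanish need not have small $\mathcal{A}$-resolution dimension without further argument. Your alternative route via Corollary~\ref{Aperp y Bvee} is the right target ($\mathcal{C} = {}^{\perp_3}\mathcal{B}$ suffices), but the justification ``by dimension-shifting through the special $\mathcal{A}$-precovers of kernels'' does not work: from $0 \to K \to A_0 \to M \to 0$ with $A_0 \in \mathcal{A}$ one only gets that $\Ext^3_{\mathcal{C}}(M,B)$ is a quotient of $\Ext^2_{\mathcal{C}}(K,B)$, and $K \in \mathcal{B}^\wedge_{n-1}$ gives no vanishing of $\Ext^2_{\mathcal{C}}(K,B)$ (Proposition~\ref{prop8} controls $\Ext^i_{\mathcal{C}}(\mathcal{A},K)$, the opposite variable). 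The unique-mapping-property hypothesis, which is what must carry this implication, is never concretely deployed. In short: you correctly set up the translation $\mathcal{A}^\wedge_2 = {}^{\perp_3}\mathcal{B}$ and the standard criterion ``UMP $\Leftrightarrow$ $\Hom_{\mathcal{C}}(\CoKer(\mu_M),\mathcal{A}) = 0$'', but the core equivalence — the content of the cited Crivei–Torrecillas theorem — is left unproved in both directions, so the proposal does not yet constitute a proof.
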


\begin{proof} 
First, we get from Proposition~\ref{A-precub,B-preenv} that $\mathcal{A}$ is special precovering. Moreover, by Theorem~\ref{theo:left-n-cotorsion},  $\mathcal{A} = {}^{\perp_1}\mathcal{B} \subseteq {}^{\perp_i}\mathcal{B}$ for $i = 2, 3$ since $n \geq 3.$ Thus, by Proposition~\ref{ncotder y cotder} we have ${}^{\perp_3}\mathcal{B} = \mathcal{A}_{2}^{\wedge}$. Hence, the result follows by \cite[dual of Theorem 3.16]{CriveiTorrecillas}. 
\end{proof}


\section{\textbf{Higher cotorsion from hereditary cotorsion pairs}}\label{sec:hereditary}

In this section we analyse the situation where we are given a (left or right) $n$-cotorsion pair $(\mathcal{A,B})$ in $\mathcal{C}$ where $\mathcal{A}$ is resolving or $\mathcal{B}$ is coresolving. This will be presented in three approaches. First, we study the relation between (left and right) $n$-cotorsion pairs and hereditary cotorsion pairs. We shall see that the only $n$-cotorsion pairs $(\mathcal{A,B})$ with $\mathcal{A}$ resolving are precisely the hereditary complete cotorsion pairs. Then, we shall comment on hereditary cotorsion pairs $(\mathcal{A,B})$ satisfying the property $\mathcal{A} \subseteq \mathcal{B}$, and provide several characterisations for them. These will allow us to note that the only $n$-cotorsion pair $(\mathcal{A,B})$ with $\mathcal{A}$ resolving and $\mathcal{A} \subseteq \mathcal{B}$ is the trivial $n$-cotorsion pair $(\mathcal{P}(\mathcal{C}),\mathcal{C})$ from Example~\ref{ex:trivial}. The previous suggest that being hereditary for higher cotorsion should be a one-sided notion. Thus, we shall propose in the last part of this section the concept of hereditary left $n$-cotorsion pair, and show that for any such pair the class $\Prec^{n}(\mathcal{A,B})$ is closed under extensions (see Remark \ref{rem:closure_Prec-n}). The latter is an important result that will help us to construct certain $n$-cotorsion pairs of chain complexes from an $n$-cotorsion pair in the ground category $\mathcal{C}$.

Recall that a cotorsion pair $(\mathcal{A,B})$ in an abelian category $\mathcal{C}$ is \emph{hereditary} if $\mathcal{A}$ is resolving and $\mathcal{B}$ is coresolving. The term \emph{resolving} means that $\mathcal{A}$ is closed under extensions and under taking kernels of epimorphisms between its objects, and that the class $\mathcal{P}(\mathcal{C})$ of projective objects of $\mathcal{C}$ is contained in $\mathcal{A}$. Coresolving classes are defined dually. 

It is well known that in any abelian category $\mathcal{C}$ with enough injectives (like for instance any Grothendieck category), for any cotorsion pair $(\mathcal{A,B})$ in $\mathcal{C}$ one has that $\mathcal{B}$ is coresolving if, and only if, $\Ext^2_{\mathcal{C}}(\mathcal{A,B}) = 0$. Dually, the latter is also equivalent to $\mathcal{A}$ being resolving provided that $\mathcal{C}$ has enough projectives. For $n$-cotorsion pairs, with $n \geq 2$, we can obtain a similar equivalence without having either enough projectives or injectives, as we show in the following result.

\begin{proposition}\label{equiv hered y n-cot}
Let $\mathcal{A}$ and $\mathcal{B}$ be two classes of objects of $\mathcal{C}$, and $n \geq 2$ be a positive integer. Consider the following conditions.
\begin{enumerate}
\item[(a)] $(\mathcal{A,B})$ is an $n$-cotorsion pair in $\mathcal{C}$ and $\mathcal{B}$ is a coresolving class.

\item[(b)] $(\mathcal{A,B})$ is an $n$-cotorsion pair in $\mathcal{C}$ and $\mathcal{A}$ is a resolving class.

\item[(c)] $(\mathcal{A,B})$ is a hereditary complete cotorsion pair in $\mathcal{C}$.
\end{enumerate}
Then, conditions (a) and (b) are equivalent, and any of them implies (c). If in addition, $\mathcal{C}$ has enough projectives or injectives, then (c) also implies (a) and (b).   
\end{proposition}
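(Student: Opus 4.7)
The plan is to prove (a) $\Rightarrow$ (c) and (b) $\Rightarrow$ (c) directly, derive (a) $\Leftrightarrow$ (b) from these via (c), and finally handle (c) $\Rightarrow$ (a), (b) under the extra hypothesis. The elementary observation to record first is that if $\mathcal{B}$ is coresolving, then $\mathcal{B}^\wedge_k = \mathcal{B}$ for every $k \geq 0$: given a $\mathcal{B}$-resolution of $X$, peel off syzygies one at a time, each falling back into $\mathcal{B}$ by closure under cokernels of monomorphisms between its objects. Dually, $\mathcal{A}$ resolving gives $\mathcal{A}^\vee_k = \mathcal{A}$.

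For (a) $\Rightarrow$ (c), combine this observation with Theorem~\ref{theo:left-n-cotorsion} to obtain that $(\mathcal{A}, \mathcal{B}^\wedge_{n-1}) = (\mathcal{A}, \mathcal{B})$ is a complete left cotorsion pair, hence $\mathcal{A} = {}^{\perp_1}\mathcal{B}$. From this equality, closure of $\mathcal{A}$ under extensions and the containment $\mathcal{P}(\mathcal{C}) \subseteq \mathcal{A}$ are formal, and closure under kernels of epimorphisms follows from a three-term long exact sequence that uses $\Ext^2_{\mathcal{C}}(\mathcal{A,B}) = 0$, available because $n \geq 2$. Thus $\mathcal{A}$ is resolving and $\mathcal{A}^\vee_{n-1} = \mathcal{A}$, so the right $n$-cotorsion sequence $0 \to C \to B \to M \to 0$ becomes a genuine monic $\mathcal{B}$-preenvelope with $M \in \mathcal{A}$. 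To finish, I would verify $\mathcal{B} = \mathcal{A}^{\perp_1}$ by taking $X \in \mathcal{A}^{\perp_1}$, applying this preenvelope to produce $0 \to X \to B \to A \to 0$, splitting it using $\Ext^1_{\mathcal{C}}(A, X) = 0$, and reading off $X \in \mathcal{B}$ from the closure of $\mathcal{B}$ under direct summands built into the right $n$-cotorsion pair definition. The implication (b) $\Rightarrow$ (c) is entirely dual, with the analogous splitting trick now using closure of $\mathcal{A}$ under direct summands to obtain $\mathcal{A} = {}^{\perp_1}\mathcal{B}$. Given these two implications, (a) $\Leftrightarrow$ (b) is then immediate, since (c) always supplies whichever of ``$\mathcal{A}$ resolving'' or ``$\mathcal{B}$ coresolving'' was missing from the other hypothesis.

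For (c) $\Rightarrow$ (a), (b) under enough projectives or injectives, the main task is to upgrade $\Ext^1_{\mathcal{C}}(\mathcal{A,B}) = 0$ to $\Ext^i_{\mathcal{C}}(\mathcal{A,B}) = 0$ for all $i \geq 1$. Assuming enough injectives, I would embed each $B \in \mathcal{B}$ into an injective $I \in \mathcal{I}(\mathcal{C}) \subseteq \mathcal{B}$, use coresolving to place the cokernel back in $\mathcal{B}$, and run the standard inductive dimension shift via $\Ext^i_{\mathcal{C}}(\mathcal{A}, I) = 0$; the enough-projectives case is strictly dual. The remaining ingredients---the left and right $n$-cotorsion sequences (with $\mathcal{B} \subseteq \mathcal{B}^\wedge_{n-1}$ and $\mathcal{A} \subseteq \mathcal{A}^\vee_{n-1}$) and summand closure (from the orthogonality descriptions)---then follow at once from the complete cotorsion pair structure. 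The main obstacle in the implications toward (c) is the orthogonality equality $\mathcal{B} = \mathcal{A}^{\perp_1}$ (and its dual), since in the absence of enough projectives or injectives one cannot dimension-shift Ext freely; the splitting-by-summand-closure argument is precisely what bypasses this difficulty.
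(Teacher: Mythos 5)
Your proof is correct and follows essentially the same route as the paper: the key steps are the observation that $\mathcal{B}$ coresolving gives $\mathcal{B}^\wedge_{n-1}=\mathcal{B}$ so that Theorem~\ref{theo:left-n-cotorsion} yields $\mathcal{A}={}^{\perp_1}\mathcal{B}$, the three-term $\Ext$-sequence with $\Ext^2_{\mathcal{C}}(\mathcal{A},\mathcal{B})=0$ (using $n\geq 2$) to get $\mathcal{A}$ resolving, and dimension shifting under enough projectives or injectives for (c) $\Rightarrow$ (a),(b). The only difference is organizational (you prove (a) $\Rightarrow$ (c) and (b) $\Rightarrow$ (c) first and read off (a) $\Leftrightarrow$ (b), and you verify $\mathcal{B}=\mathcal{A}^{\perp_1}$ by an explicit splitting rather than by citing the dual of Theorem~\ref{theo:left-n-cotorsion}), which does not change the substance.
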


\begin{proof}
We first show that (a) and (b) are equivalent. Suppose that condition (a) holds, and so $(\mathcal{A,B})$ is an $n$-cotorsion pair with $\mathcal{B}$ coresolving. By Theorem~\ref{theo:left-n-cotorsion}, $(\mathcal{A},\mathcal{B}^\wedge_{n-1})$ is a complete left cotorsion pair, where $\mathcal{B}^\wedge_{n-1} = \mathcal{B}$ since $\mathcal{B}$ is coresolving. It follows that $\mathcal{A} = {}^{\perp_1}\mathcal{B}$, and thus it is clear that $\mathcal{P}(\mathcal{C}) \subseteq \mathcal{A}$ and that $\mathcal{A}$ is closed under extensions. To show that $\mathcal{A}$ is also closed under taking kernels of epimorphisms between objects of $\mathcal{A}$, suppose we are given a short exact sequence
\[
0 \to A_1 \to A_2 \to A_3 \to 0
\]
with $A_2, A_3 \in \mathcal{A}$. For every $B \in \mathcal{B}$, we have an exact sequence
\[
\Ext^1_{\mathcal{C}}(A_2,B) \to \Ext^1_{\mathcal{C}}(A_1,B) \to \Ext^2_{\mathcal{C}}(A_3,B),
\]
where $\Ext^1_{\mathcal{C}}(A_2,B) = 0$ and $\Ext^2_{\mathcal{C}}(A_3,B) = 0$ since $(\mathcal{A,B})$ is an $n$-cotorsion pair with $n \geq 2$. It follows that $A_1 \in {}^{\perp_1}\mathcal{B} = \mathcal{A}$. Hence, $\mathcal{A}$ is resolving. The implication (b) $\Rightarrow$ (a) follows similarly. 

Note that if we assume (a) or (b), we obtain a complete left and a complete right cotorsion pair $(\mathcal{A,B})$, that is, a complete cotorsion pair in $\mathcal{C}$, which is also hereditary. 

Now if we suppose that (c) holds and that $\mathcal{C}$ has enough projectives or injectives. One can see that this implies that $\Ext^i_{\mathcal{C}}(\mathcal{A,B}) = 0$ holds for every $i \geq 1$, and showing hence conditions (a) and (b). 
\end{proof}

Note that the previous proposition basically says that there is no much hope in finding an $n$-cotorsion pair $(\mathcal{A,B})$ in a Grothendieck category with $\mathcal{A}$ resolving (and $\mathcal{B}$ coresolving) that is not actually a hereditary complete cotorsion pair. In Section~\ref{sec:applications}, we shall find some examples of $n$-cotorsion which do not come from such cotorsion pairs.

A first application of Proposition~\ref{equiv hered y n-cot} has to do with extending the result \cite[Proposition 6.7]{MaoDing_finite} about the existence of absolutely pure covers with the unique mapping property. For a precise statement, we need to recall some concepts.

Let $R$ be a ring and $M \in \Mod(R)$ be an $R$-module. Recall that $M$ is \emph{absolutely pure} (or \emph{FP-injective}) if $\Ext^1_R(F,M) = 0$ for every finitely presented module $F \in \Mod(R)$. In what follows, we shall denote by $\mathcal{AP}(R)$ the class of absolutely pure $R$-modules. The \emph{absolutely pure dimension} of $M$, denoted ${\rm apd}(M)$, is defined as the smallest nonnegative integer $m \geq 0$ such that $\Ext^{m+1}_R(F,M) = 0$ for every finitely presented module $F \in \Mod(R)$, or equivalently, as 
\[
{\rm apd}(M) = {\rm coresdim}_{\mathcal{AP}(R)}(M).
\] 
The (\emph{left}) \emph{global absolutely pure dimension} of $R$, denoted by ${\rm gl.APD}(R)$, is defined as the supremum 
\[
{\rm gl.APD}(R) := \sup \{ {\rm apd}(M) \mbox{ : } M \in \Mod(R) \}. 
\]
The dual concepts are the \emph{flat dimension} of $M$ and the \emph{weak global dimension} of $R$, denoted by ${\rm fd}(M)$ and ${\rm wd}(R)$, respectively.

\begin{corollary}
For any ring $R$ with ${\rm gl.APD}(R^{\rm op}) \leq 2$, the following statements are equivalent:
\begin{itemize}
\item[(a)] $R$ is a right coherent ring.

\item[(b)] Every right $R$-module has an absolutely pure cover with the unique mapping property.
\end{itemize}
Moreover, if any of above conditions holds true, then ${\rm wd}(R) \leq 2$ and the class $\mathcal{AP}(R\op)$ is closed under direct limits.
\end{corollary}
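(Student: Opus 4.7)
The plan is to reduce the corollary to Mao and Ding's characterisation \cite[Corollary 2.4]{MaoDing_wgd}, which asserts that (b) is equivalent to $R$ being right coherent together with ${\rm wd}(R) \leq 2$. The implication (b) $\Rightarrow$ (a) is then immediate. For (a) $\Rightarrow$ (b), the task reduces to deriving ${\rm wd}(R) \leq 2$ from the assumptions that $R$ is right coherent and ${\rm gl.APD}(R\op) \leq 2$, after which Mao--Ding delivers (b).

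To do this, I would first establish the Pontryagin identity
\[
{\rm apd}_{R\op}(N) \;=\; {\rm fd}_R(N^+) \qquad \text{for every } N \in \Mod(R\op),
\]
under the right coherence hypothesis. The starting point is the natural isomorphism $F \otimes_R N^+ \cong \Hom_{R\op}(F, N)^+$ for every finitely presented right $R$-module $F$. Right coherence lets one resolve such an $F$ by finitely generated (hence finitely presented) projectives, so deriving the previous isomorphism yields
\[
\Tor_i^R(F, N^+) \;\cong\; \Ext^i_{R\op}(F, N)^+
\]
for all $i \geq 0$ and all finitely presented $F$. Since an abelian group vanishes if and only if its Pontryagin dual does, and since every right $R$-module is a filtered colimit of finitely presented ones with $\Tor$ commuting with filtered colimits, the identity follows.

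Next, for an arbitrary $M \in \Mod(R)$, applying this identity with $N = M^+$ yields ${\rm fd}_R(M^{++}) = {\rm apd}_{R\op}(M^+) \leq {\rm gl.APD}(R\op) \leq 2$. The canonical embedding $M \hookrightarrow M^{++}$ admits a Pontryagin-dual retraction $M^{+++} \to M^+$, so $M^+$ is a direct summand of $M^{+++}$ in $\Mod(R\op)$; combined with the classical formula ${\rm fd}_R(X) = {\rm id}_{R\op}(X^+)$ of Ishikawa/Lazard, this gives
\[
{\rm fd}_R(M) \;=\; {\rm id}_{R\op}(M^+) \;\leq\; {\rm id}_{R\op}(M^{+++}) \;=\; {\rm fd}_R(M^{++}) \;\leq\; 2,
\]
so ${\rm wd}(R) \leq 2$ as required, and Mao--Ding closes the equivalence.

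For the moreover clause, ${\rm wd}(R) \leq 2$ has been obtained in either case, while $\mathcal{AP}(R\op)$ being closed under direct limits is the classical Stenstr\"om characterisation of right coherent rings. The main technical obstacle in this plan is establishing the Pontryagin identity at the level of derived functors rather than only in degree zero; right coherence is indispensable there, as it is exactly what permits resolving finitely presented right $R$-modules by finitely generated projectives and thus transferring the base isomorphism to all $i \geq 0$.
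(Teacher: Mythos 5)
Your proposal is correct, but it follows a genuinely different route from the paper's. The paper proves (a) $\Rightarrow$ (b) through its own machinery: right coherence yields the complete hereditary cotorsion pair $({}^{\perp_1}\mathcal{AP}(R\op),\mathcal{AP}(R\op))$ together with the existence of absolutely pure covers (quoting Mao--Ding and Pinz\'on), Proposition~\ref{equiv hered y n-cot} turns this pair into a right $3$-cotorsion pair, and the dual of Corollary~\ref{corUMP2} combined with ${\rm gl.APD}(R\op)\leq 2$ upgrades the covers to covers with the unique mapping property; the converse and the ``moreover'' clause are quoted from \cite{MaoDing_finite}. You instead take the full equivalence of \cite[Corollary 2.4]{MaoDing_wgd} (right coherent with ${\rm wd}(R)\leq 2$ if and only if absolutely pure covers with the unique mapping property exist) as the pivot, so the only substantive work is your Pontryagin-duality argument: over a right coherent ring the derived isomorphism $\Tor_i^R(F,N^+)\cong\Ext^i_{R\op}(F,N)^+$ (which indeed needs the finitely generated projective resolutions that coherence supplies) gives ${\rm apd}_{R\op}(N)={\rm fd}_R(N^+)$, and then ${\rm fd}_R(M)={\rm id}_{R\op}(M^+)\leq{\rm id}_{R\op}(M^{+++})={\rm fd}_R(M^{++})\leq 2$ via the retraction $M^{+++}\to M^+$, so ${\rm gl.APD}(R\op)\leq 2$ forces ${\rm wd}(R)\leq 2$; the closure of $\mathcal{AP}(R\op)$ under direct limits you obtain from Stenstr\"om's characterisation of coherence rather than from \cite[Proposition 6.7]{MaoDing_finite}. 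All of these steps check out. The trade-off is clear: your argument is classical and self-contained at the module level, but it leans on the strongest form of Mao--Ding's result and never exercises the $n$-cotorsion formalism, whereas the paper's proof only needs the existence of absolutely pure covers plus the cotorsion-pair input and is precisely the intended illustration of Proposition~\ref{equiv hered y n-cot} and Corollary~\ref{corUMP2}; your duality lemma is the extra ingredient that lets one convert the hypothesis ${\rm gl.APD}(R\op)\leq 2$ into ${\rm wd}(R)\leq 2$ directly, a step the paper avoids by quoting \cite[Theorem 6.6]{MaoDing_finite}.
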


\begin{proof}
On the one hand, if $R$ is a right coherent ring, then from \cite[Proposition 3.6]{MaoDing_finite} and \cite[Corollary 2.7]{Pinzon}, we have that $({}^{\perp_{1}}\mathcal{AP}(R\op),\mathcal{AP}(R\op))$ is a complete hereditary cotorsion pair in $\Mod(R^{\rm op})$ and that every right $R$-module has an absolutely pure cover. Thus, by Proposition~\ref{equiv hered y n-cot} and the dual of Corollary~\ref{corUMP2} we get the implication (a) $\Rightarrow$ (b). The converse, on the other hand, holds true by \cite[Remark 6.8]{MaoDing_finite}. Finally, the second part follows by \cite[Theorem 6.6 and Proposition 6.7]{MaoDing_finite}.
\end{proof}

\begin{corollary}
For any ring $R,$ the following conditions are equivalent:
\begin{itemize}
\item[(a)] Every $R$-module has a flat envelope with the unique mapping property.

\item[(b)] Every $R$-module has a flat envelope and ${\rm wd}(R)\leq 2$. 

\item[(c)] $R$ is a right coherent ring with ${\rm wd}(R)\leq 2$.
\end{itemize}
\end{corollary}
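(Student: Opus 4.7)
The plan is to recognise the classical flat cotorsion pair as a left $n$-cotorsion pair in $\Mod(R)$ and then apply Corollary~\ref{corUMP2}. Write $\mathrm{Cot}(R) := \mathcal{F}(R)^{\perp_1}$ for the class of cotorsion $R$-modules. By the flat cover theorem of Bican, El~Bashir and Enochs, the pair $(\mathcal{F}(R), \mathrm{Cot}(R))$ is a complete cotorsion pair; in particular $\mathcal{F}(R) = {}^{\perp_1}\mathrm{Cot}(R)$, and every module $M$ fits into a short exact sequence $0 \to K \to F \to M \to 0$ with $F$ flat and $K$ cotorsion. A dimension-shifting argument then gives $\Ext^i_R(\mathcal{F}(R), \mathrm{Cot}(R)) = 0$ for every $i \geq 1$: one only needs to observe that the first syzygy of a flat module in any projective resolution is again flat, which is immediate from the long exact sequence for $\Tor$. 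Combined with the trivial closure of $\mathcal{F}(R)$ under direct summands and the inclusion $K \in \mathrm{Cot}(R) \subseteq \mathrm{Cot}(R)^{\wedge}_{n-1}$, this shows that $(\mathcal{F}(R), \mathrm{Cot}(R))$ is a left $n$-cotorsion pair in $\Mod(R)$ for every $n \geq 1$.

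Having set this up, I would apply Corollary~\ref{corUMP2} with $n = 3$ to the pair $(\mathcal{F}(R), \mathrm{Cot}(R))$. The required hypotheses are all in place: $\Mod(R)$ has enough projectives, and $\mathcal{F}(R) = {}^{\perp_1}\mathrm{Cot}(R)$ as recalled above. The conclusion of that corollary reads that every $R$-module has an $\mathcal{F}(R)$-envelope with the unique mapping property if and only if every $R$-module has an $\mathcal{F}(R)$-envelope and $\resdim_{\mathcal{F}(R)}(\Mod(R)) \leq 2$. Since the latter dimension is nothing but the weak global dimension ${\rm wd}(R)$, this yields exactly the equivalence (a) $\Leftrightarrow$ (b).

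Finally, for the equivalence (a) $\Leftrightarrow$ (c), I would simply cite the theorem of Asensio Mayor and Mart\'inez Hern\'andez \cite[Proposition 2.1]{AsensioMartinez} already quoted at the beginning of the subsection, which states that every $R$-module has a flat envelope with the unique mapping property if and only if $R$ is right coherent and ${\rm wd}(R) \leq 2$. The only mildly technical point in the whole sketch is the verification of the higher $\Ext$-vanishing between flat and cotorsion modules; it is entirely routine, and the remainder is a direct consequence of Corollary~\ref{corUMP2} together with the cited classical results.
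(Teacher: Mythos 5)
Your proposal is correct and follows essentially the same route as the paper: both reduce (a) $\Leftrightarrow$ (b) to Corollary~\ref{corUMP2} applied to the flat cotorsion pair $(\mathcal{F}(R),\mathcal{F}(R)^{\perp_1})$ viewed as a left $n$-cotorsion pair with $n\geq 3$ and $\mathcal{F}(R)={}^{\perp_1}\mathcal{F}(R)^{\perp_1}$, and both get (a) $\Leftrightarrow$ (c) from Asensio Mayor--Mart\'inez Hern\'andez. The only difference is cosmetic: you verify the hereditary/higher-Ext conditions directly from the flat cover theorem and dimension shifting, whereas the paper cites Mao--Ding together with Proposition~\ref{equiv hered y n-cot} for the same fact.
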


\begin{proof}
From \cite[Theorem 3.4]{MaoDing_finite}, Proposition~\ref{equiv hered y n-cot} and Corollary~\ref{corUMP2}, we have the equivalence (a) $\Leftrightarrow$ (b), while (a) $\Leftrightarrow$ (c) follows by \cite[Proposition 2.1]{AsensioMartinez}.
\end{proof}

Another application of Proposition~\ref{equiv hered y n-cot}, along with Proposition \ref{ncotder y cotder} and its dual, has to do with descriptions for the classes $\mathcal{A}^\wedge_n$ and $\mathcal{B}^\vee_m$ coming from a hereditary complete cotorsion pair $(\mathcal{A,B})$.

\begin{corollary}
Let $(\mathcal{A,B})$ be a hereditary complete cotorsion pair in an abelian category $\mathcal{C}$ with enough projectives and injectives. Then, for any pair of integers $m, n \geq 0$, the equalities $\mathcal{A}^{\perp_{m + 1}} = \mathcal{B}^\vee_m$ and ${}^{\perp_{n+1}}\mathcal{B} = \mathcal{A}^\wedge_n$ hold true.
\end{corollary}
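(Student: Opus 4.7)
The plan is to leverage Propositions~\ref{equiv hered y n-cot} and \ref{ncotder y cotder} (together with its dual) to reduce the statement to a direct application of the $n$-cotorsion pair framework already established.

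First, I would note that since $(\mathcal{A,B})$ is a complete cotorsion pair, we have $\mathcal{A} = {}^{\perp_1}\mathcal{B}$ and $\mathcal{B} = \mathcal{A}^{\perp_1}$ by definition. Next, because $\mathcal{C}$ has enough projectives and injectives and $(\mathcal{A,B})$ is a hereditary complete cotorsion pair, Proposition~\ref{equiv hered y n-cot} (the implication (c) $\Rightarrow$ (a), (b)) yields that $(\mathcal{A,B})$ is an $N$-cotorsion pair in $\mathcal{C}$ for every integer $N \geq 2$; and for $N = 1$ this is just the hypothesis that $(\mathcal{A,B})$ is a complete cotorsion pair. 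In particular, $(\mathcal{A,B})$ is both a left and a right $N$-cotorsion pair for every $N \geq 1$.

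Now fix $n \geq 0$. I would take $N := n + 1 \geq 1$ and apply Proposition~\ref{ncotder y cotder} to the left $N$-cotorsion pair $(\mathcal{A,B})$: since the condition $\mathcal{A} = {}^{\perp_1}\mathcal{B}$ holds, we obtain ${}^{\perp_{k+1}}\mathcal{B} = \mathcal{A}^\wedge_k$ for every $0 \leq k \leq N - 1 = n$. Specialising to $k = n$ gives the desired equality
\[
{}^{\perp_{n+1}}\mathcal{B} = \mathcal{A}^\wedge_n.
\]
Dually, for a fixed $m \geq 0$, I would set $N := m + 1$ and apply the dual of Proposition~\ref{ncotder y cotder} to the right $N$-cotorsion pair $(\mathcal{A,B})$, using $\mathcal{B} = \mathcal{A}^{\perp_1}$, to obtain $\mathcal{A}^{\perp_{k+1}} = \mathcal{B}^\vee_k$ for every $0 \leq k \leq m$, and in particular $\mathcal{A}^{\perp_{m+1}} = \mathcal{B}^\vee_m$.

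There is essentially no obstacle here; all of the work has already been carried out in Proposition~\ref{equiv hered y n-cot} (which is what lets us pass from a single complete hereditary cotorsion pair to an entire family of $n$-cotorsion pairs) and in Proposition~\ref{ncotder y cotder} (which translates the $n$-cotorsion pair structure into the resolution/coresolution dimension description of the orthogonal complements). The only small point to verify is that the hypotheses of these two propositions are met, which follows immediately from the completeness and hereditariness of $(\mathcal{A,B})$ together with the enough projectives/injectives assumption on $\mathcal{C}$.
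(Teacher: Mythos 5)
Your proposal is correct and follows exactly the route the paper intends: the corollary is stated as an application of Proposition~\ref{equiv hered y n-cot} (to upgrade the hereditary complete cotorsion pair to an $N$-cotorsion pair for every $N$) combined with Proposition~\ref{ncotder y cotder} and its dual, which is precisely what you do. The only hypothesis checks needed (namely $\mathcal{A} = {}^{\perp_1}\mathcal{B}$, $\mathcal{B} = \mathcal{A}^{\perp_1}$, and the enough projectives/injectives assumption) are handled correctly, including the degenerate case $N=1$.
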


Now let us focus on $n$-cotorsion pairs satisfying the condition $\mathcal{A} \subseteq \mathcal{B}$.

\begin{proposition}\label{Asubseteq B en ncot}
Let $(\mathcal{A,B})$ be an $n$-cotorsion pair in $\mathcal{C}$. Then, the following assertions are equivalent.
\begin{itemize}
\item[(a)] $\mathcal{A} \subseteq \mathcal{B}$.

\item[(b)] $\mathcal{C} = \mathcal{B}_n^{\wedge}$.

\item[(c)] $\Ext_{\mathcal{C}}^1(\mathcal{A}_{n-1}^{\vee},\mathcal{A}) = 0$.
\end{itemize}
\end{proposition}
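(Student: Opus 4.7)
The plan is to prove the cyclic equivalences via (a) $\Rightarrow$ (b) $\Rightarrow$ (a) and (a) $\Leftrightarrow$ (c), exploiting crucially the fact that $(\mathcal{A,B})$ is a two-sided $n$-cotorsion pair, so both $\mathcal{A}$ and $\mathcal{B}$ are closed under direct summands, and using Proposition~\ref{prop8} together with its dual as the key technical input.

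For (a) $\Rightarrow$ (b), I would take an arbitrary $C \in \mathcal{C}$ and apply the left $n$-cotorsion pair condition to obtain a short exact sequence $0 \to K \to A \to C \to 0$ with $A \in \mathcal{A}$ and $K \in \mathcal{B}^{\wedge}_{n-1}$. Since $A \in \mathcal{A} \subseteq \mathcal{B}$ by (a), concatenating a length-$(n-1)$ $\mathcal{B}$-resolution of $K$ with the map $A \to C$ yields a $\mathcal{B}$-resolution of $C$ of length at most $n$, giving $C \in \mathcal{B}^{\wedge}_n$. Conversely, for (b) $\Rightarrow$ (a), given $A \in \mathcal{A}$, assumption (b) provides a $\mathcal{B}$-resolution of $A$ whose first syzygy $K = \ker(B_0 \to A)$ lies in $\mathcal{B}^{\wedge}_{n-1}$; then Proposition~\ref{prop8} (the ``In particular'' clause) yields $\Ext^1_{\mathcal{C}}(A,K) = 0$, so $0 \to K \to B_0 \to A \to 0$ splits. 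Thus $A$ is a direct summand of $B_0 \in \mathcal{B}$, and because $(\mathcal{A,B})$ is an $n$-cotorsion pair, $\mathcal{B}$ is closed under direct summands and so $A \in \mathcal{B}$.

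For (a) $\Rightarrow$ (c), the argument is immediate from the dual of Proposition~\ref{prop8}, which applied with $k = n-1$ gives $\Ext^1_{\mathcal{C}}(\mathcal{A}^{\vee}_{n-1},\mathcal{B}) = 0$; since $\mathcal{A} \subseteq \mathcal{B}$ by (a), this restricts to the vanishing in (c). For the reverse implication (c) $\Rightarrow$ (a), fix $A \in \mathcal{A}$ and use the right $n$-cotorsion pair condition to embed $A$ into a short exact sequence $0 \to A \to B \to C \to 0$ with $B \in \mathcal{B}$ and $C \in \mathcal{A}^{\vee}_{n-1}$. By (c), $\Ext^1_{\mathcal{C}}(C,A) = 0$, so the sequence splits and $A$ is a direct summand of $B \in \mathcal{B}$; once more, $\mathcal{B}$ closed under direct summands gives $A \in \mathcal{B}$.

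I do not foresee a serious obstruction: each direction reduces to combining the defining left- or right-sided condition of an $n$-cotorsion pair with either a splitting argument from Ext-vanishing or the orthogonality of Proposition~\ref{prop8}. The only point requiring a bit of attention is to verify that the indices match in Proposition~\ref{prop8} and its dual, since both applications land precisely at the endpoint $k = n - 1$; this is exactly where the resolution dimension in the definition of a left (resp.\ right) $n$-cotorsion pair is designed to hit the correct Ext-degree.
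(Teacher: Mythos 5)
Your proof is correct and follows essentially the same route as the paper: (a) $\Leftrightarrow$ (b) is argued identically (concatenating the resolution, then splitting via the ``in particular'' clause of Proposition~\ref{prop8} and closure of $\mathcal{B}$ under direct summands), while for (a) $\Leftrightarrow$ (c) the paper simply cites the dual of Theorem~\ref{theo:left-n-cotorsion}, i.e.\ the equality $\mathcal{B} = (\mathcal{A}^\vee_{n-1})^{\perp_1}$, whose two containments are exactly your dual-of-Proposition~\ref{prop8} argument and your splitting argument unfolded inline.
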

\begin{proof}
We first show (a) $\Leftrightarrow$ (b). The implication (a) $\Rightarrow$ (b) follows by Definition~\ref{def:ncotorsion} (3). Now suppose that condition (b) $\mathcal{C} = \mathcal{B}_{n}^{\wedge}$ holds true, and let $A \in \mathcal{A}$. Then, $A$ is isomorphic to the image of an epimorphism from $\mathcal{B}$ with kernel in $\mathcal{B}_{n-1}^{\wedge}$. By Proposition~\ref{prop8}, this epimorphism splits, and therefore $A \in \mathcal{B}$ since $\mathcal{B}$ is closed under direct summands. Finally, the equivalence (a) $\Leftrightarrow$ (c) follows by the dual of Theorem~\ref{theo:left-n-cotorsion}.
\end{proof}

We can use the previous proposition to show the following generalisation of Akinci and Alizade's \cite[Remark 2.4]{Akinci}.

\begin{corollary}\label{coro:cot-hered-completo-trivial}  
For every hereditary complete cotorsion pair $(\mathcal{A,B})$ in an abelian category  $\mathcal{C}$, the following assertions are equivalent.
\begin{itemize}
\item[(a)] $\mathcal{A} \subseteq \mathcal{B}.$

\item[(b)] $\mathcal{C} = \mathcal{B}$.

\item[(c)] $\Ext_{\mathcal{C}}^{1}(\mathcal{A,A}) = 0$.

\item[(d)] $\mathcal{A} = \mathcal{P}(\mathcal{C})$.
\end{itemize}
\end{corollary}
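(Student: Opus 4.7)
The plan is to establish the cyclic chain \textup{(a) $\Rightarrow$ (b) $\Rightarrow$ (c) $\Rightarrow$ (d) $\Rightarrow$ (a)}. For \textup{(a) $\Rightarrow$ (b)}, I would take an arbitrary $C \in \mathcal{C}$ and invoke the left completeness of the cotorsion pair to produce a short exact sequence $0 \to B \to A \to C \to 0$ with $A \in \mathcal{A}$ and $B \in \mathcal{B}$. By hypothesis \textup{(a)}, $A$ lies in $\mathcal{B}$ as well, so $C$ is the cokernel of a monomorphism between two objects of $\mathcal{B}$; since $\mathcal{B}$ is coresolving, $C \in \mathcal{B}$. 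The step \textup{(b) $\Rightarrow$ (c)} is then immediate, because the cotorsion-pair orthogonality gives $\mathsf{Ext}^1_\mathcal{C}(\mathcal{A},\mathcal{A}) \subseteq \mathsf{Ext}^1_\mathcal{C}(\mathcal{A},\mathcal{B}) = 0$.

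The central step is \textup{(c) $\Rightarrow$ (d)}. I would fix $A \in \mathcal{A}$ and show that $A$ is projective by verifying $\mathsf{Ext}^1_\mathcal{C}(A,X) = 0$ for every $X \in \mathcal{C}$. Applying left completeness to $X$ yields a short exact sequence $0 \to B \to A' \to X \to 0$ with $A' \in \mathcal{A}$ and $B \in \mathcal{B}$, whose associated long exact sequence contains the fragment
\[
\mathsf{Ext}^1_\mathcal{C}(A,A') \to \mathsf{Ext}^1_\mathcal{C}(A,X) \to \mathsf{Ext}^2_\mathcal{C}(A,B).
\]
The leftmost term vanishes by hypothesis \textup{(c)}. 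The rightmost term vanishes because a hereditary complete cotorsion pair satisfies $\mathsf{Ext}^i_\mathcal{C}(\mathcal{A},\mathcal{B}) = 0$ for all $i \geq 1$; concretely, Proposition~\ref{equiv hered y n-cot} tells us that such a pair is in particular a $2$-cotorsion pair (in the categorical settings of interest, where $\mathcal{C}$ has enough projectives or injectives). Hence $A \in \mathcal{P}(\mathcal{C})$, so $\mathcal{A} \subseteq \mathcal{P}(\mathcal{C})$; the reverse inclusion is built into $\mathcal{A}$ being resolving, giving $\mathcal{A} = \mathcal{P}(\mathcal{C})$.

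Finally, \textup{(d) $\Rightarrow$ (a)} is a short remark: the right-completeness half of $(\mathcal{A,B})$ identifies $\mathcal{B} = \mathcal{A}^{\perp_1} = \mathcal{P}(\mathcal{C})^{\perp_1} = \mathcal{C}$, which trivially contains $\mathcal{A}$. The only non-formal ingredient in the whole argument is the vanishing $\mathsf{Ext}^2_\mathcal{C}(\mathcal{A,B}) = 0$ used in \textup{(c) $\Rightarrow$ (d)}; this is where the hereditary hypothesis does real work, and it is the single point of the proof where one leans on Proposition~\ref{equiv hered y n-cot} (and hence, implicitly, on the ambient category supplying enough projectives or injectives). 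The remaining implications are straightforward manipulations with the definitions of completeness, resolving/coresolving classes, and the orthogonality $\mathsf{Ext}^1_\mathcal{C}(\mathcal{A},\mathcal{B}) = 0$.
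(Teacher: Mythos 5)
Your steps (a) $\Rightarrow$ (b), (b) $\Rightarrow$ (c) and (d) $\Rightarrow$ (a) are correct and use exactly the ingredients available (completeness, $\mathcal{B}$ coresolving, $\mathcal{A}={}^{\perp_1}\mathcal{B}$, $\mathcal{B}=\mathcal{A}^{\perp_1}$). The gap is in (c) $\Rightarrow$ (d): there you need $\Ext^2_{\mathcal{C}}(\mathcal{A},\mathcal{B})=0$, and you justify it by citing Proposition~\ref{equiv hered y n-cot}, which delivers that vanishing only when $\mathcal{C}$ has enough projectives or injectives --- a restriction you acknowledge (``in the categorical settings of interest''). But the corollary is stated for an arbitrary abelian category, and ``hereditary'' is defined there purely as $\mathcal{A}$ resolving and $\mathcal{B}$ coresolving; in that generality the $\Ext^2$-vanishing is not something you may simply quote. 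As written, your argument therefore proves the statement only under an extra hypothesis that the corollary does not assume.

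The detour through $\Ext^2$ is also unnecessary. Since $(\mathcal{A},\mathcal{B})$ is a complete cotorsion pair, $\mathcal{B}=\mathcal{A}^{\perp_1}$, so (c) immediately gives $\mathcal{A}\subseteq\mathcal{A}^{\perp_1}=\mathcal{B}$, i.e. (c) $\Rightarrow$ (a); combining this with your (a) $\Rightarrow$ (b) and with the observation that $\mathcal{C}=\mathcal{B}$ forces $\mathcal{A}={}^{\perp_1}\mathcal{B}={}^{\perp_1}\mathcal{C}=\mathcal{P}(\mathcal{C})$ closes the cycle using only $\Ext^1$-data and completeness, valid in any abelian category. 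This is essentially the paper's route: it regards $(\mathcal{A},\mathcal{B})$ as a $1$-cotorsion pair with $\mathcal{B}^{\wedge}_1=\mathcal{B}$ and $\mathcal{A}^{\vee}_1=\mathcal{A}$ and applies Proposition~\ref{Asubseteq B en ncot}, plus the easy equivalence (b) $\Leftrightarrow$ (d). (Alternatively, one can in fact prove $\Ext^2_{\mathcal{C}}(\mathcal{A},\mathcal{B})=0$ in any abelian category from completeness together with $\mathcal{A}$ resolving, by a splice/dimension-shifting argument with special $\mathcal{A}$-precovers; but that requires an actual proof rather than an appeal to Proposition~\ref{equiv hered y n-cot}.)
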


\begin{proof} 
Note that $(\A,\B)$ is $1$-cotorsion pair in $\C$ with $\B^\wedge_1 = \B$ and $\A^\vee_1 = \A$. Thus, by Proposition~\ref{Asubseteq B en ncot} we get the equivalences between (a), (b) and (c). The equivalence between (b) and (d), on the other hand, is straightforward. 
\end{proof}

\begin{remark}\label{Rk-n-cot-trivial} \
\begin{enumerate}
\item  Let $(\mathcal{A,B})$ be an $n$-cotorsion pair in an abelian category $\mathcal{C},$ with $n\geq 2$ and $\mathcal{A}$ resolving (or $\mathcal{B}$ coresolving). Then, by Proposition~\ref{equiv hered y n-cot} we know that $(\A,\B)$ is an  hereditary complete cotorsion pair in $\C$. Thus, by Corollary~\ref{coro:cot-hered-completo-trivial}, we get the equivalences
\[
\mathcal{A} \subseteq \mathcal{B}\;\Leftrightarrow\;\mathcal{C} = \mathcal{B}\;\Leftrightarrow\;\Ext_{\mathcal{C}}^{1}(\mathcal{A,A}) = 0\;\Leftrightarrow\;\mathcal{A} = \mathcal{P}(\mathcal{C}).
\]

\item The previous implies that, in an abelian category $\C$ with enough projectives and injectives, the only $n$-cotorsion pair $(\mathcal{A,B})$ with $\mathcal{A}$ resolving and satisfying the continment $\mathcal{A} \subseteq \mathcal{B}$ is the trivial $n$-cotorsion pair $(\mathcal{P}(\mathcal{C}),\mathcal{C})$. 
\end{enumerate}
\end{remark}

Now we are ready to propose the following definition of hereditary unilateral $n$-cotorsion.

\begin{definition}\label{def:hereditary_n-cotorsion}
Let $(\mathcal{A,B})$ be a left $n$-cotorsion pair in $\mathcal{C}$. We say that $(\mathcal{A,B})$ is \textbf{hereditary} if $\Ext^{n+1}_{\mathcal{C}}(\mathcal{A,B}) = 0$. Hereditary right $n$-cotorsion pairs are defined in the same way. 
\end{definition}

One can note the following property of left $n$-cotorsion pairs.

\begin{proposition}\label{prop:hereditary_n-cotorsion}
If $(\mathcal{A,B})$ is a hereditary left $n$-cotorsion pair in $\mathcal{C}$, then the class $\mathcal{A}$ is resolving. 
\end{proposition}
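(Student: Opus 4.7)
The plan is to verify the three defining properties of a resolving class for $\mathcal{A}$: (i) containment $\mathcal{P}(\mathcal{C}) \subseteq \mathcal{A}$, (ii) closure under extensions, and (iii) closure under kernels of epimorphisms between objects of $\mathcal{A}$. The closure under direct summands is already built into the definition of a left $n$-cotorsion pair. The key observation throughout is that, by Theorem~\ref{theo:left-n-cotorsion}, the hypothesis yields the description $\mathcal{A} = \bigcap_{i=1}^{n}\,{}^{\perp_i}\mathcal{B}$, which reduces every statement about membership in $\mathcal{A}$ to a collection of $\Ext$-vanishing conditions.

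For (i), any projective object $P \in \mathcal{P}(\mathcal{C})$ satisfies $\Ext^i_{\mathcal{C}}(P,B) = 0$ for all $B \in \mathcal{B}$ and all $i \geq 1$, so in particular $P \in \bigcap_{i=1}^{n}\,{}^{\perp_i}\mathcal{B} = \mathcal{A}$. For (ii), given a short exact sequence $0 \to A_1 \to A_2 \to A_3 \to 0$ with $A_1,A_3 \in \mathcal{A}$ and any $B \in \mathcal{B}$, the long exact sequence
\[
\Ext^i_{\mathcal{C}}(A_3,B) \to \Ext^i_{\mathcal{C}}(A_2,B) \to \Ext^i_{\mathcal{C}}(A_1,B)
\]
has vanishing outer terms for $1 \leq i \leq n$, forcing $\Ext^i_{\mathcal{C}}(A_2,B) = 0$ in that range. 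Hence $A_2 \in \bigcap_{i=1}^{n}\,{}^{\perp_i}\mathcal{B} = \mathcal{A}$.

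The interesting step is (iii), and this is where the hereditary hypothesis $\Ext^{n+1}_{\mathcal{C}}(\mathcal{A},\mathcal{B}) = 0$ does the actual work. Given $0 \to A_1 \to A_2 \to A_3 \to 0$ with $A_2, A_3 \in \mathcal{A}$, the long exact sequence
\[
\Ext^i_{\mathcal{C}}(A_2,B) \to \Ext^i_{\mathcal{C}}(A_1,B) \to \Ext^{i+1}_{\mathcal{C}}(A_3,B)
\]
gives $\Ext^i_{\mathcal{C}}(A_1,B) = 0$ for $1 \leq i \leq n-1$ from the $n$-cotorsion pair orthogonality alone; but for $i = n$ the right-hand term is $\Ext^{n+1}_{\mathcal{C}}(A_3,B)$, which vanishes precisely because $(\mathcal{A},\mathcal{B})$ is hereditary. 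Consequently $A_1 \in \bigcap_{i=1}^{n}\,{}^{\perp_i}\mathcal{B} = \mathcal{A}$, completing (iii) and therefore the proof that $\mathcal{A}$ is resolving.

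There is essentially no obstacle in this argument: all three closure properties collapse to long exact sequence manipulations once Theorem~\ref{theo:left-n-cotorsion} is invoked. The only subtle point worth emphasising in the writeup is that the hereditary condition $\Ext^{n+1}_{\mathcal{C}}(\mathcal{A},\mathcal{B}) = 0$ is exactly calibrated to cover the boundary index $i = n$ in the kernel-closure step, which explains why this extra vanishing is the natural definition of ``hereditary'' in the one-sided $n$-cotorsion setting and not a higher-degree analogue.
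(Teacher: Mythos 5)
Your proof is correct and follows essentially the same route as the paper: invoke Theorem~\ref{theo:left-n-cotorsion} to write $\mathcal{A} = \bigcap_{i=1}^{n}\,{}^{\perp_i}\mathcal{B}$, after which containment of projectives and closure under extensions are immediate, and closure under kernels of epimorphisms follows from the long exact sequence, with the hereditary condition $\Ext^{n+1}_{\mathcal{C}}(\mathcal{A,B})=0$ handling exactly the boundary index $i=n$. No gaps to report.
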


\begin{proof}
Notice by Theorem~\ref{theo:left-n-cotorsion} that $\mathcal{A} = \bigcap^n_{i = 1} {}^{\perp_i}\mathcal{B}$. Then, it is clear that $\mathcal{A}$ is closed under extensions and that $\mathcal{P}(\mathcal{C}) \subseteq \mathcal{A}$. Finally, if we are given a short exact sequence
\[
0 \to A_1 \to A_2 \to A_3 \to 0
\]
in $\mathcal{C}$ with $A_2, A_3 \in \mathcal{A}$, then we have an exact sequence
\[
\Ext^i_{\mathcal{C}}(A_2, B) \to \Ext^i_{\mathcal{C}}(A_1, B) \to \Ext^{i+1}_{\mathcal{C}}(A_3,B)
\]
of abelian groups where $\Ext^i_{\mathcal{C}}(A_2,B) = 0$ for every $1 \leq i \leq n$, and $\Ext^{i+1}_{\mathcal{C}}(A_3, B) = 0$ for every $1 \leq i \leq n$ since $(\mathcal{A,B})$ is hereditary. It follows that $\Ext^i_{\mathcal{C}}(A_1, B) = 0$ for every $1 \leq i \leq n$ and $B \in \mathcal{B}$, that is, $A_1 \in \cap^n_{i = 1} {}^{\perp_i}\mathcal{B} = \mathcal{A}$, and so $\mathcal{A}$ is closed under taking kernels of epimorphisms between its objects. 
\end{proof}

\begin{remark}
Notice by the previous proposition and its dual, that is $(\mathcal{A,B})$ is a hereditary (left and right) $n$-cotorsion pair, then $\mathcal{A}$ is resolving and $\mathcal{B}$ is coresolving. Then, Theorem~\ref{theo:left-n-cotorsion} and its dual imply that $\mathcal{A} = {}^{\perp_1}(\mathcal{B}^\wedge_{n-1})$ and $\mathcal{B} = (\mathcal{A}^\vee_{n-1})^{\perp_1}$, where $\mathcal{B}^\wedge_{n-1} = \mathcal{B}$ and $\mathcal{A}^\vee_{n-1} = \mathcal{A}$. Hence, $(\mathcal{A,B})$ is a hereditary complete cotorsion pair in $\mathcal{C}$. The converse of this fact is clearly true in the case where $\mathcal{C}$ has enough projective and injectives. 
\end{remark}

\begin{theorem}\label{theo:compatible}
Let $n \geq 2$ be a positive integer and $(\mathcal{A,B})$ be a hereditary left $n$-cotorsion pair in $\mathcal{C}$ with $\mathcal{B}$ closed under extensions and such that $\Ext^1_{\mathcal{C}}(\mathcal{B},\mathcal{B}^\wedge_{n-1}) = 0$. Let 
\[
\eta \colon 0 \to X \xrightarrow{f} Y \xrightarrow{g} Z \to 0
\]
be a short exact sequence in $\mathcal{C}$ where $X$ and $Z$ have special $(\mathcal{A},n,\mathcal{B})$-precovers $\rho_X$ and $\rho_Z$ as in Remark~\ref{rem:special_n-1}. Then, $Y$ has a special $(\mathcal{A},n,\mathcal{B})$-precover $\rho_Y$ compatible with $\rho_X$ and $\rho_Z$ in the sense that it fits into a commutative diagram as \eqref{fig_compatible}. In particular, the class ${\rm Prec}^n(\mathcal{A,B})$ is closed under extensions. 
\end{theorem}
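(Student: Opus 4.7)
The plan is to promote the construction of Theorem~\ref{Teo-k-prec} from the range $k \leq n-1$ to the missing case $k = n$ flagged in Remark~\ref{rem:closure_Prec-n}, and then to supplement it with a horseshoe-type argument that realises the compatibility with the fixed resolutions $\rho_X$ and $\rho_Z$. The first step is almost a verbatim repetition of the argument of Theorem~\ref{Teo-k-prec}, so I only need to verify that its three ingredients---closure of $\A$ under extensions, closure of $\B^\wedge_{n-1}$ under extensions, and vanishing of $\Ext^2_\C(\A,\B^\wedge_{n-1})$---are all available. Closure of $\A$ is Proposition~\ref{prop:hereditary_n-cotorsion}; closure of $\B^\wedge_{n-1}$ follows from the hypothesis $\Ext^1_\C(\B,\B^\wedge_{n-1})=0$ via Lemma~\ref{Bk-1ext}; and the $\Ext^2$-vanishing is new: the hereditary assumption upgrades $\Ext^i_\C(\A,\B)=0$ to the range $1 \leq i \leq n+1$, and Proposition~\ref{prop8} then yields $\Ext^i_\C(\A,\B^\wedge_{n-1})=0$ for $1 \leq i \leq 2$. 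Running the Theorem~\ref{Teo-k-prec} argument with $k=n$, applied to the top layers $0\to K^X \to A^X \to X \to 0$ and $0\to K^Z \to A^Z \to Z \to 0$ of $\rho_X$ and $\rho_Z$, produces a diagram of the shape~\eqref{fig_caso_n1} whose middle row
\[
0 \to K^Y \to A^Y \to Y \to 0
\]
has $A^Y\in\A$ and $K^Y\in\B^\wedge_{n-1}$, and whose left column is a short exact sequence $0 \to K^X \to K^Y \to K^Z \to 0$.

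To upgrade this single layer to a full compatible $\rho_Y$, I would run a horseshoe construction on $0 \to K^X \to K^Y \to K^Z \to 0$ using the given $\B$-resolutions of $K^X$ and $K^Z$. Writing $K^X_j$ and $K^Z_j$ for the $j$-th syzygies of $\rho_X$ and $\rho_Z$, at each step $0 \leq j \leq n-2$ one must lift the map $B^Z_j \to K^Z_j$ through the prospective epimorphism $K^Y_j \twoheadrightarrow K^Z_j$ whose kernel is $K^X_j$; the obstruction lies in $\Ext^1_\C(B^Z_j,K^X_j)$. Since $B^Z_j \in \B$ and $K^X_j \in \B^\wedge_{n-1-j} \subseteq \B^\wedge_{n-1}$, the hypothesis $\Ext^1_\C(\B,\B^\wedge_{n-1})=0$ kills this obstruction. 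Each resulting syzygy $K^Y_{j+1}$ sits in a short exact sequence $0 \to K^X_{j+1} \to K^Y_{j+1} \to K^Z_{j+1} \to 0$ and lies in $\B^\wedge_{n-2-j}$, using that every smaller class $\B^\wedge_{k-1}$ with $k\leq n-1$ is also closed under extensions (for the same reason as $\B^\wedge_{n-1}$, since $\B^\wedge_{k-1}\subseteq\B^\wedge_{n-1}$). At the final level we obtain $B^Y_{n-1} := B^X_{n-1} \oplus B^Z_{n-1} \in \B$, closing off the resolution. The horseshoe output
\[
0 \to B^X_{n-1}\oplus B^Z_{n-1} \to \cdots \to B^X_0\oplus B^Z_0 \to K^Y \to 0
\]
assembles into a short exact sequence of complexes with the corresponding resolutions of $K^X$ and $K^Z$; splicing it with $0 \to K^Y \to A^Y \to Y \to 0$ and with the short exact sequence $0 \to A^X \to A^Y \to A^Z \to 0$ from~\eqref{fig_caso_n1} produces $\rho_Y$ together with the commutative diagram~\eqref{fig_compatible}.

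The closure of $\Prec^n(\A,\B)$ under extensions is then automatic, since the construction above yields a special $(\A,n,\B)$-precover of $Y$ whenever $X$ and $Z$ admit one. The main obstacle, and the only genuinely new ingredient beyond Theorem~\ref{Teo-k-prec}, is the horseshoe lifting: it hinges on the uniform vanishing $\Ext^1_\C(\B,\B^\wedge_{n-1})=0$, which must be invoked at every stage of the iterative construction and not merely at the topmost syzygy. The hereditary hypothesis, in turn, is exactly what allows the single-layer construction of Theorem~\ref{Teo-k-prec} to be pushed to the boundary case $k=n$ by supplying the missing $\Ext^2_\C(\A,\B^\wedge_{n-1})=0$.
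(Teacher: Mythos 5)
Your proposal is correct, and its skeleton agrees with the paper's: the hereditary hypothesis is used exactly as you say, to upgrade the orthogonality to $\Ext^i_{\C}(\A,\B)=0$ for $1\leq i\leq n+1$ and hence, via Proposition~\ref{prop8}, to get $\Ext^2_{\C}(\A,\B^\wedge_{n-1})=0$ so that the single-layer construction of Theorem~\ref{Teo-k-prec} runs at $k=n$ (with $\A$ closed under extensions by Proposition~\ref{prop:hereditary_n-cotorsion} and $\B^\wedge_{n-1}$ closed under extensions by Lemma~\ref{Bk-1ext}), producing $0\to K^Y\to A^Y\to Y\to 0$ together with $0\to K^X\to K^Y\to K^Z\to 0$. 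Where you diverge is in the second stage: the paper constructs the compatible $\B$-resolution of $K^Y$ by explicit pullback/pushout diagrams in the case $n=2$ (diagrams \eqref{fig9mod}, \eqref{fig10}, \eqref{fig11}), obtaining $B^Y_0=B^Z_0\oplus B^X_0$ from the splitting $Q\simeq B^Z_0\oplus K^X$ and $B^Y_1$ as a pullback extension of $B^Z_1$ by $B^X_1$, and then leaves the general case to an implicit induction; you instead run a relative horseshoe argument on $0\to K^X\to K^Y\to K^Z\to 0$, observing that the lifting obstruction at stage $j$ lies in $\Ext^1_{\C}(B^Z_j,K^X_j)$ with $K^X_j\in\B^\wedge_{n-1-j}\subseteq\B^\wedge_{n-1}$, so the hypothesis $\Ext^1_{\C}(\B,\B^\wedge_{n-1})=0$ kills it at every level. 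Note that the paper's splitting of $Q$ is precisely your degree-zero lifting, so the two arguments have the same mechanism; what your packaging buys is a uniform treatment of all $n$ with direct-sum terms in each degree and a clear accounting of where each hypothesis enters, while the paper's explicit matrix maps record the precise morphisms of the compatibility diagram \eqref{fig_compatible} (which is what gets reused in the chain-complex application, Theorem~\ref{theo:induced_n-cotorsion_Ch}); for completeness you should still spell out, as the paper does for its squares $\circled{1}$--$\circled{6}$, that the spliced squares joining $B^Y_0$ to $A^Y$ commute, which follows from $\hat{f}\circ i^X=i^Y\circ\tilde{f}$, $\hat{g}\circ i^Y=i^Z\circ\tilde{g}$ and the defining property of the liftings, and is routine.
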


\begin{proof}
For a better understanding of the arguments below, we only show the case $n = 2$. The more general cases $n > 2$ follow inductively. 

We are given two short exact sequences 
\begin{align*}
0 & \to K^X \xrightarrow{i^X} A^X \xrightarrow{\alpha^X} X \to 0, \\
0 & \to K^Z \xrightarrow{i^Z} A^Z \xrightarrow{\alpha^Z} Z \to 0,
\end{align*}
where $A^X, A^Z \in \mathcal{A}$ and $K^X, K^Z \in \mathcal{B}^\wedge_1$, that is, we also have two short exact sequences
\begin{align*}
0 & \to B^X_1 \xrightarrow{\beta^X_1} B^X_0 \xrightarrow{\gamma^X} K^X \to 0, \numberthis \label{eqn:BX1BX0KX} \\
0 & \to B^Z_1 \xrightarrow{\beta^Z_1} B^Z_0 \xrightarrow{\gamma^Z} K^Z \to 0, 
\end{align*}
with $B^X_0, B^Z_0, B^X_1, B^Z_1 \in \mathcal{B}$. Following the same arguments as in the proof of Theorem \ref{Teo-k-prec}, we can find the following commutative diagrams with exact rows and columns:
\begin{equation}\label{fig7} 
\parbox{2.5in}{
\begin{tikzpicture}[description/.style={fill=white,inner sep=2pt}] 
\matrix (m) [ampersand replacement=\&, matrix of math nodes, row sep=2.5em, column sep=2.5em, text height=1.25ex, text depth=0.25ex] 
{ 
{} \& {} \& X \& X \\
B^Z_1 \& B^Z_0 \& Y' \& Y \\
B^Z_1 \& B^Z_0 \& A^Z \& Z \\
}; 
\path[->]
(m-2-3)-- node[pos=0.5] {\footnotesize$\mbox{\bf pb}$} (m-3-4)
(m-2-2) edge node[above] {\footnotesize$\hat{\beta}^Z_0$} (m-2-3)
(m-3-2) edge node[below] {\footnotesize$\beta^Z_0 $} (m-3-3)
;
\path[>->]
(m-1-3) edge node[left] {\footnotesize$f'$} (m-2-3) (m-1-4) edge node[left] {\footnotesize$f$} (m-2-4)
(m-2-1) edge node[above] {\footnotesize$\hat{\beta}^Z_1$} (m-2-2) 
(m-3-1) edge node[below] {\footnotesize$\beta^Z_1$} (m-3-2)
;
\path[->>]
(m-2-3) edge node[left] {\footnotesize$g'$} (m-3-3) (m-2-4) edge node[left] {\footnotesize$g$} (m-3-4)
(m-2-3) edge node[above] {\footnotesize$\tilde{\alpha}^Z$} (m-2-4)
(m-3-3) edge node[below] {\footnotesize$\alpha^Z$} (m-3-4)
;
\path[-,font=\scriptsize]
(m-1-3) edge [double, thick, double distance=2pt] (m-1-4)
(m-2-1) edge [double, thick, double distance=2pt] (m-3-1)
(m-2-2) edge [double, thick, double distance=2pt] (m-3-2)
;
\end{tikzpicture} 
}
\end{equation} 
\hfill(where $\beta^Z_0 := i^Z \circ \gamma^Z$)

\begin{equation}\label{fig8} 
\parbox{1.75in}{
\begin{tikzpicture}[description/.style={fill=white,inner sep=2pt}] 
\matrix (m) [ampersand replacement=\&, matrix of math nodes, row sep=2.5em, column sep=2.5em, text height=1.25ex, text depth=0.25ex] 
{ 
K^X \& K^X \& {} \\
A^X \& A^Y \& A^Z \\
X \& Y' \& A^Z \\
}; 
\path[->] 
(m-2-1)-- node[pos=0.5] {\footnotesize$\mbox{\bf po}$} (m-3-2)
; 
\path[>->]
(m-1-1) edge node[left] {\footnotesize$i^X$} (m-2-1) 
(m-1-2) edge node[right] {\footnotesize$\overline{i}^X$} (m-2-2)
(m-2-1) edge node[above] {\footnotesize$\hat{f}$} (m-2-2) 
(m-3-1) edge node[below] {\footnotesize$f'$} (m-3-2)
;
\path[->>]
(m-2-1) edge node[left] {\footnotesize$\alpha^X$} (m-3-1) 
(m-2-2) edge node[right] {\footnotesize$\overline{\alpha}^X$} (m-3-2) 
(m-2-2) edge node[above] {\footnotesize$\hat{g}$} (m-2-3) 
(m-3-2) edge node[below] {\footnotesize$g'$} (m-3-3)
;
\path[-,font=\scriptsize]
(m-1-1) edge [double, thick, double distance=2pt] (m-1-2)
(m-2-3) edge [double, thick, double distance=2pt] (m-3-3)
;
\end{tikzpicture} 
}
\end{equation} 
\hfill (where $A^Y \in \mathcal{A}$ since $\mathcal{A}$ is closed under extensions)

\begin{equation}\label{fig9} 
\parbox{2.5in}{
\begin{tikzpicture}[description/.style={fill=white,inner sep=2pt}] 
\matrix (m) [ampersand replacement=\&, matrix of math nodes, row sep=2.5em, column sep=2.5em, text height=1.25ex, text depth=0.25ex] 
{ 
{} \& K^X \& K^X \& {} \\
B^Z_1 \& Q \& A^Y \& Y \\
B^Z_1 \& B^Z_0 \& Y' \& Y \\
}; 
\path[->] 
(m-2-2)-- node[pos=0.5] {\footnotesize$\mbox{\bf pb}$} (m-3-3)
(m-2-2) edge (m-2-3)
(m-3-2) edge node[below] {\footnotesize$\hat{\beta}^Z_0$} (m-3-3)
; 
\path[>->]
(m-1-2) edge (m-2-2)
(m-1-3) edge node[right] {\footnotesize$\overline{i}^X$} (m-2-3)
(m-2-1) edge (m-2-2)
(m-3-1) edge node[below] {\footnotesize$\hat{\beta}^Z_1$} (m-3-2)
;
\path[->>]
(m-2-3) edge node[above] {\footnotesize$\alpha^Y$} (m-2-4)
(m-3-3) edge node[below] {\footnotesize$\tilde{\alpha}^Z$} (m-3-4)
(m-2-2) edge (m-3-2) 
(m-2-3) edge node[right] {\footnotesize$\overline{\alpha}^X$} (m-3-3)
;
\path[-,font=\scriptsize]
(m-1-2) edge [double, thick, double distance=2pt] (m-1-3)
(m-2-1) edge [double, thick, double distance=2pt] (m-3-1)
(m-2-4) edge [double, thick, double distance=2pt] (m-3-4)
;
\end{tikzpicture} 
}
\end{equation} 
Since $B^Z_0 \in \mathcal{B}$, $K^X \in \mathcal{B}^\wedge_1$ and $\Ext^1_{\mathcal{C}}(\mathcal{B},\mathcal{B}^\wedge_1) = 0$, the column $K^X \rightarrowtail Q \twoheadrightarrow B^Z_0$ is split exact and so $Q \simeq B^Z_0 \oplus K^X$. Thus, the diagram \eqref{fig9} can be rewritten as:
\begin{equation}\label{fig9mod} 
\parbox{3.5in}{
\begin{tikzpicture}[description/.style={fill=white,inner sep=2pt}] 
\matrix (m) [ampersand replacement=\&, matrix of math nodes, row sep=3.5em, column sep=4em, text height=1.25ex, text depth=0.25ex] 
{ 
{} \& K^X \& K^X \& {} \\
B^Z_1 \& B^Z_0 \oplus K^X \& A^Y \& Y \\
B^Z_1 \& B^Z_0 \& Y' \& Y \\
}; 
\path[->] 
(m-2-2) edge node[above] {\scriptsize$\left( \begin{array}{cc} a & \overline{i}^X \end{array} \right)$} (m-2-3)
(m-3-2) edge node[below] {\footnotesize$\hat{\beta}^Z_0$} (m-3-3)
; 
\path[>->]
(m-1-2) edge node[left] {\scriptsize$\left( \begin{array}{c} 0 \\ {\rm id}_{K^X} \end{array} \right)$} (m-2-2)
(m-1-3) edge node[right] {\footnotesize$\overline{i}^X$} (m-2-3)
(m-2-1) edge node[below] {\scriptsize$\left( \begin{array}{c} \beta^Z_1 \\ 0 \end{array} \right)$} (m-2-2)
(m-3-1) edge node[below] {\scriptsize$\hat{\beta}^Z_1$} (m-3-2)
;
\path[->>]
(m-2-3) edge node[above] {\footnotesize$\alpha^Y$} (m-2-4)
(m-3-3) edge node[below] {\footnotesize$\tilde{\alpha}^Z$} (m-3-4)
(m-2-2) edge node[right] {\scriptsize$\left( \begin{array}{cc} {\rm id}_{B^Z_0} & 0 \end{array} \right)$} (m-3-2) 
(m-2-3) edge node[right] {\scriptsize$\overline{\alpha}^X$} (m-3-3)
;
\path[-,font=\scriptsize]
(m-1-2) edge [double, thick, double distance=2pt] (m-1-3)
(m-2-1) edge [double, thick, double distance=2pt] (m-3-1)
(m-2-4) edge [double, thick, double distance=2pt] (m-3-4)
;
\end{tikzpicture} 
}
\end{equation} 
The existence of the arrow $a \colon B^Z_0 \to A^Y$ is a consequence of the pullback construction, and satisfies the relations 
\begin{align}\label{eqn:arrow_a}
\overline{\alpha}^X \circ a & = \hat{\beta}^Z_0.
\end{align}
Moreover, following the arguments that show the commutativity of the diagram \eqref{fig_caso_n1} in Remark \ref{rem:special_n-1}, we have the following commutative diagram with exact rows and columns: 
\begin{equation}\label{fig_kernel} 
\parbox{1.75in}{
\begin{tikzpicture}[description/.style={fill=white,inner sep=2pt}] 
\matrix (m) [ampersand replacement=\&, matrix of math nodes, row sep=2.5em, column sep=2.5em, text height=1.25ex, text depth=0.25ex] 
{ 
K^X \& K^Y \& K^Z \\
A^X \& A^Y \& A^Z \\
X \& Y \& Z \\
}; 
\path[>->]
(m-1-1) edge node[left] {\footnotesize$i^X$} (m-2-1) 
(m-1-2) edge node[left] {\footnotesize$i^Y$} (m-2-2)
(m-1-3) edge node[left] {\footnotesize$i^Z$} (m-2-3)
(m-1-1) edge node[above] {\footnotesize$\tilde{f}$} (m-1-2)
(m-2-1) edge node[above] {\footnotesize$\hat{f}$} (m-2-2) 
(m-3-1) edge node[above] {\footnotesize$f$} (m-3-2)
;
\path[->>]
(m-2-1) edge node[left] {\footnotesize$\alpha^X$} (m-3-1) 
(m-2-2) edge node[left] {\footnotesize$\alpha^Y$} (m-3-2) 
(m-2-3) edge node[left] {\footnotesize$\alpha^Z$} (m-3-3)
(m-1-2) edge node[above] {\footnotesize$\tilde{g}$} (m-1-3)
(m-2-2) edge node[above] {\footnotesize$\hat{g}$} (m-2-3) 
(m-3-2) edge node[above] {\footnotesize$g$} (m-3-3)
;
\end{tikzpicture} 
}
\end{equation}

On the other hand, let us add the identity ${\rm id}_{B^Z_0}$ to the sequence \eqref{eqn:BX1BX0KX}, in order to obtain the exact sequence
\[
0 \to B^X_1 \xrightarrow{{\scriptsize\left( \begin{array}{c} 0 \\ \beta^X_1 \end{array} \right)}} B^Z_0 \oplus B^X_0 \xrightarrow{\scriptsize{\left( \begin{array}{cc} {\rm id}_{B^Z_0} & 0 \\ 0 & \gamma^X \end{array} \right)}} B^Z_0 \oplus K^X \to 0.
\]
Now take the pullback of $B^Z_1 \to B^Z_0 \oplus K^X \leftarrow B^Z_0 \oplus B^X_0$ in order to obtain the following commutative diagram with exact rows and columns:
\begin{equation}\label{fig10} 
\parbox{2.5in}{
\begin{tikzpicture}[description/.style={fill=white,inner sep=2pt}] 
\matrix (m) [ampersand replacement=\&, matrix of math nodes, row sep=4em, column sep=5.5em, text height=1.25ex, text depth=0.25ex] 
{ 
B^X_1 \& B^X_1 \& {} \\
B^Y_1 \& B^Z_0 \oplus B^X_0 \& K^Y \\
B^Z_1 \& B^Z_0 \oplus K^X \& K^Y \\
}; 
\path[->] 
(m-2-1)-- node[pos=0.5] {\footnotesize$\mbox{\bf pb}$} (m-3-2)
; 
\path[>->]
(m-1-1) edge node[left] {\footnotesize$f_1$} (m-2-1) 
(m-1-2) edge node[right] {\scriptsize$\left( \begin{array}{c} 0 \\ \beta^X_1 \end{array} \right)$} (m-2-2)
(m-2-1) edge node[above] {\scriptsize$\left( \begin{array}{c} \beta^Z_1 \circ g_1 \\ b \end{array} \right)$} (m-2-2) 
(m-3-1) edge node[below] {\scriptsize$\left( \begin{array}{c} \beta^Z_1 \\ 0 \end{array} \right)$} (m-3-2)
;
\path[->>]
(m-2-1) edge node[left] {\footnotesize$g_1$} (m-3-1) 
(m-2-2) edge node[right] {\scriptsize$\left( \begin{array}{cc} {\rm id}_{B^Z_0} & 0 \\ 0 & \gamma^X \end{array} \right)$} (m-3-2) 
(m-2-2) edge node[above] {\scriptsize$\left( \begin{array}{cc} c & \tilde{f} \circ \gamma^X \end{array} \right)$} (m-2-3) 
(m-3-2) edge node[below] {\scriptsize$\left( \begin{array}{cc} c & \tilde{f} \end{array} \right)$} (m-3-3)
;
\path[-,font=\scriptsize]
(m-1-1) edge [double, thick, double distance=2pt] (m-1-2)
(m-2-3) edge [double, thick, double distance=2pt] (m-3-3)
;
\end{tikzpicture} 
}
\end{equation} 
where $b \colon B^Y_1 \to B^X_0$ and $c \colon B^Z_0 \to K^Y$ are arrows given by te pullback construction that satisfy the following relations:
\begin{align}
b \circ f_1 & = \beta^X_1, \label{eqn:arrow_b} \\
c \circ \beta^Z_1 & = 0. \label{eqn:arrow_c}
\end{align}
Finally, we form the following diagram with exact rows and columns: 
\begin{equation}\label{fig11} 
\parbox{4.25in}{
\begin{tikzpicture}[description/.style={fill=white,inner sep=2pt}] 
\matrix (m) [ampersand replacement=\&, matrix of math nodes, row sep=4em, column sep=5.5em, text height=1.25ex, text depth=0.25ex] 
{ 
B^X_1 \& B^X_0 \& A^X \& X \\
B^Y_1 \& B^Z_0 \oplus B^X_0 \& A^Y \& Y \\
B^Z_1 \& B^Z_0 \& A^Z \& Z \\
}; 
\path[->] 
(m-1-2) edge node[above] {\footnotesize$\beta^X_0$} (m-1-3)
(m-2-2) edge node[below] {\scriptsize$\left( \begin{array}{cc} a & \hat{f} \circ \beta^X_0 \end{array} \right)$} (m-2-3)
(m-3-2) edge node[below] {\footnotesize$\beta^Z_0$} (m-3-3)
; 
\path[>->]
(m-1-1)-- node[pos=0.25] {\scriptsize$\circled{1}$} (m-2-2)
(m-1-2)-- node[pos=0.75] {\scriptsize$\circled{2}$} (m-2-3)
(m-1-3)-- node[pos=0.5] {\scriptsize$\circled{3}$} (m-2-4)
(m-2-1)-- node[pos=0.15] {\scriptsize$\circled{4}$} (m-3-2)
(m-2-2)-- node[pos=0.75] {\scriptsize$\circled{5}$} (m-3-3)
(m-2-3)-- node[pos=0.5] {\scriptsize$\circled{6}$} (m-3-4)
(m-1-1) edge node[above] {\footnotesize$\beta^X_1$} (m-1-2)
(m-2-1) edge node[above] {\scriptsize$\left( \begin{array}{c} \beta^Z_1 \circ g_1 \\ b \end{array} \right)$} (m-2-2)
(m-3-1) edge node[below] {\footnotesize$\beta^Z_1$} (m-3-2)
(m-1-1) edge node[left] {\footnotesize$f_1$} (m-2-1)
(m-1-2) edge node[right] {\scriptsize$\left( \begin{array}{c} 0 \\ {\rm id}_{B^X_0} \end{array} \right)$} (m-2-2)
(m-1-3) edge node[left] {\footnotesize$\hat{f}$} (m-2-3)
(m-1-4) edge node[left] {\footnotesize$f$} (m-2-4)
;
\path[->>]
(m-1-3) edge node[above] {\footnotesize$\alpha^X$} (m-1-4)
(m-2-3) edge node[above] {\footnotesize$\alpha^Y$} (m-2-4)
(m-3-3) edge node[below] {\footnotesize$\alpha^Z$} (m-3-4)
(m-2-1) edge node[left] {\footnotesize$g_1$} (m-3-1)
(m-2-2) edge node[left] {\footnotesize$\left( \begin{array}{cc} {\rm id}_{B^Z_0} & 0 \end{array} \right)$} (m-3-2)
(m-2-3) edge node[left] {\footnotesize$\hat{g}$} (m-3-3)
(m-2-4) edge node[left] {\footnotesize$g$} (m-3-4)
;
\end{tikzpicture} 
}
\end{equation} 
where $\beta^X_0 := i^X \circ \gamma^X$. The commutativity of squares $\scriptsize\circled{3}$ and $\scriptsize\circled{6}$ was already verified for the diagram \eqref{fig_kernel}, while for $\scriptsize\circled{2}$ and $\scriptsize\circled{4}$ is clear. We check that the remaining squares also commute:
\begin{itemize}
\item[\scriptsize$\circled{1}$] We have by the first equality in \eqref{eqn:arrow_b} that
\[
\left( \begin{array}{c} \beta^Z_1 \circ g_1 \\ b \end{array} \right) \circ f_1 = \left( \begin{array}{c} \beta^Z_1 \circ g_1 \circ f_1 \\ b \circ f_1 \end{array} \right) = \left( \begin{array}{c} 0 \\ \beta^X_1 \end{array} \right) = \left( \begin{array}{c} 0 \\ {\rm id}_{B^X_0} \end{array} \right) \circ \beta^X_1.
\]

\item[\scriptsize$\circled{5}$] Using the diagram \eqref{fig8}, we have that $\hat{g} = g' \circ \overline{\alpha}^X$, and so $\hat{g} \circ a = g' \circ \overline{\alpha}^X \circ a$, where $\overline{\alpha}^X \circ a = \hat{\beta}^Z_0$ by \eqref{eqn:arrow_a} and $g' \circ \hat{\beta}^Z_0 = \beta^Z_0$ by \eqref{fig7}. Thus,
\begin{align*}
\hat{g} \circ \left( \begin{array}{cc} a & \hat{f} \circ \beta^X_0 \end{array} \right) & = \left( \begin{array}{cc} \hat{g} \circ a & \hat{g} \circ \hat{f} \circ \beta^X_0 \end{array} \right) = \left( \begin{array}{cc} g' \circ \hat{\beta}^Z_0 & 0 \end{array} \right) = \left( \begin{array}{cc} \beta^Z_0 & 0 \end{array} \right) = \beta^Z_0 \circ \left( \begin{array}{cc} {\rm id}_{B^Z_0} & 0 \end{array} \right).
\end{align*}
\end{itemize}
Note that $B^Z_0 \oplus B^X_0, B^Y_1 \in \mathcal{B}$ since $\mathcal{B}$ is closed under extensions. Therefore, the central row of \eqref{fig11} defines a special $(\mathcal{A},2,\mathcal{B})$-precover compatible with $\rho_X$ and $\rho_Z$.
\end{proof}

\begin{remark}
One can include the case $n = 1$ in the previous theorem, for which the hypothesis $\Ext^1_{\mathcal{C}}(\mathcal{B,B}) = 0$ is not needed (see Remark~\ref{rem:special_n-1}).
\end{remark}


\section{\textbf{Applications and examples}}\label{sec:applications}

Below we present some examples of (left and right) $n$-cotorsion pairs along with some applications, which are related to the characterisation of certain rings as well as to finding covers and envelopes with the unique mapping property. 

We need to mention a couple of considerations. Let us denote the projective and injective dimensions of an object $C$ in an abelian category $\mathcal{C}$ by $\pd(C)$ and $\id(C)$, respectively. Recall that $\pd(C)$ is defined as the smallest nonnegative integer $m \geq 0$ such that $\Ext^i_{\mathcal{C}}(C,\mathcal{C}) = 0$ for every $i > m$. If such $m$ does not exist, one sets $\pd(C) := \infty$. Note that if $\mathcal{C}$ has enough projectives, then $\pd(C)$ coincides with the $\mathcal{P}(\mathcal{C})$-resolution dimension of $C$. Similarly $\id(C)$, defined dually, coincides with the $\mathcal{I}(\mathcal{C})$-coresolution dimension of $C$ in the case where $\mathcal{C}$ has enough injectives. For simplicity, if $\mathcal{C} = \Mod(R)$, we write the classes $\mathcal{P}(\Mod(R))$ and $\mathcal{I}(\Mod(R))$ as $\mathcal{P}(R)$ and $\mathcal{I}(R)$, respectively.


\subsection*{\textbf{Gorenstein projective modules}}

Recall the classes $\mathcal{GP}(R)$ and $\mathcal{GI}(R)$ of Gorenstein projective and Gorenstein injective $R$-modules from Example~\ref{ex:special_AkB-precover}. Over an arbitrary ring $R$, it is well known by \cite[Theorem 2.5]{Holm} that $\mathcal{GP}(R)$ is closed under direct summands. On the other hand, \cite[Proposition 2.3]{Holm} asserts that $\Ext^i_R(C,P) = 0$ for every $C \in \mathcal{GP}(R)$, $P \in \mathcal{P}(R)$ and $i \geq 1$. So $(\mathcal{GP}(R),\mathcal{P}(R))$ is a left $n$-cotorsion pair in $\mathsf{Mod}(R)$ if, and only if, every module has a Gorenstein projective special precover whose kernel has projective dimension at most $n-1$. The most obvious choice of a ring $R$ over which the latter condition holds, is when $R$ is an $n$-Iwanaga-Gorenstein ring, that is, $R$ is two-sided noetherian with $\id({}_R R) = \id(R_R) = n$. Over such rings $R$, it is known that every module has Gorenstein projective dimension at most $n$. Therefore, we have the following example of a left $n$-cotorsion pair, which is also a consequence of Proposition~\ref{equiv hered y n-cot}, Hovey's \cite[Theorem 8.3]{Hovey} and Holm's \cite[Theorem 2.5]{Holm}.

\begin{example}\label{ex:GProj_ncot}
Let $R$ be an $n$-Iwanaga-Gorenstein ring with $n\geq 1$. Then, $(\mathcal{GP}(R),\mathcal{P}(R))$ is a left $n$-cotorsion pair and $(\mathcal{I}(R),\mathcal{GI}(R))$ is a right $n$-cotorsion pair in $\mathsf{Mod}(R)$. 

For the case $n = 0$, a $0$-Iwanaga-Gorenstein ring is just a \textbf{quasi-Frobenius ring} (or \textbf{QF ring}, for short) by Bland's \cite[Proposition 10.2.14]{Bland}. Moreover, $\mathcal{P}(R) = \mathcal{I}(R)$ and $\mathcal{P}(R^{\rm op}) = \mathcal{I}(R^{\rm op})$ by \cite[Proposition 10.2.15]{Bland}, if $R$ is a QF ring, and so one can note that every module in $\Mod(R)$ is Gorenstein projective. Thus, in the case $n = 0$, $(\mathcal{GP}(R),\mathcal{P}(R))$ and $(\mathcal{I}(R),\mathcal{GI}(R))$ coincide with the trivial cotorsion pairs $(\Mod(R),\mathcal{I}(R))$ and $(\mathcal{P}(R),\Mod(R))$, respectively. 
\end{example}

The previous example is not necessarily an equivalence. Indeed, there are slightly more general conditions for $R$ under which $(\mathcal{GP}(R),\mathcal{P}(R))$ is still a left $n$-cotorsion pair in $\Mod(R)$. These conditions will involve the following two relative homological dimensions: 
\begin{align}
\pd\,(\mathcal{I}(R)):= {\rm sup}\{ \pd(I) \mbox{ : } I \in \mathcal{I}(R) \}, \label{eqn:pdi} \\
 \id\,(\mathcal{P}(R)):={\rm sup}\{ \id(P) \mbox{ : } P \in \mathcal{P}(R) \}. \label{eqn:idp}
\end{align}
Recall from Beligiannis and Reiten's \cite[Definitions 2.1 and 2.5]{BR07} that a ring $R$ is a \emph{left Gorenstein ring} if 
$\mathsf{Mod}(R)$ is a Gorenstein category, that is, if $\pd\,(\mathcal{I}(R)) $ and $ \id\,(\mathcal{P}(R))$ are both finite. Every $n$-Iwanaga-Gorenstein ring is a Gorenstein ring, but the converse is not necessarily true. 

Below we give a characterisation and properties of Gorenstein rings in terms of left and right $n$-cotorsion pairs involving the classes $\mathcal{GP}(R)$, $\mathcal{GI}(R)$, $\mathcal{P}(R)$ and $\mathcal{I}(R)$, the homological dimensions \eqref{eqn:pdi} and \eqref{eqn:idp}, and the global Gorenstein homological dimensions. Recall that the (\emph{left}) \emph{global Gorenstein projective dimension} of a ring $R$ is defined as the supremum 
\[
{\rm gl.GPD}(R) := {\rm sup}\{ {\rm Gpd}(M) \mbox{ : } M \in \Mod(R) \}.
\]
Dually, we have the \emph{global Gorenstein injective dimension} ${\rm gl.GID}(R)$ of $R$.

\begin{proposition}\label{prop:ncot_spli_silp}
The following conditions hold true for any ring $R$:
\begin{enumerate}
\item If $(\mathcal{GP}(R),\mathcal{P}(R))$ is a left $n$-cotorsion pair in $\mathsf{Mod}(R)$, then 
\[
\glGPD(R) = \id\,(\mathcal{P}(R))\leq n.
\] 
Dually, if $(\mathcal{I}(R),\mathcal{GI}(R))$ is a right $m$-cotorsion pair in $\mathsf{Mod}(R)$, then 
\[
\glGID(R) = \pd\,(\mathcal{I}(R))\leq m.
\] 

\item The following assertions are equivalent:
\begin{itemize}
\item[(a)] $R$ is a left Gorenstein ring which is not QF.

\item[(b)] There exist integers $n, m \geq 1$ such that $(\mathcal{GP}(R),\mathcal{P}(R))$ is a left $n$-cotorsion pair and $(\mathcal{I}(R),\mathcal{GI}(R))$ is a right $m$-cotorsion pair in $\Mod(R)$.
\end{itemize}
Moreover, if any of the previous holds true, we can choose 
\[
n = m = \id\,(\mathcal{P}(R)) = \pd\,(\mathcal{I}(R)).
\]
\end{enumerate}
\end{proposition}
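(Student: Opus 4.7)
For part (1), I would unpack Definition~\ref{def:ncotorsion}: for each $M \in \Mod(R)$, there is a short exact sequence $0 \to K \to G \to M \to 0$ with $G \in \mathcal{GP}(R)$ and $\pd(K) \leq n-1$. Standard stability properties of the Gorenstein projective dimension (Holm) immediately give $\Gpd(M) \leq n$, hence $\glGPD(R) \leq n$. Applying $\Hom_R(-,P)$ to this sequence for any projective $P$, and using the vanishing $\Ext^{i}_R(G,P)=0$ for all $i \geq 1$ (a standard property of Gorenstein projective objects) together with $\Ext^{i}_R(K,P)=0$ for $i \geq n$ (from $\pd(K)\leq n-1$), I obtain $\Ext^{n+1}_R(M,P)=0$ for every $M$, which yields $\id(P) \leq n$ and hence $\id(\mathcal{P}(R)) \leq n$. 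The equality $\glGPD(R) = \id(\mathcal{P}(R))$ then follows from two inequalities: $\id(\mathcal{P}(R)) \leq \glGPD(R)$ by Holm's characterisation of $\Gpd$ via Ext-vanishing against modules of finite projective dimension; conversely, since $\glGPD(R) < \infty$, one has $\mathcal{GP}(R) = {}^\perp\mathcal{P}(R)$, so a sufficiently high syzygy of any module is Gorenstein projective, giving $\glGPD(R) \leq \id(\mathcal{P}(R))$. The dual statement for $(\mathcal{I}(R), \mathcal{GI}(R))$ is treated analogously.

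For part (2), the implication (b)$\Rightarrow$(a) follows directly from part (1) and its dual: the finiteness of $\id(\mathcal{P}(R))$ and $\pd(\mathcal{I}(R))$ makes $R$ left Gorenstein, while the exclusion of QF is forced by the ``moreover'' clause, since $\id(\mathcal{P}(R))=0$ characterises QF among left Gorenstein rings. Conversely, for (a)$\Rightarrow$(b), set $d := \id(\mathcal{P}(R)) = \pd(\mathcal{I}(R))$ (equal by Beligiannis--Reiten), which satisfies $d \geq 1$ since $R$ is not QF, and take $n = m = d$. Closure of $\mathcal{GP}(R)$ under direct summands and the orthogonality $\Ext^i_R(\mathcal{GP}(R), \mathcal{P}(R))=0$ for $i \geq 1$ are classical results of Holm. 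The exact sequences required by Definition~\ref{def:ncotorsion}(3) are supplied by Holm's approximation theorem \cite[Theorem 2.10]{Holm} applied to each $M$ (using $\Gpd(M) \leq \glGPD(R) = d$), producing $0 \to K \to G \to M \to 0$ with $G \in \mathcal{GP}(R)$ and $\pd(K) \leq d-1$. A dual argument handles $(\mathcal{I}(R), \mathcal{GI}(R))$ as a right $d$-cotorsion pair.

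The main obstacle lies in part (1), specifically in verifying the inequality $\glGPD(R) \leq \id(\mathcal{P}(R))$, which rests on the nontrivial identity $\mathcal{GP}(R) = {}^\perp\mathcal{P}(R)$ available once $\glGPD(R)$ is known to be finite. Once this ingredient is secured, the remainder is a straightforward combination of the definitions with the classical theorems of Holm on Gorenstein approximations and Beligiannis--Reiten on the coincidence of the various global Gorenstein homological dimensions.
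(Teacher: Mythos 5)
Your proposal is correct and follows essentially the same route as the paper: part (1) is obtained from the bound $\Gpd(M) \leq n$ extracted from condition (3) of Definition~\ref{def:ncotorsion} together with the identity $\glGPD(R) = \id(\mathcal{P}(R))$, and part (2), (a) $\Rightarrow$ (b), uses the Beligiannis--Reiten equality $\pd(\mathcal{I}(R)) = \id(\mathcal{P}(R))$, the finiteness of $\Gpd$ over a left Gorenstein ring, and a Gorenstein projective approximation of each module. The only real difference is bookkeeping: where the paper cites \cite[Corollary 5.19]{BecerrilMendozaSantiago} for $\glGPD(R) = \id(\mathcal{P}(R))$, \cite{BR07} for the finiteness of the Gorenstein projective dimension, and \cite[Theorem 2.8]{BMPS} for the approximation sequences, you re-derive the first from Holm's characterisations (your dimension-shifting argument for $\id(\mathcal{P}(R)) \leq n$ and the identity $\mathcal{GP}(R) = {}^{\perp}\mathcal{P}(R)$ under finite $\glGPD(R)$ are both correct) and invoke Holm's \cite[Theorem 2.10]{Holm} for the approximations; both versions are legitimate, and yours is more self-contained. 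One caveat: in (b) $\Rightarrow$ (a) your claim that the exclusion of QF is ``forced by the moreover clause'' is circular --- part (1) only yields that $R$ is left Gorenstein, and nothing in (b) alone rules out $\id(\mathcal{P}(R)) = 0$: over a QF ring both $(\mathcal{GP}(R),\mathcal{P}(R))$ and $(\mathcal{I}(R),\mathcal{GI}(R))$ exist as left and right $n$-cotorsion pairs for every $n, m \geq 1$, as the degenerate case in Example~\ref{ex:GProj_ncot} shows. The paper's own proof is equally terse on this point (it simply says that (b) $\Rightarrow$ (a) follows from part (1)), so this is a weakness in the formulation of the statement rather than of your argument specifically, but the appeal to the moreover clause should not be presented as a proof of the non-QF condition.
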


\begin{proof}
Let us first show part (1). Suppose $(\mathcal{GP}(R),\mathcal{P}(R))$ is a left $n$-cotorsion pair in $\mathsf{Mod}(R)$. Then, every module has Gorenstein projective dimension at most $n$. It follows by \cite[Corollary 5.19]{BecerrilMendozaSantiago} that $\glGPD(R) = \id\,(\mathcal{P}(R))\leq n$. 

Now for part (2), let us show first the implication (a) $\Rightarrow$ (b). If $R$ is a Gorenstein ring which is not QF, we have that both $\pd\,(\mathcal{I}(R)) $ and $ \id\,(\mathcal{P}(R))$ are finite. By \cite[Proposition VII.1.3 (vi)]{BR07}, we have $\pd\,(\mathcal{I}(R)) = \id\,(\mathcal{P}(R))$. Thus, let $n := \id\,(\mathcal{P}(R))$ and note that $n \geq 1$ since $R$ is not QF. The first two conditions of Definition~\ref{def:ncotorsion} are well known for $\mathcal{GP}(R)$ and $\mathcal{P}(R)$. By \cite[Theorem VII.2.2 ($\gamma$)]{BR07}, we have that every module has Gorenstein projective dimension at most $n$. Thus, the remaining condition (3) in Definition~\ref{def:ncotorsion} follows after setting $\mathcal{X} = \mathcal{GP}(R)$ and $\omega = \mathcal{P}(R)$ in \cite[Theorem 2.8]{BMPS}. In a similar way, we can show that $(\mathcal{I}(R),\mathcal{GI}(R))$ is a right $n$-cotorsion pair in $\mathsf{Mod}(R)$.

Finally, the converse implication (b) $\Rightarrow$ (a) in part (2) follows by part (1). 
\end{proof}

For the following observations, recall that an $R$-module $M \in \Mod(R)$ is \emph{Gorenstein flat} if $M \simeq Z_0(F)$, where $F = (F_m)_{m \in \mathbb{Z}}$ is an exact complex of flat $R$-modules such that for every injective right $R$-module $E \in \mathcal{I}(R^{\rm op})$, the induced complex of abelian groups
\[
E \otimes_R F = \cdots \to E \otimes_R F_1 \to E \otimes_R F_0 \to E \otimes_R F_{-1} \to \cdots
\]
is exact. We shall denote the class of Gorenstein flat $R$-modules by $\mathcal{GF}(R)$.

\begin{remark}\label{rem:consequences_InjGInj}
Let us mention some other consequences of having a right $n$-cotorsion pair of $R$-modules $(\mathcal{I}(R),\mathcal{GI}(R))$ for some $n \geq 1$.
\begin{enumerate}
\item If $R$ is a right coherent ring, then the \textbf{Pontryagin dual} $M^+ := \Hom_{\mathbb{Z}}(M,\mathbb{Q / Z})$ of every Gorenstein injective left $R$-module $M \in \mathsf{Mod}(R)$ is a Gorenstein flat right $R$-module. For this, consider the value 
\[
\mathrm{fd}\,(\mathcal{I}(R)):= {\rm sup}\{ {\rm fd}(I) \mbox{ : } I \in \mathcal{I}(R) \}\leq\pd\,(\mathcal{I}(R)).
\]
Under the assumption that $(\mathcal{I}(R),\mathcal{GI}(R))$ is a right $n$-cotorsion pair, we have by Proposition~\ref{prop:ncot_spli_silp} that $\mathrm{fd}\,(\mathcal{I}(R)) \leq n$. Thus, Iacob's \cite[Theorem 4]{IacobAuslandercondition} implies that $M^+ \in \mathcal{GF}(R\op)$ for every $M \in \mathcal{GI}(R)$.

\item If $R$ is a left Noetherian and right coherent ring, then both $\pd\,(\mathcal{I}(R)) $ and $ \id\,(\mathcal{P}(R))$ are finite. Indeed, we already know $\pd\,(\mathcal{I}(R)) \leq n$ by Proposition \ref{prop:ncot_spli_silp}. Now let $P$ be a projective $R$-module. Then by Fieldhouse's \cite[Theorem 2.2]{Fieldhouse}, we have that $\id(P) = {\rm fd}(P^+)$, where $P^+$ is an injective $R^{\rm op}$-module. By (1) above, it follows that $\id(P) = {\rm fd}(P^+) \leq n$, and hence $\id\,(\mathcal{P}(R)) \leq n$. 

\item If $R$ is a two sided Noetherian ring, then $\mathcal{GI}(R)$ is covering and $\mathcal{GF}(R)$ is preenveloping. This follows by part (1) of Propositon~\ref{prop:ncot_spli_silp} and \cite[Theorem 4]{IacobAuslandercondition}.

\item If $M$ is an $R$-module with finite injective dimension, then $M$ has projective dimension at most $n.$ Indeed, let $M\in\mathcal{I}(R)^\vee.$ Then, by \cite[Lemma 2.6]{BecerrilMendozaSantiago}, we get $\pd\,(M)\leq\pd\,(\mathcal{I}(R)^\vee)=\pd\,(\mathcal{I}(R))\leq n.$ 

\item By (4) and \cite[Proposition VII.1.3(iii)]{BR07}, it follows that the \emph{big finitistic injective dimension} of $R$ is finite. Specifically, 
\[
{\rm FID}(R) := {\rm sup}\{ \id(M) \mbox{ : $M$ has finite injective dimension} \} \leq n.
\] 
\end{enumerate}
\end{remark}

In what remains of this section, we mention some consequences of Section~\ref{sec:approximations}. Most of our comments below have to do with right and left approximations by $\mathcal{GP}(R)$ and $\mathcal{GI}(R)$ with the unique mapping property. We begin with the following application of Corollary~\ref{Aperp y Bvee} in the context of Gorenstein homological algebra.

\begin{corollary}\label{corUMP1}
Let $R$ be an $n$-Iwanaga-Gorenstein ring with $n \geq 1$. Then, the following equalities hold:
\begin{enumerate}
\item $\Mod(R) = \mathcal{GP}(R)_n^\wedge = {}^{\perp_{n}}(\mathcal{P}(R)^\wedge_{n-1})$.

\item $\Mod(R) = \mathcal{GI}(R)_{n}^{\vee} = (\mathcal{I}(R)^\vee_{n-1})^{\perp_{n}}$.
\end{enumerate}
\end{corollary}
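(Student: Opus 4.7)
Plan. The two chains of equalities are dual, so I focus on (1); part~(2) follows by the analogous dual argument, starting from the right $n$-cotorsion pair $(\mathcal{I}(R),\mathcal{GI}(R))$ of Example~\ref{ex:GProj_ncot}, promoting it via the dual of Theorem~\ref{theo:left-n-cotorsion} to the complete right cotorsion pair $(\mathcal{I}(R)^\vee_{n-1},\mathcal{GI}(R))$, and invoking the dual of Corollary~\ref{Aperp y Bvee}.

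The first equality $\Mod(R)=\mathcal{GP}(R)^\wedge_n$ is the classical Auslander--Buchweitz bound $\Gpd(M)\le n$ over an $n$-Iwanaga--Gorenstein ring, already invoked through \cite[Theorem VII.2.2]{BR07} in the proof of Proposition~\ref{prop:ncot_spli_silp}. For the second equality I would apply Corollary~\ref{Aperp y Bvee}. By Example~\ref{ex:GProj_ncot}, $(\mathcal{GP}(R),\mathcal{P}(R))$ is a left $n$-cotorsion pair in $\Mod(R)$, and Theorem~\ref{theo:left-n-cotorsion} promotes it to the complete left cotorsion pair $(\mathcal{GP}(R),\mathcal{P}(R)^\wedge_{n-1})$; in particular one has the identification $\mathcal{GP}(R)={}^{\perp_1}(\mathcal{P}(R)^\wedge_{n-1})$ required by the hypothesis of Corollary~\ref{Aperp y Bvee}. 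Over an $n$-Iwanaga--Gorenstein ring the stronger vanishing $\Ext^i_R(\mathcal{GP}(R),\mathcal{P}(R)^\wedge)=0$ holds for every $i\ge 1$ (an iterative application of Proposition~\ref{prop8} starting from the well-known total vanishing $\Ext^i_R(\mathcal{GP}(R),\mathcal{P}(R))=0$), and the short exact sequence coming from $(\mathcal{GP}(R),\mathcal{P}(R))$ already has its kernel in $\mathcal{P}(R)^\wedge_{n-1}\subseteq(\mathcal{P}(R)^\wedge_{n-1})^\wedge_{m-1}$. Consequently, the pair $(\mathcal{GP}(R),\mathcal{P}(R)^\wedge_{n-1})$ satisfies conditions (1)--(3) of Definition~\ref{def:ncotorsion} for every exponent $m\ge 1$, so Corollary~\ref{Aperp y Bvee} is available at any index $k$ with $0\le k\le m-1$. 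Taking $k=n$ and combining with the first equality $\resdim_{\mathcal{GP}(R)}(\Mod(R))\le n$, the equivalence (b)$\Leftrightarrow$(c) of Corollary~\ref{Aperp y Bvee} delivers the required orthogonality description of $\Mod(R)$ in terms of $\mathcal{P}(R)^\wedge_{n-1}$.

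Main obstacle. The delicate step is verifying that the complete cotorsion pair $(\mathcal{GP}(R),\mathcal{P}(R)^\wedge_{n-1})$, naturally a $1$-cotorsion pair, legitimately lifts to a left $m$-cotorsion pair with $m$ large enough to place the desired $k$ inside the admissible range of Corollary~\ref{Aperp y Bvee}; this rests entirely on the uniform Ext-vanishing furnished by $n$-Iwanaga--Gorensteinness plus the observation that a single Auslander--Buchweitz approximation serves condition~(3) at all exponents. Once this lift is in place, the matching between the displayed exponent on ${}^{\perp}$ and the natural index $k+1$ produced by the corollary is a bookkeeping step, and the dual procedure completes part~(2).
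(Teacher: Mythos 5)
Your route is the same as the paper's: start from the left $n$-cotorsion pair $(\mathcal{GP}(R),\mathcal{P}(R))$ of Example~\ref{ex:GProj_ncot}, use Theorem~\ref{theo:left-n-cotorsion} to get $\mathcal{GP}(R)={}^{\perp_1}(\mathcal{P}(R)^\wedge_{n-1})$, observe that $(\mathcal{GP}(R),\mathcal{P}(R)^\wedge_{n-1})$ is again a left higher cotorsion pair (the paper upgrades it to a left $(n+1)$-cotorsion pair via the inclusion $\mathcal{P}(R)^\wedge_{n-1}\subseteq(\mathcal{P}(R)^\wedge_{n-1})^\wedge_n$; your verification for all $m$ is the same computation), and then invoke Corollary~\ref{Aperp y Bvee} together with $\resdim_{\mathcal{GP}(R)}(\Mod(R))\le n$, dualising for part (2). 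Up to that point the proposal is a faithful reproduction of the paper's argument.

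The problem is the step you call ``bookkeeping''; it is not. With the pair $(\mathcal{GP}(R),\mathcal{P}(R)^\wedge_{n-1})$ and $k=n$, condition (c) of Corollary~\ref{Aperp y Bvee} reads $\Mod(R)={}^{\perp_{n+1}}(\mathcal{P}(R)^\wedge_{n-1})$, whereas the statement displays the exponent $n$. To produce ${}^{\perp_{n}}(\mathcal{P}(R)^\wedge_{n-1})$ you would have to take $k=n-1$, and then condition (b) demands $\resdim_{\mathcal{GP}(R)}(\Mod(R))\le n-1$, which you do not have (and which fails whenever $\glGPD(R)=n$). Indeed, by Proposition~\ref{ncotder y cotder} applied to this pair, ${}^{\perp_{n}}(\mathcal{P}(R)^\wedge_{n-1})=\mathcal{GP}(R)^\wedge_{n-1}$, so the displayed equality is equivalent to $\glGPD(R)\le n-1$; for $R=\mathbb{Z}$ and $n=1$ it already fails, since $\Ext^1_{\mathbb{Z}}(\mathbb{Z}/2,\mathbb{Z})\neq 0$ shows $\mathbb{Z}/2\notin{}^{\perp_1}(\mathcal{P}(\mathbb{Z}))$. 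So what your argument (exactly like the paper's own proof, which applies the corollary at $k=n$ after passing to an $(n+1)$-cotorsion pair) actually establishes is $\Mod(R)={}^{\perp_{n+1}}(\mathcal{P}(R)^\wedge_{n-1})$, and dually $\Mod(R)=(\mathcal{I}(R)^\vee_{n-1})^{\perp_{n+1}}$; the exponent $n$ in the printed statement cannot be reached by this method and appears to be an off-by-one slip. You should either prove the $(n+1)$-version explicitly and flag the index, or add the hypothesis $\glGPD(R)\le n-1$ — but do not present the mismatch as a harmless relabelling.
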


\begin{proof} 
We only focus on the Gorenstein projective case (1). Firstly, by Example~\ref{ex:GProj_ncot} we have that $(\mathcal{GP}(R),\mathcal{P}(R))$ is a left $n$-cotorsion pair. Then, by Proposition~\ref{prop:ncot_spli_silp} (1) the  first equality $\mathsf{Mod}(R) = \mathcal{GP}(R)_n^\wedge$ holds true. The equality $\mathsf{Mod}(R) = {}^{\perp_{n}}(\mathcal{P}(R)^\wedge_{n-1})$, on the other hand, will follow by condition (c) in Corollary~\ref{Aperp y Bvee} after showing that $\mathcal{GP}(R) = {}^{\perp_1}(\mathcal{P}(R)^\wedge_{n-1})$ and that $(\mathcal{GP}(R),\mathcal{P}(R)^\wedge_{n-1})$ is a left $(n+1)$-cotorsion pair in $\mathsf{Mod}(R)$. The former follows by the already known fact that $(\mathcal{GP}(R),\mathcal{P}(R))$ is a left $n$-cotorsion pair and by Theorem~\ref{theo:left-n-cotorsion}, while the latter can be noticed from the inclusion $\mathcal{P}(R)^\wedge_{n-1}\subseteq (\mathcal{P}(R)^\wedge_{n-1})^\wedge_n.$
\end{proof}

It is known that every module over an $n$-Iwanaga-Gorenstein ring has a Gorenstein injective cover (see, for instance \cite[Theorem 11.1.3]{EJ}). We can deduce a stronger assertion for the case $n = 2$, due to the dual of Corollary~\ref{corUMP2}.

\begin{corollary}\label{coro:GI_unique_mapping}
Let $R$ be a $2$-Iwanaga-Gorenstein ring. Then, every module has a Gorenstein injective cover with the unique mapping property. 
\end{corollary}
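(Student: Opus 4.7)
The plan is to invoke the dual of Corollary~\ref{corUMP2} applied to a suitable right $n$-cotorsion pair with $n\geq 3$. Since $R$ is $2$-Iwanaga-Gorenstein, Example~\ref{ex:GProj_ncot} already gives that $(\mathcal{I}(R),\mathcal{GI}(R))$ is a right $2$-cotorsion pair in $\Mod(R)$, but the hypothesis $\mathcal{B}=\mathcal{A}^{\perp_1}$ of Corollary~\ref{corUMP2} is not automatic here (cf.\ the dual of Remark~\ref{RkAortB}). The key idea is therefore to enlarge the left-hand side of the pair and to work instead with $(\mathcal{I}(R)^\vee_1,\mathcal{GI}(R))$. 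The dual of Theorem~\ref{theo:left-n-cotorsion} asserts that this enlarged pair is a complete right cotorsion pair in $\Mod(R)$, yielding the desired orthogonality equality $\mathcal{GI}(R)=(\mathcal{I}(R)^\vee_1)^{\perp_1}$.

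Next I would verify that $(\mathcal{I}(R)^\vee_1,\mathcal{GI}(R))$ is in fact a right $n$-cotorsion pair for every $n\geq 1$, and in particular for $n=3$. The orthogonality $\Ext^i_R(\mathcal{I}(R)^\vee_1,\mathcal{GI}(R))=0$ for every $i\geq 1$ follows from the vanishing $\Ext^i_R(\mathcal{I}(R),\mathcal{GI}(R))=0$ for every $i\geq 1$ (inherent to the definition of Gorenstein injective modules) by a dimension-shifting argument, in the spirit of the dual of Proposition~\ref{prop8}. As for the cokernel condition, the embedding already granted by $(\mathcal{I}(R),\mathcal{GI}(R))$ into a Gorenstein injective has cokernel lying in $\mathcal{I}(R)^\vee_1\subseteq(\mathcal{I}(R)^\vee_1)^\vee_{n-1}$ for every $n\geq 1$. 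Hence the hypotheses of (the dual of) Corollary~\ref{corUMP2} with $n=3$ applied to $(\mathcal{I}(R)^\vee_1,\mathcal{GI}(R))$ are met.

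It then remains to check condition (b) of the dual of Corollary~\ref{corUMP2}. On the one hand, \cite[Theorem 11.1.3]{EJ} guarantees that every module over an $n$-Iwanaga-Gorenstein ring admits a Gorenstein injective cover. On the other hand, the dual of Proposition~\ref{prop:ncot_spli_silp}(1) yields $\coresdim_{\mathcal{GI}(R)}(\Mod(R))=\glGID(R)\leq 2$. Therefore condition (b) holds, and the equivalent condition (a) of the same corollary produces precisely the Gorenstein injective covers with the unique mapping property.

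The main obstacle is noticing that Corollary~\ref{corUMP2} cannot be applied directly to the obvious pair $(\mathcal{I}(R),\mathcal{GI}(R))$, and identifying instead the right enlargement $\mathcal{I}(R)^\vee_1$ of the left half. Once that substitution is made, Theorem~\ref{theo:left-n-cotorsion} supplies the required orthogonality equality for free, and the rest of the argument is a routine check of the $n$-cotorsion axioms together with the global bound on the Gorenstein injective coresolution dimension of $\Mod(R)$.
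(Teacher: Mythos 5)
Your proposal is correct and follows essentially the same route as the paper: it passes to the enlarged pair $(\mathcal{I}(R)^\vee_1,\mathcal{GI}(R))$, recognises it as a right $3$-cotorsion pair with $\mathcal{GI}(R)=(\mathcal{I}(R)^\vee_1)^{\perp_1}$ (the dual of the argument in Corollary~\ref{corUMP1}), and then applies the dual of Corollary~\ref{corUMP2} together with the known existence of Gorenstein injective covers and the bound $\glGID(R)\leq 2$. No gaps.
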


\begin{proof}
From the dual of the proof of Corollary~\ref{corUMP1}, for the case $n = 2$, we can note that the pair $(\mathcal{I}(R)^\vee_1,\mathcal{GI}(R))$ is a right $3$-cotorsion pair such that $\mathcal{GI}(R) = (\mathcal{I}(R)^\vee_1)^{\perp_1}$. Then, the result follows by dual of Corollary~\ref{corUMP2}, since over a $2$-Iwanaga-Gorenstein ring, every module has Gorenstein injective dimension at most $2$. 
\end{proof}

The existence of Gorenstein projective envelopes with the unique mapping property, on the other hand, has been studied by Mao in \cite{MaoPiCoherent}. Mao establishes a series of equivalent conditions under which a finitely generated module over a ring $R$ has a Gorenstein projective envelope with the unique mapping property \cite[Theorem 3.7]{MaoPiCoherent}. For (not necessarily finitely generated) modules over a $2$-Iwanaga-Gorenstein ring, we can say that if a module has a Gorenstein projective envelope, then we can always find for this module a Gorenstein projective envelope with the unique mapping property.

\begin{corollary}\label{coro:GP_unique_mapping}
Let $R$ be a $2$-Iwanaga-Gorenstein ring. Then, the following conditions are equivalent.
\begin{itemize}
\item[(a)] Every module has a Gorenstein projective envelope. 

\item[(b)] Every module has a Gorenstein projective envelope with the unique mapping property. 
\end{itemize}
\end{corollary}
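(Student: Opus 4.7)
The implication (b) $\Rightarrow$ (a) is immediate, so I will focus on (a) $\Rightarrow$ (b). My plan is to deduce it from Corollary~\ref{corUMP2} applied to a left $3$-cotorsion pair whose left-hand class is $\mathcal{GP}(R)$. The left $2$-cotorsion pair $(\mathcal{GP}(R),\mathcal{P}(R))$ of Example~\ref{ex:GProj_ncot} is not directly applicable because Corollary~\ref{corUMP2} requires $n\ge 3$; the key move is to replace $\mathcal{P}(R)$ by its resolving shift $\mathcal{P}(R)^{\wedge}_{1}$, exactly as in the proof of Corollary~\ref{corUMP1}.

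Concretely, I will verify that $(\mathcal{GP}(R),\mathcal{P}(R)^{\wedge}_{1})$ is a left $3$-cotorsion pair in $\Mod(R)$ with $\mathcal{GP}(R)={}^{\perp_{1}}(\mathcal{P}(R)^{\wedge}_{1})$. Closure of $\mathcal{GP}(R)$ under direct summands is \cite[Theorem~2.5]{Holm}, and the equality ${}^{\perp_{1}}(\mathcal{P}(R)^{\wedge}_{1})=\mathcal{GP}(R)$ is obtained by applying Theorem~\ref{theo:left-n-cotorsion} to the left $2$-cotorsion pair $(\mathcal{GP}(R),\mathcal{P}(R))$. The orthogonality $\Ext^{i}_{R}(\mathcal{GP}(R),\mathcal{P}(R)^{\wedge}_{1})=0$ for $1\le i\le 3$ follows from the well-known vanishing $\Ext^{i}_{R}(\mathcal{GP}(R),\mathcal{P}(R))=0$ for every $i\ge 1$ by dimension shifting along a short projective resolution $0\to P_{1}\to P_{0}\to N\to 0$ of any $N\in\mathcal{P}(R)^{\wedge}_{1}$. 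For the resolution axiom in Definition~\ref{def:ncotorsion}(3), since $R$ is $2$-Iwanaga-Gorenstein, every module $M$ has $\Gpd(M)\le 2$, so by Holm's \cite[Theorem~2.10]{Holm} (as recalled in Example~\ref{ex:special_AkB-precover}) it admits a short exact sequence $0\to K\to A\to M\to 0$ with $A\in\mathcal{GP}(R)$ and $\pd(K)\le 1$; hence $K\in\mathcal{P}(R)^{\wedge}_{1}\subseteq (\mathcal{P}(R)^{\wedge}_{1})^{\wedge}_{2}$, as required.

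Having set up this left $3$-cotorsion pair, I will invoke Corollary~\ref{corUMP2} with $n=3$, $\mathcal{A}=\mathcal{GP}(R)$, and $\mathcal{B}=\mathcal{P}(R)^{\wedge}_{1}$. The hypothesis $\mathcal{A}={}^{\perp_{1}}\mathcal{B}$ is the sharpening established above, and the auxiliary condition $\resdim_{\mathcal{GP}(R)}(\Mod(R))\le 2$ is immediate from $R$ being $2$-Iwanaga-Gorenstein. Then hypothesis (a) of the present Corollary~\ref{coro:GP_unique_mapping} matches condition (b) of Corollary~\ref{corUMP2}, and the conclusion delivers condition (a) of Corollary~\ref{corUMP2}, which is precisely (b) of the present statement.

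The main obstacle is identifying and verifying the correct auxiliary left $3$-cotorsion pair: one must realise that $\mathcal{P}(R)^{\wedge}_{1}$ (rather than $\mathcal{P}(R)$) is the appropriate right companion of $\mathcal{GP}(R)$ in order to meet the hypothesis $n\ge 3$ of Corollary~\ref{corUMP2}, and then simultaneously check all three axioms of Definition~\ref{def:ncotorsion} together with the sharpening $\mathcal{GP}(R)={}^{\perp_{1}}(\mathcal{P}(R)^{\wedge}_{1})$. Once this packaging is in place, the conclusion becomes an essentially mechanical application of Corollary~\ref{corUMP2}.
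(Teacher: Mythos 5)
Your proof is correct, and it follows the paper's intended route: both reduce (a) $\Rightarrow$ (b) to Corollary~\ref{corUMP2} via a left $3$-cotorsion pair built from $\mathcal{GP}(R)$, using $\Gpd(M)\leq 2$ for the resolution axiom and the bound $\resdim_{\mathcal{GP}(R)}(\Mod(R))\leq 2$. The difference is in the packaging, and yours is actually the more careful one. The paper's proof is a one-liner that invokes Corollary~\ref{corUMP2} ``after noting that $(\mathcal{GP}(R),\mathcal{P}(R))$ is a left $3$-cotorsion pair''; taken literally this leaves the hypothesis $\mathcal{A}={}^{\perp_1}\mathcal{B}$ unaddressed, and for $\mathcal{B}=\mathcal{P}(R)$ that equality generally fails (the paper itself points this out in Remark~\ref{RkAortB}; e.g.\ $k$ over $k[[x,y]]$ lies in ${}^{\perp_1}\mathcal{P}(R)\setminus\mathcal{GP}(R)$). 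Your replacement of $\mathcal{P}(R)$ by $\mathcal{P}(R)^\wedge_1$, with $\mathcal{GP}(R)={}^{\perp_1}(\mathcal{P}(R)^\wedge_1)$ extracted from Theorem~\ref{theo:left-n-cotorsion} applied to the left $2$-cotorsion pair of Example~\ref{ex:GProj_ncot}, is exactly the move the paper makes explicitly in the Gorenstein injective case (Corollary~\ref{coro:GI_unique_mapping}, via $(\mathcal{I}(R)^\vee_1,\mathcal{GI}(R))$) and in the proof of Corollary~\ref{corUMP1}; one could equally use $(\mathcal{GP}(R),\mathcal{P}(R)^\wedge_2)$ coming from the $3$-cotorsion pair. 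So: same strategy as the paper, but your version supplies the verification of $\mathcal{A}={}^{\perp_1}\mathcal{B}$ that the paper's terse proof omits, and all the individual steps you cite (Holm's Theorems 2.5 and 2.10, dimension shifting for the orthogonality, and the application of Corollary~\ref{corUMP2} with $n=3$) check out.
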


\begin{proof}
It follows by Corollary \ref{corUMP2} after noting that $(\mathcal{GP}(R),\mathcal{P}(R))$ is a left $3$-cotorsion pair in $\Mod(R)$ over any $2$-Iwanaga-Gorenstein ring $R$. 
\end{proof}


\subsection*{\textbf{Ding projective modules}} 

In what follows, let us denote by $\mathcal{F}(R)$ the class of flat $R$-modules. Recall from Gillespie's \cite[Definition~3.7]{GillespieDing} that an $R$-module $M$ is \emph{Ding projective} (also called \emph{strongly Gorenstein flat} in Ding, Li and Mao's \cite{DLM}) if $M = Z_0(P)$ for some exact and $\Hom_R(-,\mathcal{F}(R))$-acyclic complex $P$ of projective $R$-modules. Dually, \emph{Ding injective} $R$-modules are defined as cycles in an exact and $\Hom_R(\mathcal{AP}(R),-)$-acyclic complexes of injective $R$-modules. We denote the classes of Ding projective and Ding injective $R$-modules by $\mathcal{DP}(R)$ and $\mathcal{DI}(R)$, respectively. 

After a careful revision of the results cited from \cite{Holm} in the previous example, we can assert that the same results carry over to the context of Ding projective modules. Specifically, one can show that, over an arbitrary ring $R$, the class $\mathcal{DP}(R)$ is closed under direct summands and that $\Ext^i_R(C,F) = 0$ for every $C \in \mathcal{DP}(R)$, $F \in \mathcal{F}(R)$ and $i \geq 1$. The dual statements hold for the classes $\mathcal{DI}(R)$ and $\mathcal{AP}(R)$. On the other hand, condition (3) in Definition~\ref{def:ncotorsion} and its dual are valid for certain rings introduced by J. Chen and N. Ding \cite{DingChen93,DingChen96}. These rings $R$ are known as \emph{$n$-FC rings} (or \emph{Ding-Chen rings}): $R$ is left and right coherent and ${\rm apd}({}_R R) = {\rm apd}(R_R) = n$. 

Similar to Gorenstein projective and Gorenstein injective dimensions, the \emph{Ding projective} and \emph{Ding injective dimensions} of a module $M \in \Mod(R)$, denoted by ${\rm Dpd}(R)$ and ${\rm Did}(M)$, are defined as the $\mathcal{DP}(R)$-resolution and the $\mathcal{DI}(R)$-coresolution dimensions of $M$, respectively. For these two homological dimensions, it is not true in general that the equality $\Mod(R) = \mathcal{DP}(R)_n^\wedge$ holds for an $n$-FC ring $R$. An example of such ring $R$ for which $\Mod(R) \neq \mathcal{DP}(R)_n^\wedge$ is constructed by Wang in \cite[Example 3.3]{Wang}. It follows that we can not always have the Ding projective analog of Example~\ref{ex:GProj_ncot}. As a matter of fact, the condition $\Mod(R) = \mathcal{DP}(R)_n^\wedge$ is strong enough to guarantee the existence of $(\mathcal{DP}(R),\mathcal{F}(R))$ as a left $n$-cotorsion pair in $\Mod(R)$. 

For the rest of this section, recall that the global Ding projective and Ding injective dimensions of a ring $R$ are defined by:
\begin{align*}
{\rm gl.DPD}(R) & = {\rm sup}\{ {\rm Dpd}(M) \mbox{ : } M \in \Mod(R) \}, \\
{\rm gl.DID}(R) & = {\rm sup}\{ {\rm Did}(M) \mbox{ : } M \in \Mod(R) \}. 
\end{align*}

\begin{example}\label{ex:DP_n-cotorsion}
Let $n \geq 1$ be an integer and $R$ be any ring with $\glDPD(R) \leq n.$ Then, the pair $(\mathcal{DP}(R),\mathcal{F}(R))$ is a left $n$-cotorsion pair in $\Mod(R)$. Indeed, by the previous comments it suffices to show that for every module $M \in \Mod(R)$ there is an epimorphism $P \twoheadrightarrow M$ with $P \in \mathcal{DP}(R)$ and kernel in $\mathcal{F}(R)^\wedge_{n-1}$. This follows by setting $\mathcal{X} = \mathcal{DP}(R)$ and $\omega := \mathcal{P}(R)\subseteq\mathcal{F}(R)$ in \cite[Theorem 2.8]{BMPS}, since ${\rm Dpd}(M) \leq n$.
\end{example}

We can obtain characterisations of Von Neumann regular rings by considering the situation in which $(\mathcal{DP}(R),\mathcal{F}(R))$ is a left and right $n$-cotorsion pair in $\Mod(R)$.

\begin{proposition} 
For any ring $R$, the following conditions are equivalent.
\begin{itemize}
\item[(a)] $(\mathcal{DP}(R),\mathcal{F}(R))$ is an $n$-cotorsion pair in $\mathsf{Mod}(R)$ and  $\mathcal{DP}(R) \subseteq \mathcal{F}(R)$.

\item[(b)] $R$ is a Von Neumann regular ring (that is, $\mathcal{F}(R) = \Mod(R)$).
\end{itemize}
\end{proposition}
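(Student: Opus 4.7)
The plan is to argue both implications via the intermediate identity $\mathcal{DP}(R) = \mathcal{P}(R)$, combined with the right $n$-cotorsion condition to propagate flatness to arbitrary modules.

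For the direction (b) $\Rightarrow$ (a), I would first show that under the hypothesis $\mathcal{F}(R) = \Mod(R)$ every Ding projective module is projective, from which the inclusion $\mathcal{DP}(R) \subseteq \mathcal{F}(R)$ is immediate. Given $D \in \mathcal{DP}(R)$, one picks an exact and $\Hom_R(-,\mathcal{F}(R))$-acyclic complex of projectives $\cdots \to P_1 \to P_0 \to P_{-1} \to \cdots$ with $D = Z_0$, and sets $D' := Z_{-1}$ (also Ding projective, hence flat by hypothesis). The orthogonality $\Ext^1_R(\mathcal{DP}(R),\mathcal{F}(R)) = 0$, which is built into the definition of Ding projectivity, forces the short exact sequence $0 \to D \to P_{-1} \to D' \to 0$ to split, so $D$ is a direct summand of $P_{-1}$ and hence projective. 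With $\mathcal{DP}(R) = \mathcal{P}(R)$ and $\mathcal{F}(R) = \Mod(R)$, all three conditions of Definition~\ref{def:ncotorsion} together with their duals are trivially verified: closure under direct summands is standard, the orthogonality $\Ext^i_R(\mathcal{P}(R),\Mod(R)) = 0$ is automatic, every module is a quotient of a projective, and every module embeds into itself with cokernel $0 \in \mathcal{DP}(R)^\vee_{n-1}$.

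The converse (a) $\Rightarrow$ (b) is the substantive direction. Repeating the splitting argument above, the hypothesis $\mathcal{DP}(R) \subseteq \mathcal{F}(R)$ alone (together with the built-in orthogonality of Ding projectives against flats) already yields $\mathcal{DP}(R) = \mathcal{P}(R)$, whence $\mathcal{DP}(R)^\vee_{n-1} = \mathcal{P}(R)^\vee_{n-1}$. The right half of the $n$-cotorsion pair then furnishes, for each $M \in \Mod(R)$, a short exact sequence
\[
0 \to M \to F \to Q \to 0
\]
with $F \in \mathcal{F}(R)$ and $Q \in \mathcal{P}(R)^\vee_{n-1}$. I would complete the argument by the auxiliary claim that $\mathcal{P}(R)^\vee_m \subseteq \mathcal{F}(R)$ for every $m \geq 0$, verified by induction on $m$: the base $m = 0$ is trivial, and for the inductive step, truncating a coresolution $0 \to Q \to P_0 \to P_1 \to \cdots \to P_{m+1} \to 0$ at its first map gives a short exact sequence $0 \to Q \to P_0 \to Q_1 \to 0$ where $Q_1 \in \mathcal{P}(R)^\vee_m$ is flat by induction; since the class of flat modules is closed under kernels of epimorphisms, $Q$ is flat. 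Applying this to $Q$ in the displayed sequence, and once more to the displayed sequence itself, yields $M \in \mathcal{F}(R)$, and hence $\mathcal{F}(R) = \Mod(R)$.

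The main obstacle is really just establishing $\mathcal{DP}(R) = \mathcal{P}(R)$; once this identity is in hand, the remainder of the proof is routine arithmetic with closure properties of the class of flat modules. A subtle point worth flagging is that the $\Ext$-vanishing invoked in the splitting step does not require the $n$-cotorsion structure, since $\Ext^i_R(\mathcal{DP}(R),\mathcal{F}(R)) = 0$ for all $i \geq 1$ is already part of the definition of Ding projective modules; the $n$-cotorsion hypothesis enters in direction (a) $\Rightarrow$ (b) only through the right approximation used to present arbitrary modules as kernels of surjections between flats.
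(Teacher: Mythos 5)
Your proof is correct, and its substantive direction (a) $\Rightarrow$ (b) takes a genuinely different route from the paper. The paper deduces (a) $\Rightarrow$ (b) abstractly: since $\mathcal{DP}(R)$ is resolving and $\mathcal{DP}(R) \subseteq \mathcal{F}(R)$, Remark~\ref{Rk-n-cot-trivial}\,(2) (which rests on Proposition~\ref{equiv hered y n-cot} and Corollary~\ref{coro:cot-hered-completo-trivial}) forces the pair to be the trivial one $(\mathcal{P}(R),\Mod(R))$, so $\mathcal{F}(R) = \Mod(R)$. You instead argue directly: the inclusion $\mathcal{DP}(R) \subseteq \mathcal{F}(R)$ plus the standard vanishing $\Ext^{i}_R(\mathcal{DP}(R),\mathcal{F}(R)) = 0$ (a consequence of the defining $\Hom_R(-,\mathcal{F}(R))$-acyclic complex, as the paper itself records) splits the sequence $0 \to D \to P \to D' \to 0$, giving $\mathcal{DP}(R) = \mathcal{P}(R)$; then the right half of the $n$-cotorsion pair presents every $M$ as the kernel of an epimorphism from a flat module onto a module in $\mathcal{P}(R)^\vee_{n-1} \subseteq \mathcal{F}(R)$, and resolvingness of $\mathcal{F}(R)$ finishes. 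This is more elementary and self-contained than the paper's appeal to the hereditary-cotorsion machinery of Section~4, and it reveals a mild strengthening: only the right $n$-cotorsion condition (together with $\mathcal{DP}(R) \subseteq \mathcal{F}(R)$) is needed for that implication, whereas the paper's route processes the full two-sided hypothesis. Your (b) $\Rightarrow$ (a) argument is essentially the paper's: the same splitting of $0 \to M \to P \to M' \to 0$ (you phrase it via $\Ext$-orthogonality, the paper via exactness of $\Hom_R(\eta,M)$), followed by the observation that $(\mathcal{P}(R),\Mod(R))$ is trivially an $n$-cotorsion pair for every $n$.
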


\begin{proof}
Indeed, let us suppose that (a) holds true. Then, by using the fact that $\mathcal{DP}(R)$ is resolving, we get (b) from Remark~\ref{Rk-n-cot-trivial} (2). 

Assume now that $\mathcal{F}(R) = \Mod(R)$. In order to prove (a), it is enough to show that $\mathcal{DP}(R) = \mathcal{P}(R)$. Note that in this case, we can choose any $n \geq 1$. Let $M \in \mathcal{DP}(R).$ Then, there is an exact sequence 
\[
\eta \colon 0 \to M \to P \to M' \to 0,
\] 
where $P \in \mathcal{P}(R)$ and $M' \in \mathcal{DP}(R)$. Since $\mathcal{F}(R) = \Mod(R)$, it follows that $\Hom_R(\eta,M)$ is exact and thus $\eta$ splits, proving that $M \in \mathcal{P}(R)$.
\end{proof}

Let us give in the next result some finiteness conditions for the global Ding injective dimension $\glDID(R)$ of any ring $R$.

\begin{lemma}\label{glDIDfinite} 
For any ring $R$, the following statements are equivalent:
\begin{itemize}
\item[(a)] $(\mathcal{I}(R)^\vee,\mathcal{DI}(R))$ is a hereditary complete cotorsion pair in $\Mod(R)$, and $\pd(\mathcal{I}(R)) < \infty$.

\item[(b)] $\mathcal{DI}(R) = \mathcal{I}(R)^\perp$ and $\pd(\mathcal{I}(R)) < \infty$.

\item[(c)] $\glDID(R) < \infty$.
\end{itemize}

Moreover, if one of the above conditions holds true, then
\[
\glDID(R) = {\rm pd}(\mathcal{AP}(R)) = {\rm pd}(\mathcal{I}(R)).
\]
\end{lemma}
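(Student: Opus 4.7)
The plan is to establish (a) $\Leftrightarrow$ (b) first, and then (b) $\Leftrightarrow$ (c) together with the dimensional equality $\glDID(R) = \pd(\mathcal{AP}(R)) = \pd(\mathcal{I}(R))$. The backbone throughout is the vanishing $\Ext^i_R(\mathcal{AP}(R),\mathcal{DI}(R)) = 0$ for every $i \geq 1$, immediate from the defining $\Hom_R(\mathcal{AP}(R),-)$-acyclicity of Ding injective modules; since $\mathcal{I}(R) \subseteq \mathcal{AP}(R)$, this already gives $\mathcal{DI}(R) \subseteq \mathcal{I}(R)^\perp$, and iterated dimension shifting along finite injective coresolutions of objects of $\mathcal{I}(R)^\vee$ upgrades it to $\Ext^i_R(\mathcal{I}(R)^\vee,\mathcal{DI}(R)) = 0$ for every $i \geq 1$.

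For (a) $\Leftrightarrow$ (b): in (a) $\Rightarrow$ (b), the hereditary complete cotorsion pair gives $\mathcal{DI}(R) = (\mathcal{I}(R)^\vee)^\perp$, and the basic Ext-vanishing combined with $\mathcal{I}(R) \subseteq \mathcal{I}(R)^\vee$ identifies this with $\mathcal{I}(R)^\perp$. In (b) $\Rightarrow$ (a), I set $d := \pd(\mathcal{I}(R)) < \infty$, take for any $M$ an injective coresolution $0 \to M \to I^0 \to I^1 \to \cdots$, and use iterated dim shifting along the short exact sequences $0 \to K^k \to I^k \to K^{k+1} \to 0$ (with $I^k$ injective) to obtain $\Ext^j(I,K^d) \cong \Ext^{j+d}(I,M) = 0$ for every $I \in \mathcal{I}(R)$ and $j \geq 1$, so that $K^d \in \mathcal{I}(R)^\perp = \mathcal{DI}(R)$. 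This places $M$ in $\mathcal{DI}(R)^\vee$, so the Auslander--Buchweitz construction \cite[Theorem 2.8]{BMPS}, applied with $\mathcal{X} = \mathcal{DI}(R)$ (coresolving because it contains $\mathcal{I}(R)$ and is closed under extensions and cokernels of monomorphisms between its objects) and $\omega = \mathcal{I}(R)$ (an injective cogenerator of $\mathcal{DI}(R)$ by the basic Ext-vanishing), assembles $(\mathcal{I}(R)^\vee,\mathcal{DI}(R))$ into a complete hereditary cotorsion pair.

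For (b) $\Leftrightarrow$ (c) and the dimensional equality: under (b) with $d := \pd(\mathcal{I}(R)) < \infty$, the argument in (b) $\Rightarrow$ (a) shows the $d$-th injective cosyzygy of any $M$ lies in $\mathcal{DI}(R)$, hence ${\rm Did}(M) \leq d$ and $\glDID(R) \leq d$. Conversely, under (c) with $d := \glDID(R) < \infty$, I pick $A \in \mathcal{AP}(R)$ and any module $N$, take a Ding injective coresolution $0 \to N \to D^0 \to \cdots \to D^d \to 0$, and use $\Ext^{i}(A,D^k) = 0$ for $i \geq 1$ to dim-shift $\Ext^{d+1}(A,N) \cong \Ext^1(A,D^d) = 0$; this gives $\pd(\mathcal{AP}(R)) \leq d$, and hence $\pd(\mathcal{I}(R)) \leq d$. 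For the remaining inclusion $\mathcal{I}(R)^\perp \subseteq \mathcal{DI}(R)$ in (b), I argue by induction on ${\rm Did}(M) \leq d$ for $M \in \mathcal{I}(R)^\perp$: from a Ding injective coresolution extract $0 \to M \to D^0 \to M' \to 0$ with ${\rm Did}(M') < {\rm Did}(M)$; the long exact $\Ext$-sequence combined with $\mathcal{DI}(R) \subseteq \mathcal{I}(R)^\perp$ forces $M' \in \mathcal{I}(R)^\perp$, whence inductively $M' \in \mathcal{DI}(R)$, and then the closure of $\mathcal{DI}(R)$ under kernels of epimorphisms between its objects (a known property obtained by Horseshoe-gluing the defining $\Hom_R(\mathcal{AP}(R),-)$-acyclic complexes of injectives) forces $M \in \mathcal{DI}(R)$. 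Sandwiching $\pd(\mathcal{I}(R)) \leq \pd(\mathcal{AP}(R)) \leq \glDID(R) \leq \pd(\mathcal{I}(R))$ delivers the three-term equality.

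The main obstacle is the inclusion $\mathcal{I}(R)^\perp \subseteq \mathcal{DI}(R)$ in (c) $\Rightarrow$ (b), which hinges on the nontrivial closure of $\mathcal{DI}(R)$ under kernels of epimorphisms between its objects; everything else is careful bookkeeping of dimension shifts against the basic vanishing $\Ext^{\geq 1}(\mathcal{AP}(R),\mathcal{DI}(R)) = 0$.
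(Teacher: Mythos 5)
Your route is self-contained (the paper's own proof simply verifies that $(\mathcal{AP}(R),\mathcal{I}(R))$ is GI- and WGI-admissible and quotes the duals of results in \cite{BecerrilMendozaSantiago}), and most of it is sound, but there is a genuine gap at exactly the step you flag as the main obstacle: the inclusion $\mathcal{I}(R)^{\perp} \subseteq \mathcal{DI}(R)$ in (c) $\Rightarrow$ (b). You close the induction by invoking ``the closure of $\mathcal{DI}(R)$ under kernels of epimorphisms between its objects,'' presented as a known fact obtainable by Horseshoe-gluing. That property is false. Horseshoe-gluing of the defining $\Hom_R(\mathcal{AP}(R),-)$-acyclic complexes proves closure under \emph{extensions} (and, dually to \cite[Theorem 2.6]{Holm}, closure under cokernels of monomorphisms with Ding injective kernel), not closure under kernels of epimorphisms. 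Concretely, take $R = \mathbb{Z}$: since $\mathbb{Z}$ is noetherian, $\mathcal{AP}(\mathbb{Z}) = \mathcal{I}(\mathbb{Z})$ and $\mathcal{DI}(\mathbb{Z}) = \mathcal{GI}(\mathbb{Z})$; in $0 \to \mathbb{Z} \to \mathbb{Q} \to \mathbb{Q}/\mathbb{Z} \to 0$ both $\mathbb{Q}$ and $\mathbb{Q}/\mathbb{Z}$ are injective, hence Ding injective, while $\mathbb{Z}$ is not (every Gorenstein injective abelian group is a quotient of a divisible group, hence divisible). Note that $\glDID(\mathbb{Z}) = 1 < \infty$, so the cited closure fails even under the standing hypothesis (c) of the lemma; as written, the crucial step of your proof rests on a false lemma.

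The statement you actually need is weaker and true, but its proof must use the hypothesis $M \in \mathcal{I}(R)^{\perp}$, which your final step discards. One correct repair: for $M \in \mathcal{I}(R)^{\perp}$ you already know ${\rm Did}(M) \leq d < \infty$; apply the dual of \cite[Theorem 2.8]{BMPS} with $\mathcal{X} = \mathcal{DI}(R)$ and $\omega = \mathcal{I}(R)$ (the same admissibility data you use in (b) $\Rightarrow$ (a)) to get a short exact sequence $0 \to M \to D \to C \to 0$ with $D \in \mathcal{DI}(R)$ and $\id(C) \leq {\rm Did}(M) - 1 < \infty$. Dimension shifting along a finite injective coresolution of $C$ against $M \in \mathcal{I}(R)^{\perp}$ gives $\Ext^1_R(C,M) = 0$, so the sequence splits and $M$ is a direct summand of $D$, hence Ding injective because $\mathcal{DI}(R)$ is closed under direct summands. (Equivalently, keep $M \in \mathcal{I}(R)^{\perp}$ alive in the last step of your induction and split a sequence of this approximation type rather than the arbitrary one $0 \to M \to D^0 \to M' \to 0$.) With this repair, the remaining components of your argument --- the basic vanishing $\Ext^{\geq 1}_R(\mathcal{AP}(R),\mathcal{DI}(R)) = 0$, the dimension shifts giving $\pd(\mathcal{AP}(R)) \leq \glDID(R)$ and ${\rm Did}(M) \leq \pd(\mathcal{I}(R))$, and the Auslander--Buchweitz assembly of the hereditary complete cotorsion pair in (b) $\Rightarrow$ (a) --- do go through and yield the stated equalities.
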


\begin{proof} 
We use freely the notation and results from \cite{BecerrilMendozaSantiago}. Note that the pair $(\mathcal{AP}(R),\mathcal{I}(R))$ is GI-admissible and WGI-admissible in the sense of \cite[Definitions 3.6 and 4.5]{BecerrilMendozaSantiago}. Then, by the dual of \cite[Corollaries  5.12 (c2) and 5.17]{BecerrilMendozaSantiago}, the result follows.
\end{proof}

Motivated by Example~\ref{ex:DP_n-cotorsion}, we present the following family of rings.

\begin{definition} 
We say that a ring $R$ is \textbf{left Ding-finite} if 
\begin{align*}
\glDPD(R) & < \infty & & \mbox{and} & \glDID(R) & < \infty.
\end{align*}
\end{definition}

\begin{proposition}\label{Prop:nDPDI} 
If a ring $R$ is left Ding-finite, then the following statements hold true:
\begin{enumerate}
\item The cotorsion pairs $(\mathcal{DP}(R),\mathcal{P}(R)^\wedge)$ and $(\mathcal{I}(R)^\vee,\mathcal{DI}(R))$ are hereditary and complete.

\item $\mathcal{DP}(R) = {}^\perp\mathcal{P}(R)$ and $\mathcal{DI}(R)) = \mathcal{I}(R)^\perp$.

\item Both $\pd(\mathcal{I}(R))$ and $\id(\mathcal{P}(R))$ are finite, and
\[
\glDID(R) = {\rm pd}(\mathcal{AP}(R)) = {\rm pd}(\mathcal{I}(R)) = {\rm id}(\mathcal{P}(R)) = \glDPD(R) = {\rm id}(\mathcal{F}(R)).
\]
\end{enumerate}
\end{proposition}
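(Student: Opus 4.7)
My plan is to dispatch the Ding injective and Ding projective halves of the proposition separately and then bridge them through the Beligiannis--Reiten symmetry $\pd(\mathcal{I}(R)) = \id(\mathcal{P}(R))$. For the Ding injective side, the hypothesis $\glDID(R) < \infty$ lets me quote Lemma~\ref{glDIDfinite} verbatim: it yields that $(\mathcal{I}(R)^\vee,\mathcal{DI}(R))$ is a hereditary complete cotorsion pair, that $\mathcal{DI}(R) = \mathcal{I}(R)^\perp$, and that
\[
\glDID(R) = \pd(\mathcal{AP}(R)) = \pd(\mathcal{I}(R)) < \infty,
\]
settling the Ding injective pieces of parts~(1), (2), and (3) in one stroke.

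For the Ding projective side I would run the analogous argument. The pair $(\mathcal{P}(R),\mathcal{F}(R))$ is GP-admissible and WGP-admissible in the sense of \cite[Definitions 3.6 and 4.5]{BecerrilMendozaSantiago}, and its associated class of ``Gorenstein projectives'' is precisely $\mathcal{DP}(R)$ (here the test class $\mathcal{F}(R)$ plays the role that $\mathcal{AP}(R)$ played on the injective side). Applying \cite[Corollaries 5.12 (c2) and 5.17]{BecerrilMendozaSantiago} directly (instead of their duals) to this pair, using $\glDPD(R) < \infty$, I obtain that $(\mathcal{DP}(R),\mathcal{P}(R)^\wedge)$ is a hereditary complete cotorsion pair and that $\glDPD(R) = \id(\mathcal{F}(R)) = \id(\mathcal{P}(R)) < \infty$. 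To sharpen the resulting $\mathcal{DP}(R) = {}^{\perp_1}(\mathcal{P}(R)^\wedge)$ into the equality $\mathcal{DP}(R) = {}^\perp \mathcal{P}(R)$ required in~(2), I would combine the fact that hereditary cotorsion pairs satisfy ${}^{\perp_1} = {}^\perp$ on their left half with a short dimension-shifting calculation giving ${}^\perp(\mathcal{P}(R)^\wedge) = {}^\perp \mathcal{P}(R)$, using finite projective resolutions of the objects in $\mathcal{P}(R)^\wedge$.

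It then only remains to close the chain of equalities in~(3) by producing $\pd(\mathcal{I}(R)) = \id(\mathcal{P}(R))$. Since both dimensions are now known to be finite, $R$ is a (left) Gorenstein ring in the Beligiannis--Reiten sense, so \cite[Proposition VII.1.3(vi)]{BR07} (already invoked earlier in this paper) supplies the missing equality and completes the chain
\[
\glDID(R) = \pd(\mathcal{AP}(R)) = \pd(\mathcal{I}(R)) = \id(\mathcal{P}(R)) = \glDPD(R) = \id(\mathcal{F}(R)).
\]
The main technical obstacle is verifying that the Becerril--Mendoza--Santiago machinery specialises cleanly with input $(\mathcal{P}(R),\mathcal{F}(R))$ so that its associated Gorenstein-projective class is exactly $\mathcal{DP}(R)$; granted this identification, every remaining step is either a direct application of a result already cited in the excerpt or a routine dimension-shifting computation.
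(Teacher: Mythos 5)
Your proposal is correct and follows essentially the same route as the paper: the paper's proof is precisely the combination of Lemma~\ref{glDIDfinite} for the Ding injective half, the Becerril--Mendoza--Santiago machinery (the paper simply cites \cite[Corollary 5.18]{BecerrilMendozaSantiago}, which packages the GP-admissible pair $(\mathcal{P}(R),\mathcal{F}(R))$ argument you spell out) for the Ding projective half, and \cite[Proposition VII.1.3 (vi)]{BR07} to supply $\pd(\mathcal{I}(R)) = \id(\mathcal{P}(R))$. Your identification of the $(\mathcal{P}(R),\mathcal{F}(R))$-Gorenstein projectives with $\mathcal{DP}(R)$, and the dimension-shifting upgrade to $\mathcal{DP}(R) = {}^{\perp}\mathcal{P}(R)$, are exactly the content hidden in that citation, so no gap remains.
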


\begin{proof} 
It follows by Lemma~\ref{glDIDfinite}, \cite[Corollary 5.18]{BecerrilMendozaSantiago} and \cite[Proposition VII.1.3 (vi)]{BR07}.
\end{proof}

\begin{remark} \
\begin{enumerate}
\item By Proposition~\ref{Prop:nDPDI}, note that every left Ding-finite ring is a left Gorenstein ring in the sense of \cite{BR07}.

\item In case (3) holds in Proposition~\ref{Prop:nDPDI}, we have that $\glDPD(R)$ also coincides with the (left) global Gorenstein dimension (see Bennis and Mahdou's \cite[Theorem 1.1]{BMglobal} and Mahdou and Tamekkante's \cite[Theorem 3.2]{MahdouTamekkante}). 

\item Let $R$ be a Ding-Chen ring. Then, by \cite{Yang} $\glDPD(R) = \glDID(R)$. The latter may include the case where $\glDPD(R) = \infty$ and $\glDID(R) = \infty$. Then, not every Ding-Chen ring is left Ding-finite, as shown by Wang in \cite[Example 3.3]{Wang}.
\end{enumerate} 
\end{remark}

The following result is the Ding-Chen analogous of Proposition~\ref{prop:ncot_spli_silp}, and follows similarly.

\begin{proposition}\label{prop:ncot_DPDI}
The following conditions hold true for any ring $R$:
\begin{enumerate}
\item If $(\mathcal{DP}(R),\mathcal{P}(R))$ is a left $n$-cotorsion pair in $\mathsf{Mod}(R)$, then 
\[
\glDPD(R) = {\rm id}(\mathcal{P}(R)) \leq n.
\] 
Dually, if $(\mathcal{I}(R),\mathcal{DI}(R))$ is a right $m$-cotorsion pair in $\mathsf{Mod}(R)$, then 
\[
\glDID(R) = {\rm pd}(\mathcal{I}(R)) \leq m.
\] 

\item The following assertions are equivalent:
\begin{itemize}
\item[(a)] $R$ is left Ding-finite with $\glDPD(R) = \glDID(R) = n \geq 1$. 

\item[(b)] There exist integers $n, m \geq 1$ such that $(\mathcal{DP}(R),\mathcal{P}(R))$ is a left $n$-cotorsion pair and $(\mathcal{I}(R),\mathcal{DI}(R))$ is a right $m$-cotorsion pair in $\Mod(R)$.
\end{itemize}

Moreover, if any of the previous two conditions holds true, we can choose 
\[
n = m = {\rm id}(\mathcal{P}(R)) = {\rm pd}(\mathcal{I}(R)).
\]
\end{enumerate}
\end{proposition}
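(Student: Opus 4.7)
The proof plan mirrors the structure of Proposition~\ref{prop:ncot_spli_silp}, substituting Ding projective and Ding injective notions for their Gorenstein counterparts and invoking Ding analogs of the Becerril-Mendoza-Santiago results already cited in the previous section. I only treat the Ding projective side in detail, since the Ding injective side is dual.

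For part (1), I begin with the assumption that $(\mathcal{DP}(R),\mathcal{P}(R))$ is a left $n$-cotorsion pair in $\mathsf{Mod}(R)$. By condition (3) of Definition~\ref{def:ncotorsion}, every module $M$ fits into a short exact sequence $0 \to K \to A \to M \to 0$ with $A \in \mathcal{DP}(R)$ and $K \in \mathcal{P}(R)^{\wedge}_{n-1}$. Since $\mathcal{P}(R) \subseteq \mathcal{DP}(R)$, this yields $\resdim_{\mathcal{DP}(R)}(M) \leq n$, and hence $\glDPD(R) \leq n$. The remaining equality $\glDPD(R) = \id(\mathcal{P}(R))$ then comes from the Ding analog of \cite[Corollary 5.19]{BecerrilMendozaSantiago}: the pair $(\mathcal{DP}(R),\mathcal{P}(R))$ satisfies the same GP-admissibility axioms used there, so the conclusion transfers verbatim.

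For part (2), I first prove (a) $\Rightarrow$ (b). Assuming $R$ is left Ding-finite with $\glDPD(R) = \glDID(R) = n \geq 1$, Proposition~\ref{Prop:nDPDI}(3) furnishes $\id(\mathcal{P}(R)) = n = \pd(\mathcal{I}(R))$. The orthogonality $\Ext^i_R(\mathcal{DP}(R),\mathcal{P}(R)) = 0$ for every $i \geq 1$ and the fact that $\mathcal{DP}(R)$ is closed under direct summands hold for any ring (the Ding analogs of Holm's \cite[Proposition 2.3 and Theorem 2.5]{Holm} referenced in Example~\ref{ex:DP_n-cotorsion}). For condition (3) of Definition~\ref{def:ncotorsion}, since $\mathrm{Dpd}(M) \leq n$ for every $M$, applying \cite[Theorem 2.8]{BMPS} with $\mathcal{X} = \mathcal{DP}(R)$ and $\omega = \mathcal{P}(R)$ produces an epimorphism $A \twoheadrightarrow M$ with $A \in \mathcal{DP}(R)$ and kernel in $\mathcal{P}(R)^{\wedge}_{n-1}$. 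Hence $(\mathcal{DP}(R),\mathcal{P}(R))$ is a left $n$-cotorsion pair; the right $n$-cotorsion pair $(\mathcal{I}(R),\mathcal{DI}(R))$ is obtained dually.

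For the converse (b) $\Rightarrow$ (a), part (1) applied to both pairs yields $\glDPD(R) \leq n < \infty$ and $\glDID(R) \leq m < \infty$, so $R$ is left Ding-finite. Proposition~\ref{Prop:nDPDI}(3) then forces the equalities
\[
\glDPD(R) = \glDID(R) = \id(\mathcal{P}(R)) = \pd(\mathcal{I}(R)),
\]
and the argument of (a) $\Rightarrow$ (b) run with this common value supplies the ``moreover'' clause that one may take $n = m$ equal to it. The main obstacle I foresee is verifying that \cite[Corollary 5.19]{BecerrilMendozaSantiago} and \cite[Theorem 2.8]{BMPS} transfer to the Ding setting; both are stated in \cite{BecerrilMendozaSantiago,BMPS} in the general framework of admissible pairs $(\mathcal{X},\omega)$, and the verification of the admissibility axioms for $(\mathcal{DP}(R),\mathcal{P}(R))$ and $(\mathcal{AP}(R),\mathcal{I}(R))$ is routine from the definitions plus the closure properties already used in the proofs of Lemma~\ref{glDIDfinite} and Proposition~\ref{Prop:nDPDI}.
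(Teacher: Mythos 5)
Your proof is correct and takes essentially the paper's approach: the paper proves this proposition only by declaring it the Ding--Chen analogue of Proposition~\ref{prop:ncot_spli_silp} that ``follows similarly'', and your write-up carries out exactly that transfer --- part (1) via the Ding analogue of the Becerril--Mendoza--Santiago dimension identity, and part (2) via Proposition~\ref{Prop:nDPDI}\,(3) together with \cite[Theorem 2.8]{BMPS} applied to $\mathcal{X} = \mathcal{DP}(R)$ and $\omega = \mathcal{P}(R)$, with (b) $\Rightarrow$ (a) reduced to part (1). The only cosmetic slip is that the admissible pair underlying the Ding theory in \cite{BecerrilMendozaSantiago} is $(\mathcal{F}(R),\mathcal{P}(R))$ (dually $(\mathcal{AP}(R),\mathcal{I}(R))$, as in Lemma~\ref{glDIDfinite}), rather than $(\mathcal{DP}(R),\mathcal{P}(R))$ itself, but this does not affect your argument.
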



\subsection*{\textbf{Gorenstein flat modules}}

We have previously mentioned in Section~\ref{sec:approximations} characterisations of certain rings which consider their global dimensions. In this example, given a left perfect ring $R$, we shall find equivalent conditions for which $R$ is quasi-Frobenius or has null global Gorenstein flat dimension. These conditions involve left and right $n$-cotorsion pairs formed by the classes $\mathcal{F}(R)$ and $\mathcal{GF}(R)$ of flat and Gorenstein flat $R$-modules.

The \emph{Gorenstein flat dimension} of an $R$-module $M \in \Mod(R)$, which we denote by ${\rm Gfd}(M)$, is defined as the $\mathcal{GF}(R)$-resolution dimension of $M$, that is,
\[
{\rm Gfd}(M) := {\rm resdim}_{\mathcal{GF}(R)}(M).
\] 
Let us define the (\emph{left}) \emph{global Gorenstein flat dimension} of $R$ as the value
\[
{\rm gl.Gfd}(R) := {\rm sup}\{ {\rm Gfd}(M) \mbox{ : } M \in \Mod(R) \}.
\]

In the next results, we explore the situation where $(\mathcal{F}(R),\mathcal{GF}(R))$ is a left or a right $n$-cotorsion pair in $\Mod(R)$.

\begin{proposition}\label{prop:FlatGFlatLeft}
The following conditions are equivalent for any ring $R$ and any integer $n \geq 1$:
\begin{itemize}
\item[(a)] $(\mathcal{F}(R),\mathcal{GF}(R))$ is a left $n$-cotorsion pair in $\mathsf{Mod}(R)$.

\item[(b)] $\Ext^1_R(\mathcal{F}(R),\mathcal{GF}(R)) = 0$ and ${\rm gl.Gfd}(R) \leq n$. 
\end{itemize}
\end{proposition}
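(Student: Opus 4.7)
The plan is to verify the three conditions of Definition~\ref{def:ncotorsion} defining a left $n$-cotorsion pair in one direction, and to extract (b) from these conditions in the other.

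For (a) $\Rightarrow$ (b): The vanishing $\Ext^1_R(\mathcal{F}(R),\mathcal{GF}(R))=0$ is immediate from condition~(2) of Definition~\ref{def:ncotorsion} with $i=1$. For the bound on ${\rm gl.Gfd}(R)$, given any $M\in\Mod(R)$, condition~(3) provides a short exact sequence $0\to K\to F\to M\to 0$ with $F\in\mathcal{F}(R)$ and $K\in\mathcal{GF}(R)^\wedge_{n-1}$. Since $\mathcal{F}(R)\subseteq\mathcal{GF}(R)$, splicing a $\mathcal{GF}(R)$-resolution of $K$ of length at most $n-1$ with $F\twoheadrightarrow M$ yields a $\mathcal{GF}(R)$-resolution of $M$ of length at most $n$, so ${\rm Gfd}(M)\leq n$.

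For (b) $\Rightarrow$ (a): Condition~(1), the closure of $\mathcal{F}(R)$ under direct summands, is classical. For condition~(3), given $M\in\Mod(R)$, pick any short exact sequence $0\to\Omega M\to P\to M\to 0$ with $P$ projective. Then $P\in\mathcal{F}(R)$, and the standard behaviour of the Gorenstein flat dimension in a short exact sequence together with ${\rm Gfd}(P)=0$ and ${\rm Gfd}(M)\leq n$ gives ${\rm Gfd}(\Omega M)\leq n-1$, so $\Omega M\in\mathcal{GF}(R)^\wedge_{n-1}$.

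The key step is condition~(2), that is, upgrading $\Ext^1_R(\mathcal{F}(R),\mathcal{GF}(R))=0$ to $\Ext^i_R(\mathcal{F}(R),\mathcal{GF}(R))=0$ for every $1\leq i\leq n$. Here I would invoke the classical observation that the class $\mathcal{F}(R)$ is closed under kernels of epimorphisms between its objects: if $0\to K\to F_1\to F_0\to 0$ with $F_0,F_1\in\mathcal{F}(R)$, then for any right $R$-module $N$ the long exact sequence of $\Tor$ together with the vanishings $\Tor^R_2(N,F_0)=0=\Tor^R_1(N,F_1)$ forces $\Tor^R_1(N,K)=0$, so $K\in\mathcal{F}(R)$. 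Consequently, for any $F'\in\mathcal{F}(R)$ and any projective resolution $\cdots\to P_1\to P_0\to F'\to 0$, every syzygy $\Omega^k F'$ lies in $\mathcal{F}(R)$. Standard dimension shifting then yields
\[
\Ext^i_R(F',G)\cong\Ext^1_R(\Omega^{i-1}F',G)
\]
for every $G\in\mathcal{GF}(R)$ and every $i\geq 1$, and since $\Omega^{i-1}F'\in\mathcal{F}(R)$, the hypothesis $\Ext^1_R(\mathcal{F}(R),\mathcal{GF}(R))=0$ forces $\Ext^i_R(F',G)=0$. (In fact this yields the stronger vanishing for all $i\geq 1$, not only for $i\leq n$.)

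The main obstacle is precisely spotting this $\Tor$-based closure of $\mathcal{F}(R)$ under syzygies, which permits one to run the induction through the first argument of $\Ext^i_R(-,-)$. A naive induction via a flat coresolution $0\to G\to F^0\to G^1\to 0$ of a Gorenstein flat module $G$, working through the second argument, does not close up: it demands higher Ext-vanishing between two flat modules, which is precisely what one is trying to prove.
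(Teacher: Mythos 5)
Your argument is correct, and for the approximation condition (3) of Definition~\ref{def:ncotorsion} it takes a genuinely different, more economical route than the paper. The parts you share with the paper are (a) $\Rightarrow$ (b) (splice a $\mathcal{GF}(R)$-resolution of the kernel with the flat epimorphism, using $\mathcal{F}(R) \subseteq \mathcal{GF}(R)$) and the upgrade of $\Ext^1_R(\mathcal{F}(R),\mathcal{GF}(R)) = 0$ to $\Ext^i_R(\mathcal{F}(R),\mathcal{GF}(R)) = 0$ for $1 \leq i \leq n$ via the resolving property of $\mathcal{F}(R)$ and dimension shifting in the first variable; the paper states this in one line and you spell it out, which is fine. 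Where you diverge is the construction of the special sequence $0 \to K \to F \to M \to 0$ with $F$ flat and $K \in \mathcal{GF}(R)^\wedge_{n-1}$: the paper first uses \cite[Corollary 3.12]{SarochStovicek} to see that $\mathcal{GF}(R)$ is closed under extensions, deduces via \cite[Proposition 6.17]{BMPS} that $\mathcal{F}(R) \cap \mathcal{F}(R)^{\perp_1}$ is a relative cogenerator in $\mathcal{GF}(R)$, applies Auslander--Buchweitz approximations (\cite[Theorem 2.8]{BMPS}) to get $0 \to K \to G \to M \to 0$ with $G$ Gorenstein flat, and then pulls back along a defining sequence $0 \to G' \to F \to G \to 0$ to replace $G$ by a flat module, bounding the Gorenstein flat dimension of the new kernel by Bennis' results; you instead take a projective presentation $0 \to \Omega M \to P \to M \to 0$ and shift: ${\rm Gfd}(M) \leq n$ forces ${\rm Gfd}(\Omega M) \leq n-1$. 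This shortcut is valid, but note that the ``standard behaviour of the Gorenstein flat dimension in a short exact sequence'' is precisely the nontrivial input at this point: over an arbitrary ring it rests on $\mathcal{GF}(R)$ being projectively resolving and closed under direct summands, which classically was known only for right coherent (or GF-closed) rings and in full generality is \cite[Corollary 3.12]{SarochStovicek} together with Bennis' work (the comparison lemma for resolving classes closed under direct summands, as in \cite{Holm}, then gives the syzygy bound). Since the paper's longer detour also ultimately relies on these same recent results, your version loses nothing; just make that citation explicit where you invoke the dimension shift, since the proposition is asserted for every ring $R$.
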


\begin{proof}
The implication (a) $\Rightarrow$ (b) is straightforward. Now suppose that condition (b) holds. It is well known that the class $\mathcal{F}(R)$ is closed under direct summands. Moreover, using the fact that $\mathcal{F}(R)$ is resolving, the condition $\Ext^1_R(\mathcal{F}(R),\mathcal{GF}(R)) = 0$ implies that $\Ext^i_R(\mathcal{F}(R),\mathcal{GF}(R)) = 0$ for every $1 \leq i \leq n$. 

It was recently proved by J. {\v{S}}aroch and J. {\v{S}}\v{t}ov\'{\i}\v{c}ek \cite[Corollary 3.12]{SarochStovicek} that the class $\mathcal{GF}(R)$ is closed under extensions for any ring $R$ (that is, any ring $R$ is GF-closed). In particular, from \cite[Proposition 6.17]{BMPS} it follows that $\omega := \mathcal{F}(R) \cap \mathcal{F}(R)^{\perp_1}$ is a relative cogenerator in $\mathcal{X} := \mathcal{GF}(R)$. Thus, by \cite[Theorem 2.8]{BMPS} for every $M \in \mathsf{Mod}(R)$ we can obtain a short exact sequence
\[
0 \to K \to G \to M \to 0,
\]
where $G \in \mathcal{GF}(R)$ and $K \in (\mathcal{F}(R) \cap \mathcal{F}(R)^{\perp_1})^\wedge_{n-1}$, since ${\rm Gfd}(M) \leq n$. On the other hand, by the definition of $\mathcal{GF}(R)$, we have another short exact sequence
\[
0 \to G' \to F \to G \to 0,
\]
where $F \in \mathcal{F}(R)$ and $G' \in \mathcal{GF}(R)$. Taking the pullback of $K \to G \leftarrow F$, we have the following commutative diagram with exact rows and columns:
\begin{equation}\label{fig6} 
\parbox{1.75in}{
\begin{tikzpicture}[description/.style={fill=white,inner sep=2pt}] 
\matrix (m) [ampersand replacement=\&, matrix of math nodes, row sep=2.5em, column sep=2.5em, text height=1.25ex, text depth=0.25ex] 
{ 
G' \& G' \& {} \\
E \& F \& M \\
K \& G \& M \\
}; 
\path[->] 
(m-2-1)-- node[pos=0.5] {\footnotesize$\mbox{\bf pb}$} (m-3-2)
; 
\path[>->]
(m-1-1) edge (m-2-1) (m-1-2) edge (m-2-2)
(m-2-1) edge (m-2-2) (m-3-1) edge (m-3-2)
;
\path[->>]
(m-2-1) edge (m-3-1) (m-2-2) edge (m-3-2)
(m-2-2) edge (m-2-3) (m-3-2) edge (m-3-3)
;
\path[-,font=\scriptsize]
(m-1-1) edge [double, thick, double distance=2pt] (m-1-2)
(m-2-3) edge [double, thick, double distance=2pt] (m-3-3)
;
\end{tikzpicture} 
}
\end{equation} 
By Bennis' \cite[Theorem 2.11]{Bennis}, we can note that ${\rm Gfd}(E) \leq n-1$. Hence, the central row in \eqref{fig6} completes the proof that $(\mathcal{F}(R),\mathcal{GF}(R))$ is a left $n$-cotorsion pair.
\end{proof}

If we assume that $(\mathcal{F}(R),\mathcal{GF}(R))$ is a right $n$-cotorsion pair instead, we can show that $R$ is a left perfect and a left IF ring. Recall from Colby's \cite{Colby} that a ring $R$ is a \emph{left IF ring} if every injective left $R$-module is flat. Before proving the previous assertion concerning $(\mathcal{F}(R),\mathcal{GF}(R))$, let us show the following characterisation of IF rings in terms of its global Gorenstein flat dimension.

\begin{lemma}\label{Lema:IF-ring}
Let $R$ be a ring. If ${\rm gl.Gfd}(R) = 0$, then $R$ is a left IF ring. If in addition ${\rm gl.Gfd}(R^{op}) = 0$, then the converse also holds. Moreover, if $R$ is commutative, then $R$ is an IF ring if, and only if, ${\rm gl.Gfd}(R) = 0$. 
\end{lemma}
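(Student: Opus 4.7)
The plan is to prove the three assertions of the lemma in sequence, exploiting the definition of Gorenstein flatness and some elementary splitting/splicing arguments.

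For the forward direction, suppose ${\rm gl.Gfd}(R) = 0$ and fix $I \in \mathcal{I}(R)$. Since $I \in \mathcal{GF}(R)$, the defining exact complex $F$ of flat modules with $I = Z_0(F)$ provides an embedding $I \hookrightarrow F_0$ into a flat left $R$-module. Injectivity of $I$ makes this embedding split, exhibiting $I$ as a direct summand of $F_0$ and hence as a flat module. Therefore $R$ is left IF.

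For the converse under the additional hypothesis ${\rm gl.Gfd}(R^{\rm op}) = 0$, I would first apply the forward direction to $R^{\rm op}$ to conclude that $R$ is also right IF; equivalently, every $E \in \mathcal{I}(R^{\rm op})$ is flat as a right $R$-module. Given $M \in \Mod(R)$, I splice a projective (hence flat) resolution $\cdots \to P_1 \to P_0 \to M \to 0$ with an injective coresolution $0 \to M \to I^0 \to I^1 \to \cdots$, noting that the left IF hypothesis makes every $I^k$ flat. The resulting complex
\[
F : \cdots \to P_1 \to P_0 \to I^0 \to I^1 \to \cdots
\]
is exact, consists of flat left $R$-modules, and has $M \cong Z_0(F)$ at the gluing point. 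For any $E \in \mathcal{I}(R^{\rm op})$, right IF forces $E$ to be flat, so $E \otimes_R -$ is exact and preserves exactness of $F$. Hence $M \in \mathcal{GF}(R)$, giving ${\rm gl.Gfd}(R) = 0$.

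In the commutative case, left IF, right IF, and IF all coincide and ${\rm gl.Gfd}(R) = {\rm gl.Gfd}(R^{\rm op})$ automatically; the argument of the preceding paragraph then runs using only the single hypothesis that $R$ is IF, yielding the claimed equivalence.

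The main obstacle is the tensor-exactness requirement in the definition of $\mathcal{GF}(R)$, which a priori quantifies over all injective right $R$-modules and is not easy to verify directly. The key conceptual insight is that the hypothesis ${\rm gl.Gfd}(R^{\rm op}) = 0$, via the first implication applied to $R^{\rm op}$, renders every such $E$ flat, so tensoring with $E$ preserves the exactness of any complex of left $R$-modules. This reduces the nontrivial part of the Gorenstein flat condition for $M$ to the standard splicing of a projective resolution with an injective coresolution, both of which are available in any abelian category of modules.
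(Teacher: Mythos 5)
Your proposal is correct and takes essentially the same route as the paper: the forward direction splits the embedding of the injective (Gorenstein flat) module into the flat term of its defining complex, and the converse and commutative case splice a projective resolution with an injective coresolution, using left IF to make the terms flat and right IF (obtained by applying the first part to $R^{\rm op}$) to make $E \otimes_R -$ exact for injective right modules $E$. No gaps.
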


\begin{proof}
Suppose first that ${\rm gl.Gfd}(R) = 0$, and let $E$ be an injective module. Then, $E$ is also Gorenstein flat, and so there exists a short exact sequence 
\[
0 \to E \to F \to N \to 0
\]
with $F \in \mathcal{F}(R)$ and $N \in \mathcal{GF}(R)$, which splits since $E$ is injective. It follows that $E$ is flat, and hence $R$ is a left IF ring. 

Now, suppose that $R$ is a left IF ring with ${\rm gl.Gfd}(R^{op}) = 0$, and so $R$ is also a right IF ring. For every $M \in \mathsf{Mod}(R)$, note that we can find a chain complex 
\[
F_\bullet = \cdots \to P_1 \to P_0 \to E^0 \to E^1 \to \cdots
\]
where $M = {\rm Ker}(E^0 \to E^1)$, $P_i \in \mathcal{P}(R)$ and $E^j \in \mathcal{I}(R)$, for every $i, j \geq 0$. This is a complex of flat modules, since every injective is flat. Moreover, injective right $R$-modules are also flat, and then $E \otimes_R F_\bullet$ is exact, for every injective $E \in \mathcal{I}(R\op)$. 
\end{proof}

\begin{proposition}\label{prop:perfect_global_flat}
The following conditions are equivalent for any ring $R$.
\begin{itemize}
\item[(a)] $(\mathcal{F}(R),\mathcal{GF}(R))$ is a right $n$-cotorsion pair in $\mathsf{Mod}(R)$ for some integer $n \geq 1$.

\item[(b)] $R$ is left perfect and ${\rm gl.Gfd}(R) = 0$. 
\end{itemize}
Moreover, in the case $R$ is commutative, we have that $R$ is a left perfect and an IF ring if, and only if, there exists an integer $n \geq 1$ such that $(\mathcal{F}(R),\mathcal{GF}(R))$ is a right $n$-cotorsion pair in $\mathsf{Mod}(R)$. 
\end{proposition}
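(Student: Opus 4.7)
The plan is to prove the equivalence (a) $\Leftrightarrow$ (b) in two steps and then read off the moreover clause from Lemma~\ref{Lema:IF-ring}. The easy direction is (b) $\Rightarrow$ (a): if $R$ is left perfect, then by Bass's characterization $\mathcal{F}(R)=\mathcal{P}(R)$, and if ${\rm gl.Gfd}(R)=0$ then $\mathcal{GF}(R)=\Mod(R)$. Hence $\mathcal{GF}(R)$ is trivially closed under direct summands, $\Ext^i_R(\mathcal{F}(R),\mathcal{GF}(R))=\Ext^i_R(\mathcal{P}(R),\Mod(R))=0$ for every $i\geq 1$, and condition (3) of Definition~\ref{def:ncotorsion} is witnessed (for any $n\geq 1$) by the trivial short exact sequence $0\to C\xrightarrow{{\rm id}}C\to 0\to 0$, whose cokernel lies in $\mathcal{F}(R)^\vee_{n-1}$.

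The substantial direction is (a) $\Rightarrow$ (b). The crucial input I would use is the recent theorem of \v{S}aroch and \v{S}\v{t}ov\'{\i}\v{c}ek stating that $(\mathcal{GF}(R),\mathcal{GF}(R)^{\perp})$ is a hereditary complete cotorsion pair in $\Mod(R)$ for every ring $R$; in particular $\mathcal{GF}(R)$ is \emph{projectively resolving}, i.e., closed under kernels of epimorphisms between its objects (this refines the closure under extensions already invoked in Proposition~\ref{prop:FlatGFlatLeft}). With this in hand, I would first establish by induction on $k$ the containment $\mathcal{F}(R)^\vee_k\subseteq\mathcal{GF}(R)$ for every $0\leq k\leq n-1$: the base $k=0$ is immediate from $\mathcal{F}(R)\subseteq\mathcal{GF}(R)$; for the inductive step, an $X\in\mathcal{F}(R)^\vee_{k+1}$ fits into a short exact sequence $0\to X\to F_0\to X'\to 0$ with $F_0\in\mathcal{F}(R)$ and $X'\in\mathcal{F}(R)^\vee_k\subseteq\mathcal{GF}(R)$ by induction, so $X\in\mathcal{GF}(R)$ by the projectively resolving property.

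Now for an arbitrary $M\in\Mod(R)$, condition (3) of the right $n$-cotorsion pair gives a short exact sequence $0\to M\to G\to F'\to 0$ with $G\in\mathcal{GF}(R)$ and $F'\in\mathcal{F}(R)^\vee_{n-1}\subseteq\mathcal{GF}(R)$; invoking projective resolving once more, $M\in\mathcal{GF}(R)$, so ${\rm gl.Gfd}(R)=0$. Since every module is then Gorenstein flat, the orthogonality condition (2) in Definition~\ref{def:ncotorsion} forces $\Ext^1_R(F,M)=0$ for every flat $F$ and every $M\in\Mod(R)$; thus every flat is projective, and by Bass's theorem $R$ is left perfect, completing (a) $\Rightarrow$ (b). The moreover clause then follows at once from the commutative case of Lemma~\ref{Lema:IF-ring}, which identifies ``$R$ is IF'' with ``${\rm gl.Gfd}(R)=0$''. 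The principal obstacle is securing that $\mathcal{GF}(R)$ is projectively resolving for an \emph{arbitrary} ring $R$, which is precisely what is bought by invoking \v{S}aroch--\v{S}\v{t}ov\'{\i}\v{c}ek rather than the earlier partial results available under coherence or Noetherian hypotheses.
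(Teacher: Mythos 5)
Your proposal is correct and follows essentially the same route as the paper: both directions hinge on the \v{S}aroch--\v{S}\v{t}ov\'{\i}\v{c}ek result that $\mathcal{GF}(R)$ is resolving over any ring, used to deduce ${\rm gl.Gfd}(R)=0$ from the coresolution $0 \to M \to G \to F^0 \to \cdots \to F^{n-1} \to 0$, after which $\Ext^1_R(\mathcal{F}(R),\Mod(R))=0$ forces flats to be projective (the paper phrases this via a split sequence $0 \to K \to P \to F \to 0$ rather than citing Bass explicitly), and the moreover clause is read off from Lemma~\ref{Lema:IF-ring} exactly as you do. Your explicit induction showing $\mathcal{F}(R)^\vee_k \subseteq \mathcal{GF}(R)$ merely spells out what the paper compresses into ``$\mathcal{GF}(R)$ is resolving, hence $M$ is Gorenstein flat.''
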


\begin{proof}
First, note that the implication (b) $\Rightarrow$ (a) is clear. To show (a) $\Rightarrow$ (b), let us assume that there exists an integer $n \geq 1$ such that $(\mathcal{F}(R),\mathcal{GF}(R))$ is a right $n$-cotorsion pair in $\Mod(R)$. Then, for every $M \in \mathsf{Mod}(R)$ there exists an exact sequence
\[
0 \to M \to G \to F^0 \to F^1 \to \cdots \to F^{n-2} \to F^{n-1} \to 0
\]
with $G \in \mathcal{GF}(R)$ and $F^k \in \mathcal{F}(R)$, for every $0 \leq k \leq n-1$. Since the class $\mathcal{GF}(R)$ is resolving by \cite[Corollary 3.12]{SarochStovicek}, we have that $M$ is Gorenstein flat, and hence ${\rm gl.Gfd}(R) = 0$. Now, let $F$ be a flat $R$-module, and consider an exact sequence
\[
0 \to K \to P \to F \to 0,
\]
with $P$ projective. Note that this sequence splits since $\Ext^1_R(\mathcal{F}(R),\mathsf{Mod}(R)) = 0$, and so $F$ is projective. Therefore, $R$ is a left perfect ring with ${\rm gl.Gfd}(R) = 0$. 
\end{proof}

Two more interesting results occur if we switch the roles for the classes $\mathcal{F}(R)$ and $\mathcal{GF}(R)$, that is, if we analyse the implications of assuming that $(\mathcal{GF}(R),\mathcal{F}(R))$ is a left or a right $n$-cotorsion pair in $\mathsf{Mod}(R)$.

\begin{proposition}\label{prop:perfect_QF}
For any ring $R$ the following conditions are equivalent:
\begin{itemize}
\item[(a)] $(\mathcal{GF}(R),\mathcal{F}(R))$ is a right $n$-cotorsion pair in $\mathsf{Mod}(R)$ for some integer $n \geq 1$.

\item[(b)] $R$ is left perfect and QF. 
\end{itemize}
\end{proposition}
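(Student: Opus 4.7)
The implication (b) $\Rightarrow$ (a) should follow by the same pattern as the $n=0$ case of Example~\ref{ex:GProj_ncot}. Indeed, over a left perfect QF ring, $R$ is $0$-Iwanaga-Gorenstein, so every module is Gorenstein projective; combined with left perfectness (flat = projective) and the fact that QF implies left noetherian (hence Gorenstein projective $\subseteq$ Gorenstein flat), we get $\mathcal{GF}(R) = \Mod(R)$ and $\mathcal{F}(R) = \mathcal{P}(R) = \mathcal{I}(R)$. Then $(\mathcal{GF}(R),\mathcal{F}(R))$ reduces to the trivial right $n$-cotorsion pair $(\Mod(R),\mathcal{I}(R))$ for every $n \geq 1$.

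The real content is (a) $\Rightarrow$ (b). The plan is to show first that every $R$-module is flat, and then derive semisimplicity (which of course implies left perfect and QF). So, start with an arbitrary $M \in \Mod(R)$ and use condition (3) of the dual of Definition~\ref{def:ncotorsion} to obtain a short exact sequence
\[
0 \to M \to F \to C \to 0
\]
with $F \in \mathcal{F}(R)$ and $C \in \mathcal{GF}(R)^\vee_{n-1}$. The key step is to invoke the dual of Proposition~\ref{prop8}: since $\mathrm{Ext}^i_R(\mathcal{GF}(R),\mathcal{F}(R)) = 0$ for $1 \leq i \leq n$, applying the dual with $k = n-1$ gives $\mathrm{Ext}^1_R(C,F) = 0$. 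Hence the displayed sequence splits, and $M$ is a direct summand of a flat module, so $M \in \mathcal{F}(R)$.

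This shows $\mathcal{F}(R) = \Mod(R)$, that is, $R$ is Von Neumann regular. In particular, every module is Gorenstein flat, so $\mathcal{GF}(R) = \Mod(R)$ as well. Feeding this back into the orthogonality relation yields $\mathrm{Ext}^1_R(\Mod(R),\Mod(R)) = 0$, which forces every short exact sequence of $R$-modules to split. Thus $R$ is semisimple, and in particular left perfect and QF.

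The main obstacle, such as it is, is technical rather than conceptual: one must ensure that the dual of Proposition~\ref{prop8} applies cleanly with index $k = n-1$ (producing exactly $\mathrm{Ext}^1$ vanishing, which is what is needed to split the sequence). No deeper input about the class $\mathcal{GF}(R)$ is required — in particular, there is no need to invoke Saroch--Stovicek's closure results or to argue separately that $C$ itself is Gorenstein flat; the sole use of the $\vee_{n-1}$ coresolution dimension is to unlock the dual of Proposition~\ref{prop8}.
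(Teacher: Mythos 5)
Your (b) $\Rightarrow$ (a) direction is fine (a QF ring is two-sided artinian, hence right coherent, so the inclusion $\mathcal{GP}(R)\subseteq\mathcal{GF}(R)$ you invoke is available, and one indeed lands in the trivial situation $\mathcal{F}(R)=\mathcal{P}(R)=\mathcal{I}(R)$, $\mathcal{GF}(R)=\Mod(R)$). The problem is the key step of (a) $\Rightarrow$ (b). From $0 \to M \to F \to C \to 0$ with $C \in \mathcal{GF}(R)^\vee_{n-1}$ and $F$ flat, the dual of Proposition~\ref{prop8} does give $\Ext^1_R(C,F)=0$, but that group classifies extensions of $C$ by $F$, i.e.\ sequences $0 \to F \to E \to C \to 0$. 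The class of your displayed sequence lives in $\Ext^1_R(C,M)$, where $M$ is an arbitrary module, and nothing in the hypotheses makes that group vanish. So the splitting is unjustified, and in fact the conclusion you reach is false: for $R=k[x]/(x^2)$, which is QF (hence satisfies (a) with $n=1$ by the direction you already proved), the module $k$ is not flat, so ``every module is flat'' fails; more globally, your argument would force every left perfect QF ring to be von Neumann regular, hence semisimple, contradicting the equivalence itself since non-semisimple QF rings exist. The moral is that in all the legitimate splitting arguments in the paper (e.g.\ Theorem~\ref{theo:left-n-cotorsion}, Proposition~\ref{Asubseteq B en ncot}) the orthogonality is applied with the quotient in the left class and the \emph{kernel} in the right-hand resolving class, which is not the configuration you have here.

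The paper's route is different and does need the input you dismissed: first one shows every $M$ is Gorenstein flat, by splicing the coresolution of $C$ and using that $\mathcal{GF}(R)$ is resolving (this is where \cite[Corollary 3.12]{SarochStovicek} enters); then the dual of Theorem~\ref{theo:left-n-cotorsion} gives $\mathcal{F}(R) = (\mathcal{GF}(R)^\vee_{n-1})^{\perp_1} = \Mod(R)^{\perp_1} = \mathcal{I}(R)$, and finally a flat module is shown to be projective by splitting $0 \to F' \to P \to F \to 0$ (here the kernel $F'$ is flat, hence injective, so this splitting \emph{is} legitimate), yielding $\mathcal{P}(R)=\mathcal{F}(R)=\mathcal{I}(R)$, i.e.\ $R$ left perfect and QF --- not semisimple. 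You would need to redo (a) $\Rightarrow$ (b) along these lines.
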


\begin{proof}
Let us show first the implication (a) $\Rightarrow$ (b). Our first step is to show that every module is Gorenstein flat. Indeed, for every $M \in \mathsf{Mod}(R)$ we have an exact sequence
\[
0 \to M \to F \to G^0 \to G^1 \to \cdots \to G^{n-2} \to G^{n-1} \to 0
\]
with $F \in \mathcal{F}(R)$ and $G^k \in \mathcal{GF}(R)$ for every $0 \leq k \leq n-1$, since $(\mathcal{GF}(R),\mathcal{F}(R))$ is a right $n$-cotorsion pair. Using the fact that $\mathcal{GF}(R)$ is resolving, we obtain that $M$ is Gorenstein flat. Now, from the dual of Theorem \ref{theo:left-n-cotorsion}, we get that $\mathcal{F}(R) = (\mathcal{GF}(R)^\vee_{n-1})^{\perp_1} = \mathsf{Mod}(R)^{\perp_1} = \mathcal{I}(R)$. On the other hand, for every flat module $F$ we have a short exact sequence 
\[
0 \to F' \to P \to F \to 0
\]
with $P$ projective and $F'$ flat (and so injective). Thus, this sequence splits, and then $F$ is projective. Therefore, we finally obtain $\mathcal{P}(R) = \mathcal{F}(R) = \mathcal{I}(R)$. This implies that $R$ is a left perfect and a QF ring.

Now, we prove that (b) implies (a). Assume that $R$ is left perfect and QF. Then, we have that the following conditions hold: (i) $\mathcal{P}(R) = \mathcal{F}(R) = \mathcal{I}(R)$, and (ii) $\mathcal{I}(R^{\rm op}) = \mathcal{P}(R^{\rm op}) \subseteq\mathcal{F}(R^{\rm op})$. We assert that $\mathcal{GF}(R) = \Mod(R)$. Indeed, for any $M \in \Mod(R)$ we can construct an exact complex 
\[
\eta \colon \cdots \to P_1 \to P_0 \to I^0 \to I^1 \to \cdots,
\] 
where $P_i \in \mathcal{P}(R)$ and $I^j \in \mathcal{I}(R)$ for any $i, j \geq 0$, and $M = {\rm Ker}(I^0 \to I^1)$.  By condition (i), we get that $\eta$ is an acyclic complex of flat modules, and applying (ii) it follows that the complex $E \otimes_R \eta$ is acyclic for any injective $E \in \mathcal{I}(R^{\rm op})$. Then, $M \in \mathcal{GF}(R)$. Once we have the equality $\mathcal{GF}(R) = \mathsf{Mod}(R)$, it can be shown easily that $(\mathcal{GF}(R),\mathcal{F}(R))$ is a right $1$-cotorsion pair in $\mathsf{Mod}(R)$, since $\mathcal{I}(R) = \mathcal{F}(R)$.
\end{proof}

Now let us consider the remaining scenario where $(\mathcal{GF}(R),\mathcal{F}(R))$ is a left $n$-cotorsion pair in $\Mod(R)$. It will be important to recall that for an arbitrary ring $R$, the Pontryagin duality functor $(-)^+ \colon \Mod(R) \longrightarrow \Mod(R^{\rm op})$ maps every flat $R$-module into an injective $R^{\rm op}$-module (see Enochs and Jenda's \cite[Theorem 3.2.9]{EJ}), and every Gorenstein flat $R$-module into a Gorenstein injective $R^{\rm op}$-module (as proved for example in Holm's \cite[Theorem 3.6]{Holm} or in Meng and Pan's \cite[Proposition 4.4]{MengPan}).

Recall also that for every $N \in \Mod(R\op)$ there is a \textbf{pure exact sequence}
\begin{align}\label{eqn:canonical_pure}
\rho_N \colon & 0 \to N \to N^{++} \to N^{++} / N \to 0,
\end{align}
that is, $\rho_N \otimes_R M$ is exact for every $M \in \Mod(R)$ (see \cite[Proposition 5.3.9]{EJ}).

\begin{proposition}\label{prop:GFF_globlaGID}
Let $R$ be a ring over which $(\mathcal{GF}(R),\mathcal{F}(R))$ is a left $n$-cotorsion pair in $\Mod(R)$ for some integer $n \geq 1$. Then, ${\rm Gid}(N^{++}) \leq n$ for every $N \in \Mod(R^{\rm op})$. If in addition $R$ is a commutative noetherian ring with dualizing complex, then $\glGID(R^{\rm op}) \leq n$. 
\end{proposition}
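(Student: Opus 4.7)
For the first statement, my plan is to apply the left $n$-cotorsion pair to the $R$-module $N^+$ and then Pontryagin-dualise. Concretely, for $N \in \Mod(R\op)$, the module $N^+$ lies in $\Mod(R)$, so the pair $(\mathcal{GF}(R),\mathcal{F}(R))$ provides a short exact sequence
\[
0 \to K \to G \to N^+ \to 0
\]
with $G \in \mathcal{GF}(R)$ and $K \in \mathcal{F}(R)^{\wedge}_{n-1}$. Since $\mathbb{Q / Z}$ is $\mathbb{Z}$-injective, $(-)^+$ is exact, yielding
\[
0 \to N^{++} \to G^+ \to K^+ \to 0
\]
in $\Mod(R\op)$. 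The Pontryagin dual of a flat module is injective by Enochs--Jenda \cite[Theorem 3.2.9]{EJ}, so a flat resolution of $K$ of length at most $n-1$ dualises into an injective coresolution of $K^+$ of the same length, placing $K^+$ in $\mathcal{I}(R\op)^{\vee}_{n-1} \subseteq \mathcal{GI}(R\op)^{\vee}_{n-1}$. Similarly, Holm's theorem (equivalently, Theorem~\ref{theo:GFGI_Pontryagin}) sends Gorenstein flats to Gorenstein injectives, so $G^+ \in \mathcal{GI}(R\op)$. Splicing the dualised sequence with an injective coresolution of $K^+$ produces a Gorenstein injective coresolution of $N^{++}$ of length at most $n$, whence ${\rm Gid}(N^{++}) \leq n$.

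For the second statement, since $R$ is commutative we have $R\op = R$, so I must bound the Gorenstein injective dimension of an arbitrary $N \in \Mod(R)$. The plan is to combine the first part with the canonical pure-exact sequence~\eqref{eqn:canonical_pure}, which embeds $N$ as a pure submodule of $N^{++}$. The first part already gives ${\rm Gid}(N^{++}) \leq n$. Under the hypothesis that $R$ is commutative noetherian with dualizing complex, the class of modules with finite Gorenstein injective dimension coincides with the Bass class of the dualizing complex, and moreover the subclass $\{M \in \Mod(R) : {\rm Gid}(M) \leq n\}$ is closed under pure submodules; this is a consequence of the Foxby equivalence between Bass and Auslander classes together with the preservation of Gorenstein flat dimension under pure submodules. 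Transferring the bound from $N^{++}$ to $N$ via this pure embedding yields ${\rm Gid}(N) \leq n$ for every $N \in \Mod(R)$, and so $\glGID(R\op) \leq n$.

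The main obstacle is the last step, where the dualizing-complex hypothesis enters in an essential way: without it, the class of modules of finite Gorenstein injective dimension need not be closed under pure subobjects, and the bound on ${\rm Gid}(N^{++})$ cannot be transferred to $N$. An alternative route bypassing purity is to apply the first part to $M^+$ to obtain ${\rm Gid}(M^{+++}) \leq n$, note that $M^+$ is a direct summand of $M^{+++}$ (since the composite $M^+ \to M^{+++} \to M^+$ dual to the canonical map $M \to M^{++}$ is the identity), conclude ${\rm Gid}(M^+) \leq n$ for every $M$, and then invoke the Pontryagin-duality equality between Gorenstein flat and Gorenstein injective dimensions from Theorem~\ref{theo:GFGI_Pontryagin} to convert this into the desired bound on $\glGID(R)$; in either route the dualizing complex is doing the real work.
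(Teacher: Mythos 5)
Your argument is correct and is essentially the paper's proof: you dualise the special Gorenstein-flat precover of $N^+$ to obtain $0 \to N^{++} \to G^+ \to K^+ \to 0$ with $G^+$ Gorenstein injective and ${\rm id}(K^+) \leq n-1$, giving ${\rm Gid}(N^{++}) \leq n$, and then transfer the bound to $N$ via the canonical pure embedding $N \rightarrowtail N^{++}$ together with the closure of $\mathcal{GI}(R^{\rm op})^\vee_n$ under pure submodules over a commutative noetherian ring with dualizing complex, which is exactly the reference the paper invokes. The closing aside about deducing the bound from ${\rm Gid}(M^{+++}) \leq n$ and direct summands is not needed (and, as you note, would still require the purity/dualizing-complex input), but your main argument is complete.
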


\begin{proof}
Let $N \in \Mod(R^{\rm op})$ and consider its character module $N^+ \in \Mod(R)$. Since $(\mathcal{GF}(R),\mathcal{F}(R))$ is a left $n$-cotorsion pair in $\Mod(R)$ for some $n \geq 1$, we can find a short exact sequence
\[
0 \to K \to G \to N^+ \to 0
\]
where $G$ is a Gorenstein flat $R$-module and ${\rm fd}(K) \leq n-1$. Then, we have the following exact sequence involving $N^{++}$:
\[
0 \to N^{++} \to G^+ \to K^+ \to 0.
\]
Here, $G^+$ if Gorenstein injective and ${\rm id}(K^+) \leq n-1$ by previous comments. Hence, ${\rm Gid}(N^{++}) \leq n$ for every $N \in \Mod(R^{\rm op})$. 

Now let us assume that $R$ is a commutative noetherian ring with dualizing complex. Under these conditions, it is known that the class $\mathcal{GI}(R^{\rm op})^\vee_n$ of modules with Gorenstein injective dimension $\leq n$ is closed under pure submodules by \cite[Lemma 2.5 (b) and Theorem 3.1]{HJduality}. On the other hand, for every $N \in \Mod(R^{\rm op})$ there is a canonical pure exact sequence
\[
0 \to N \to N^{++} \to N^{++} / N \to 0,
\] 
where ${\rm Gid}(N^{++}) \leq n$ by the previous part. It follows that ${\rm Gid}(N) \leq n$ for every $N \in \Mod(R^{\rm op})$.
\end{proof}

The dual statement of the previous result holds in case $R$ is a two-sided noetherian ring.

\begin{proposition}\label{prop:plusplus}
Let $R$ be any ring and $n \geq 1$ be an integer. If $R$ is two-sided Noetherian and $(\mathcal{I}(R\op), \mathcal{GI}(R\op))$ is a right $n$-cotorsion pair in $\Mod(R\op)$, then ${\rm Gfd}(M^{++}) \leq n$ for every $M \in \Mod(R)$.
\end{proposition}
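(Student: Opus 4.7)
The plan is to mirror the argument of Proposition \ref{prop:GFF_globlaGID}, but now running the Pontryagin duality functor $(-)^+$ in the reverse direction, from $\Mod(R\op)$ to $\Mod(R)$. Given $M \in \Mod(R)$, the character module $M^+$ belongs to $\Mod(R\op)$, so the right $n$-cotorsion pair hypothesis supplies a short exact sequence
\[
0 \to M^+ \to G \to C \to 0
\]
with $G \in \mathcal{GI}(R\op)$ and $\id_{R\op}(C) \leq n-1$. Applying the exact contravariant functor $(-)^+$ then yields a short exact sequence
\[
0 \to C^+ \to G^+ \to M^{++} \to 0
\]
in $\Mod(R)$, on which the whole argument will rest.

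The two-sided Noetherian hypothesis on $R$ enters twice, to control the outer terms of this new sequence. First, by a standard Pontryagin transfer result (of the sort alluded to in Theorem \ref{theo:GFGI_Pontryagin}; compare Holm's \cite[Theorem 3.6]{Holm} and Meng and Pan's \cite[Proposition 4.4]{MengPan}), over a left Noetherian ring the dual of a Gorenstein injective right $R$-module is Gorenstein flat, so $G^+ \in \mathcal{GF}(R)$ and in particular ${\rm Gfd}(G^+) = 0$. Second, over a left Noetherian ring the dual of an injective right $R$-module is flat, so applying $(-)^+$ to an injective coresolution of $C$ of length at most $n-1$ produces a flat resolution of $C^+$ of the same length, giving ${\rm fd}(C^+) \leq n-1$ and hence ${\rm Gfd}(C^+) \leq n-1$.

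To conclude, one invokes the standard rules relating homological dimensions along a short exact sequence; these are available for Gorenstein flat dimension because $\mathcal{GF}(R)$ is closed under extensions and under kernels of epimorphisms between its objects, by {\v{S}}aroch and {\v{S}}\v{t}ov\'{\i}\v{c}ek \cite[Corollary 3.12]{SarochStovicek}. The inequality
\[
{\rm Gfd}(M^{++}) \;\leq\; \max\{{\rm Gfd}(G^+),\; {\rm Gfd}(C^+) + 1\} \;\leq\; \max\{0,\, n\} \;=\; n
\]
then delivers the desired bound. The only real obstacle is a bookkeeping one, namely verifying the two Pontryagin transfer properties in the two-sided Noetherian setting; no genuinely new idea beyond dualising Proposition \ref{prop:GFF_globlaGID} is required.
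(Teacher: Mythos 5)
Your overall architecture is exactly the paper's: dualise the right approximation $0 \to M^+ \to G \to C \to 0$ supplied by the right $n$-cotorsion pair, obtain $0 \to C^+ \to G^+ \to M^{++} \to 0$, and bound the two outer terms. The step that is not bookkeeping, however, is the claim that $G^+ \in \mathcal{GF}(R)$ because ``over a left Noetherian ring the dual of a Gorenstein injective right $R$-module is Gorenstein flat''. The references you invoke do not give this: Holm's \cite[Theorem 3.6]{Holm} and Meng--Pan's \cite[Proposition 4.4]{MengPan} prove the opposite transfer (Gorenstein flat modules have Gorenstein injective character modules), and the direction you need is not a standard consequence of Noetherianness alone. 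In the paper it is obtained from Remark~\ref{rem:consequences_InjGInj}\,(1) applied over $R\op$: the right $n$-cotorsion pair hypothesis gives $\pd\,(\mathcal{I}(R\op)) \leq n$, hence ${\rm fd}\,(\mathcal{I}(R\op)) \leq n$, by the $R\op$-version of Proposition~\ref{prop:ncot_spli_silp}, and then Iacob's \cite[Theorem 4]{IacobAuslandercondition} (using that $R$ is left coherent, being left Noetherian) yields $N^+ \in \mathcal{GF}(R)$ for every $N \in \mathcal{GI}(R\op)$. So it is the cotorsion hypothesis itself, not merely the Noetherian hypothesis, that makes this transfer work; as written, your proof has a gap at precisely this point.

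The rest is fine and agrees with the paper's proof: over a two-sided Noetherian ring injective right modules have flat character modules, so dualising an injective coresolution of $C$ of length at most $n-1$ gives ${\rm fd}(C^+) \leq n-1$; and the bound ${\rm Gfd}(M^{++}) \leq \max\{{\rm Gfd}(G^+),\ {\rm Gfd}(C^+)+1\} \leq n$ is legitimate because $\mathcal{GF}(R)$ is resolving by \cite[Corollary 3.12]{SarochStovicek} (alternatively, and even more directly, splice a flat resolution of $C^+$ of length $n-1$ onto the epimorphism $G^+ \twoheadrightarrow M^{++}$ to exhibit a $\mathcal{GF}(R)$-resolution of $M^{++}$ of length $n$).
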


\begin{proof}
Recall that over a two-sided Noetherian ring $R$, a right $R$-module is injective if, and only if, its Pontryagin dual is flat. Furthermore, by Remark~\ref{rem:consequences_InjGInj} (1) we have that $N^+$ is a Gorenstein flat $R$-module, for any Gorenstein injective $N \in \Mod(R\op)$. Using these two facts, the rest of the proof follows as in Proposition~\ref{prop:GFF_globlaGID}.
\end{proof}

Let us consider again the left global Gorenstein flat dimension of $R$, but this time in the case where $(\mathcal{GF}(R),\mathcal{F}(R))$ is a left $n$-cotorsion pair.

\begin{proposition}\label{(GF,F)}
The following conditions are equivalent for any ring $R$ and any integer $n \geq 1$:
\begin{itemize}
\item[(a)] $(\mathcal{GF}(R),\mathcal{F}(R))$ is a left $n$-cotorsion pair in $\Mod(R)$.

\item[(b)] $\Ext^1_R(\mathcal{GF}(R),\mathcal{F}(R)) = 0$ and ${\rm gl.Gfd}(R) \leq n$. 
\end{itemize}
\end{proposition}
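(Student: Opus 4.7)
My plan is to mirror the proof strategy of Proposition~\ref{prop:FlatGFlatLeft} but with the roles of $\mathcal{F}(R)$ and $\mathcal{GF}(R)$ interchanged. The situation here is slightly cleaner: in Proposition~\ref{prop:FlatGFlatLeft} one needs an extra pullback to upgrade a Gorenstein flat approximation into a flat approximation, whereas here the Auslander--Buchweitz machinery will deliver exactly the sequence we want.

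For the implication (a) $\Rightarrow$ (b), the vanishing $\Ext^1_R(\mathcal{GF}(R), \mathcal{F}(R)) = 0$ is just the $i = 1$ case of condition (2) in Definition~\ref{def:ncotorsion}. For the dimension bound, I would take any $M \in \Mod(R)$ and use condition (3) of Definition~\ref{def:ncotorsion} to obtain a short exact sequence $0 \to K \to G \to M \to 0$ with $G \in \mathcal{GF}(R)$ and $K \in \mathcal{F}(R)^\wedge_{n-1}$; since $\mathcal{F}(R) \subseteq \mathcal{GF}(R)$, we have $K \in \mathcal{GF}(R)^\wedge_{n-1}$, and the fact that $\mathcal{GF}(R)$ is closed under extensions (by \cite[Corollary 3.12]{SarochStovicek}) lets us conclude ${\rm Gfd}(M) \leq n$.

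For (b) $\Rightarrow$ (a), I would verify the three conditions of Definition~\ref{def:ncotorsion} in turn. Closure of $\mathcal{GF}(R)$ under direct summands is standard. The vanishing $\Ext^i_R(\mathcal{GF}(R), \mathcal{F}(R)) = 0$ for every $1 \leq i \leq n$ (in fact, for every $i \geq 1$) follows by dimension shifting on the first argument: given $G \in \mathcal{GF}(R)$ and a projective resolution of $G$, the resolving property of $\mathcal{GF}(R)$ granted by \cite[Corollary 3.12]{SarochStovicek} ensures that each syzygy of $G$ remains Gorenstein flat, so the hypothesis on $\Ext^1$ propagates to all positive $i$.

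The remaining and main condition is the existence, for every $M \in \Mod(R)$, of a sequence $0 \to K \to G \to M \to 0$ with $G \in \mathcal{GF}(R)$ and $K \in \mathcal{F}(R)^\wedge_{n-1}$. I would obtain this from \cite[Theorem 2.8]{BMPS} applied with $\mathcal{X} := \mathcal{GF}(R)$ and $\omega := \mathcal{F}(R)$. The hypotheses I would check are: $\mathcal{X}$ is closed under extensions by \cite[Corollary 3.12]{SarochStovicek}; $\omega \subseteq \mathcal{X}$ since every flat module is Gorenstein flat; $\omega$ is a relative cogenerator in $\mathcal{X}$ because, by the very definition of Gorenstein flat, every $G \in \mathcal{GF}(R)$ embeds into a flat module with Gorenstein flat cokernel; and $\omega$ is $\Ext$-injective in $\mathcal{X}$ by the previous paragraph. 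Since ${\rm gl.Gfd}(R) \leq n$ means that every $M \in \Mod(R)$ lies in $\mathcal{X}^\wedge_n$, \cite[Theorem 2.8]{BMPS} then delivers the required approximation. The most delicate point, and the step I expect to require the most care, is the dimension-shifting argument that upgrades the hypothesis on $\Ext^1$ to all positive $i$, since it relies essentially on the recent Šaroch--Šťovíček theorem that $\mathcal{GF}(R)$ is resolving over an arbitrary ring.
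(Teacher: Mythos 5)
Your proposal is correct and follows essentially the same route as the paper: higher $\Ext$-vanishing via the resolving property of $\mathcal{GF}(R)$ and closure under direct summands from \cite[Corollary 3.12]{SarochStovicek}, and the approximation sequence from \cite[Theorem 2.8]{BMPS} applied to $\mathcal{X}=\mathcal{GF}(R)$ with $\omega=\mathcal{F}(R)$, which the $\Ext^1$-hypothesis makes a $\mathcal{GF}(R)$-injective relative cogenerator. The only cosmetic difference is that the paper compresses this step to ``as in Proposition~\ref{prop:FlatGFlatLeft}'' (where $\omega=\mathcal{F}(R)\cap\mathcal{F}(R)^{\perp_1}$ is used), whereas your choice of $\omega$ is exactly how the same argument is spelled out in Theorem~\ref{theo:GFGI_Pontryagin}.
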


\begin{proof}
The implication (a) $\Rightarrow$ (b) is clear. Now, let us assume that (b) holds true. The condition $\Ext^i_R(\mathcal{GF}(R),\mathcal{F}(R)) = 0$ is clear for every $1 \leq i \leq n$, since $\mathcal{GF}(R)$ is resolving. Moreover, $\mathcal{GF}(R)$ is closed under direct summands by \cite[Corollary 3.12]{SarochStovicek}. The rest of the implication follows by applying \cite[Theorem 2.8]{BMPS} again, as in the proof of Proposition~\ref{prop:FlatGFlatLeft}. 
\end{proof}

Propositions~\ref{prop:GFF_globlaGID} and \ref{(GF,F)} are not the only consequences of having $\mathcal{GF}(R)$ and $\mathcal{F}(R)$ forming a left $n$-cotorsion pair $(\mathcal{GF}(R),\mathcal{F}(R))$ in $\Mod(R)$. For the rest of this section, we shall study other possible results from this assumption, regarding the relation between the classes $\mathcal{F}(R)$, $\mathcal{GF}(R)$, $\mathcal{I}(R^{\rm op})$ and $\mathcal{GI}(R\op)$ via the Pontryagin duality functor $(-)^+$. Namely, we shall focus on the following: 
\begin{enumerate}
\item To look for conditions under which it is possible to find an integer $k \geq 1$ such that $(\mathcal{I}(R\op),\mathcal{GI}(R\op))$ is a right $k$-cotorsion pair in $\Mod(R\op)$, provided that $(\mathcal{GF}(R),\mathcal{F}(R))$ is a left $n$-cotorsion pair in $\Mod(R)$.

\item To see if the converse procedure is possible, that is, if there exists $k \geq 1$ for which $(\mathcal{GF}(R),\mathcal{F}(R))$ is a left $k$-cotorsion pair in $\Mod(R)$, assuming that $(\mathcal{I}(R\op),\mathcal{GI}(R\op))$ is a right $n$-cotorsion pair in $\Mod(R\op)$. 
\end{enumerate}

\begin{theorem}\label{theo:GFGI_Pontryagin}
Let $R$ be any ring such that $(\mathcal{GF}(R),\mathcal{F}(R))$ is a left $n$-cotorsion pair in $\Mod(R)$. Then, the following conditions are equivalent.
\begin{itemize}
\item[(a)] $m := \sup \{{\rm Gid}(N^{++} / N) \mbox{ {\rm :} } N \in \Mod(R\op) \} < \infty$.

\item[(b)] $(\mathcal{I}(R\op),\mathcal{GI}(R\op))$ is a right $(k+1)$-cotorsion pair in $\Mod(R\op)$ for some integer $k \geq 1$. 
\end{itemize}
Moreover, if any of the previous conditions holds true, one can take $k = \max\{n, m\}$.
\end{theorem}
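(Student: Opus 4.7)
The plan is to handle the two directions separately. The implication (b) $\Rightarrow$ (a) is immediate: if $(\mathcal{I}(R\op),\mathcal{GI}(R\op))$ is a right $(k+1)$-cotorsion pair in $\Mod(R\op)$, then the analogue of Proposition~\ref{prop:ncot_spli_silp}(1) applied over $R\op$ gives $\glGID(R\op) \leq k+1$, which forces $\Gid(N^{++}/N) \leq k+1$ for every $N \in \Mod(R\op)$, hence $m \leq k+1 < \infty$.

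For (a) $\Rightarrow$ (b) with $k := \max\{n,m\}$, I would verify the three clauses of the dual of Definition~\ref{def:ncotorsion} for the pair $(\mathcal{I}(R\op),\mathcal{GI}(R\op))$ and the integer $k+1$. The closure of $\mathcal{GI}(R\op)$ under direct summands is standard, and the vanishing $\Ext^i_{R\op}(\mathcal{I}(R\op),\mathcal{GI}(R\op)) = 0$ for every $i \geq 1$ is automatic: the complete injective resolution realizing any $G \in \mathcal{GI}(R\op)$ yields an injective coresolution of $G$ that stays exact after applying $\Hom_{R\op}(I,-)$ for every injective $I$, so the positive $\Ext$-groups vanish. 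Only the approximation clause requires real work: for each $N \in \Mod(R\op)$ I need to produce a short exact sequence $0 \to N \to G \to C \to 0$ with $G \in \mathcal{GI}(R\op)$ and $\id(C) \leq k$.

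To obtain this, I first bound $\glGID(R\op) \leq k+1$ by applying, to the canonical Pontryagin pure exact sequence $0 \to N \to N^{++} \to N^{++}/N \to 0$, the standard dimension-shifting estimate
\[
\Gid(A) \leq \max\{\Gid(B),\, \Gid(C)+1\}
\]
valid for any short exact sequence $0 \to A \to B \to C \to 0$. Proposition~\ref{prop:GFF_globlaGID} gives $\Gid(N^{++}) \leq n$, and hypothesis (a) gives $\Gid(N^{++}/N) \leq m$, so $\Gid(N) \leq \max\{n,m+1\} \leq \max\{n,m\}+1 = k+1$. With this bound in hand, the desired approximation sequence is furnished by the dual of the Auslander--Buchweitz theorem \cite[Theorem 2.8]{BMPS} applied to the pair $(\mathcal{GI}(R\op),\mathcal{I}(R\op))$: any $N$ with $\Gid(N) \leq k+1$ fits into $0 \to N \to G \to C \to 0$ with $G \in \mathcal{GI}(R\op)$ and $\id(C) \leq k$. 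The main obstacle is justifying the displayed dimension-shifting inequality without extra finiteness assumptions on $R$; this is handled by splicing a Gorenstein injective coresolution of $B$ of length $\Gid(B)$ with one of $C$ of length $\Gid(C)$, using that $\mathcal{GI}(R\op)$ is coresolving in $\Mod(R\op)$.
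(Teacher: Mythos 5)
Your proposal is correct and follows essentially the same route as the paper: the same canonical pure exact sequence $0 \to N \to N^{++} \to N^{++}/N \to 0$, the same bound $\Gid(N^{++}) \leq n$ from Proposition~\ref{prop:GFF_globlaGID}, the same short-exact-sequence estimate $\Gid(N) \le \max\{\Gid(N^{++}), \Gid(N^{++}/N)+1\} \le k+1$ (which the paper simply cites as \cite[Proposition 2.15]{MengPan}; your ``splicing'' sketch is the only loose point, since a dual horseshoe argument for Gorenstein injective coresolutions is not immediate, but the inequality is exactly that standard cited fact), and the same conclusion via the dual of \cite[Theorem 2.8]{BMPS}. For (b) $\Rightarrow$ (a) you invoke Proposition~\ref{prop:ncot_spli_silp} over $R\op$ to get $m \le k+1 < \infty$, a slightly cruder bound than the paper's $m \le \max\{k,n\}$ obtained from Meng--Pan, but entirely sufficient since (a) only asserts finiteness.
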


\begin{proof}
For the first implication, let $k := \max\{n, m\}.$ It suffices to show that, for every $N \in \Mod(R\op),$ one can find an embedding into $\mathcal{GI}(R\op)$ whose cokernel has injective dimension at most $k$. Consider the canonical pure exact sequence $\rho_N$ from \eqref{eqn:canonical_pure}. By Proposition~\ref{prop:GFF_globlaGID}, we get that ${\rm Gid}(N^{++}) \leq k$. The latter, along with condition (a) implies that ${\rm Gid}(N) \leq k + 1$ (see \cite[Proposition 2.15]{MengPan}). Hence, (b) follows by \cite[dual of Theorem 2.8]{BMPS}.

Conversely, if $(\mathcal{I}(R\op),\mathcal{GI}(R\op))$ is a right $(k+1)$-cotorsion pair for some $k \in \mathbb{N}$, then ${\rm Gid}(N) \leq k + 1$ and ${\rm Gid}(N^{++}) \leq n$ for every $N \in \Mod(R\op)$. Using \cite[Proposition 2.15]{MengPan} again, we obtain that ${\rm Gid}(N^{++} / N) \leq {\rm max}\{ k, n \}$.
\end{proof}

\begin{theorem}\label{theo:GFGI_Pontryagin}
Let $R$ be a two-sided noetherian ring such that $(\mathcal{I}(R\op), \mathcal{GI}(R\op))$ is a right $n$-cotorsion pair in $\Mod(R\op)$. Then, $\mathrm{gl.Gfd}(R)\leq n$. Moreover, the following conditions are equivalent:
\begin{itemize}
\item[(a)] $\Ext_R^1(\mathcal{GF}(R),\mathcal{F}(R)) = 0$.
 
\item[(b)] $(\mathcal{GF}(R),\mathcal{F}(R))$ is a left Frobenius pair in $\Mod(R)$ (in the sense of \cite[Definition 2.5]{BMPS}).

\item[(c)] $(\mathcal{GF}(R),\mathcal{F}(R))$ is a left $k$-cotorsion pair in $\Mod(R)$ for some integer $k \geq 1$.
\end{itemize}
If any of the previous conditions holds, one can take $k = n$. Furthermore,  
\[
\mathrm{gl.Gfd}(R) = \id(\mathcal{F}(R)).
\]
\end{theorem}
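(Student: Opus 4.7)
The plan proceeds in three stages: (i) derive the bound $\mathrm{gl.Gfd}(R)\leq n$ from the hypothesis using Proposition~\ref{prop:plusplus}; (ii) establish the equivalences (a)$\Leftrightarrow$(b)$\Leftrightarrow$(c) via Proposition~\ref{(GF,F)} and a direct check of the Frobenius pair axioms; (iii) deduce the dimensional equality.

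For stage (i), fix $M \in \Mod(R)$. Proposition~\ref{prop:plusplus} gives $\mathrm{Gfd}(M^{++})\leq n$, and the canonical morphism $M \hookrightarrow M^{++}$ is a pure monomorphism. Over a two-sided Noetherian ring, the class $\{N \in \Mod(R) : \mathrm{Gfd}(N)\leq n\}$ is closed under pure submodules; this is a consequence of the closure of $\mathcal{GF}(R)$ under pure subobjects (following from \cite[Corollary~3.12]{SarochStovicek} together with the Noetherian hypothesis) combined with a standard dimension-shifting argument along a flat resolution of $M$. Therefore $\mathrm{Gfd}(M)\leq n$ for every $M$, yielding $\mathrm{gl.Gfd}(R)\leq n$.

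For stage (ii), the implication (c)$\Rightarrow$(a) is immediate from Definition~\ref{def:ncotorsion}. For (a)$\Rightarrow$(c), having $\mathrm{gl.Gfd}(R)\leq n$ together with (a), I would apply Proposition~\ref{(GF,F)} directly to conclude that $(\mathcal{GF}(R),\mathcal{F}(R))$ is a left $n$-cotorsion pair in $\Mod(R)$, so $k=n$ works. For (a)$\Leftrightarrow$(b), recall that a left Frobenius pair $(\mathcal{X},\omega)$ in the sense of \cite[Definition~2.5]{BMPS} requires $\mathcal{X}$ closed under extensions and direct summands, $\omega$ closed under direct summands and contained in $\mathcal{X}$, $\omega$ a relative cogenerator in $\mathcal{X}$, and $\Ext^i_R(\mathcal{X},\omega)=0$ for all $i\geq 1$. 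For $(\mathcal{GF}(R),\mathcal{F}(R))$, the closure properties of $\mathcal{GF}(R)$ are from \cite[Corollary~3.12]{SarochStovicek}, those of $\mathcal{F}(R)$ are standard, the cogenerator property is the very definition of Gorenstein flat modules, and the higher Ext-vanishing follows from (a) by dimension shifting using that $\mathcal{GF}(R)$ is resolving with $\mathcal{F}(R)\subseteq\mathcal{GF}(R)$. Hence (a) and (b) are equivalent, with (b)$\Rightarrow$(a) being trivial.

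For stage (iii), once $(\mathcal{GF}(R),\mathcal{F}(R))$ is known to be a left Frobenius pair with $\mathcal{GF}(R)^\wedge = \Mod(R)$ (by stage (i)), a flat analogue of \cite[Corollary~5.19]{BecerrilMendozaSantiago} gives $\mathrm{gl.Gfd}(R) = \id(\mathcal{F}(R))$; the Auslander--Buchweitz duality inherent in Frobenius pairs identifies the supremum of $\mathcal{GF}(R)$-resolution dimensions with the supremum of injective dimensions of the cogenerator $\mathcal{F}(R)$. The main obstacle I anticipate is the pure-submodule closure used in stage (i), which depends on somewhat delicate recent results on $\mathcal{GF}(R)$ over Noetherian rings; the remainder of the argument is essentially formal given the preceding results of the paper.
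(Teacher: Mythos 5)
Your proposal is correct and follows the same overall skeleton as the paper: Proposition~\ref{prop:plusplus} plus purity of $M \to M^{++}$ for the bound $\mathrm{gl.Gfd}(R)\leq n$, then the \v{S}aroch--\v{S}\v{t}ov\'{\i}\v{c}ek facts together with Auslander--Buchweitz/Frobenius-pair machinery from \cite{BMPS} for the equivalences and the equality $\mathrm{gl.Gfd}(R)=\id(\mathcal{F}(R))$. The genuine differences are local. First, where the paper simply cites \cite[Lemma 2.5(a) and Theorem 3.1]{HJduality} for the closure of $\mathcal{GF}(R)^\wedge_n$ under pure submodules, you re-derive it from the pure-submodule closure of $\mathcal{GF}(R)$ by dimension shifting; this can be made to work, but not ``along a flat resolution of $M$'' alone: you need to pass the purity of $0\to M\to M^{++}\to M^{++}/M\to 0$ to the $n$-th syzygy sequence (horseshoe resolutions, plus the fact that the Tor long exact sequence of a pure sequence splits), and then use that $\mathcal{GF}(R)$ is resolving and closed under summands so that $\mathrm{Gfd}\leq n$ is detected by the $n$-th syzygy; your route is more hands-on but buys independence from the duality-pair citation (at the cost of spelling out the purity-of-syzygies step). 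Second, you prove (a)$\Rightarrow$(c) directly via Proposition~\ref{(GF,F)} rather than the paper's route (a)$\Rightarrow$(b)$\Rightarrow$(c); this is legitimate and slightly shorter, since Proposition~\ref{(GF,F)} already packages the needed \cite[Theorem 2.8]{BMPS} argument. One small caution in your (a)$\Rightarrow$(b): the promotion of $\Ext^1_R(\mathcal{GF}(R),\mathcal{F}(R))=0$ to all degrees must be shifted along \emph{projective} resolutions (projective syzygies of Gorenstein flat modules stay Gorenstein flat because $\mathcal{GF}(R)$ is resolving); shifting along flat resolutions fails because $\Ext^i_R(\mathcal{F}(R),\mathcal{F}(R))$ need not vanish -- the paper avoids this by invoking that $(\mathcal{GF}(R),\mathcal{GF}(R)^{\perp_1})$ is a hereditary cotorsion pair. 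Finally, your ``flat analogue of \cite[Corollary 5.19]{BecerrilMendozaSantiago}'' is exactly \cite[Theorem 2.10]{BMPS} applied to the Frobenius pair $(\mathcal{GF}(R),\mathcal{F}(R))$, which is what the paper uses.
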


\begin{proof} 
For the first part, let $M\in\Mod(R)$ and consider the canonical pure exact sequence
\[
\rho_M \colon 0 \to M \to M^{++} \to M^{++} / M \to 0,
\] 
where ${\rm Gfd}(M^{++}) \leq n$ by Proposition~\ref{prop:plusplus}. On the other hand, by \cite[Lemma 2.5 (a) and Theorem 3.1]{HJduality}, we get that the class $\mathcal{GF}(R)^\wedge_n$ is closed under pure submodules, and thus from $\rho_M$ we get that ${\rm Gfd}(M) \leq n$.

Now, let us show the equivalence between (a), (b) and (c).  
\begin{itemize}
\item (a) $\Rightarrow$ (b): From \cite[Corollary 3.12]{SarochStovicek}, we know that $\mathcal{GF}(R)$ is the left part of an hereditary cotorsion pair in $\Mod(R).$ Thus, the condition $\Ext_R^{1}(\mathcal{GF}(R),\mathcal{F}(R)) = 0$ implies that  $\Ext_R^{i}(\mathcal{GF}(R),\mathcal{F}(R)) = 0$ for any $i \geq 1$. Therefore, $\mathcal{F}(R)$ is an $\mathcal{GF}(R)$-injective relative cogenerator in $\mathcal{GF}(R)$. Finally, it is clear that $\mathcal{F}(R)$ is closed under direct summands, thus proving (b).

\item (b) $\Rightarrow$ (c): Assume that (b) holds true. Since $\mathcal{GF}(R)^\wedge_n = \Mod(R)$, we get by \cite[Theorem 2.10]{BMPS} that 
\[
\mathrm{gl.Gfd}(R) = \pd_{\mathcal{F}(R)}(\Mod(R)) = \id\,(\mathcal{F}(R)).
\] 
Moreover, for any $M\in\Mod(R)$ we get by \cite[Theorem 2.8]{BMPS} an exact sequence 
\[
0 \to K \to G \to M \to 0,
\] 
where $G\in\mathcal{GF}(R)$ and $K \in \mathcal{F}(R)^\wedge_{n-1}$.
  
\item (c) $\Rightarrow$ (a): Trivial.
\end{itemize}
\end{proof}


\subsection*{\textbf{Cluster tilting subcategories}}

Following Iyama's \cite[Definition 1.1]{IyamaCluster}, for an integer $m \geq 1$, a subcategory $\mathcal{D} \subseteq \mathcal{C}$ is said to be \emph{$m$-cluster tilting} if it is precovering and preenveloping, and the following equalities hold true
\[
\mathcal{D} = \bigcap\limits_{0 < i < m}^{}{}^{\perp_{i}}\mathcal{D} = \bigcap\limits_{0 < i < m}^{}\mathcal{D}^{\perp_{i}}.
\]

\begin{remark}\label{rem:cluster_precover}
Note that if $\mathcal{D}$ is an $m$-cluster tilting subcategory (with $m \geq 2$) of an abelian category $\mathcal{C}$ with enough projectives and injectives, then $\mathcal{D}$-precovers and $\mathcal{D}$-preenvelopes are special, since $\Ext^1_{\mathcal{C}}(\mathcal{D,D}) = 0$. 
\end{remark}

In this example, we prove that a subcategory $\mathcal{D}$ of an abelian category $\mathcal{C}$ is an $(n+1)$-cluster tilting subcategory if, and only if, it forms an $n$-cotorsion pair of the form $(\mathcal{D,D})$. The following result is straightforward.

\begin{lemma}\label{nclustA=B}
Let $\mathcal{A}$ and  $\mathcal{B}$ be classes of objects of $\mathcal{C}$ such that $\Ext^i_{\mathcal{C}}(\mathcal{A,B}) = 0$ for any integer $1 \leq i \leq n$. If the containment
\[
\bigcap\limits_{i = 1}^{m} {}^{\perp_{i}}(\mathcal{A} \cap \mathcal{B}) \subseteq \mathcal{A} \cap \mathcal{B},
\] 
holds for some integer $1 \leq m \leq n$, then $\mathcal{A} \subseteq \mathcal{B}$. 
\end{lemma}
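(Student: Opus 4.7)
The plan is very short because the statement unpacks almost immediately once one writes down what the two hypotheses say about an arbitrary object $A \in \mathcal{A}$. The only thing to do is to feed such an $A$ into the assumed containment, which will force $A \in \mathcal{A} \cap \mathcal{B}$ and hence $A \in \mathcal{B}$.

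First I would fix $A \in \mathcal{A}$ and an arbitrary $X \in \mathcal{A} \cap \mathcal{B}$. Since $X \in \mathcal{B}$, the global orthogonality hypothesis $\Ext^i_{\mathcal{C}}(\mathcal{A},\mathcal{B}) = 0$ for $1 \leq i \leq n$ gives $\Ext^i_{\mathcal{C}}(A,X) = 0$ for all such $i$. In particular, because $1 \leq m \leq n$, this vanishing holds for every $1 \leq i \leq m$, so $A$ belongs to ${}^{\perp_i}(\mathcal{A} \cap \mathcal{B})$ for each $i$ in that range. Therefore
\[
A \in \bigcap_{i=1}^{m} {}^{\perp_i}(\mathcal{A} \cap \mathcal{B}).
\]

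Now I would invoke the assumed containment $\bigcap_{i=1}^{m} {}^{\perp_i}(\mathcal{A} \cap \mathcal{B}) \subseteq \mathcal{A} \cap \mathcal{B}$ to conclude $A \in \mathcal{A} \cap \mathcal{B}$, and in particular $A \in \mathcal{B}$. Since $A \in \mathcal{A}$ was arbitrary, this yields $\mathcal{A} \subseteq \mathcal{B}$, as required. There is no real obstacle here; the only mild subtlety is verifying that the range $1 \leq i \leq m$ is indeed covered by the orthogonality hypothesis, which is guaranteed by the assumption $m \leq n$.
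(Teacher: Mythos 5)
Your argument is correct and is exactly the direct verification the paper has in mind (the paper states the lemma without proof, calling it straightforward): any $A \in \mathcal{A}$ lands in $\bigcap_{i=1}^{m}{}^{\perp_i}(\mathcal{A}\cap\mathcal{B})$ by the orthogonality hypothesis and $m \leq n$, hence in $\mathcal{A}\cap\mathcal{B}\subseteq\mathcal{B}$. No gaps.
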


\begin{proposition}\label{nclustA=B y ff}
Let $(\mathcal{A,B})$ be an $n$-cotorsion pair in $\mathcal{C}$. Then, the following conditions hold true:
\begin{enumerate}
\item If there exists an integer $1 \leq m \leq n$ such that the equalities 
\[
\mathcal{A} \cap \mathcal{B} = \bigcap\limits_{i=1}^{m} {}^{\perp_{i}}(\mathcal{A} \cap \mathcal{B}) = \bigcap\limits_{i=1}^{m} (\mathcal{A} \cap \mathcal{B})^{\perp_{i}}
\]
hold true, then $\mathcal{A} = \mathcal{B}$ and the class $\mathcal{A} \cap \mathcal{B} = \mathcal{A}$ is special precovering and special preenveloping. 

\item The class $\mathcal{A} \cap \mathcal{B}$ is an $(n+1)$-cluster tilting subcategory if, and only if, $\mathcal{A} = \mathcal{B}$. 
\end{enumerate}
\end{proposition}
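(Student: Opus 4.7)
The plan is to reduce both items to a combination of Lemma~\ref{nclustA=B} (and its dual), Proposition~\ref{A-precub,B-preenv} (and its dual), and Theorem~\ref{theo:left-n-cotorsion} (and its dual), so that no new construction is required, only careful assembly.

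For part (1), I would first observe that the given equality
\[
\mathcal{A} \cap \mathcal{B} = \bigcap_{i=1}^{m} {}^{\perp_{i}}(\mathcal{A} \cap \mathcal{B})
\]
implies, in particular, the containment $\bigcap_{i=1}^{m} {}^{\perp_{i}}(\mathcal{A} \cap \mathcal{B}) \subseteq \mathcal{A} \cap \mathcal{B}$, which is exactly the hypothesis of Lemma~\ref{nclustA=B}. Applying that lemma yields $\mathcal{A} \subseteq \mathcal{B}$. The second equality in the hypothesis plays the symmetric role via the dual statement of Lemma~\ref{nclustA=B}: the containment $\bigcap_{i=1}^{m}(\mathcal{A} \cap \mathcal{B})^{\perp_{i}} \subseteq \mathcal{A} \cap \mathcal{B}$ combined with $\Ext^i_{\mathcal{C}}(\mathcal{A},\mathcal{B}) = 0$ for $1 \leq i \leq n$ gives $\mathcal{B} \subseteq \mathcal{A}$. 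Therefore $\mathcal{A} = \mathcal{B}$, and so $\mathcal{A} \cap \mathcal{B} = \mathcal{A} = \mathcal{B}$. The precovering/preenveloping conclusion then follows immediately: since $(\mathcal{A,B})$ is in particular a left $n$-cotorsion pair, Proposition~\ref{A-precub,B-preenv} makes $\mathcal{A} = \mathcal{A}\cap\mathcal{B}$ a special precovering class, and its dual makes $\mathcal{B} = \mathcal{A}\cap\mathcal{B}$ a special preenveloping class.

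For part (2), the implication $(\Rightarrow)$ is a direct application of part (1) with $m = n$: the defining equalities of an $(n+1)$-cluster tilting subcategory
\[
\mathcal{A} \cap \mathcal{B} = \bigcap_{0 < i < n+1} {}^{\perp_{i}}(\mathcal{A} \cap \mathcal{B}) = \bigcap_{0 < i < n+1} (\mathcal{A} \cap \mathcal{B})^{\perp_{i}}
\]
are exactly the hypothesis of part~(1) with $m = n$, which forces $\mathcal{A} = \mathcal{B}$. For $(\Leftarrow)$, assume $\mathcal{A} = \mathcal{B}$. Then $(\mathcal{A,A})$ is both a left and a right $n$-cotorsion pair, so Proposition~\ref{A-precub,B-preenv} and its dual give that $\mathcal{A} \cap \mathcal{B} = \mathcal{A}$ is precovering and preenveloping. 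Theorem~\ref{theo:left-n-cotorsion} applied to the left $n$-cotorsion pair $(\mathcal{A},\mathcal{A})$ yields $\mathcal{A} = \bigcap_{i=1}^{n} {}^{\perp_{i}}\mathcal{A}$, and dually $\mathcal{A} = \bigcap_{i=1}^{n} \mathcal{A}^{\perp_{i}}$. Since $\{i : 0 < i < n+1\} = \{1,\dots,n\}$, these are precisely the orthogonality equalities required to make $\mathcal{A}$ an $(n+1)$-cluster tilting subcategory of $\mathcal{C}$.

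The only delicate point, and the main thing to verify carefully, is that the two-sided equalities of part~(1) genuinely supply \emph{both} containments needed to invoke Lemma~\ref{nclustA=B} and its dual (so that one obtains $\mathcal{A} \subseteq \mathcal{B}$ and $\mathcal{B} \subseteq \mathcal{A}$ separately); everything else is bookkeeping and reindexing. In particular, once $\mathcal{A} = \mathcal{B}$ is established, no further work beyond citing the already proven characterisations of left/right $n$-cotorsion pairs is needed.
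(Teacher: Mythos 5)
Your proof is correct and follows essentially the same route as the paper: part (1) via Lemma~\ref{nclustA=B} and its dual together with Proposition~\ref{A-precub,B-preenv} and its dual, and part (2) by deducing the ``only if'' direction from part (1) and the ``if'' direction from Proposition~\ref{A-precub,B-preenv}, Theorem~\ref{theo:left-n-cotorsion}(b) and their duals. The only addition is that you spell out explicitly which containments feed into the lemma and its dual, which the paper leaves implicit.
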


\begin{proof}
Part (1) follows by Lemma~\ref{nclustA=B} and Proposition~\ref{A-precub,B-preenv} and their duals. 

The ``only if'' statement of part (2) is a consequence of part (1). Now for the ``if'' statement, suppose that $\mathcal{A} = \mathcal{B}$. Then, by Proposition~\ref{A-precub,B-preenv} and its dual, we get that $\mathcal{A}$ is a special precovering and a special preenveloping class. Thus, it suffices to prove that 
\[
\mathcal{A} = \bigcap_{i = 1}^n {}^{\perp_i}\mathcal{A} = \bigcap_{i = 1}^n \mathcal{A}^{\perp_i},
\] 
but this follows from Theorem~\ref{theo:left-n-cotorsion} (2) and its dual. 
\end{proof}

We are now ready to show the following characterisation of $(n+1)$-cluster tilting subcategories.

\begin{theorem} \label{ncot y ct}
Let $\mathcal{C}$ be an abelian category with enough projectives and injectives. Then, for any subcategory $\mathcal{D} \subseteq \mathcal{C}$ and any integer $n \geq 1$, the following statements are equivalent:
\begin{itemize}
\item[(a)] $(\mathcal{D,D})$ is an $n$-cotorsion pair in $\mathcal{C}$.

\item[(b)] $\mathcal{D}$ is an $(n+1)$-cluster tilting subcategory of $\mathcal{C}$.
\end{itemize}
Moreover, in case any of the above conditions holds true, we have $\mathcal{C} = \mathcal{D}^\wedge_n = \mathcal{D}^\vee_n$.
\end{theorem}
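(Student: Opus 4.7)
The plan is to deduce (a) $\Rightarrow$ (b) as a direct consequence of Proposition \ref{nclustA=B y ff}\,(2): taking $\mathcal{A} = \mathcal{B} = \mathcal{D}$ in that proposition, the hypothesis $\mathcal{A} = \mathcal{B}$ is automatic, and the intersection $\mathcal{A} \cap \mathcal{B} = \mathcal{D}$ is therefore $(n+1)$-cluster tilting.

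For (b) $\Rightarrow$ (a), assume $\mathcal{D}$ is $(n+1)$-cluster tilting. Conditions (1) and (2) of Definition \ref{def:ncotorsion} for the pair $(\mathcal{D,D})$ follow at once from $\mathcal{D} = \bigcap_{i=1}^{n} \mathcal{D}^{\perp_i}$: this intersection is closed under direct summands and satisfies $\Ext^i_{\mathcal{C}}(\mathcal{D,D}) = 0$ for every $1 \leq i \leq n$. The nontrivial content is condition (3) together with its dual. To produce the required sequence, fix $C \in \mathcal{C}$. Since $\mathcal{D}$ is precovering and $\Ext^1_{\mathcal{C}}(\mathcal{D,D}) = 0$, each $\mathcal{D}$-precover of $C$ is automatically a special $\mathcal{D}$-precover (Remark \ref{rem:cluster_precover}). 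Iterating this on successive kernels yields an exact sequence
\[
0 \to K_{n-1} \to D_{n-1} \to D_{n-2} \to \cdots \to D_1 \to D_0 \to C \to 0,
\]
with $D_j \in \mathcal{D}$ and intermediate syzygies $K_j \in \mathcal{D}^{\perp_1}$ for $0 \leq j \leq n-1$.

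The crux is to show $K_{n-1} \in \mathcal{D}$. By the characterisation $\mathcal{D} = \bigcap_{i=1}^{n} \mathcal{D}^{\perp_i}$, it suffices to verify $\Ext^i_{\mathcal{C}}(D,K_{n-1}) = 0$ for every $D \in \mathcal{D}$ and every $1 \leq i \leq n$. The case $i = 1$ is built into the construction. For $2 \leq i \leq n$, iterated dimension shifting along the short exact sequences $0 \to K_{j+1} \to D_{j+1} \to K_j \to 0$ gives
\[
\Ext^i_{\mathcal{C}}(D,K_{n-1}) \cong \Ext^{i-1}_{\mathcal{C}}(D,K_{n-2}) \cong \cdots \cong \Ext^1_{\mathcal{C}}(D,K_{n-i}),
\]
each isomorphism being valid because the $\Ext^j_{\mathcal{C}}(D,D_l)$ appearing as outer terms in the long exact sequences vanish throughout the range $1 \leq j \leq n$. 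The final term is zero since $K_{n-i} \in \mathcal{D}^{\perp_1}$. Hence $K_{n-1} \in \mathcal{D}$, and so $K_0$ admits a $\mathcal{D}$-resolution of length at most $n-1$, that is, $K_0 \in \mathcal{D}^\wedge_{n-1}$. The dual argument, using that $\mathcal{D}$ is preenveloping with automatically special preenvelopes, produces the corresponding short exact sequence on the other side, showing that $(\mathcal{D,D})$ is an $n$-cotorsion pair.

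For the moreover statement, splicing the resolution above with $0 \to K_0 \to D_0 \to C \to 0$ gives a $\mathcal{D}$-resolution of $C$ of length at most $n$, so $C \in \mathcal{D}^\wedge_n$; the equality $\mathcal{C} = \mathcal{D}^\vee_n$ is dual. The main obstacle is the bookkeeping in the iterated dimension-shifting chain: each of the $i-1$ shifts consumes one unit of admissible $\Ext$-degree, so all indices must stay inside the range $1 \leq j \leq n$ where $\Ext^j_{\mathcal{C}}(\mathcal{D,D}) = 0$ is available. This is precisely why the $(n+1)$-cluster tilting hypothesis, and not merely $n$-cluster tilting, is the correct match for $n$-cotorsion pairs.
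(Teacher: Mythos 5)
Your proposal is correct and follows essentially the same route as the paper: (a) $\Rightarrow$ (b) is quoted from Proposition~\ref{nclustA=B y ff}, and (b) $\Rightarrow$ (a) is obtained by iterating special $\mathcal{D}$-precovers (Remark~\ref{rem:cluster_precover}) and using dimension shifting with the rigidity $\Ext^i_{\mathcal{C}}(\mathcal{D,D})=0$ for $1\leq i\leq n$ to place the deepest syzygy in $\bigcap_{i=1}^n\mathcal{D}^{\perp_i}=\mathcal{D}$, together with the dual argument. The paper phrases the conclusion as $\mathcal{C}=\mathcal{D}^\wedge_n=\mathcal{D}^\vee_n$ while you truncate the same resolution to get $K_0\in\mathcal{D}^\wedge_{n-1}$, but these are the same computation.
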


\begin{proof} 
The implication (a) $\Rightarrow$ (b) follows by Proposition~\ref{nclustA=B y ff}. Now suppose that $\mathcal{D}$ is an $(n+1)$-cluster tilting subcategory of $\mathcal{C}$. Then, we have that $\mathcal{D}$ is closed under direct summands and that $\Ext^{i}_{\mathcal{C}}(\mathcal{D,D}) = 0$ for any integer $1 \leq i \leq n$. The result will follow after showing the equalities $\mathcal{C} = \mathcal{D}^{\wedge}_{n} = \mathcal{D}^{\vee}_{n}$.

By Remark~\ref{rem:cluster_precover}, for any $M \in \mathcal{C}$ we can consider an exact sequence 
\[
\eta \colon 0 \to K_0 \to D_0 \xrightarrow{f_0} M \to 0,
\] 
where $f_0$ is a special $\mathcal{D}$-precover. After applying the functor $\Hom_{\mathcal{C}}(D,-)$ to $\eta$, with $D$ running over $\mathcal{D}$, we get:
\begin{align*}
\Ext^1_{\mathcal{C}}(D,K_0) & = 0 \quad \text{and}\quad\Ext^{i+1}_{\mathcal{C}}(D,K_0) \cong \Ext^{i}_{\mathcal{C}}(D,M)\;\text{ for any}\; 1 \leq i \leq n-1.
\end{align*}
Inductively, we can construct an exact sequence
\begin{align*}
0 & \to K_{n} \to D_{n-1} \xrightarrow{f_{n-1}} D_{n-2} \to \cdots \to D_1 \xrightarrow{f_1} D_0 \xrightarrow{f_0} M \to 0,
\end{align*}
where $D_i \in \mathcal{D}$ and $K_i := \Ima\,(f_i)$ for any $0 \leq i \leq n-1$, and such that the following relations hold:
\begin{align*}
\Ext^1_{\mathcal{C}}(D,K_n) &= 0,\\
\Ext^2_{\mathcal{C}}(D,K_n) & \cong \Ext^1_{\mathcal{C}}(D,K_{n-1}) = 0,\\
\vdots & \qquad\qquad\qquad \vdots\qquad\qquad\qquad \vdots \\
\Ext^n_{\mathcal{C}}(D,K_n) & \cong \Ext^{n-1}_{\mathcal{C}}(D,K_{n-1}) \cong \cdots \cong \Ext^1_{\mathcal{C}}(D,K_1) = 0.
\end{align*}
Therefore, we get that $K_n \in \bigcap_{i = 1}^n\,\mathcal{D}^{\perp_{i}} = \mathcal{D}$ and thus $M \in \mathcal{D}^{\wedge}_n$. Dually, we get $M \in \mathcal{D}^{\vee}_n$.
\end{proof}

One interesting fact to note about $(n+1)$-cluster tilting subcategories $\mathcal{D}$ is that $n$ is the biggest integer for which the condition $\Ext^n_{\mathcal{C}}(\mathcal{D,D}) = 0$ is true, in the sense that letting $\Ext^{n+1}_{\mathcal{C}}(\mathcal{D,D}) = 0$ forces $\mathcal{C}$ to be a Frobenius category. We specify this in the following result.

\begin{proposition}
Let $n \geq 1$ and $\mathcal{D}$ be an $(n+1)$-cluster tilting subcategory of an abelian category $\mathcal{C}$ with enough projectives and injectives. Then, the following conditions are equivalent:
\begin{itemize}
\item[(a)] $\Ext^{n+1}_{\mathcal{C}}(\mathcal{D,D}) = 0$.

\item[(b)] $\mathcal{P}(\mathcal{C}) = \mathcal{D}$.

\item[(c)] $\mathcal{I}(\mathcal{C}) = \mathcal{D}$.

\item[(d)] $\Ext^i_{\mathcal{C}}(\mathcal{D,D}) = 0$ for every $i \geq 1$. 
\end{itemize} 
\end{proposition}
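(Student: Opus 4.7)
The plan is to run a circle of implications (d) $\Rightarrow$ (a) $\Rightarrow$ (b) $\Rightarrow$ (d) and (d) $\Rightarrow$ (a) $\Rightarrow$ (c) $\Rightarrow$ (d). The trivial directions are (d) $\Rightarrow$ (a), together with (b) $\Rightarrow$ (d) and (c) $\Rightarrow$ (d), which follow immediately from the fact that $\mathsf{Ext}^i_{\mathcal{C}}(P,-) = 0 = \mathsf{Ext}^i_{\mathcal{C}}(-,I)$ for every $i \geq 1$, every projective $P$ and every injective $I$. So the real content is in (a) $\Rightarrow$ (b) and (a) $\Rightarrow$ (c).

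The key input is Theorem~\ref{ncot y ct}, which tells us that $(\mathcal{D},\mathcal{D})$ is an $n$-cotorsion pair and that $\mathcal{C} = \mathcal{D}^\wedge_n = \mathcal{D}^\vee_n$. In particular, $\mathsf{Ext}^i_{\mathcal{C}}(\mathcal{D},\mathcal{D}) = 0$ for every $1 \leq i \leq n$, and combining with (a) we get the vanishing for every $1 \leq i \leq n+1$. For (a) $\Rightarrow$ (b), I would first show $\mathcal{D} \subseteq \mathcal{P}(\mathcal{C})$ as follows: fix $D \in \mathcal{D}$ and $C \in \mathcal{C}$, and choose a $\mathcal{D}$-resolution
\[
0 \to D_n \to D_{n-1} \to \cdots \to D_0 \to C \to 0
\]
of length $\leq n$. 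Splitting into short exact sequences and applying $\mathsf{Hom}_{\mathcal{C}}(D,-)$, the vanishing of $\mathsf{Ext}^j_{\mathcal{C}}(D,D_k)$ for $1 \leq j \leq n+1$ lets me shift dimensions freely and conclude
\[
\mathsf{Ext}^1_{\mathcal{C}}(D,C) \simeq \mathsf{Ext}^2_{\mathcal{C}}(D,K_0) \simeq \cdots \simeq \mathsf{Ext}^{n+1}_{\mathcal{C}}(D,D_n) = 0,
\]
so $D$ is projective. The reverse containment $\mathcal{P}(\mathcal{C}) \subseteq \mathcal{D}$ is immediate from the cluster tilting defining equality $\mathcal{D} = \bigcap_{0 < i < n+1} {}^{\perp_i}\mathcal{D}$, since any projective lies in every ${}^{\perp_i}\mathcal{D}$.

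The implication (a) $\Rightarrow$ (c) is entirely dual: for $D \in \mathcal{D}$ and $C \in \mathcal{C}$, one uses the existence of a $\mathcal{D}$-coresolution of $C$ of length $\leq n$ (from $\mathcal{C} = \mathcal{D}^\vee_n$) and dimension-shifts $\mathsf{Ext}^1_{\mathcal{C}}(C,D)$ up to $\mathsf{Ext}^{n+1}_{\mathcal{C}}(D^n,D) = 0$ to conclude $D$ is injective; the containment $\mathcal{I}(\mathcal{C}) \subseteq \mathcal{D}$ follows from $\mathcal{D} = \bigcap_{0 < i < n+1} \mathcal{D}^{\perp_i}$. The only mildly delicate point is bookkeeping the intermediate $\mathsf{Ext}^j$ vanishings so that each step of the dimension shift is actually an isomorphism; but since (a) together with the cluster tilting hypothesis already gives vanishing up to the shifted index $n+1$ (which is exactly what is needed to reach the top of the resolution/coresolution of length $n$), nothing is lost. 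Thus all four conditions are equivalent.
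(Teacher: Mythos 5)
Your proof is correct and follows essentially the paper's own argument: both hinge on Theorem~\ref{ncot y ct} giving $\mathcal{C} = \mathcal{D}^\wedge_n = \mathcal{D}^\vee_n$, then deduce $\mathcal{D} \subseteq \mathcal{P}(\mathcal{C})$ (and dually $\mathcal{D} \subseteq \mathcal{I}(\mathcal{C})$) from the Ext-vanishing in degrees $1,\dots,n+1$, with the reverse containments coming from the defining intersections of a cluster tilting subcategory. The only cosmetic difference is that you carry out the dimension shifting by hand along a length-$n$ resolution, whereas the paper packages it as the statement $\Ext^1_{\mathcal{C}}(\mathcal{D},\mathcal{D}^\wedge_n)=0$ via Proposition~\ref{prop8}.
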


\begin{proof}
It suffices to show that (a) implies (b) and (c). So let us assume that $\Ext^{n+1}_{\mathcal{C}}(\mathcal{D,D}) = 0$. By Proposition \ref{prop8}, we have that $\Ext^1_{\mathcal{C}}(\mathcal{D},\mathcal{D}^\wedge_n) = 0$, and since $\mathcal{C} = \mathcal{D}^\wedge_n$ by the proof of Theorem \ref{ncot y ct}, we obtain the containment $\mathcal{D} \subseteq {}^{\perp_1}\mathcal{C} = \mathcal{P}(\mathcal{C})$. Dually, we can also show that $\mathcal{D} \subseteq \mathcal{C}^{\perp_1} = \mathcal{I}(\mathcal{C})$ holds. On the other hand, we know that $\mathcal{P}(\mathcal{C}) \cup \mathcal{I}(\mathcal{C}) \subseteq \mathcal{D}$, since $\bigcap_{1 \leq i \leq n} {}^{\perp_i}\mathcal{D} = \mathcal{D} = \bigcap_{1 \leq i \leq n} \mathcal{D}^{\perp_i}$. Therefore, $\mathcal{P}(\mathcal{C}) = \mathcal{D} = \mathcal{I}(\mathcal{C})$. 
\end{proof}

\begin{remark}\label{rem:cluster_trivial}
Theorem~\ref{ncot y ct} may constitute a nontrivial example of a two-sided $n$-cotorsion pair. Namely, let $\mathcal{D}$ be an $(n+1)$-cluster tilting subcategory of an abelian category $\mathcal{C},$ with enough projectives and injectives. Then by Theorem~\ref{ncot y ct} and Remark~\ref{Rk-n-cot-trivial} (2), $(\mathcal{D,D})$ is the trivial $n$-cotorsion pair (that is, $\mathcal{D} = \mathcal{P}(\mathcal{C})$) if, and only if, $\mathcal{D}$ is resolving. 
\end{remark}

Using the previous theorem and \cite[Theorem 1.6]{IyamaCluster}, we obtain the following example.

\begin{example}
Let $\Lambda$ be an Artin $R$-algebra. Note that the category  $\fmod(\Lambda),$ of finitely generated left $\Lambda$-modules, is an abelian category with enough projectives and injectives, as it is well known that every finitely generated $\Lambda$-module has a finitely generated projective cover and a finitely generated injective envelope. 
\begin{enumerate}
\item If ${\rm gl.dim}(\Lambda) \leq n+1$ and $\fmod(\Lambda)$ has an $(n+1)$-cluster tilting object $T$, then there exists a unique $n$-cotorsion pair in $\fmod(\Lambda)$ of the form $(\mathcal{D,D})$, where $\mathcal{D} := {\rm add}(T)$ is the class of direct summands of finite direct sums of copies of $T$. In this case, note that $\mathcal{D}$ is resolving if, and only if, $\mathcal{D} = {\rm add}(\Lambda)$. 

\item If $\Lambda$ is not self-injective having an $(n+1)$-cluster tilting object $T$, we necessarily have that $\Ext^{n+1}_{\Lambda}(T,T) \neq 0$ and $\mathcal{P}(\Lambda) \cup \mathcal{I}(\Lambda) \subsetneq {\rm add}(T)$.
\end{enumerate}
\end{example}


\section{Higher cotorsion for chain complexes}\label{sec:complexes}

The last part of the present paper is devoted to study $n$-cotorsion pairs in the setting provided by the category $\Ch(\mathcal{C})$ of chain complexes of objects in $\mathcal{C}$. In the first part of this section, we characterise certain families of $n$-cotorsion pairs of complexes in terms of $n$-cotorsion pairs in the ground category $\mathcal{C}$. In the second part, we shall study how to induce $n$-cotorsion pairs of complexes from an $n$-cotorsion pair $(\mathcal{A,B})$ in $\mathcal{C}$. The complexes involved in these $n$-cotorsion pairs are the $\mathcal{A}$-complexes, $\mathcal{B}$-complexes, and differential graded complexes considered by Gillespie in \cite{GillespieFlat}.

Let us set some notation for the category $\Ch(\mathcal{C})$. Given a chain complex $X \in \mathsf{Ch}(\mathcal{C})$ with differentials $\partial^X_m \colon X_m \to X_{m-1}$, we denote its cycle and boundary objects in $\mathcal{C}$ by $Z_m(X) := {\rm Ker}(\partial^X_m)$ and $B_m(X) := {\rm Im}(\partial^X_{m+1})$, respectively. 

Let us also borrow some notation from \cite[Section 3]{GillespieDegreewise}. Let $(\mathscr{A,B})$ be an $n$-cotorsion pair in $\Ch(\C)$. The symbol $\mathscr{A}'$ will denote the class of all objects $M \in \mathcal{C}$ such that $M = A_m$ for some $A \in \mathscr{A}$ and some $m \in \mathbb{Z}$. The class $\mathscr{B}'$ is defined similarly. 

Motivated by \cite[Definition 3.4]{GillespieDegreewise}, we propose the following.

\begin{definition}
An $n$-cotorsion pair $(\mathscr{A,B})$ in $\Ch(\C)$ is \textbf{degreewise orthogonal} if for every pair if integers $i, j \in \mathbb{Z}$ we have the relations:
\begin{enumerate} 
\item $\Ext_{\C}^1(A_i, Y_j) = 0$ whenever $A \in \mathscr{A}$ and $Y \in \mathscr{B}_{n-1}^{\wedge}$, and

\item $\Ext_{\C}^1(X_i, B_j) = 0$ whenever $X \in \mathscr{A}_{n-1}^{\vee}$ and $B \in \mathscr{B}$.
\end{enumerate} 
\end{definition}

Given an object $M \in \mathcal{C}$ and an integer $m \in \mathbb{Z}$, the \emph{$m$-th disk complex centred at $M$} is the chain complex denoted by $D^m(M)$, such that $M$ appears at degrees $m$ and $m\mbox{-}1$, and $0$ elsewhere. The only nonzero differential is the identity on $M$.  The \emph{$m$-th sphere complex centred at $M$}, on the other hand, is the chain complex $S^m(M) \in \Ch(\mathcal{C})$ with $M$ at the $m$-th component and $0$ elsewhere.

The first relation we note between $n$-cotorsion in $\Ch(\mathcal{C})$ and $n$-cotorsion in $\mathcal{C}$ is described in the following result, which is the $n$-cotorsion version of \cite[Lemma 3.5]{GillespieDegreewise}.

\begin{lemma}\label{JKtoJ'K'}
Let $\C$ be an abelian category with enough injectives. Then, the following statements are equivalent for any $n$-cotorsion pair $(\mathscr{A,B})$ in $\Ch(\mathcal{C})$:
\begin{itemize}
\item[(a)] $(\mathscr{A,B})$ is degreewise orthogonal.

\item[(b)] If $A \in \mathscr{A}$ and $B \in \mathscr{B}$, then $D^{m}(A_i) \in \mathscr{A}$ and $D^n(B_j) \in \mathscr{B}$ for every $m, n, i, j \in \mathbb{Z}$.

\item[(c)] $(\mathscr{A}',\mathscr{B}')$ is an $n$-cotorsion pair in $\C$.
\end{itemize}
\end{lemma}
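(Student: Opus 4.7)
The key technical ingredient will be the Ext-disk adjunction, which for all $k \geq 0$, $M, N \in \mathcal{C}$, and $X, Y \in \Ch(\mathcal{C})$ yields natural isomorphisms
\begin{align*}
\Ext^k_{\Ch}(D^p(M), Y) & \cong \Ext^k_{\mathcal{C}}(M, Y_p), \\
\Ext^k_{\Ch}(X, D^q(N)) & \cong \Ext^k_{\mathcal{C}}(X_{q-1}, N).
\end{align*}
These arise from the adjoint pairs $(D^p(-), (-)_p)$ and $((-)_{q-1}, D^q(-))$ together with the exactness of the evaluation functors on $\Ch(\mathcal{C})$. My plan is to prove the cyclic implication $(a) \Rightarrow (b) \Rightarrow (c) \Rightarrow (a)$.

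For $(a) \Rightarrow (b)$: Theorem~\ref{theo:left-n-cotorsion} and its dual provide the descriptions $\mathscr{A} = {}^{\perp_1}(\mathscr{B}^\wedge_{n-1})$ and $\mathscr{B} = (\mathscr{A}^\vee_{n-1})^{\perp_1}$. Hence, to certify $D^p(A_i) \in \mathscr{A}$ it suffices to check $\Ext^1_{\Ch}(D^p(A_i), Y) = 0$ for every $Y \in \mathscr{B}^\wedge_{n-1}$; by the adjunction this is exactly $\Ext^1_{\mathcal{C}}(A_i, Y_p) = 0$, which holds by (a). The claim $D^q(B_j) \in \mathscr{B}$ is handled dually via the second adjunction.

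For $(b) \Rightarrow (c)$: Closure of $\mathscr{A}'$ under direct summands will go via disks: a summand $M \mid A_i$ produces a summand $D^p(M) \mid D^p(A_i) \in \mathscr{A}$, and since $\mathscr{A}$ is summand-closed one gets $D^p(M) \in \mathscr{A}$, so $M = D^p(M)_p \in \mathscr{A}'$ (and dually for $\mathscr{B}'$). For the orthogonality $\Ext^k_{\mathcal{C}}(A_i, B_j) = 0$ with $1 \leq k \leq n$, I will invoke $D^{i+1}(B_j) \in \mathscr{B}$ (granted by (b)) together with the second adjunction to get $\Ext^k_{\Ch}(A, D^{i+1}(B_j)) \cong \Ext^k_{\mathcal{C}}(A_i, B_j)$, where the left side vanishes because $(\mathscr{A}, \mathscr{B})$ is an $n$-cotorsion pair in $\Ch(\mathcal{C})$. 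Finally, for the resolution condition on $M \in \mathcal{C}$, I will feed the sphere complex $S^0(M)$ into the $n$-cotorsion pair to obtain a short exact sequence $0 \to K \to A \to S^0(M) \to 0$ with $A \in \mathscr{A}$ and $K \in \mathscr{B}^\wedge_{n-1}$, then evaluate in degree zero; since evaluation is exact, a $\mathscr{B}$-resolution of $K$ of length $\leq n-1$ restricts to a $\mathscr{B}'$-resolution of $K_0$ of the same length, so $K_0 \in (\mathscr{B}')^\wedge_{n-1}$. The coresolution half is dual, using injective envelopes of $M$ (for which I will need the standing hypothesis that $\mathcal{C}$ has enough injectives) to feed sphere complexes into the right $n$-cotorsion condition.

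For $(c) \Rightarrow (a)$: Given that $(\mathscr{A}', \mathscr{B}')$ is an $n$-cotorsion pair in $\mathcal{C}$, Proposition~\ref{prop8} yields $\Ext^1_{\mathcal{C}}(\mathscr{A}', (\mathscr{B}')^\wedge_{n-1}) = 0$ and, dually, $\Ext^1_{\mathcal{C}}((\mathscr{A}')^\vee_{n-1}, \mathscr{B}') = 0$; exactness of evaluation implies $Y \in \mathscr{B}^\wedge_{n-1}$ gives $Y_j \in (\mathscr{B}')^\wedge_{n-1}$ for every $j$, and symmetrically on the left, which delivers (a). The main obstacle I expect lies in $(b) \Rightarrow (c)$: assumption (b) only supplies membership of disks, so the off-diagonal vanishing $\Ext^k_{\mathcal{C}}(A_i, B_j)$ with $i \neq j$ cannot be extracted from $D^p(A_i) \in \mathscr{A}$ alone (that would produce only the diagonal case $\Ext^k_{\mathcal{C}}(A_i, B_i)$); the auxiliary disk $D^{i+1}(B_j) \in \mathscr{B}$ is indispensable, and some bookkeeping with degree conventions is needed so that the $p$ in one adjunction matches the component index in the other.
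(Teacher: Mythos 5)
Your proposal is correct and follows essentially the same route as the paper's proof: the cycle (a) $\Rightarrow$ (b) $\Rightarrow$ (c) $\Rightarrow$ (a), the disk adjunction together with Theorem~\ref{theo:left-n-cotorsion} for (a) $\Rightarrow$ (b) and for summand closure, sphere complexes evaluated in degree $0$ for the approximation conditions, and exactness of evaluation for (c) $\Rightarrow$ (a). The only variation is the orthogonality step of (b) $\Rightarrow$ (c), where you check $\Ext^k_{\mathcal{C}}(A_i,B_j)=0$ for all $1\leq k\leq n$ directly via the degree-$k$ disk adjunction (legitimate, since both functors in each adjunction are exact), while the paper reduces everything to a single $\Ext^1$ vanishing against $(\mathscr{B}')^\wedge_{n-1}$ using Proposition~\ref{prop:cotorsion_vs_ncotorsion}; note also that your appeal to injective envelopes in the dual half is unnecessary, as feeding $S^0(C)$ into the right $n$-cotorsion condition and evaluating in degree $0$ already suffices.
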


\begin{proof}
We prove the implications (a) $\Rightarrow$ (b) $\Rightarrow$ (c) $\Rightarrow$ (a). 
\begin{itemize}
\item (a) $\Rightarrow$ (b): Let $A \in \mathscr{A}$ and $Y \in \mathscr{B}_{n-1}^{\wedge}$. By \cite[Lemma 3.1]{GillespieFlat} we have that 
\[
\Ext_{\Ch}^1(D^{m}(A_i), Y) \cong \Ext_{\C}^1(A_i, Y_m) = 0
\] 
where $\Ext_{\C}^1(A_i, Y_m) = 0$ by condition (a). Thus, $D^m(A_i) \in {}^{\perp_1}(\mathscr{B}_{n-1}^{\wedge}) = \mathscr{A}$ by Theorem~\ref{theo:left-n-cotorsion}. In a similar way, we can prove that $D^m(B_j) \in \mathscr{B}$ for any $j, m \in \mathbb{Z}$ whenever $B \in \mathscr{B}$.

\item (b) $\Rightarrow$ (c): We prove that $(\mathscr{A}', \mathscr{B}')$ is a left $n$-cotorsion pair in $\mathcal{C}$ assuming (b). 

We first show that $\mathscr{A}'$ is closed under direct summands. Let $N \in \mathscr{A}'$ and $M$ be a direct summand of $A$. Then, $N = A_m$ for some complex $A \in \mathscr{A}$ and some $m \in \mathbb{Z}$. Note that $D^0(A_m) \in \mathscr{A}$ by condition (b), and that $D^0(M)$ is a direct summand of $D^0(A_m)$. Since $\mathscr{A}$ is closed under direct summands by hypothesis, we have that $D^0(M) \in \mathscr{A}$, that is, $M \in \mathscr{A}'$. Hence, $\mathscr{A}'$ is closed under direct summands. 

Now let us show that $\Ext^i_{\C}(\mathscr{A}',\mathscr{B}') = 0$ for every $1 \leq i \leq n$. By Proposition~\ref{prop:cotorsion_vs_ncotorsion}, since $\mathcal{C}$ has enough injectives, it is equivalent to show that $\Ext^1_{\mathcal{C}}(\mathscr{A}',(\mathscr{B}')^\wedge_{n-1}) = 0$. So let $M \in \mathscr{A}'$ and $N \in (\mathscr{B}')^\wedge_{n-1}$. By condition (b), we can note that $D^{0}(M) \in \mathscr{A}$ and $D^1(N) \in \mathscr{B}_{n-1}^{\wedge}$. From \cite[Lemma 3.1]{GillespieFlat}, we have that
\[
\Ext_{\C}^1(M,N) \cong \Ext_{\Ch}^1(D^0(M),D^1(N)) = 0,
\]
where the last equality follows by Proposition~\ref{prop:cotorsion_vs_ncotorsion}. Hence, $\Ext_{\C}^1(\mathscr{A}', (\mathscr{B}')_{n-1}^\wedge) = 0$. 

Finally, we show that for every object $C \in \mathcal{C}$ there exists a short exact sequence
\[
0 \to N \to M \to C \to 0
\]
where $M \in \mathscr{A}'$ and $N \in (\mathscr{B}')^\wedge_{n-1}$. For, consider the sphere complex $S^0(C) \in \Ch(\mathcal{C})$. Since $(\mathscr{A},\mathscr{B})$ is an $n$-cotorsion pair in $\Ch(\mathcal{C})$, there exists a short exact sequence
\[
0 \to Y \to A \to S^0(C) \to 0 
\]
where $A \in \mathscr{A}$ and $Y \in \mathscr{B}^\wedge_{n-1}$. Thus, at degree $0$ we have the exact sequence 
\[
0 \to N \to M \to C \to 0
\] 
where $M = A_0 \in \mathscr{A}'$ and $N = Y_0 \in (\mathscr{B}')^\wedge_{n-1}$.  

The previous shows that $(\mathscr{A}',\mathscr{B}')$ is a left $n$-cotorsion pair in $\mathcal{C}$. In a similar way, one can show that $(\mathscr{A}',\mathscr{B}')$ is also a right $n$-cotorsion pair in $\mathcal{C}$. Therefore, (c) follows. 

\item (c) $\Rightarrow$ (a): It is clear due to the equalities $\mathscr{A}' = {}^{\perp_1}((\mathscr{B}')_{n-1}^{\wedge})$ and $\mathscr{B}' = ((\mathscr{A}')_{n-1}^{\vee})^{\perp_1}$.
\end{itemize}
\end{proof}

In what follows, we need to consider the subgroup $\mathsf{Ext}^1_{\mathsf{dw}}(X,Y)$ of $\mathsf{Ext}^1_{\mathsf{Ch}}(X,Y)$ of classes of short exact sequences 
\[
0 \to Y \to Z \to X \to 0
\] 
which are degreewise split, that is, 
\[
0 \to Y_m \to Z_m \to X_m \to 0
\] 
is a split exact sequence in $\mathcal{C}$ for every $m \in \mathbb{Z}$. 

Recall also that given a chain complex $X \in \Ch(\mathcal{C})$ and an integer $k \in \mathbb{Z}$, the \emph{$k$-th suspension of $X$} is the complex $X[k] \in \Ch(\mathcal{C})$ with components $(X[k])_m := X_{m-k}$ and differentials $\partial^{X[k]}_m := (-1)^k \partial^X_{m-k}$. 

The following result corresponds to \cite[Proposition 3.7]{GillespieDegreewise} in the context of $n$-cotorsion pairs. We provide a characterisation for the class $\mathscr{A}$ in every degreewise orthogonal $n$-cotorsion pair $(\mathscr{A,B})$ in $\Ch(\mathcal{C})$.

\begin{proposition}
Let $(\mathscr{A,B})$ be a degreewise orthogonal $n$-cotorsion pair in $\Ch(\C)$ (where $\mathcal{C}$ is an abelian category with enough injectives), and let $(\mathscr{A}',\mathscr{B}')$ be the corresponding $n$-cotorsion pair in $\C$ from Lemma~\ref{JKtoJ'K'}. If $\mathscr{B}$ is closed under suspensions, then $\mathscr{A}$ equals the class of all complexes $A \in \Ch(\mathcal{C})$ for which $A_m \in \mathscr{A}'$ for every $m \in \mathbb{Z}$, and such that every chain map $A \to Y$ is null homotopic whenever $Y \in \mathscr{B}^{\wedge}_{n-1}$. 
\end{proposition}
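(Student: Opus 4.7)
The plan is to combine two ingredients: the characterisation $\mathscr{A} = {}^{\perp_1}(\mathscr{B}^\wedge_{n-1})$ from Theorem~\ref{theo:left-n-cotorsion} (applied to the $n$-cotorsion pair $(\mathscr{A},\mathscr{B})$ in $\Ch(\mathcal{C})$), and the classical natural isomorphism
\[
\mathsf{Ext}^1_{\mathsf{dw}}(X,Y) \;\cong\; \overline{\Hom}_{\Ch(\mathcal{C})}(X, Y[1])
\]
identifying degreewise split extensions with homotopy classes of chain maps into the suspension. This is the standard upper-triangular differential identification: a choice of splittings realises $\partial^Z$ as $\bigl(\begin{smallmatrix} \partial^Y & h \\ 0 & \partial^X \end{smallmatrix}\bigr)$, the equation $(\partial^Z)^2 = 0$ forces $h$ to be a chain map $X \to Y[1]$, and changing the splittings modifies $h$ by a chain homotopy. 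Since $\mathscr{B}$ is closed under suspensions, the same is true of $\mathscr{B}^\wedge_{n-1}$, because shifts preserve exactness of sequences of complexes; this is what lets the bijection interact cleanly with the $n$-cotorsion data. Write $\mathscr{A}''$ for the class described in the statement.

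For the containment $\mathscr{A} \subseteq \mathscr{A}''$, I would take $A \in \mathscr{A}$; then $A_m \in \mathscr{A}'$ by the very definition of $\mathscr{A}'$. For any $Y \in \mathscr{B}^\wedge_{n-1}$, the desuspension $Y[-1]$ again lies in $\mathscr{B}^\wedge_{n-1}$, so $\mathsf{Ext}^1_{\mathsf{Ch}}(A, Y[-1]) = 0$ by Theorem~\ref{theo:left-n-cotorsion} and a fortiori $\mathsf{Ext}^1_{\mathsf{dw}}(A, Y[-1]) = 0$. Applying the displayed bijection with $Y$ replaced by $Y[-1]$ then yields $\overline{\Hom}_{\Ch(\mathcal{C})}(A, Y) = 0$, which is precisely the second defining condition of $\mathscr{A}''$.

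For the reverse containment $\mathscr{A}'' \subseteq \mathscr{A}$, let $A \in \mathscr{A}''$; I shall verify $\mathsf{Ext}^1_{\mathsf{Ch}}(A, Y) = 0$ for every $Y \in \mathscr{B}^\wedge_{n-1}$, after which Theorem~\ref{theo:left-n-cotorsion} forces $A \in \mathscr{A}$. Fix such a $Y$ and an extension $\eta \colon 0 \to Y \to Z \to A \to 0$. A $\mathscr{B}$-resolution of $Y$ of length at most $n-1$ restricts at each degree $m$ to a $\mathscr{B}'$-resolution of $Y_m$, so $Y_m \in (\mathscr{B}')^\wedge_{n-1}$. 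Since $(\mathscr{A}',\mathscr{B}')$ is an $n$-cotorsion pair in $\mathcal{C}$ by Lemma~\ref{JKtoJ'K'}, Proposition~\ref{prop8} gives $\mathsf{Ext}^1_{\mathcal{C}}(A_m, Y_m) = 0$ for every $m$, so $\eta$ is degreewise split. Its class in $\mathsf{Ext}^1_{\mathsf{dw}}(A, Y) \cong \overline{\Hom}_{\Ch(\mathcal{C})}(A, Y[1])$ is killed by the hypothesis on $A$, using that $Y[1] \in \mathscr{B}^\wedge_{n-1}$; hence $\eta$ splits.

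The only real obstacle is having a clean reference to (or reproof of) the bijection $\mathsf{Ext}^1_{\mathsf{dw}}(X,Y) \cong \overline{\Hom}_{\Ch(\mathcal{C})}(X, Y[1])$, but this is routine chain-complex homological algebra. Once it is available, both inclusions reduce to tracking how the suspension closure of $\mathscr{B}$ propagates to $\mathscr{B}^\wedge_{n-1}$ and invoking the $n$-cotorsion data on both levels via Theorem~\ref{theo:left-n-cotorsion}, Lemma~\ref{JKtoJ'K'}, and Proposition~\ref{prop8}.
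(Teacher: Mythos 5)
Your proof is correct and follows essentially the same route as the paper: both directions rest on the identification $\mathscr{A} = {}^{\perp_1}(\mathscr{B}^\wedge_{n-1})$ from Theorem~\ref{theo:left-n-cotorsion}, the isomorphism $\Ext^1_{\mathsf{dw}}(X,Y) \cong \Hom_{\Ch}(X,Y[1])/\!\sim$, the closure of $\mathscr{B}^\wedge_{n-1}$ under suspensions, and the degreewise vanishing $\Ext^1_{\mathcal{C}}(A_m,Y_m)=0$ coming from the $n$-cotorsion pair $(\mathscr{A}',\mathscr{B}')$ via Proposition~\ref{prop8}. The only deviation is harmless: for the component condition in the containment $\mathscr{A}\subseteq\mathscr{A}''$ you invoke the definition of $\mathscr{A}'$ directly, where the paper instead argues through disk complexes and $\mathscr{A}'={}^{\perp_1}[(\mathscr{B}')^\wedge_{n-1}]$.
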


\begin{proof}
Suppose that $(\mathscr{A,B})$ is an $n$-cotorsion pair in $\Ch(\mathcal{C})$ with $\mathscr{B}$ closed under suspensions. Note that the latter implies that $\mathscr{B}^{\wedge}_{n-1}$ is also closed under suspensions.

Let us denote by $\mathscr{X}$ the class of complexes $X \in \Ch(\mathcal{C})$ such that $X_m \in \mathscr{A}'$ and such that every chain map $X \to Y$ is null homotopic whenever $Y \in \mathscr{B}^\wedge_{n-1}$. We show $\mathscr{A} = \mathscr{X}$ using the equality $\mathscr{A} = {}^{\perp_1}\mathscr{B}^\wedge_{n-1}$ from Theorem~\ref{theo:left-n-cotorsion}. 
\begin{itemize}
\item $\mathscr{A} \supseteq \mathscr{X}$: Let $X \in \mathscr{X}$ and $Y \in \mathscr{B}^\wedge_{n-1}$. Since $(\mathscr{A}',\mathscr{B}')$ is an $n$-cotorsion pair by Lemma~\ref{JKtoJ'K'}, we have that $\Ext^1_{\mathcal{C}}(X_m, Y_m) = 0$ for every $m \in \mathbb{Z}$, and so $\Ext_{\Ch}^1(X, Y) = \Ext_{\mathsf{dw}}^1(X, Y)$. On the other hand, $\Ext_{\mathsf{dw}}^1(X,Y) \cong \Hom_{\Ch}(X, Y[1]) / \sim$ by \cite[Lemma 2.1]{GillespieFlat}, where $\sim$ represents the chain homotopy relation. Since $X \in \mathscr{X}$ and $Y[1] \in \mathscr{B}^\wedge_{n-1}$ being $\mathscr{B}^\wedge_{n-1}$  closed under suspensions, we have that $\Hom_{\Ch}(X, Y[1]) / \sim \mbox{} = 0$, and hence $\Ext_{\Ch}^1(X, Y) = 0$. Then, we have that $X \in \mathscr{A}$. 

\item $\mathscr{A} \subseteq \mathscr{X}$: Let $A \in \mathscr{A}$ and $Y \in \mathscr{B}^\wedge_{n-1}$. We have that
\[
0 = \Ext^1_{\Ch}(A,Y[-1]) \supseteq \Ext^1_{\mathsf{dw}}(A,Y[-1]) \cong \Hom_{\Ch}(A,Y) / \sim.
\]
since $Y[-1] \in \mathscr{B}^\wedge_{n-1}$. It follows that every chain map $A \to Y$ is null homotopic whenever $Y \in \mathscr{B}^\wedge_{n-1}$. 

Now let $N \in (\mathscr{B}')^\wedge_{n-1}$ and note that $D^{m+1}(N)\in \mathscr{B}_{n-1}^{\wedge}$ by Lemma~\ref{JKtoJ'K'}. Then, we have
\[
\Ext_{\C}^1(A_m, N) \cong \Ext_{\Ch}^1(A, D^{m+1}(N)) = 0,
\]
that is, $A_m \in {}^{\perp_1}[(\mathscr{B}')^\wedge_{n-1}] = \mathscr{A}'$ (by Lemma~\ref{JKtoJ'K'} and Theorem~\ref{theo:left-n-cotorsion}). Therefore, we have that $A_m \in \mathscr{A}'$ for all $m \in \mathbb{Z}$.
\end{itemize}
\end{proof}

Now let us show how to induce $n$-cotorsion pairs involving certain families of complexes from an $n$-cotorsion pair in $\mathcal{C}$. These families are presented below in Definition \ref{def:special_complexes}, which follows the spirit of Gillespie's \cite[Definition 3.3]{GillespieFlat}.

For the rest of this section, it will be important to recall that $\Ch(\mathcal{C})$ is equipped with an \emph{internal hom} functor $\mathcal{H}{om}(-,-)$ defined as follows: for every $X, Y \in \Ch(\mathcal{C})$, $\mathcal{H}{om}(X,Y)$ is the chain complex of abelian groups defined by 
\[
\mathcal{H}{om}(X,Y)_m := \prod_{k \in \mathbb{Z}} \Hom_{\mathcal{C}}(X_k, Y_{m+k})
\]
for every $m \in \mathbb{Z}$, and with differentials given by $f \mapsto \partial^Y \circ f - (-1)^m f \circ \partial^X$ (see Garc\'ia Rozas' \cite{JRGR}, for instance). It is known that every chain map $X \to Y$ is null homotopic if, and only if, the complex $\mathcal{H}{om}(X,Y)$ is exact.

\begin{definition}\label{def:special_complexes}
Let $\mathcal{X}$ be a class of objects of $\mathcal{C}$. A chain complex $X \in \Ch(\mathcal{C})$ is:
\begin{enumerate}
\item a \textbf{complex (with terms) in $\bm{\mathcal{X}}$} (or a \textbf{degreewise $\bm{\mathcal{X}}$-complex}) if $X_m \in \mathcal{X}$ for every $m \in \mathbb{Z}$;

\item an \textbf{$\bm{\mathcal{X}}$-complex} if $X$ is exact and $Z_m(X) \in \mathcal{X}$ for every $m \in \mathbb{Z}$.
\end{enumerate}
We shall denote by $\Ch(\mathcal{X})$ the class of complexes in $\mathcal{X}$, and by $\widetilde{\mathcal{X}}$ the class of $\mathcal{X}$-complexes. 

Now let $\mathcal{A}$ and $\mathcal{B}$ be two classes of objects in $\mathcal{C}$ such that $\Ext^1_{\mathcal{C}}(\mathcal{A,B}) = 0$. We can also define two new families of complexes from $\Ch(\mathcal{A})$, $\widetilde{\mathcal{A}}$, $\Ch(\mathcal{B})$ and $\widetilde{\mathcal{B}}$. 
\begin{enumerate}
\setcounter{enumi}{2}
\item We shall say that a complex $X \in \Ch(\mathcal{C})$ is \textbf{$\bm{\mathcal{H}{om}(-,\widetilde{\mathcal{B}})}$-acyclic in $\bm{\Ch(\mathcal{A})}$} if $X \in \Ch(\mathcal{A})$ and if $\mathcal{H}{om}(X,B)$ is an exact complex of abelian groups whenever $B \in \widetilde{\mathcal{B}}$. 

\item \textbf{$\bm{\mathcal{H}{om}(\widetilde{\mathcal{A}},-)}$-acyclic complexes in $\bm{\Ch(\mathcal{B})}$} are defined dually, that is, as those complexes $Y \in \Ch(\mathcal{B})$ such that $\mathcal{H}{om}(A,Y)$ is exact for every $A \in \widetilde{\mathcal{A}}$. 
\end{enumerate}
We shall denote by $\Ch_{\rm acy}(\mathcal{A};\widetilde{\mathcal{B}})$ the class of $\mathcal{H}{om}(-,\widetilde{\mathcal{B}})$-acyclic complexes in $\Ch(\mathcal{A})$. Dually, $\Ch_{\rm acy}(\widetilde{\mathcal{A}};\mathcal{B})$ will denote the class of $\mathcal{H}{om}(\widetilde{\mathcal{A}},-)$-acyclic complexes in $\Ch(\mathcal{B})$.
 \end{definition}

\begin{remark} \label{sumB'} \
\begin{enumerate}
\item If a class $\mathcal{X}$ of objects in $\mathcal{C}$ is closed under extensions, then $\widetilde{\mathcal{X}} \subseteq \Ch(\mathcal{X})$. 

\item If $X \in \widetilde{\mathcal{X}}$, then $X[k] \in \widetilde{\mathcal{X}}$ for every $k \in \mathbb{Z}$. 

\item If $0 \in \mathcal{X}$, then $D^m(X) \in \widetilde{\mathcal{X}}$ for every $X \in \mathcal{X}$ and $m \in \mathbb{Z}$. 

\item In the case where $(\mathcal{A,B})$ is a cotorsion pair in $\mathcal{C}$, $\mathcal{H}{om}(-,\widetilde{\mathcal{B}})$-acyclic complexes in $\Ch(\mathcal{A})$ and $\mathcal{H}{om}(\widetilde{\mathcal{A}},-)$-acyclic complexes in $\Ch(\mathcal{B})$ are called in \cite{GillespieFlat} \textit{differential graded $\mathcal{A}$-complexes} and \textit{differential graded $\mathcal{B}$-complexes}, respectively. Since there may be more than two pairs $(\mathcal{A,B})$ of classes objects in $\mathcal{C}$ satisfying the condition $\Ext^1_{\mathcal{C}}(\mathcal{A,B}) = 0$, we have preferred to use the terminology specified in Definition~\ref{def:special_complexes} above in order to avoid confusion. 

In fact, we can find an example for $\Ch_{\rm acy}(\mathcal{A};\widetilde{\mathcal{B}})$ where $\mathcal{A}$ and $\mathcal{B}$ do not form a cotorsion pair $(\mathcal{A,B})$ in $\mathcal{C}$. This is the case for the classes $\mathcal{A} := \mathcal{DP}(R)$ and $\mathcal{B} := \mathcal{F}(R)$ of Ding projective and flat modules. By \cite[Theorem 3.7]{DPcomplexes}, a chain complex over an arbitrary ring $R$ is Ding projective if, and only if, it is $\mathcal{H}{om}(-,\widetilde{\mathcal{F}(R)})$-acyclic in $\Ch(\mathcal{DP}(R))$. Keep in mind that $\widetilde{\mathcal{F}(R)}$ is precisely the class of flat complexes.

\item If $\Ext_{\C}^1(\mathcal{A,B}) = 0$, $\mathcal{B}$ is closed under extensions  and $0 \in \mathcal{A}$, then $S^m(A)\in \Ch_{\rm acy}(\mathcal{A};\widetilde{\mathcal{B}})$ for every $A \in \mathcal{A}$ and $m \in \mathbb{Z}$.
\end{enumerate}
\end{remark}

The following result follows as \cite[Lemma 3.9]{GillespieFlat}.

\begin{lemma}\label{lem:null_homotopic}
Let $\mathcal{A}$ and $\mathcal{B}$ be two classes of objects in $\C$. If $\Ext_{\C}^1(\mathcal{A,B}) = 0$ and $\mathcal{B}$ is closed under extensions, then every chain map from a $\mathcal{A}$-complex to a $\mathcal{B}$-complex is null homotopic.
\end{lemma}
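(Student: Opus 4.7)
The plan is to construct, for each $n \in \mathbb{Z}$, a morphism $s_n \colon A_n \to B_{n+1}$ satisfying $f_n = \partial^B_{n+1} s_n + s_{n-1} \partial^A_n$. Rather than inducting on $n$ (which is awkward for unbounded complexes), I will use a two-phase strategy: first construct auxiliary maps $v_n$ for all $n$ in parallel, then measure their failure to define a homotopy and correct them.

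For the first phase, since both $A$ and $B$ are exact we have short exact sequences
\[
0 \to Z_n(X) \to X_n \to Z_{n-1}(X) \to 0
\]
for $X \in \{A, B\}$. For each $n$, I first lift the induced map on cycles $Z_n(f) \colon Z_n(A) \to Z_n(B)$ along the surjection $B_{n+1} \twoheadrightarrow Z_n(B)$ to obtain a morphism $u_n \colon Z_n(A) \to B_{n+1}$; the obstruction lies in $\Ext^1_{\C}(Z_n(A), Z_{n+1}(B))$, which vanishes since $Z_n(A) \in \A$, $Z_{n+1}(B) \in \B$, and $\Ext^1_{\C}(\A,\B) = 0$. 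Then I extend $u_n$ along $0 \to Z_n(A) \to A_n \to Z_{n-1}(A) \to 0$ to a morphism $v_n \colon A_n \to B_{n+1}$; the obstruction $\Ext^1_{\C}(Z_{n-1}(A), B_{n+1})$ vanishes because $B_{n+1} \in \B$ by closure of $\B$ under extensions (applied to the short exact sequence with ends $Z_n(B), Z_{n+1}(B) \in \B$). This is the only point where the hypothesis on $\B$ is used.

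For the second phase, I introduce the defect $e_n := f_n - \partial^B_{n+1} v_n - v_{n-1}\partial^A_n$. Since $v_n$ extends $u_n$ and $\partial^A_n$ annihilates $Z_n(A)$, the restriction $e_n|_{Z_n(A)}$ is identically zero. A direct computation yields $\partial^B_n e_n = (f_{n-1} - \partial^B_n v_{n-1})\partial^A_n$, and since $f_{n-1} - \partial^B_n v_{n-1}$ also vanishes on $Z_{n-1}(A) = \Ima(\partial^A_n)$ by the same reasoning, we obtain $\partial^B_n e_n = 0$. Therefore $e_n$ factors as $\iota^B_n \tilde{e}_n \pi^A_n$ for some morphism $\tilde{e}_n \colon Z_{n-1}(A) \to Z_n(B)$, where $\iota^B_n \colon Z_n(B) \hookrightarrow B_n$ and $\pi^A_n \colon A_n \twoheadrightarrow Z_{n-1}(A)$ are the canonical maps. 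I then lift $\tilde{e}_n$ along $B_{n+1} \twoheadrightarrow Z_n(B)$ to a morphism $\lambda_n \colon Z_{n-1}(A) \to B_{n+1}$, once more using $\Ext^1_{\C}(Z_{n-1}(A), Z_{n+1}(B)) = 0$.

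Setting $s_n := v_n + \lambda_n \pi^A_n$ for every $n$, the homotopy equation follows at once: $\partial^B_{n+1}(\lambda_n \pi^A_n) = \iota^B_n \tilde{e}_n \pi^A_n = e_n$ exactly cancels the defect, while $\lambda_{n-1} \pi^A_{n-1} \partial^A_n = 0$ because $\pi^A_{n-1} \partial^A_n = 0$, which follows from $\partial^A_{n-1} \partial^A_n = 0$ after factoring $\partial^A_{n-1} = \iota^A_{n-2} \pi^A_{n-1}$ and using that $\iota^A_{n-2}$ is a monomorphism. The chief organizational subtlety---and the reason for the two-phase setup---is avoiding any inductive dependence of $s_n$ on $s_{n-1}$; separating the lifting step from the correction step allows all indices to be handled in parallel, which is essential since $A$ and $B$ need not be bounded.
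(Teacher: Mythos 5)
Your proof is correct and follows essentially the same route as the paper, which simply defers to Gillespie's Lemma 3.9: the homotopy is built degreewise from the exact sequences $0 \to Z_m(X) \to X_m \to Z_{m-1}(X) \to 0$, using $\Ext^1_{\mathcal{C}}(\mathcal{A},\mathcal{B})=0$ on cycles and the closure of $\mathcal{B}$ under extensions to get $B_{m}\in\mathcal{B}$. Your two-phase ``lift then correct'' organization is only a cosmetic variant of that standard argument (it cleanly sidesteps induction for unbounded complexes), and all the hypotheses are used exactly where they should be.
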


Before inducing higher cotorsion pairs from an $n$-cotorsion pair in $\mathcal{C}$, we prove the following orthogonality relations between the classes (1), (2), (3) and (4) in Definition~\ref{def:special_complexes}. Recall that $\mathcal{C}$ is said to have \textit{enough $\mathcal{X}$-objects}, for some class $\mathcal{X}$ of objects of $\mathcal{C}$, if every object of $\mathcal{C}$ is an epimorphic image of an object in $\mathcal{X}$.

\begin{lemma}\label{B subseteq dgA}
Let $\mathcal{A}$ and $\mathcal{B}$ be two classes of objects in $\mathcal{C}$. Then, the following statements hold true:
\begin{enumerate}
\item If $\Ext_{\C}^1(\mathcal{A,B}) = 0$, then $\Ext_{\Ch}^1(\Ch_{\rm acy}(\mathcal{A};\widetilde{\mathcal{B}}), \widetilde{\mathcal{B}}) = 0$ and $\Ext_{\Ch}^1(\widetilde{\mathcal{A}},\Ch_{\rm acy}(\widetilde{\mathcal{A}};\mathcal{B})) = 0$.

\item If $0 \in \mathcal{A}$, then $Z_m(Y) \in \mathcal{A}^{\perp_1}$ for every $Y \in (\Ch_{\rm acy}(\mathcal{A};\widetilde{\mathcal{A}^{\perp_1}}))^{\perp_1}$ and for each $m \in \mathbb{Z}$. Moreover, if $\mathcal{C}$ has enough $\A$-objects, then $Y$ is a $\mathcal{A}^{\perp_1}$-complex.

\item ${}^{\perp_1}\widetilde{\mathcal{B}} \subseteq \Ch_{\rm acy}({}^{\perp_1}\mathcal{B};\widetilde{\mathcal{B}})$.
\end{enumerate}
\end{lemma}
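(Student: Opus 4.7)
The overall strategy is to turn the hypotheses into degreewise-split extensions, so that $\Ext^{1}_{\mathsf{Ch}}$ collapses to homotopy classes of chain maps, which can then be controlled via the acyclicity of $\mathcal{H}{om}(-,-)$.

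For part~(1), given $X \in \Ch_{\rm acy}(\A;\widetilde{\B})$ and $Y \in \widetilde{\B}$, I will first show that every short exact sequence $0 \to Y \to Z \to X \to 0$ in $\mathsf{Ch}(\C)$ is degreewise split. The key computation is $\Ext^{1}_{\C}(X_m,Y_m) = 0$ for each $m$: indeed $X_m \in \A$, and $Y_m$ fits into $0 \to Z_m(Y) \to Y_m \to Z_{m-1}(Y) \to 0$ with outer terms in $\B$, so the vanishing follows by sandwiching in the long exact sequence of $\Ext$. Hence $\Ext^{1}_{\mathsf{Ch}}(X,Y) = \Ext^{1}_{\mathsf{dw}}(X,Y) \cong \Hom_{\mathsf{Ch}}(X,Y[1])/\sim$ by \cite[Lemma 2.1]{GillespieFlat}. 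Since $\widetilde{\B}$ is closed under suspensions, $Y[1] \in \widetilde{\B}$, so $\mathcal{H}{om}(X,Y[1])$ is exact, forcing every chain map $X \to Y[1]$ to be null-homotopic. This gives $\Ext^{1}_{\mathsf{Ch}}(X,Y) = 0$; the second orthogonality is dual.

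For part~(2), I first verify $S^{m}(A) \in \Ch_{\rm acy}(\A;\widetilde{\A^{\perp_{1}}})$ for every $A \in \A$ and $m \in \mathbb{Z}$. Degreewise membership in $\A$ is immediate from $0 \in \A$, and $\mathcal{H}{om}(S^{m}(A),B)$ agrees up to shift with the complex $\Hom_{\C}(A,B)$, whose exactness follows from splicing the short exact sequences $0 \to Z_{k+1}(B) \to B_{k+1} \to Z_{k}(B) \to 0$ under $\Hom_{\C}(A,-)$ using $\Ext^{1}_{\C}(A,Z_{k}(B)) = 0$. Consequently $\Ext^{1}_{\mathsf{Ch}}(S^{m}(A),Y) = 0$, and the standard identification $\Ext^{1}_{\mathsf{Ch}}(S^{m}(A),Y) \cong \Ext^{1}_{\C}(A,Z_{m}(Y))$ yields $Z_{m}(Y) \in \A^{\perp_{1}}$. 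For the moreover statement, I use that disk complexes $D^{m}(A)$ are contractible, hence lie in $\Ch_{\rm acy}(\A;\widetilde{\A^{\perp_{1}}})$, which via $\Ext^{1}_{\mathsf{Ch}}(D^{m}(A),Y) \cong \Ext^{1}_{\C}(A,Y_{m}) = 0$ gives $Y_{m} \in \A^{\perp_{1}}$. The extensions of $S^{m}(A)$ by $Y$ are then degreewise split, so $\Ext^{1}_{\mathsf{Ch}}(S^{m}(A),Y) \cong \Hom_{\C}(A,Z_{m-1}(Y)) / \{\partial^{Y}_{m} \circ h : h \in \Hom_{\C}(A,Y_{m})\}$; together with $\Ext^{1}_{\C}(A,Z_{m}(Y)) = 0$, this forces every morphism $A \to Z_{m-1}(Y)$ to factor through $B_{m-1}(Y)$. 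Choosing an epimorphism $A \twoheadrightarrow Z_{m-1}(Y)$ with $A \in \A$ then forces $B_{m-1}(Y) = Z_{m-1}(Y)$, i.e., $H_{m-1}(Y) = 0$; varying $m$ gives exactness of $Y$.

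Part~(3) follows the same template. For $X \in {}^{\perp_{1}}\widetilde{\B}$ and $B \in \B$, the disk complex $D^{k+1}(B)$ lies in $\widetilde{\B}$ and $\Ext^{1}_{\mathsf{Ch}}(X,D^{k+1}(B)) \cong \Ext^{1}_{\C}(X_{k},B) = 0$, giving $X_{k} \in {}^{\perp_{1}}\B$ for every $k$. Next, for $B \in \widetilde{\B}$ and any $k$, suspension closure gives $B[k] \in \widetilde{\B}$, and the sandwich argument from part~(1) (applied to $X_{m} \in {}^{\perp_{1}}\B$ and the components of $B[k]$) shows that extensions of $X$ by $B[k]$ are degreewise split. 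Thus $\Ext^{1}_{\mathsf{Ch}}(X,B[k]) \cong \Hom_{\mathsf{Ch}}(X,B[k+1])/\sim = 0$, meaning every chain map $X \to B[k+1]$ is null-homotopic; since this holds for every $k$, $\mathcal{H}{om}(X,B)$ is exact. The main technical subtlety will be justifying the precise formulas $\Ext^{1}_{\mathsf{Ch}}(S^{m}(A),Y) \cong \Ext^{1}_{\C}(A,Z_{m}(Y))$ and $\Ext^{1}_{\mathsf{Ch}}(X,D^{m}(B)) \cong \Ext^{1}_{\C}(X_{?},B)$, since in general these identifications require degreewise orthogonality that must be bootstrapped from the hypotheses in precisely the order indicated above.
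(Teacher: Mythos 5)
Your proposal is correct and follows essentially the same route as the paper's proof: degreewise orthogonality (obtained by the same sandwich argument on $0 \to Z_m(B) \to B_m \to Z_{m-1}(B) \to 0$) collapses $\Ext^1_{\Ch}$ to $\Ext^1_{\mathsf{dw}} \cong \Hom_{\Ch}(-,-[1])/\!\sim$, and the disk/sphere identifications (the paper's citations of Gillespie's Lemmas 2.1, 3.1 and 4.2) do the rest, with your splicing verification that $S^m(A) \in \Ch_{\rm acy}(\mathcal{A};\widetilde{\mathcal{A}^{\perp_1}})$ and your use of contractible disks to get $Y_m \in \mathcal{A}^{\perp_1}$ being only cosmetic variants of the paper's computations. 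One small clarification on the point you flag: for a general (non-exact) $Y$ the sphere identification is only a monomorphism $\Ext^1_{\C}(A,Z_m(Y)) \rightarrowtail \Ext^1_{\Ch}(S^m(A),Y)$ (it becomes an isomorphism when the target complex is exact, and the disk isomorphism needs no hypotheses at all), but this injectivity is exactly the direction your argument uses, so nothing breaks.
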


\begin{proof} \
\begin{enumerate}
\item Suppose the relation $\Ext^1_{\mathcal{C}}(\mathcal{A,B}) = 0$ holds true, and let $A \in \Ch_{\rm acy}(\mathcal{A};\widetilde{\mathcal{B}})$ and $B \in \widetilde{\mathcal{B}}$. We aim to show that $\Ext^1_{\Ch}(A,B) = 0$. Consider the subgroup $\Ext^1_{\mathsf{dw}}(A,B) \subseteq \Ext^1_{\Ch}(A,B)$. We know that $A_m \in \mathcal{A}$ for every $m \in \mathbb{Z}$. On the other hand, we have short exact sequences
\[
0 \to Z_m(B) \to B_m \to Z_{m-1}(B) \to 0
\]
with $Z_{m-1}(B), Z_m(B) \in \mathcal{B}$, and so $\Ext^1_{\mathcal{C}}(A_m,B_m) = 0$ for every $m \in \mathbb{Z}$. This implies that $\Ext^1_{\mathsf{dw}}(A,B) = \Ext^1_{\Ch}(A,B)$. Now in oder to show that $\Ext^1_{\mathsf{dw}}(A,B) = 0$, it suffices to use the isomorphism
\[
\Ext^1_{\mathsf{dw}}(A,B) \cong {\rm H}_0(\mathcal{H}{om}(A,B[1]))
\]
from \cite[Lemma 2.1]{GillespieFlat}. Since the complex $\mathcal{H}{om}(A,B[1])$ is exact, we have that its $0$-th homology is zero, that is, ${\rm H}_0(\mathcal{H}{om}(A,B[1])) = 0$. Hence, $\Ext^1_{\Ch}(A,B) = 0$. The equality $\Ext_{\Ch}^1(\widetilde{\mathcal{A}},\Ch_{\rm acy}(\widetilde{\mathcal{A}};\mathcal{B})) = 0$ follows in the same way. 

\item Let $Y \in (\Ch_{\rm acy}(\mathcal{A};\widetilde{\mathcal{A}^{\perp_1}}))^{\perp_1}$ and consider $Z_m(Y)$ and $A \in \mathcal{A}$. We show $\Ext^1_{\mathcal{C}}(A,Z_m(Y)) = 0$. By Gillespie's \cite[Lemma 4.2]{GillespieDegreewise}, we know that there is a monomorphism 
\[
0 \to \Ext^1_{\mathcal{C}}(A,Z_m(Y)) \to \Ext^1_{\Ch}(S^m(A),Y).
\]
So it suffices to show that $S^m(A) \in \Ch_{\rm acy}(\mathcal{A};\widetilde{\mathcal{A}^{\perp_1}})$. It is clear that $S^m(A) \in \Ch(\mathcal{A})$ since $0, A \in \mathcal{A}$. Now let $B \in \widetilde{\mathcal{A}^{\perp_1}}$. For each $m \in \mathbb{Z}$, we have that
\begin{align*}
{\rm H}_i(\mathcal{H}{om}(S^m(A),B)) & \cong \Ext^1_{\mathsf{dw}}(S^m(A),B[-i-1]).
\end{align*}
Note that $\Ext^1_{\mathcal{C}}((S^m(A))_k,(B[-i-1])_k) = 0$ for every $k \neq m$. Now for $k = m$, we have that $\Ext^1_{\mathcal{C}}((S^m(A))_m,(B[-i-1])_m) = \Ext^1_{\mathcal{C}}(A,B_{m+i+1})$. Since $B$ is an exact complex with cycles in $\mathcal{A}^{\perp_1}$, we can note that $\Ext^1_{\mathcal{C}}(A,B_{m+i+1}) = 0$ as we did in part (1). Hence, it follows that 
\[
\Ext^1_{\mathsf{dw}}(S^m(A),B[-i-1]) = \Ext^1_{\Ch}(S^m(A),B[-i-1]).
\]
Using again \cite[Lemma 4.2]{GillespieDegreewise} and the fact that $B$ is exact yields an isomorphism
\[
\Ext^1_{\Ch}(S^m(A),B[-i-1]) \cong \Ext^1_{\mathcal{C}}(A,Z_m(B[-i-1])),
\]
where $\Ext^1_{\mathcal{C}}(A,Z_m(B[-i-1])) = 0$. Then, we have that ${\rm H}_i(\mathcal{H}{om}(S^m(A),B)) = 0$ for every $i \in \mathbb{Z}$, that is, $\mathcal{H}{om}(S^m(A),B)$ is an exact complex. Thus, $S^m(A) \in \Ch_{\rm acy}(\mathcal{A};\widetilde{\mathcal{A}^{\perp_1}})$, and hence $\Ext^1_{\mathcal{C}}(A,Z_m(Y)) = 0$. 

Now suppose that in addition $\mathcal{C}$ has enough $\mathcal{A}$-objects. Since we already know that $Y$ has cycles in $\mathcal{A}^{\perp_1}$, it suffices to show that $Y$ is exact, that is, that the equality $Z_m(Y) = B_m(Y)$ holds for every $m \in \mathbb{Z}$. The containment $B_m(Y) \subseteq Z_m(Y)$ is clear. For the converse containment, we have an epimorphism $f_m \colon A \to Z_m(Y)$ with $A \in \mathcal{A}$ since $\mathcal{C}$ has enough $\mathcal{A}$-objects. This induces a chain map $\tilde{f} \colon S^m(A) \to Y$ given by $\tilde{f}_m := i_m \circ f_m$ and $0$ elsewhere, where $i_m$ is the inclusion $Z_m(Y) \hookrightarrow Y_m$. On the other hand, 
\begin{align*}
\Hom_{\Ch}(S^m(A),Y) / \sim \mbox{} & \cong {\rm H}_0(\mathcal{H}{om}(S^m(A),Y)) \cong \Ext^1_{\mathsf{dw}}(S^m(A),Y[-1]) \\
& \subseteq \Ext^1_{\Ch}(S^m(A),Y[-1]),
\end{align*}
where $\Ext^1_{\Ch}(S^m(A),Y[-1]) = 0$ since $S^m(A) \in \Ch_{\rm acy}(\mathcal{A};\widetilde{\mathcal{A}^{\perp_1}})$ (by Remark~\ref{sumB'} (5)) and $Y[-1] \in (\Ch_{\rm acy}(\mathcal{A};\widetilde{\mathcal{A}^{\perp_1}}))^{\perp_1}$. It follows that the map $\tilde{f}$ is null homotopic, and so there exists a morphism $D_{m+1} \colon A \to Y_{m+1}$ such that $\partial^Y_{m+1} \circ D_{m+1} = i_m \circ f_m$. The latter implies $Z_m(Y) \subseteq B_m(Y)$, since $f_m$ is epic. 

\item Finally, we show the containment ${}^{\perp_1}(\widetilde{B}) \subseteq \Ch_{\rm acy}({}^{\perp_1}\mathcal{B};\widetilde{\mathcal{B}})$. Let $X \in {}^{\perp_1}\widetilde{\mathcal{B}}$. We first show that $X_m \in {}^{\perp_1}\mathcal{B}$ for every $m \in \mathbb{Z}$. Let $B \in \mathcal{B}$. Then, we know that $D^{m+1}(B) \in \widetilde{\mathcal{B}}$ by Remark \ref{sumB'} (3), and so 
\[
\Ext^1_{\mathcal{C}}(X_m,B) \cong \Ext^1_{\Ch}(X,D^{m+1}(B)) = 0
\]
since $X \in {}^{\perp_1}\widetilde{\mathcal{B}}$. 

Now we show that $\mathcal{H}{om}(X,B)$ is an exact complex of abelian groups for every $B \in \widetilde{\mathcal{B}}$. We have natural isomorphisms
\[
{\rm H}_m(\mathcal{H}{om}(X,B)) \cong \Ext^1_{\mathsf{dw}}(X,B[-m-1]) = \Ext^1_{\Ch}(X,B[-m-1]) = 0
\]
where $\Ext^1_{\mathsf{dw}}(X,B[-m-1]) = \Ext^1_{\Ch}(X,B[-m-1])$ follows as in (1), and $B[-m-1] \in \widetilde{\mathcal{B}}$ by Remark~\ref{sumB'} (2).
\end{enumerate}
\end{proof}

We are know ready to show how to induce $n$-cotorsion pairs in $\Ch(\mathcal{C})$ involving the classes $\widetilde{\mathcal{A}}$, $\widetilde{\mathcal{B}}$, $\Ch_{\rm acy}(\mathcal{A};\widetilde{\mathcal{B}})$ and $\Ch_{\rm acy}(\widetilde{\mathcal{A}};\mathcal{B})$ from an $n$-cotorsion pair $(\mathcal{A,B})$ in $\mathcal{C}$.

\begin{theorem}\label{theo:induced1}
Let $\mathcal{A}$ and $\mathcal{B}$ be two classes of objects in an abelian category $\C$ with enough injectives, such that $\Ext_{\C}^{1}(\mathcal{A,B}) = 0$ and $\mathcal{B}$ is closed under extensions and contains the injectives of $\mathcal{C}$. If $(\Ch_{\rm acy}(\mathcal{A};\widetilde{\mathcal{B}}),\widetilde{\mathcal{B}})$ or $(\widetilde{\mathcal{A}},\Ch_{\rm acy}(\widetilde{\mathcal{A}};\mathcal{B}))$ is a left $n$-cotorsion pair in $\Ch(\mathcal{C})$, then $(\mathcal{A,B})$ is a left $n$-cotorsion pair in $\mathcal{C}$. 
\end{theorem}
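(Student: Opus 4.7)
The plan is to verify the three defining conditions of Definition~\ref{def:ncotorsion} for $(\mathcal{A},\mathcal{B})$ in $\mathcal{C}$, treating both cases uniformly by denoting the given left $n$-cotorsion pair in $\Ch(\mathcal{C})$ as $(\mathscr{L},\mathscr{R})$, so that $\mathscr{L}\in\{\widetilde{\mathcal{A}},\Ch_{\rm acy}(\mathcal{A};\widetilde{\mathcal{B}})\}$. The initial observation is that $0\in\mathcal{B}$ (since $\mathcal{I}(\mathcal{C})\subseteq\mathcal{B}$) and $0\in\mathcal{A}$ (since $\mathscr{L}=\bigcap_{i=1}^n{}^{\perp_i}\mathscr{R}$ by Theorem~\ref{theo:left-n-cotorsion} contains the zero complex). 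The central tool is the disk--degree adjunction $\Hom_{\Ch}(X,D^{m+1}(N))\cong\Hom_{\mathcal{C}}(X_m,N)$ from \cite[Lemma 3.1]{GillespieFlat}, which extends to an isomorphism $\Ext^i_{\Ch}(X,D^{m+1}(N))\cong\Ext^i_{\mathcal{C}}(X_m,N)$ for every $i\geq 0$, thanks to the facts that $\mathcal{C}$ has enough injectives and that $D^{m+1}$ carries injective objects of $\mathcal{C}$ to injective objects of $\Ch(\mathcal{C})$. Disk complexes are contractible, from which I verify that $D^0(A)\in\mathscr{L}$ whenever $A\in\mathcal{A}$ and $D^1(B)\in\mathscr{R}$ whenever $B\in\mathcal{B}$: the cycles lie in the relevant class for the $\widetilde{(-)}$ alternative, while contractibility supplies the $\mathcal{H}{om}$-acyclicity for the other.

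For condition~(1), if $M$ is a direct summand of $A\in\mathcal{A}$, then $D^0(M)$ is a direct summand of $D^0(A)\in\mathscr{L}$, so closure of $\mathscr{L}$ under direct summands forces $D^0(M)\in\mathscr{L}$, after which reading off $Z_{-1}(D^0(M))=M$ or $D^0(M)_0=M$ deposits $M$ in $\mathcal{A}$. For condition~(2), the $n$-cotorsion pair hypothesis yields $\Ext^i_{\Ch}(D^0(A),D^1(B))=0$ for $1\leq i\leq n$, and the extended adjunction identifies this with $\Ext^i_{\mathcal{C}}(A,B)$.

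For condition~(3), I apply the $n$-cotorsion pair $(\mathscr{L},\mathscr{R})$ to the sphere complex $S^0(C)$ to obtain a short exact sequence $0\to Y\to X\to S^0(C)\to 0$ in $\Ch(\mathcal{C})$ with $X\in\mathscr{L}$ and $Y\in\mathscr{R}^{\wedge}_{n-1}$, and then pass to the degree-$0$ slice $0\to Y_0\to X_0\to C\to 0$. The membership $Y_0\in\mathcal{B}^{\wedge}_{n-1}$ is obtained in either case by slicing an $\mathscr{R}$-resolution of $Y$ at degree $0$: the resolvents lie in $\mathcal{B}$, either directly because $\Ch_{\rm acy}(\widetilde{\mathcal{A}};\mathcal{B})\subseteq\Ch(\mathcal{B})$, or because each term of an exact complex with cycles in $\mathcal{B}$ is again in $\mathcal{B}$ (using that $\mathcal{B}$ is closed under extensions). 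The statement $X_0\in\mathcal{A}$ is immediate when $\mathscr{L}=\Ch_{\rm acy}(\mathcal{A};\widetilde{\mathcal{B}})\subseteq\Ch(\mathcal{A})$; the case $\mathscr{L}=\widetilde{\mathcal{A}}$, where $X_0$ sits inside the extension $0\to Z_0(X)\to X_0\to Z_{-1}(X)\to 0$ of two objects of $\mathcal{A}$, is what I expect to be the main obstacle and requires showing that $\mathcal{A}$ itself is closed under extensions. I will establish this by lifting any short exact sequence $0\to A_1\to A_2\to A_3\to 0$ in $\mathcal{C}$ with outer terms in $\mathcal{A}$ through the exact functor $D^0$ to a short exact sequence $0\to D^0(A_1)\to D^0(A_2)\to D^0(A_3)\to 0$ in $\Ch(\mathcal{C})$ and invoking that $\widetilde{\mathcal{A}}=\bigcap_{i=1}^n{}^{\perp_i}\mathscr{R}$ is automatically closed under extensions by a routine long-exact-sequence argument, whence $D^0(A_2)\in\widetilde{\mathcal{A}}$ and $A_2\in\mathcal{A}$.
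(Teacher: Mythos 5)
Your proposal is correct, but it follows a different route than the paper. The paper does not verify Definition~\ref{def:ncotorsion} directly: it invokes Proposition~\ref{prop:cotorsion_vs_ncotorsion} twice, first in $\Ch(\mathcal{C})$ to convert the hypothesis into the complete left cotorsion pair $(\Ch_{\rm acy}(\mathcal{A};\widetilde{\mathcal{B}}),\widetilde{\mathcal{B}}^\wedge_{n-1})$, and then in $\mathcal{C}$ after proving that $(\mathcal{A},\mathcal{B}^\wedge_{n-1})$ is a complete left cotorsion pair; this keeps every computation at the level of $\Ext^1$ (slicing an approximation of $S^0(C)$ at degree $0$, and identifying $\Ext^1_{\mathcal{C}}(A,N)\cong\Ext^1_{\Ch}(S^0(A),D^1(N))$ via \cite[Lemma 4.2]{GillespieDegreewise}), and yields the stronger intermediate equality $\mathcal{A}={}^{\perp_1}(\mathcal{B}^\wedge_{n-1})$. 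You instead check conditions (1)--(3) of the definition directly, transferring the whole range $\Ext^i$, $1\leq i\leq n$, through the disk functors; the price is the extension of Gillespie's $\Ext^1$-isomorphism to higher Yoneda Ext, which you correctly justify (evaluation at degree $m$ is exact with right adjoint $D^{m+1}(-)$, which is exact and preserves injectives, and $\mathcal{C}$ — hence $\Ch(\mathcal{C})$ — has enough injectives). What your version buys is a genuinely uniform treatment of the two alternatives: in particular, for $\mathscr{L}=\widetilde{\mathcal{A}}$ the membership $X_0\in\mathcal{A}$ is not automatic, and your argument that $\mathcal{A}$ is closed under extensions (lifting through $D^0$ and using $\widetilde{\mathcal{A}}=\bigcap_{i=1}^n{}^{\perp_i}\mathscr{R}$ from Theorem~\ref{theo:left-n-cotorsion}) supplies exactly the step that the paper's closing remark ``the same conclusion can be reached'' leaves implicit — note that the paper's step (ii) cannot be repeated verbatim there, since $S^0(A)$ is never exact for $A\neq 0$. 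Your verification of $Y_0\in\mathcal{B}^\wedge_{n-1}$ by slicing an $\mathscr{R}$-resolution, using closure of $\mathcal{B}$ under extensions in the $\widetilde{\mathcal{B}}$ case, matches the paper's corresponding observation.
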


\begin{proof} \
Suppose first that $(\Ch_{\rm acy}(\mathcal{A};\widetilde{\mathcal{B}}),\widetilde{\mathcal{B}})$ is a left $n$-cotorsion pair in $\Ch(\mathcal{C})$. By Proposition~\ref{prop:cotorsion_vs_ncotorsion}, it suffices to show that $(\mathcal{A},\mathcal{B}^\wedge_{n-1})$ is a complete left cotorsion pair in $\mathcal{C}$. 

We can apply Proposition~\ref{prop:cotorsion_vs_ncotorsion} in the setting of $\Ch(\mathcal{C})$, noticing that $\Ch(\mathcal{C})$ has enough injectives and that $\widetilde{\mathcal{B}}$ contains the injective complexes. Thus, $(\Ch_{\rm acy}(\mathcal{A};\widetilde{\mathcal{B}}),\widetilde{\mathcal{B}}^\wedge_{n-1})$ is a complete left cotorsion pair in $\Ch(\mathcal{C})$. In particular, the class $\Ch_{\rm acy}(\mathcal{A};\widetilde{\mathcal{B}})$ is closed under extensions, and so is $\mathcal{A}$ by Remark~\ref{sumB'} (5). 
\begin{itemize}
\item[(i)] We first show that for every $C \in \mathcal{C}$, we can construct a short exact sequence
\[
0 \to N \to A \to C \to 0
\]
with $A \in \mathcal{A}$ and $N \in \mathcal{B}^\wedge_{n-1}$. For the complex $S^0(C)$, we can find a short exact sequence
\[
0 \to Y \to X \to S^0(C) \to 0
\]
where $X \in \Ch_{\rm acy}(\mathcal{A};\widetilde{\mathcal{B}})$ and $Y \in \widetilde{\mathcal{B}}^\wedge_{n-1}$, since $(\Ch_{\rm acy}(\mathcal{A};\widetilde{\mathcal{B}}),\widetilde{\mathcal{B}}^\wedge_{n-1})$ is a complete left cotorsion pair in $\Ch(\mathcal{C})$. Then, it suffices to take $A = X_0$ and $N = Y_0$. Note that $N \in \mathcal{B}^\wedge_{n-1}$ since $\mathcal{B}$ is closed under extensions. 

\item[(ii)] Now we prove the equality $\mathcal{A} = {}^{\perp_1}\mathcal{B}^\wedge_{n-1}$. So let $A \in \mathcal{A}$ and $N \in \mathcal{B}^\wedge_{n-1}$. Consider the complexes $S^0(A)$ and $D^1(N)$. We have that $S^0(A) \in \Ch_{\rm acy}(\mathcal{A};\widetilde{\mathcal{B}})$ and $D^1(N) \in \widetilde{\mathcal{B}}^\wedge_{n-1}$ by Remark~\ref{sumB'}. This, along with \cite[Lemma 4.2]{GillespieDegreewise}, yields that
\[
\Ext^1_{\C}(A,N) = \Ext^1_{\C}(A,Z_0(D^1(N))) \cong \Ext^1_{\Ch}(S^0(A),D^1(N)) = 0.
\]
Then, the containment $\mathcal{A} \subseteq {}^{\perp_1}\mathcal{B}^\wedge_{n-1}$ follows. On the other hand, for every $M \in {}^{\perp_1}\mathcal{B}^\wedge_{n-1}$ there is an exact sequence
\[
0 \to N \to A \to M \to 0
\]
with $A \in \mathcal{A}$ and $N \in \mathcal{B}^\wedge_{n-1}$. We thus have that $M$ is a direct summand of $A$, since $\Ext^1_{\mathcal{C}}(M,N) = 0$. It follows that $M \in \mathcal{A}$. 
\end{itemize}
Hence, (i) and (ii) show that $(\mathcal{A},\mathcal{B}^\wedge_{n-1})$ is a complete left cotorsion pair in $\mathcal{C}$. The same conclusion can be reached if we assume that $(\widetilde{\mathcal{A}},\Ch_{\rm acy}(\widetilde{\mathcal{A}};\mathcal{B}))$ is a left $n$-cotorsion pair instead. 
\end{proof}

The converse of the previous result holds with some extra assumptions on the pair $(\mathcal{A,B})$ and the ground category $\mathcal{C}$. We will also need the following lemma, which follows after a careful revision of Yang and Ding's \cite[Lemma 2.1]{YangDingQuestion}.

\begin{lemma}
Let $(\mathcal{A,B})$ be a pair of classes of objects in a bicomplete abelian category $\mathcal{C}$ with enough $\mathcal{A}$-objects such that $\mathcal{B}$ is closed under extensions and satisfying $\Ext^1_{\mathcal{C}}(\mathcal{A,B}) = 0$. Then, for every complex $X \in \Ch(\mathcal{C})$, there exists a short exact sequence
\[
0 \to X \to E \to A \to 0
\]
with $E$ exact and $A \in \Ch_{\rm acy}(\mathcal{A};\widetilde{\mathcal{B}})$.
\end{lemma}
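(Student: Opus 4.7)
The plan is to construct a surjective quasi-isomorphism $\phi \colon A \twoheadrightarrow X[1]$ with $A \in \Ch_{\rm acy}(\mathcal{A};\widetilde{\mathcal{B}})$, and then to realize $E$ as the pullback of the canonical mapping-cone sequence of $\mathrm{id}_X$ along $\phi$. The cone sequence
\[
0 \to X \to \mathrm{cone}(\mathrm{id}_X) \to X[1] \to 0
\]
has contractible (hence exact) middle term, and the pullback will transport this exactness to the sought-after complex $E$ once the kernel of $\phi$ is known to be exact, which is automatic since $\phi$ is a quasi-isomorphism.

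To construct $\phi$, I first use the enough-$\mathcal{A}$-objects hypothesis to choose, for each $n \in \mathbb{Z}$, an epimorphism $\pi_n \colon P_n \twoheadrightarrow X_{n-1} = X[1]_n$ with $P_n \in \mathcal{A}$. Form the contractible complex $T_0 := \bigoplus_{n \in \mathbb{Z}} D^n(P_n)$, whose degree-$m$ component is $P_m \oplus P_{m+1}$; the universal property of disk complexes together with the $\pi_n$'s provides a canonical chain map $T_0 \to X[1]$ that is a degreewise split epimorphism. Since each summand $D^n(P_n)$ is contractible, so is $T_0$, and hence $\mathcal{H}{om}(T_0,B)$ is contractible (in particular exact) for every $B \in \Ch(\mathcal{C})$; thus $T_0 \in \Ch_{\rm acy}(\mathcal{A};\widetilde{\mathcal{B}})$ (here one needs the binary coproducts at each degree to remain in $\mathcal{A}$, a mild closure property implicit in the intended applications). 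The map $T_0 \to X[1]$ is not a quasi-isomorphism in general, so I iterate: set $K_0 := \ker(T_0 \to X[1])$ and apply the disk-covering construction to $K_0$ to obtain $T_1 \twoheadrightarrow K_0$, then $T_2 \twoheadrightarrow K_1 := \ker(T_1 \to K_0)$, and so on. Totalizing the resulting tower of surjections produces a complex $A$ equipped with a surjective quasi-isomorphism $\phi \colon A \twoheadrightarrow X[1]$; stability of $\Ch_{\rm acy}(\mathcal{A};\widetilde{\mathcal{B}})$ under this totalization follows from the orthogonality $\Ext^1_{\Ch}(\Ch_{\rm acy}(\mathcal{A};\widetilde{\mathcal{B}}),\widetilde{\mathcal{B}}) = 0$ recorded in Lemma~\ref{B subseteq dgA}(1), together with closure of $\mathcal{B}$ under extensions and the bicompleteness of $\mathcal{C}$.

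With $\phi$ in hand, I form the pullback
\[
\begin{array}{ccccccccc}
0 & \to & X & \to & E & \to & A & \to & 0 \\
  &     & \parallel &     & \downarrow & & \downarrow\phi & & \\
0 & \to & X & \to & \mathrm{cone}(\mathrm{id}_X) & \to & X[1] & \to & 0.
\end{array}
\]
Since $\phi$ is epic, the top row is a short exact sequence of complexes with $A \in \Ch_{\rm acy}(\mathcal{A};\widetilde{\mathcal{B}})$ by construction. The kernel of the middle vertical arrow $E \to \mathrm{cone}(\mathrm{id}_X)$ coincides with $\ker \phi$, which is exact because $\phi$ is a quasi-isomorphism; and $\mathrm{cone}(\mathrm{id}_X)$ is contractible, hence exact. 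Consequently $E$ fits into the short exact sequence $0 \to \ker\phi \to E \to \mathrm{cone}(\mathrm{id}_X) \to 0$ whose outer terms are exact, forcing $E$ itself to be exact and completing the construction.

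The principal obstacle is the iterative construction of the dg-$\mathcal{A}$-resolution $\phi$ in the second paragraph: one application of the disk-complex construction yields only a degreewise-split surjection from a contractible source, not a quasi-isomorphism, and promoting the tower of such surjections to a single quasi-isomorphism $A \to X[1]$ that still lies in $\Ch_{\rm acy}(\mathcal{A};\widetilde{\mathcal{B}})$ is the technical core of the argument. This is precisely where the hypotheses $\Ext^1_{\mathcal{C}}(\mathcal{A},\mathcal{B}) = 0$, closure of $\mathcal{B}$ under extensions, and bicompleteness of $\mathcal{C}$ enter in an essential way.
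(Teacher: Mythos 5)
Your outer reduction is fine: pulling back the contractible cone sequence $0 \to X \to \mathrm{cone}(\mathrm{id}_X) \to X[1] \to 0$ along a surjective quasi-isomorphism $\phi \colon A \twoheadrightarrow X[1]$ with $A \in \Ch_{\rm acy}(\mathcal{A};\widetilde{\mathcal{B}})$ does yield the required sequence, and since the paper's own ``proof'' is only a citation of Yang--Ding's Lemma 2.1, that citation is exactly the input you are trying to reconstruct. The genuine gap is in your construction of $\phi$, which you yourself call the technical core. Resolving $X[1]$ by the contractible disk complexes $T_i$ and ``totalizing'' cannot produce a quasi-isomorphism: the associated double complex is concentrated in columns $i \geq 0$ and all its columns are exact (they are contractible), so the direct-sum total complex is itself exact (peel off the highest nonzero column of a total cycle using column exactness; the process stops because the column index is bounded below --- this is the acyclic assembly lemma, already valid over $\Mod(\mathbb{Z})$). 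Hence the augmentation $\mathrm{Tot}^{\oplus}(T_\bullet) \to X[1]$ is a quasi-isomorphism only when $X$ is exact, and your $\phi$ does not exist by this route. Passing to the product totalization restores the quasi-isomorphism (now using exactness of the rows), but then each term of $A$ is an infinite product of objects of $\mathcal{A}$, which the hypotheses do not place back in $\mathcal{A}$; the same objection already applies to the countable coproducts in the $\mathrm{Tot}^{\oplus}$ version, and even to the binary sums $P_m \oplus P_{m+1}$ in $T_0$, since the lemma only assumes enough $\mathcal{A}$-objects.

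Independently of this, your justification that the totalized complex remains in $\Ch_{\rm acy}(\mathcal{A};\widetilde{\mathcal{B}})$ --- ``by the orthogonality $\Ext^1_{\Ch}(\Ch_{\rm acy}(\mathcal{A};\widetilde{\mathcal{B}}),\widetilde{\mathcal{B}}) = 0$ of Lemma~\ref{B subseteq dgA}, extension-closure of $\mathcal{B}$ and bicompleteness'' --- is not an argument: an $\Ext^1$-orthogonality controls finite extensions, whereas the object you build carries an infinite degreewise split filtration with subquotients the shifted $T_k$, and exactness of $\mathcal{H}{om}(A,B)$ for $B \in \widetilde{\mathcal{B}}$ would have to be extracted from the tower of complexes $\mathcal{H}{om}(F_k,B)$ by an inverse-limit ($\lim^1$-type) argument that you do not supply. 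So the heart of the lemma --- producing from the bare hypotheses a surjective quasi-isomorphism onto $X[1]$ from an object of $\Ch_{\rm acy}(\mathcal{A};\widetilde{\mathcal{B}})$, i.e.\ a dg-$\mathcal{A}$ resolution --- is missing, and the specific mechanism you propose for it provably fails. A workable revision of the Yang--Ding argument proceeds differently: one builds such resolutions for bounded-below truncations of $X[1]$ degree by degree and assembles them along a countable telescope of degreewise split monomorphisms, verifying the $\mathcal{H}{om}(-,\widetilde{\mathcal{B}})$-exactness of the colimit precisely by the kind of inverse-limit argument mentioned above, rather than by resolving with contractible complexes.
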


\begin{theorem}\label{theo:induced_n-cotorsion_Ch}
Let $\mathcal{A}$ and $\mathcal{B}$ be two classes of objects in a bicomplete abelian category $\C$ such that $\Ext_{\C}^{1}(\mathcal{A,B}) = 0$ and $\mathcal{B}$ closed under extensions. 
\begin{enumerate}
\item If $(\mathcal{A,B})$ is a hereditary left $1$-cotorsion pair in $\mathcal{C}$, then $(\Ch_{\rm acy}(\mathcal{A};\widetilde{\mathcal{B}}),\widetilde{\mathcal{B}})$ is a left $1$-cotorsion pair in $\Ch(\mathcal{C})$. For $n \geq 2$ in the case where $\mathcal{C}$ has enough injectives and $\mathcal{B}$ contains the injectives of $\mathcal{C}$, if $(\mathcal{A,B})$ is a hereditary left $n$-cotorsion pair in $\mathcal{C}$ and $\Ext^1_{\mathcal{C}}(\mathcal{B},\mathcal{B}^\wedge_{n-1}) = 0$, then $(\Ch_{\rm acy}(\mathcal{A};\widetilde{\mathcal{B}}),\widetilde{\mathcal{B}})$ is a left $n$-cotorsion pair in $\Ch(\mathcal{C})$.

\item Suppose $\mathcal{C}$ has enough injectives. If $(\mathcal{A,B})$ is a hereditary left $1$-cotorsion pair in $\mathcal{C}$, then $(\widetilde{\mathcal{A}}, \Ch_{\rm acy}(\widetilde{\mathcal{A}};\mathcal{B}))$ is a left $1$-cotorsion pair in $\Ch(\mathcal{C})$. For $n \geq 2$, if $\mathcal{B}$ contains the injectives of $\mathcal{C}$ and $(\mathcal{A,B})$ is a hereditary left $n$-cotorsion pair in $\mathcal{C}$ with $\Ext^1_{\mathcal{C}}(\mathcal{B},\mathcal{B}^\wedge_{n-1}) = 0$, then $(\widetilde{\mathcal{A}},\Ch(\mathcal{B}))$ is a left $n$-cotorsion pair in $\Ch(\mathcal{C})$.
\end{enumerate}
\end{theorem}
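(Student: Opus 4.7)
The strategy is to verify the three conditions of Definition~\ref{def:ncotorsion} for each pair. Closure of the left class under direct summands is immediate: for part~(1), both $\mathcal{A}$-membership degreewise and acyclicity of $\mathcal{H}{om}(-,B)$ are preserved under summands; for part~(2), $\widetilde{\mathcal{A}}$ inherits closure from $\mathcal{A}$. The orthogonality relation $\Ext^i_{\Ch}(\cdot,\cdot)=0$ for $i=1$ is Lemma~\ref{B subseteq dgA}(1); for $2\le i\le n$, I would dimension-shift using that, by Proposition~\ref{equiv hered y n-cot} and the hereditary assumption, $\mathcal{A}$ is resolving and $\mathcal{B}$ is coresolving, which permits splicing without leaving the given classes. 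What remains — and constitutes the bulk of the work — is condition~(3): the special $n$-th resolution.

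For part~(1), my plan is to bootstrap from the $n=1$ case (Gillespie's theorem, i.e.\ the first sentence of~(1) which I treat as established) applied to an auxiliary cotorsion pair. By Theorem~\ref{theo:left-n-cotorsion}, the pair $(\mathcal{A},\mathcal{B}^{\wedge}_{n-1})$ is a complete cotorsion pair in $\mathcal{C}$; the hereditary assumption on $(\mathcal{A,B})$ together with $\mathcal{I}(\mathcal{C})\subseteq \mathcal{B}\subseteq \mathcal{B}^{\wedge}_{n-1}$ makes it a hereditary complete cotorsion pair. The $n=1$ case applied to this auxiliary pair yields a complete cotorsion pair of complexes $(\Ch_{\rm acy}(\mathcal{A};\widetilde{\mathcal{B}^{\wedge}_{n-1}}),\,\widetilde{\mathcal{B}^{\wedge}_{n-1}})$. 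Given $X\in \Ch(\mathcal{C})$, this provides a short exact sequence $0\to Y\to A\to X\to 0$ with $A$ in the first class and $Y$ in the second. Since $\widetilde{\mathcal{B}}\subseteq \widetilde{\mathcal{B}^{\wedge}_{n-1}}$, the acyclicity of $\mathcal{H}{om}(A,-)$ against the larger class restricts to the smaller, giving $A\in \Ch_{\rm acy}(\mathcal{A};\widetilde{\mathcal{B}})$. The missing piece is $Y\in \widetilde{\mathcal{B}}^{\wedge}_{n-1}$, i.e.\ the inclusion $\widetilde{\mathcal{B}^{\wedge}_{n-1}}\subseteq \widetilde{\mathcal{B}}^{\wedge}_{n-1}$.

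This last inclusion is the main obstacle and is where the hypothesis $\Ext^1_{\mathcal{C}}(\mathcal{B},\mathcal{B}^{\wedge}_{n-1})=0$ enters. Together with Lemma~\ref{Bk-1ext} it ensures that $\mathcal{B}^{\wedge}_{k}$ is closed under extensions for all $k\le n-1$. I plan to proceed by induction on $n$ via a horseshoe-style ``weaving'' argument: given an exact complex $Y$ with each cycle $Z_m(Y)\in \mathcal{B}^{\wedge}_{n-1}$, choose degreewise short exact sequences $0\to Z'_m\to B_m\to Z_m(Y)\to 0$ with $B_m\in \mathcal{B}$ and $Z'_m\in \mathcal{B}^{\wedge}_{n-2}$; the closure of $\mathcal{B}^{\wedge}_{n-1}$ under extensions then allows one to define compatible differentials on the $B_m$'s (and on the $Z'_m$'s) so that these assemble into a short exact sequence of complexes $0\to Y'\to \tilde{B}\to Y\to 0$ with $\tilde{B}\in \widetilde{\mathcal{B}}$ and $Y'$ exact with cycles in $\mathcal{B}^{\wedge}_{n-2}$. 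Inducting on $n$ concludes. Verifying the compatibility of the connecting differentials — essentially a Horseshoe Lemma in the category of complexes — is the most delicate calculation, since differentials between successive levels must be lifted simultaneously at every degree.

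For part~(2), the outline is dual. The $n=1$ statement is again Gillespie's (dual) theorem, yielding the pair $(\widetilde{\mathcal{A}},\,\Ch_{\rm acy}(\widetilde{\mathcal{A}};\mathcal{B}))$. For $n\ge 2$, enlarging the right class from $\Ch_{\rm acy}(\widetilde{\mathcal{A}};\mathcal{B})$ to $\Ch(\mathcal{B})$ is legitimate because under the stronger hypotheses (in particular $\mathcal{I}(\mathcal{C})\subseteq \mathcal{B}$ together with $\Ext^1_{\mathcal{C}}(\mathcal{B},\mathcal{B}^{\wedge}_{n-1})=0$) the orthogonality $\Ext^i_{\Ch}(\widetilde{\mathcal{A}},\Ch(\mathcal{B}))=0$ for $1\le i\le n$ still holds: it reduces componentwise to $\Ext^i_{\mathcal{C}}(\mathcal{A,B})=0$ via the $\Ext^1_{\mathsf{dw}}$-analysis used in Lemma~\ref{B subseteq dgA}, coupled with dimension shifting. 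The special resolutions are produced by the analogue of the above bootstrap (applied to the hereditary complete pair $(\mathcal{A}^{\vee}_{n-1}{}^{\perp_1},\mathcal{B})$ in the dual framework) combined with the same weaving induction to upgrade the $\widetilde{\mathcal{B}^{\wedge}_{n-1}}$-resolution on the right into a $\Ch(\mathcal{B})$-coresolution of length $n-1$. The main obstacle is again the horseshoe-compatibility verification, identical in character to that in part~(1).
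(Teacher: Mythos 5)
Your reduction of the approximation condition in part~(1) to the $n=1$ case applied to the auxiliary pair $(\mathcal{A},\mathcal{B}^\wedge_{n-1})$, together with the inclusion $\widetilde{\mathcal{B}^\wedge_{n-1}}\subseteq\widetilde{\mathcal{B}}^{\wedge}_{n-1}$ proved by a cycle-wise horseshoe (which does go through: $\Ext^1_{\mathcal{C}}(\mathcal{B},\mathcal{B}^\wedge_{n-1})=0$ provides the lifts $B_{m-1}\to Y_m$ of $B_{m-1}\twoheadrightarrow Z_{m-1}(Y)$, and the kernel is exact with cycles the $Z'_m$), is a genuinely different organisation from the paper, which instead resolves an exact complex degreewise by making the special $(\mathcal{A},n,\mathcal{B})$-precovers of its cycles compatible via Theorem~\ref{theo:compatible}, and treats arbitrary complexes through the lemma preceding the theorem plus a pullback. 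However, your plan has real gaps. First, the base case is not available as cited: Gillespie proves that the induced pairs are cotorsion pairs, not that they are complete; completeness is Yang--Ding's theorem, which concerns two-sided complete hereditary cotorsion pairs in $\Mod(R)$. The statement here is one-sided in a bicomplete abelian category, and your auxiliary pair $(\mathcal{A},\mathcal{B}^\wedge_{n-1})$ is in general \emph{not} a two-sided cotorsion pair (typically $\mathcal{B}^\wedge_{n-1}\subsetneq\mathcal{A}^{\perp_1}$, e.g.\ for $(\mathcal{GP}(R),\mathcal{P}(R))$), so even over a ring the citation does not apply. The $n=1$ case is precisely the first assertion of the theorem and must itself be proved (the paper does so by the same degreewise construction, using Remark~\ref{rem:special_n-1}); your bootstrap therefore rests on an unproven base.

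Second, your justification of the higher orthogonality is wrong: Proposition~\ref{equiv hered y n-cot} concerns two-sided $n$-cotorsion pairs, and for a hereditary \emph{left} $n$-cotorsion pair only $\mathcal{A}$ is resolving (Proposition~\ref{prop:hereditary_n-cotorsion}); $\mathcal{B}$ is not coresolving in the intended examples (were it coresolving, $\mathcal{B}^\wedge_{n-1}=\mathcal{B}$ and the theorem would collapse to $n=1$). The step is repairable, but along the paper's route: establish that $(\Ch_{\rm acy}(\mathcal{A};\widetilde{\mathcal{B}}),\widetilde{\mathcal{B}}^{\wedge}_{n-1})$ is a complete left cotorsion pair in $\Ch(\mathcal{C})$ and apply Proposition~\ref{prop:cotorsion_vs_ncotorsion} there, using that the injective complexes lie in $\widetilde{\mathcal{B}}$ because $\mathcal{I}(\mathcal{C})\subseteq\mathcal{B}$. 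Third, part~(2) is not the dual of part~(1): for $n\geq 2$ the right-hand class is $\Ch(\mathcal{B})$, the complexes with \emph{terms} in $\mathcal{B}$, and your componentwise argument for $\Ext^i_{\Ch}(\widetilde{\mathcal{A}},\Ch(\mathcal{B}))=0$ breaks down: after the reduction to $\Ext^1_{\mathsf{dw}}$ one must show that every chain map from an $\mathcal{A}$-complex to an arbitrary complex with terms in $\mathcal{B}$ is null-homotopic, and Lemma~\ref{lem:null_homotopic} only covers targets that are exact with cycles in $\mathcal{B}$; the cycles $Z_m(Y)$ of an object $Y\in\Ch(\mathcal{B})$ need not lie in $\mathcal{B}$, so the lifting obstructions $\Ext^1_{\mathcal{C}}(\mathcal{A},Z_m(Y))$ are not controlled componentwise. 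The paper's part~(2) instead produces the approximation from the short exact sequence $0\to I\to E\to X\to 0$ with $E$ exact and $I$ dg-injective, resolving $E$ as in part~(1) and taking a pullback; your proposed dual bootstrap with $((\mathcal{A}^\vee_{n-1})^{\perp_1},\mathcal{B})$ involves a pair that plays no role here and is not known to be complete under the hypotheses.
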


\begin{proof} 
We focus in the case where $n \geq 2$. The remaining case $n = 1$ follows in the same way. 

We first show part (1). Again, as in the proof of Theorem~\ref{theo:induced1}, the idea is to use Proposition~\ref{prop:cotorsion_vs_ncotorsion} and show that $(\Ch_{\rm acy}(\mathcal{A};\widetilde{\mathcal{B}}),\widetilde{\mathcal{B}}^\wedge_{n-1})$ is a complete left cotorsion pair in $\Ch(\mathcal{C})$.
\begin{itemize}
\item[(i)] First, for every complex $X \in \Ch(\mathcal{C})$ we construct a short exact sequence 
\[
0 \to B \to A \to X \to 0
\]
with $A \in \Ch_{\rm acy}(\mathcal{A};\widetilde{\mathcal{B}})$ and $B \in \widetilde{\mathcal{B}}^\wedge_{n-1}$. It suffices to prove the case where $X$ is an exact complex. Indeed, the general case follows by using the previous lemma and a standard pullback argument, along with the fact that $\Ch_{\rm acy}(\mathcal{A};\widetilde{\mathcal{B}})$ is resolving since $(\mathcal{A,B})$ is hereditary. Thus, for each $m \in \mathbb{Z}$ we have a short exact sequence 
\[
0 \to Z_m(X) \to X_m \to Z_{m-1}(X) \to 0.
\]
Since $(\mathcal{A,B})$ is a left $n$-cotorsion pair, each $Z_m(X)$ has a $\mathcal{A}$-precover with kernel in $\mathcal{B}^\wedge_{n-1}$. By Theorem~\ref{theo:compatible}, from these $\mathcal{A}$-precovers we can construct an exact sequence
\[
0 \to B^{n-1}_m \to B^{n-2}_m \to \cdots \to B^1_m \to B^0_m \to A_m \to X_m \to 0
\]
compatible with them, such that $A_m \in \mathcal{A}$ and $B_k \in \mathcal{B}$ for every $0 \leq k \leq n-1$. Thus, for each $m \in \mathbb{Z}$ we have a commutative diagram as in \eqref{fig_compatible}. Connecting these digrams yields an exact sequence
\[
0 \to B^{n-1} \to B^{n-1} \to \cdots \to B^1 \to B^0 \to A \to X \to 0
\]
in $\Ch(\mathcal{C})$ with $A \in \widetilde{\mathcal{A}} \subseteq \Ch_{\rm acy}(\mathcal{A};\widetilde{\mathcal{B}})$ (see Lemma~\ref{lem:null_homotopic}) and $B^k \in \widetilde{\mathcal{B}}$ for every $0 \leq k \leq n-1$. 

\item[(ii)] We now show the equality $\Ch_{\rm acy}(\mathcal{A};\widetilde{\mathcal{B}}) = {}^{\perp_1}(\widetilde{\mathcal{B}}^\wedge_{n-1})$. Note that $\Ext^1_{\mathcal{C}}(\mathcal{A},\mathcal{B}^\wedge_{n-1}) = 0$, then the containment $\Ch_{\rm acy}(\mathcal{A};\widetilde{\mathcal{B}}) \subseteq {}^{\perp_1}(\widetilde{\mathcal{B}}^\wedge_{n-1})$ follows by Lemma \ref{B subseteq dgA}. The converse inclusion follows after using part (i) and noticing that if $\mathcal{A}$ is closed under direct summands, then so is $\Ch_{\rm acy}(\mathcal{A};\widetilde{\mathcal{B}})$. 
\end{itemize}
Therefore, $(\Ch_{\rm acy}(\mathcal{A};\widetilde{\mathcal{B}}),\widetilde{\mathcal{B}}^\wedge_{n-1})$ is a complete left cotorsion pair in $\Ch(\mathcal{C})$. For the proof concerning the pair $(\widetilde{\mathcal{A}},\Ch(\mathcal{B}))$, we only show that every complex is the epimorphic image of an objet in $\widetilde{\mathcal{A}}$ with kernel in $\Ch(\mathcal{B})^\wedge_{n-1}$. So let $X \in \Ch(\mathcal{C})$ be a complex. Since $\mathcal{C}$ has enough injectives, there exists a short exact sequence
\[
0 \to I \to E \to X \to 0
\]
where $E$ is exact and $I$ is a differential graded injective complex (see \cite[Lemma 2.1]{YangDingQuestion}). On the other hand, since $E$ is exact, there is a short exact sequence
\[
0 \to K^0 \to A \to E \to 0
\] 
where $A \in \widetilde{\mathcal{A}}$ and $K^0 \in \widetilde{\mathcal{B}}^\wedge_{n-1}$. Taking the pullback of $I \to E \leftarrow A$ gives rise to two short exact sequences
\begin{align*}
0 & \to K^0 \to K \to I \to 0, \\
0 & \to K \to A \to X \to 0.
\end{align*}
Note that $I \in \Ch_{\rm acy}(\widetilde{\mathcal{A}};\mathcal{B})$ and $K^0 \in \Ch_{\rm acy}(\widetilde{\mathcal{A}};\mathcal{B})^\wedge_{n-1}$. By Lemma~\ref{Bk-1ext}, we have that $\mathcal{B}^\wedge_{n-1}$ is closed under extensions, and so $K \in \Ch(\mathcal{B})^\wedge_{n-1}$. For the case $n = 1$, it is not hard to see that also $K \in \Ch_{\rm acy}(\widetilde{\mathcal{A}};\mathcal{B})$. 
\end{proof}

\begin{remark} \
\begin{enumerate}
\item From the previous theorem, we have hereditary left $1$-cotorsion pairs 
\[
(\Ch(\mathcal{P}(R);\widetilde{\mathsf{Mod}(R)}),\widetilde{\mathsf{Mod}(R)}) \mbox{ \ and \ } (\widetilde{\mathcal{P}(R)},\Ch(\widetilde{\mathcal{P}(R)};\mathsf{Mod}(R)))
\]
where $\widetilde{\mathsf{Mod}(R)}$ is the class of exact complexes, $\widetilde{\mathcal{P}(R)}$ coincides with the class of projective complexes (which we denote by $\mathscr{P}(R)$), and $\Ch(\mathcal{P}(R);\widetilde{\mathsf{Mod}(R)})$ is the class of differential graded projective chain complexes. Note also that $\mathscr{P}(R)$ is part of another left $1$-cotorsion pair $(\mathscr{P}(R),\Ch(R))$. 

\item Let $R$ be a Gorenstein ring which is not a QF ring. The results mentioned in Section \ref{sec:applications} for Gorenstein modules also hold in the context of $\Ch(R)$. So, if $\mathscr{GP}(R)$ denotes the class of Gorenstein projective chain complexes, we have that $(\mathscr{GP}(R),\mathscr{P}(R))$ is a left $n$-cotorsion pair in $\Ch(R)$. On the other hand, we know from \cite[Theorem 2.2]{GPcomplexes} that $\mathscr{GP}(R)$ is the class of complexes of Gorenstein projective modules, that is, $\mathscr{GP}(R) = \Ch(\mathcal{GP}(R))$. It follows that we have a left $n$-cotorsion pair in $\Ch(R)$ of the form $(\Ch(\mathcal{GP}(R)),\widetilde{\mathcal{P}(R)})$. Thus, the condition that $\mathcal{B}$ contains the injective objects required in Theorem \ref{theo:induced_n-cotorsion_Ch} is sufficient but not necessary. 

Similar observations are also true for the classes of Gorenstein injective and injective complexes. Moreover, by \cite[Theorem 3.11]{GFcomplexes} and \cite[Corollary 3.12]{SarochStovicek}, we have that a chain complex $X \in \Ch(R)$ is Gorenstein flat if, and only if, each $X_m$ is a Gorenstein flat module. Thus, the results in Section \ref{sec:applications} for the classes $\mathcal{GF}(R)$, $\mathcal{GI}(R)$ and $\mathcal{F}(R)$ carry over to the classes $\Ch(\mathcal{F}(R))$, $\Ch(\mathcal{I}(R))$ and $\widetilde{\mathcal{F}(R)}$ of Gorenstein flat, Gorenstein injective and flat complexes, respectively. 
\end{enumerate}
\end{remark}


\section*{\textbf{Funding}}

The authors thank Project PAPIIT-Universidad Nacional Aut\'onoma de M\'exico IN103317. The third named author is supported by a Comisi\'on Acad\'emica de Posgrado de la Universidad de la Rep\'ublica (CAP - UdelaR) posdoctoral fellowship.


\bibliographystyle{alpha}
\bibliography{bibliohmp}
\end{document}